\definecolor{limegreen}{rgb}{0.196,0.804,0.196}
\definecolor{darkgreen}{rgb}{0.0,0.5,0.0}
\definecolor{darkbluegreen}{rgb}{0,0.3,0.6}
\definecolor{badgerred}{rgb}{0.715,0.004,0.004}
\newcommand{\bom}{{\boldsymbol\Omega}}
\newcommand{\be}{\begin{equation}}
\newcommand{\ee}{\end{equation}}
\newcommand{\ba}{\begin{aligned}}
\newcommand{\ea}{\end{aligned}}
 \newcommand{\bee}{\begin{equation*}}
\newcommand{\eee}{\end{equation*}}
\newcommand{\R}{{\mathbb R}}
\newcommand{\bq}{{\bar q}}
\newcommand{\brho}{{\bar \sigma}}
\newcommand{\bu}{{\bar u}}
\newcommand{\cA}{\mathcal{A}}
\newcommand{\cB}{\mathcal{B}}
\newcommand{\cE}{\mathcal{E}}
\newcommand{\cF}{\mathcal{F}}
\newcommand{\cG}{\mathcal{G}}
\newcommand{\cI}{\mathcal{I}}
\newcommand{\cL}{\mathcal{L}}
\newcommand{\cO}{\mathcal{O}}
\newcommand{\cQ}{\mathcal{Q}}
\newcommand{\cT}{\mathcal{T}}
\newcommand{\hhm}{H}
\newcommand{\hilb}{\mathcal H}
\newcommand{\hm}{H}
\newcommand{\hv}{\mathcal{D}}
\newcommand{\mc}[1]{{\mathcal #1}}
\newcommand{\sk}{\smallskip}
\newcommand{\tw}{{\tilde w}}
\newcommand{\bell}{{\bar  \ell}}
\newcommand{\ueo}{u_{\epsilon,\bom}}
\newtheorem{theorem}{Theorem}[section]
\newtheorem{proposition}[theorem]{Proposition}
\newtheorem{lemma}[theorem]{Lemma}
\newtheorem{definition}[theorem]{Definition}
\newtheorem{prop}[theorem]{Proposition}
\newtheorem{corollary}[theorem]{Corollary}
\theoremstyle{remark}
\newtheorem{remark}[theorem]{Remark}
\newtheorem{claim}[theorem]{Claim}
\newtheorem{step}{Step}
\numberwithin{equation}{section}
\numberwithin{theorem}{section}
\begin{document}

\title{Mean Curvature Flow near a Peanut Solution}
\author{S.  B.  Angenent}
\author{P.  Daskalopoulos}
\author{N.  Sesum}
\begin{abstract}
It was shown in  \cite{AAG} and \cite{AV} that there exist closed mean curvature flow solutions that extinct to a point in finite time, without ever becoming convex prior to their extinction.  These solutions develop a {\it degenerate neckpinch} singularity, meaning that the tangent flow at a singularity is a round cylinder, but at the same time for each of these solutions there exists a sequence of points in space and time, so that the pointed blow up limit around this sequence is the Bowl soliton.  These solutions are called  {\it peanut} solutions and they were first conjectured to exist by Richard Hamilton, while the existence of those solutions was shown in \cite{AAG}.
In this paper we show that this type of solutions are highly unstable, in the sense that in every small neighborhood of any such peanut solution we can  find   a perturbation so that the mean curvature flow starting at that perturbation develops spherical singularity, and at the same time we can find a perturbation so that the mean curvature flow starting at that perturbation develops a nondegenerate neckpinch singularity.  We also show that appropriately rescaled subsequence of any sequence of solutions whose initial data converge to the peanut solution, and all of which develop spherical singularities, converges to the Ancient oval solution.

\end{abstract}

\maketitle
\setcounter{tocdepth}{2}
\section{Introduction}


We consider  families of compact hypersurfaces $\bar M_{\theta}(t)\subset\R^{n+1}$ that evolve by Mean Curvature Flow, and which depend continuously on the parameter $\theta\in\Theta$; the parameter belongs to some topological space $\Theta$, which in our examples will always be an open subset of $\R^{m}$ for some $m\geq 1$.  
These solutions become singular at a finite time $T(\theta)$ which may vary with the parameter $\theta\in\Theta$.  
Such solutions have a parametrization $(p, t, \theta)\in\mc M^n\times[0, \infty)\times\Theta\mapsto \hat F(p, t, \theta)\in\R^{n+1}$ whose domain is an open subset of $\mc M^n\times[0, \infty) \times \Theta$ given   by
\[
\mc O = \bigl\{ (p, t, \theta)\in \mc M^n\times[0, \infty) \times \Theta \mid 0\leq t<T(\theta) \bigr \}.  
\]
For each $\theta\in\Theta$ the immersion $p\mapsto \hat F(p, t, \theta)$ satisfies the Mean Curvature Flow equation
\begin{equation}		\label{eq-mcf}
\bigl(\partial_t\hat F\bigr)^{\perp} = \Delta_{\hat F}(\hat F), 
\tag{MCF}
\end{equation}
in which $(\partial_t\hat F)^\perp$ is the component perpendicular to $T_{\hat F(p, t, \theta)}\bar M_\theta(t)$ of $\partial_t\hat F(p, t, \theta)\in T_{\hat F(p,t,\theta)}\R^{n+1}$, and $\Delta_{\hat F}$ is the Laplacian of the pullback of the Euclidean metric under the immersion $p\mapsto \hat F(p, t, \theta)$.  

There have been many works towards understanding the formation of singularities in the mean curvature flow, that is classifying all possible singularity models.  It is a very hard, if not even impossible question to answer in its full generality.  To understand the singularities, which inevitably happen for closed mean curvature flows, one parabolically dilates around the singularity in space and time.  Huisken's monotonicity formula (\cite{Hu90}, \cite{Ilm95}) guarantees that a subsequential limit of such dilations will weakly limit to a tangent flow which will be a weak solution to \eqref{eq-mcf}, evolving only by homothety.  These solutions are called self-shrinking solutions.  We need to understand these tangent flows better in order to either continue the flow past singularities via surgery, or by showing some regularity for weak solutions past the singular time.  The problem is that tangent flow can come with multiplicity, and also that its mean curvature may change sign.

On the other hand, in \cite{CM} Colding and Minicozzi introduced the notion of entropy and showed that the only entropy stable shrinkers are the generalized cylinders.  These singularities if they occur with multiplicity one behave very well, and nice regularity results or well posedness of weak solutions were for example shown in \cite{CHHW22}, \cite{CM16} and \cite{Wh}.  Thanks to results in \cite{BK} and \cite{CCS} we know the mean curvature flow of a generic initial surface $M_0 \subset \mathbb{R}^3$ encounters only spherical and cylindrical singularities, and the flow is well-posed and is completely smooth for almost every time.  Combining the results in \cite{CCS} with the surgery construction in \cite{DH22}, the authors in \cite{CCS} construct a mean curvature flow with surgery for a generic initial $M(0) \subset \mathbb{R}^3$.  

\begin{figure}[t]
\centering
\includegraphics[width=\textwidth]{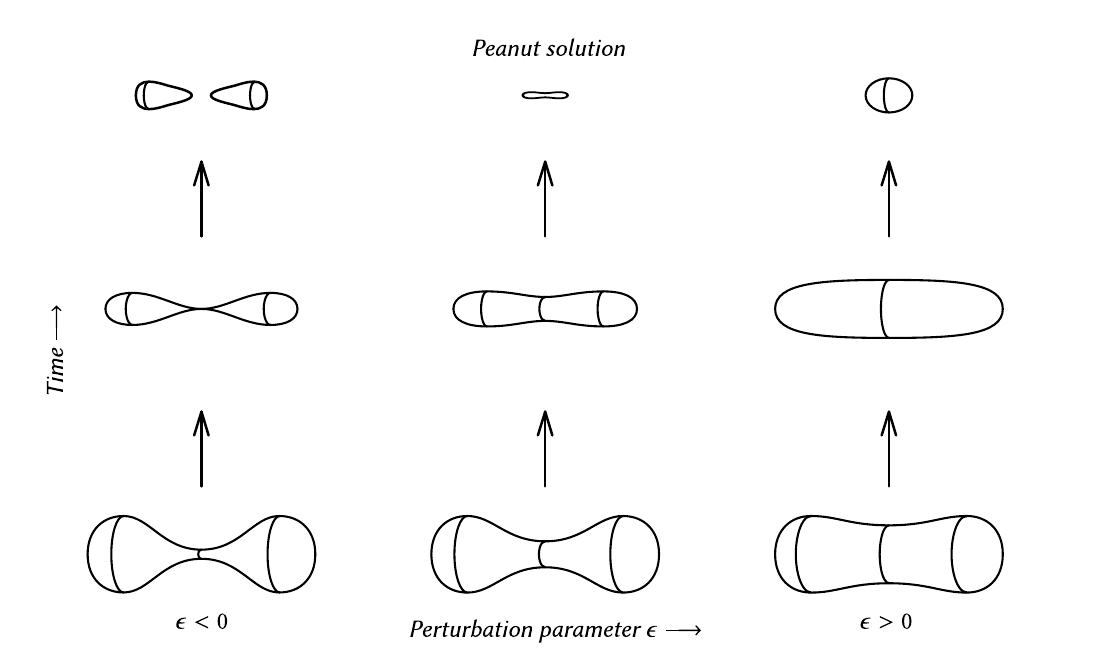}
\caption{\textbf{The peanut and neighbors.}  In the 1980ies Hamilton suggested an atypical singularity in a solution to MCF.  He considered a one-parameter family of initial surfaces $\{M_\epsilon \mid \epsilon\in\R\}$ each of which is a sphere.  For $\epsilon\ll0$ the surface $M_\epsilon$ has a neck that is so narrow that it will pinch before the whole solution can vanish in a point.  For $\epsilon\gg0$ the surface $M_\epsilon$ is convex, or so close to being convex that it quickly becomes convex and shrinks to a point, obeying Huisken's theorem \cite{Hu}.  Observing that continuous dependence on initial data should imply the existence of at least one parameter value $\epsilon_*\in\R$ whose corresponding solution $M_{\epsilon_*}(t)$  forms a neck pinch, but does not ever become convex.  A rigorous version of Hamilton's arguments appeared in \cite[Section 8]{AAG}}
\label{fig:peanut}
\end{figure}

The question is what happens in higher dimensions.  In \cite{CMS} it was shown that a generic closed mean curvature flow in $\mathbb{R}^{n+1}$, for $3 \le n \le 5$,  under certain entropy assumptions on an initial  hypersurface, develops only generic singularities, meaning the generalized cylinders. Roughly speaking, singularities modeled on generalized cylinders $\Sigma^k := \mathbb{S}^{n-k} \times \mathbb{R}^k$ are called in short {\it neckpinch} singularities.

Even among neckpinch singularities, there are different types of neckpinch singularities, i.e.  the nondegenerate and degenerate neckpinches.  We expect the former one to be generic, that is, if a mean curvature flow starting with some initial hypersurface $M_0$ develops a degenerate neckpinch singularity, we expect to find a sequence of perturbations converging to $M_0$, whose mean curvature flows all develop nondegenerate neckpinch singularities. 

\begin{definition}		\label{def-neckpinch}
We say that the Mean curvature flow  $\{\tilde{M}_t\}_{t\in [-1,0)}$ has a {\em neckpinch} singularity at $(0,0)$ if the singularity is modeled on $\Sigma^k:=
\mathbb{S}^{n-k} \times \mathbb{R}^k$  at $(0,0)$, and the associated Rescaled Mean Curvature flow  $\{M_{\tau}\}_{\tau\in [0,\infty)}$ converges to $\Sigma^k$  in the $C^{\infty}_{loc}$ sense.  
\end{definition}

In this paper we restrict our attention to the case $k = 1$, that is, when the singularity model is a round cylinder $\mathbb{S}^{n-1} \times \mathbb{R}.$  Assume the first singularity of the flow \eqref{eq-mcf} occurs at the spacetime point $(0,0) \in \R^n\times \R$.  
Consider then the rescaled mean curvature flow (RMCF) defined by $M_{\tau} = e^{\tau/2}\, \tilde{M}_{-e^{-\tau}}$, for $\tau\in [0,\infty)$.

To distinguish, at least in the geometric sense, between degenerate and nondegenerate neckinches  we introduce the following definition.  

\begin{definition}		\label{def-neckpinch-general}
A neckpinch singularity is called {\bf nondegenerate} if every pointed singularity model, that is, a smooth limit of any sequence of blow ups around $(x_i,t_i) \to (0,0)$, is a round cylinder $\Sigma^1$, and is called {\bf degenerate} if there is at least one blowup sequence around some $(x_i,t_i)\to (0,0)$ with a pointed limit that is not $\Sigma^1$.
\end{definition}

In this paper we focus on so-called \emph{peanut solutions} whose existence was first suggested by Richard Hamilton, and then established in~\cite{AAG,AV}.  In \cite{AV} the asymptotics of these solutions have been also established.  These are  examples of closed mean curvature flow solutions that contract to a point at the singular time, without ever becoming convex prior to that.  At the same time these are  examples of degenerate neckpinches.

In this paper we will restrict to the case where $\Theta$ is a {\em two dimensional set of parameters}  and consider perturbations of one of the  $4$-peanut solutions
(c.f.  in section \ref{sec:peanut properties}), that is a two parameter family of solutions $\{M_{\theta}(t)\,\,\, |\,\,\, \theta \in \Theta\}$ so that each $M_{\theta}(t)$ is a smooth MCF solution for $t\in [0,T(\theta))$, and so that for $\theta = {\bf 0} := (0,0)$ we have that $\bar{M}_{\bf 0}(t)$ is one of the $4$-peanut solutions.  More details on the peanut solutions  will be discussed in section \ref{sec:peanut}.
 
If for some parameter value $\theta\in\Theta$ the initial hypersurface $\bar M_\theta(0)$ is convex, then Huisken's theorem \cite{Hu} implies that $\bar M_\theta(t)$ contracts to a point as $t\nearrow T(\theta)$.  For other values of parameter $\theta$ the solution $\bar M_\theta(t)$ may become singular without shrinking to a point.  If, for example, the initial surface $\bar M_\theta(0)$ has a ``dumbbell shape,'' then this will happen for all $\theta$ in a subset of the parameter space $\Theta$ with nonempty interior.

We show that degenerate neckpinch type behavior exhibited by any of peanut solutions in consideration is highly \emph{unstable}, in the sense that there exist $\theta'$ arbitrarily close to ${\bf 0}$ for which $\bar M_{\theta'}(t) $ forms a qualitatively different kind of singularity than $\bar M_\theta(t)$.  More precisely, our goal in this paper is to prove the following result, which is well illustrated by figure \ref{fig:peanut} above.

\begin{theorem}\label{thm-main}
Let $\bar{M}_{\bf 0}(t)$ be one of the  $4$-peanut solutions  as discussed above, and let $\bar{T}$ be its first singular time.  There exists a $t_0$ close to $\bar{T}$, so that in every sufficiently small neighborhood of $\bar{M}_{\bf 0}(t_0)$, there exist perturbations $\bar{M}_{\theta_s}(t_0)$ and $\bar{M}_{\theta_c}(t_0)$ with the following property.  The MCF starting at $\bar{M}_{\theta_s}(t_0)$ as its initial data develops a spherical singularity, while at the same time the MCF starting at $\bar{M}_{\theta_c}(t_0)$ as its initial data develops a nondegenerate neckpinch singularity.  Here $\theta_s$ and $\theta_c$ can be chosen arbitrarily small.
\end{theorem}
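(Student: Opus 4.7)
The plan is to work in the Rescaled Mean Curvature Flow (RMCF) about the cylindrical tangent flow of the peanut, and to combine the detailed asymptotics of the peanut from \cite{AV} with barrier arguments in order to exhibit explicit perturbations going each way. First I would parametrize rotationally symmetric surfaces close to $\bar M_{\bf 0}(t_0)$ as graphs $u(y,\tau)$ over the shrinking cylinder $\mathbb{S}^{n-1}\times\mathbb{R}$, where $y$ is the rescaled axial coordinate and $\tau=-\log(\bar T-t)$. The RMCF for $u$ linearizes to the Ornstein--Uhlenbeck-type operator $\cL=\partial_y^2-\tfrac12 y\,\partial_y+1$, whose spectrum (on the rotationally symmetric sector) has two unstable eigenvalues with eigenfunctions $1$ and $y$ and a neutral kernel spanned by $y^2-2$. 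From \cite{AV} the peanut has the property that, after projecting out the unstable modes, its profile sits on the neutral manifold with the coefficient of $y^2-2$ decaying like $\tau^{-1/2}$ and of a definite sign.

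To produce the cylindrical perturbation $\theta_c$, I would subtract a small multiple of $(y^2-2)\chi(y)$ from $u(\cdot,\tau_0)$, narrowing the neck in the dominant neutral direction. Back in the original scale, the perturbed surface then lies strictly inside a rotationally symmetric ``thin neck with two round caps'' barrier of exactly the type constructed in \cite[Section~8]{AAG} to produce a nondegenerate neckpinch. By the avoidance principle, the central neck of $\bar M_{\theta_c}(t)$ pinches off strictly before the two tips can collide, and since the barrier's singularity is a round cylinder, every blow-up limit at the pinch point is forced to be $\Sigma^1$, giving a nondegenerate neckpinch in the sense of Definition~\ref{def-neckpinch-general}.

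For the spherical perturbation $\theta_s$ I would instead \emph{add} a small multiple of $(y^2-2)\chi(y)$, fattening the neck. One then shows that $\bar M_{\theta_s}(t_0)$ is enclosed by a uniformly convex surface arbitrarily close to it --- for instance, a suitably chosen prolate ellipsoid. Huisken's theorem \cite{Hu} applied to this outer barrier makes it shrink smoothly to a round point, so by avoidance $\bar M_{\theta_s}$ must extinguish by that time. Combined with shrinking-sphere avoidance from the inside and the classification of compact tangent flows (only the shrinking sphere and the shrinking cylinder can arise as multiplicity-one tangent flows of this type), the tangent flow at the extinction point must be the round sphere, yielding a spherical singularity. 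Both perturbations have the form $\bar M_{\bf 0}(t_0)+\varepsilon\phi_\pm$ with $\phi_\pm$ fixed smooth profiles supported near the neck, so sending $\varepsilon\to 0$ produces $\theta_s,\theta_c$ arbitrarily close to ${\bf 0}$ in any $C^k$-norm, and by the two-dimensionality of $\Theta$ these perturbations are realized within the family.

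The main obstacle lies on the cylindrical side: one has to verify quantitatively that flipping the sign of the $y^2-2$ coefficient genuinely places the solution strictly off the ``degenerate stratum,'' rather than producing yet another degenerate neckpinch (possibly with different asymptotic constants). The key input is the sharp asymptotic description in \cite{AV}, which pins down the sign of the $y^2-2$ coefficient along the peanut; one then has to construct the barrier from \cite[Section~8]{AAG} finely enough that the avoidance principle yields \emph{strict} enclosure uniformly in a space-time neighborhood of the incipient pinch point, and that the pinch of the barrier occurs at a controlled location, so that the classification of tangent flows can be applied. On the spherical side the technical work is somewhat lighter but still requires ruling out, via entropy/area bounds, the possibility that a cylindrical singularity forms before extinction.
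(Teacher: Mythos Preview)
Your proposal has a genuine gap on the spherical side and rests on a misreading of the peanut's asymptotics.

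First, the $4$-peanut is \emph{not} governed by the neutral mode $H_2(y)=y^2-2$; its dominant deviation from the cylinder is the stable mode $H_4$, with coefficient $-K_0e^{-\tau}$ (see \eqref{eq:um2}). The $H_2$ coefficient along the peanut is $o(e^{-\tau})$, not $\tau^{-1/2}$. So ``flipping the sign of the $y^2-2$ coefficient'' has no direct meaning here: an $\epsilon$-perturbation in the $H_2$ direction is dwarfed by the $H_4$ term for a long time, and you must track the flow until the unstable component catches up with $e^{-\tau}$. This is exactly what the funnel and exit-time machinery of \S\ref{sec-growth} accomplish, together with the inner--outer estimate of \S\ref{sec-inner-outer} and the degree argument of Lemma~\ref{lemma-homotopy} that lets you \emph{prescribe} the direction of $W^u$ at exit. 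Your proposal skips this entirely.

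Second, the spherical argument as written is incorrect. Enclosing $\bar M_{\theta_s}(t_0)$ between an outer convex hypersurface and inner spheres, and applying Huisken's theorem to the \emph{barrier}, only bounds the extinction time of $\bar M_{\theta_s}$; it does not preclude a neckpinch before extinction (a dumbbell inside a round sphere still pinches). The paper instead shows that the perturbed solution \emph{itself} becomes convex at the exit time $\tau_1$ (Lemma~\ref{lem-convexity}), by arranging via the degree lemma that $W^u(\cdot,\tau_1)$ points in a direction with $d_2(\tau_1)<0$ and $|d_2(\tau_1)|$ large enough to beat $-K_0 H_4''$ on the inner region, while a maximum-principle argument handles $|y|\ge r_0$. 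Huisken's theorem is then applied directly to $\bar M_{\theta_s}$.

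On the cylindrical side your idea is closer in spirit, but the barriers in \cite[\S8]{AAG} are not fine enough: one needs sub/supersolutions (the $\cQ^\pm_{\varepsilon,K}$ of \S\ref{sec-sub-super}) adapted to the specific profile $\varepsilon y^2 - Ke^{-\tau}y^4$ and valid from $\tau_1$ to a much later time $\tau_2$ at which $u_{\max}$ is a large absolute constant, after which an Angenent torus outside and large spheres inside force a neckpinch that disconnects the surface.
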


We will give more precise definition of our families $\bar{M}_{\theta}(t)$ later in the text.  

\smallskip 

In \cite{CHH} the authors showed that the ancient ovals occur as a limit flow of  a closed MCF $\{M_t\}$ if and only if there is a sequence of spherical singularities converging to a cylindrical singularity.  As a corollary of Theorem \ref{thm-main} we show an analogous result for a blow up limit of our families of MCF solutions that can be seen as perturbations of  the peanut solution.  More precisely, we have the following result.  

\begin{theorem}		\label{thm-families}
Appropriately rescaled subsequence of any sequence of solutions which belong to one of our families of solutions, whose initial data converge to the peanut solution, and all of which develop spherical singularities, converges to the Ancient oval solution
constructed in \cite{HH, Wh}. 
\end{theorem}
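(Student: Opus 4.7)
The plan is to mimic, in our parametrized setting, the Choi--Haslhofer--Hershkovits argument \cite{CHH} that identifies the limit of a spherical-to-cylindrical degeneration as the Ancient oval. Let $\theta_k\to\mathbf{0}$ be a sequence in $\Theta$ for which $\bar M_{\theta_k}(t)$ develops a spherical singularity at some spacetime point $(x_k, T(\theta_k))$. I would parabolically rescale each flow around its spherical singularity, defining
\[
\tilde M^k_{\sigma} := \lambda_k\bigl(\bar M_{\theta_k}\bigl(T(\theta_k)+\lambda_k^{-2}\sigma\bigr)-x_k\bigr),
\]
where $\lambda_k>0$ is chosen by a geometric normalization at time $\sigma=-1$ (for instance, normalizing the mean curvature at a chosen tip, or the extrinsic diameter of the evolving component). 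Because the limit $\bar M_{\mathbf{0}}$ has cylindrical rather than spherical asymptotics at its singular time, this scale must blow up, $\lambda_k\to\infty$; that is, we zoom in beyond the natural Type~I parabolic scale.

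Next I would establish uniform regularity of $\tilde M^k$ on $\sigma\in[-\Sigma,0)$ for every $\Sigma>0$. The ingredients are (i) uniform $\alpha$-noncollapsing near each spherical singularity, which propagates back in time along the rescaled flows and survives in the limit; (ii) uniform curvature and derivative bounds on compact subsets, obtained from noncollapsing together with pseudolocality applied away from the compact ``cap'' region. Passing to a subsequence via a standard Arzel\`a--Ascoli / Hamilton-type compactness argument yields a smooth ancient mean curvature flow $\tilde M^\infty_\sigma$, $\sigma\in(-\infty,0)$, which is compact for each $\sigma$ (by the normalization of $\lambda_k$), noncollapsed, and nontrivial.

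The core step is to identify $\tilde M^\infty$. Invoking the classification of compact noncollapsed ancient MCF solutions (Angenent--Daskalopoulos--Sesum in dimension $n=1$; Brendle--Choi and the follow-up work underlying \cite{HH,Wh,CHH} in higher dimensions), $\tilde M^\infty$ must be either a round shrinking sphere or an Ancient oval. To rule out the sphere, I would show that as $\sigma\to-\infty$ the surface $\tilde M^\infty_\sigma$ develops a cylindrical end. This uses the fact that for $\theta_k\to\mathbf{0}$ the rescaled surfaces at scale $\lambda_k$ are forced to match, on any intrinsic annulus of fixed size, a piece of the neck of the peanut solution $\bar M_{\mathbf{0}}$, whose tangent flow at its singular time is precisely $\Sigma^1=\mathbb S^{n-1}\times\mathbb R$. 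Hence $\tilde M^\infty$ has a cylindrical asymptotic as $\sigma\to-\infty$, which excludes the shrinking sphere and forces $\tilde M^\infty$ to be the Ancient oval of \cite{HH,Wh}.

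The main obstacle, which is also where the work in the body of the paper leading to Theorem \ref{thm-main} is used, is coordinating the scale $\lambda_k$ so that the blow-up is simultaneously (a) compact (reflecting the fact that the $\theta_k$-flow closes up into a spherical singularity) and (b) cylindrical at spatial infinity (reflecting the peanut's degenerate neck). Quantifying the rate at which the spherical cap of $\bar M_{\theta_k}$ emerges from the peanut's neck as $\theta_k\to\mathbf{0}$---essentially a uniform matching estimate between the rescaled cap and the incoming cylindrical region---is the delicate ingredient; everything else is then compactness plus the ancient-solution classification.
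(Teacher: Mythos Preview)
Your high-level strategy---rescale, extract a compact ancient limit, invoke the classification of ancient ovals---matches the paper's, but the implementation differs in the two places you flag as difficult, and the paper's choices there are what make the argument go through.

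\textbf{Normalization.} Instead of your vague choice of $\lambda_k$, the paper uses the eccentricity ratio. Writing $a_k(t)$ and $b_k(t)$ for the major (axial) and minor (profile) radii of $M^k_t$, the barrier estimates of Section~\ref{sec-inner-outer} give $a_k(t_k^1)/b_k(t_k^1)\to\infty$ at the first convex time $t_k^1$, while the spherical singularity forces the ratio to tend to $1$ as $t\nearrow T_k$. One then picks $t_k$ with $a_k(t_k)/b_k(t_k)=2$. This single scale-invariant condition rules out both the sphere (ratio $1$) and the cylinder (ratio $\infty$) for any subsequential limit, resolving exactly the ``main obstacle'' you identify. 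Your proposed mechanism for excluding the sphere---matching at scale $\lambda_k$ to the peanut's neck---does not survive $\lambda_k\to\infty$ without a quantitative surrogate of this kind; normalizing the extrinsic diameter at $\sigma=-1$ does not by itself prevent the limit from being round.

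\textbf{Compactness.} The paper does not use noncollapsing or pseudolocality. It instead exploits that each flow becomes \emph{convex} at $t_k^1$ (this is precisely what Section~\ref{sec-sphere} establishes), rescales so the remaining lifetime equals $1$, and places the initial data in the space $Z_s$ of centrally symmetric convex sets with unit extinction time; compactness of RMCF orbits in $Z_s$ is quoted from \cite{ADS2}. One then shows the RMCF times $\tau_k$ at which the ratio equals $2$ satisfy $\tau_k\to\infty$ (otherwise the limiting flow at $\tau_\infty$ would be a cylinder with ratio $2$), so the shifted sequence $N^k(\tau+\tau_k)$ subconverges to an ancient solution, which by \cite{ADS,CHHW22} must be the Ancient oval.
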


\section{The Peanut Solution}
\label{sec:peanut}

Peanut solutions are central to our story.  In this section we describe these axiomatically by listing the asymptotic properties we assume them to have.  In appendix~\ref{sec:remember the peanut} we recall the construction in \cite{AV}, which showed that $m$-peanuts do exist.  

We begin by restating the relevant coordinates in Space-time and evolution equations for rotationally symmetric Rescaled Mean Curvature Flow from \cite{AV} in our current notation.

\subsection{The outer scale}
We consider families of rotationally symmetric surfaces which, in terms of their profile function $r= U(x, t)$ are given by
\[
\bar M_t =\bigl\{(x, x')\in \R\times\R^{n} : -x_{\max}(t)\leq x\leq x_{\max}(t), \|x'\|=U(x, t)\bigr\}.  
\]
We assume the surfaces are defined throughout the time interval $t_0\leq t <T$, and that  they are reflection symmetric, i.e.
\begin{equation}		\label{eq:everything even}
        U(-x, t) = U(x, t)\quad \text{for all }x, t.  
\end{equation}
The family of hypersurfaces $\bar M_t$ evolves by MCF if
\begin{equation}		\label{eq:hu}
        U_t = \frac{U_{xx}}{1+U_x^2} - \frac{n-1}{U}.  
\end{equation}
\subsection{The inner, parabolic scale}
To describe the possible singularity that forms at $x=0$, $t=T$, we introduce new time and space variables
\begin{equation}
        \tau = - \log (T-t), \quad t
        = T-e^{-\tau}, \quad y = \frac x{\sqrt{T-t}}
\end{equation}
as well as the rescaled profile
\[
u(y, \tau) =e^{\tau/2}U(e^{-\tau/2}x, T-e^{-\tau}),
\]
or, equivalently,
\begin{equation}		\label{eq:cv}
        U(x,t) = \sqrt{T-t} \; u\Bigl( \frac x{\sqrt{T-t}},\log \frac 1{T-t}\Bigr).  
\end{equation}
For the rescaled profile $u(y, \tau)$ \eqref{eq:hu} is equivalent with the Rescaled Mean Curvature Flow equation
\begin{equation}		\label{eq:u}
        u_\tau = \frac{u_{yy}}{1+u_y^2} - \frac y2 u_y - \frac{n-1}u + \frac u2.  
\end{equation}
The cylinder soliton corresponds to the constant solution $u(y, \tau)\equiv \sqrt{2(n-1)}$.  

\subsection{The intermediate scale}
We use an intermediate horizontal scale, whose coordinate is
\begin{equation}		\label{eq:z defined}
        z = e^{-\tau/4}y =\frac{x}{(T-t)^{1/4}}.  
\end{equation}


\subsection{Defining properties of a $4$-Peanut} \label{sec:peanut properties} The defining estimates for a $4$-peanut solution contain four positive parameters $(\tau_0, \rho, \delta, \ell_{\rm int})$.  We say that a solution $\bu(y, \tau)$ of \eqref{eq:u} is a $4$-peanut solution if it is defined for $\tau\geq \tau_0$ and satisfies the descriptions in \S~\ref{sec:peanut property inner}, \S~\ref{sec:peanut property intermediate}, and \S\ref{sec:peanut property monotone} below.

In \cite{AV} it was shown that if $\delta>0$, $\rho>0$, are small enough, and $\ell_{\rm int}>0$ is large enough, then for all sufficiently large $\tau_0>0$ one can construct a corresponding $4$-peanut solution.  In this paper we will focus on one such peanut solution, and then construct arbitrarily small perturbations of that chosen peanut solution.  At several points in our arguments we have to assume that $\rho>0$ and $\delta>0$ are sufficiently small, and that $\ell_{\rm int}>0$ is sufficiently large.  The existence result in \cite{AV} (see also appendix~\eqref{sec:proof of monotonicity}) allows us to make this assumption, provided we choose $\tau_0$ large enough, depending on the chosen values of $\delta, \rho, \ell_{\rm int}$.

Given the parameter $\rho$ we abbreviate
\begin{equation}		\label{eq:6}
L(\tau) = \rho \, e^{\tau/4}.
\end{equation}

\subsubsection{The inner scale}		\label{sec:peanut property inner}
For all $|y|\leq  2 L(\tau)$, $\tau\geq\tau_0$ one has
\begin{equation}		\label{eq:um2}
        \left| \sqrt{2(n-1)} - K_0e^{-\tau}\hhm_4(y) - \bu(y,\tau) \right|\leq \delta \, (1+|y|)^{4}e^{-\tau} 
\end{equation}
and
\begin{align}		\label{eq:um2-derivs}
        \big|\bu_\tau -  K_0e^{-\tau}&\hhm_4(y)\big| +
        (1+|y|) \left|\bu_y -  K_0e^{-\tau}\hhm_4'(y)\right|  \\
        \notag
        &+ \left|\bu_{yy} - K_0e^{-\tau}\hhm_4''(y)\right|
        \leq  \delta \, (1+|y|)^4 e^{-\tau }
\end{align}

\subsubsection{The intermediate scale}	\label{sec:peanut property intermediate}
For all $|y|\geq \ell_{\rm int}, \tau\geq\tau_0$ one has
\begin{equation}		\label{eq:u-outer}
        \sqrt{2(n-1) - (\tilde K_0+\delta)e^{-\tau} y^4}
        \leq \bu(y, \tau)
        \leq \sqrt{2(n-1) - (\tilde K_0-\delta)e^{-\tau} y^4}
\end{equation}
where we abbreviate $\tilde K_0 = 2\sqrt{2(n-1)} \, K_0$. 

These inequalities imply that for some constant $C$ one has
\begin{equation}
        \big|\bu(y, \tau)-\sqrt{2(n-1)} + K_0 e^{-\tau}y^4\big|
        \le C\delta e^{-\tau}y^4
\end{equation}
whenever $\tau\geq \tau_0$ and $\ell_{\rm int}\leq |y|\leq 2L(\tau)$.

The inequalities \eqref{eq:u-outer} also imply that  the  location $y_{\max}(\tau)$ of the tip of the peanut solution is bounded by
\[
(\tilde K_0 +\delta)^{-1/4} e^{\tau/4}
\leq y_{\max}(\tau) \leq
(\tilde K_0 -\delta)^{-1/4} e^{\tau/4}.
\]
\subsubsection{Monotonicity of the ends of the peanut}			\label{sec:peanut property monotone}
In Appendix~\ref{sec:proof of monotonicity} we show that, with the right choice of initial peanut, the construction in \cite{AV} yields a  solution that satisfies the following monotonicity.

For all $y\geq \ell_{\rm int}$ and $\tau\geq \tau_0$ we have
\begin{equation}		\label{eq:utau positive}
        \bu_\tau(y, \tau) > 0.
\end{equation}
We will use this fact to construct barriers that control perturbations of the peanut outside the parabolic region.  

\section{Set up and outline of the proof of Theorem \ref{thm-main}}	\label{sec-outline}

The goal in this section is to describe the set up, explain our choice of constants and outline the main steps in the proof of Theorem \ref{thm-main}.

We choose positive constants $K_0, \rho, \delta, \ell_{\rm int}, \tau_0$ and consider a peanut solution  $\bu(y, \tau)$ as in the previous section.

To introduce a family of perturbations of $\bu(y,\tau_0)$ in the direction of the lower eigenfunctions $\hm_0, \hm_2$ we let $\eta_0:\R\to\R$ be a smooth even cutoff function satisfying
\begin{equation}		\label{eqn-def-eta0}
\eta_0(y)=
\begin{cases}
0 & \text{for } |y|\geq 2\\
1 & \text{for }|y|\leq 1,
\end{cases}
\end{equation}
and we choose a length $\ell_0$ with
\begin{equation}		\label{eq:1}
\ell_{\rm int}<2\ell_0 \ll   L(\tau_0)
\end{equation}
For any given $\epsilon > 0$ and $\bom= (\Omega_0,\Omega_2) \in \mathbb{S}^1$ we then define
\begin{equation}		\label{eq-initial}
\ueo(y,\tau_0)
	= \bu(y,\tau_0)
	+  \epsilon\, \eta_0\Bigl(\frac{y}{\ell_0}\Bigr)\bigl\{\Omega_0 \hm_0(y) + \Omega_2 \hm_{2}(y)\bigr\}.
\end{equation}
Let $\ueo(y,\tau)$ be the rescaled mean curvature flow solution starting at $\ueo(y,\tau_0)$.  If there is no confusion, we abbreviate $\ueo(y,\tau_0)$ and $\ueo(y,\tau)$ to $u(y,\tau_0)$ and $u(y,\tau)$, respectively.

We will analyze the difference between the peanut solution $\bu(y, \tau)$ and the perturbed solution $u_{\epsilon, \bf \Omega}$.  Define
\begin{equation}		\label{eqn-defnW}
w(y,\tau) := u(y,\tau) - \bu(y,\tau)\quad \text{and } W(y,\tau) := w(y,\tau) \, \eta(y,\tau)
\end{equation}
We refer to $W(y, \tau)$as the \emph{truncated difference} of $u$ and $\bu$.  The time dependent cutoff function $\eta$ is defined by
\begin{equation}		\label{eq:eta y tau def}
\eta(y,\tau) := \eta_0\left(\frac{y}{L(\tau)}\right).
\end{equation}
Since the initial perturbed surface coincides with the peanut $\bu(y, \tau_0)$ when $y\geq 2\ell_0$, short time existence for MCF implies that $\ueo(y, \tau)$ is defined on some time interval $[\tau_0, \tau_{\epsilon,\bom})$ for some $\tau_{\epsilon,\bom}>\tau_0$.  The tip of the perturbed solution is located at $y_{\max,\epsilon,\bom}(\tau)$.  In the following sections we follow the perturbed solution $u(y, \tau)$ until some time $\tau'$ depending on $\epsilon, \bom$, and it will follow from our barrier arguments that $y_{\max, \epsilon,\bom}(\tau) > 2L(\tau)$ for all~$\tau\leq \tau'$.  This implies that the truncated difference $W(y, \tau)$ is well defined by \eqref{eqn-defnW} for all~$y\in\R$.

\subsection{Evolution of the difference $w=u-\bu$}

Since both $\bu$ and $u=\ueo$ satisfy equation~\eqref{eq:u}, a direct computation shows that $w$ satisfies
\[
  w_\tau =  w_{yy}  -\frac y2 w_y +  w
  -\frac{u_y^2}{1+ u_y^2} w_{yy}
  -  \frac{(u_y+\bu_y)\bu_{yy}}{(1+\bu_y^2)(1+u_y^2)} w_y
  +\frac{2(n-1)-u\bu}{2\bu u}w,
\]
which we write as
\begin{equation}		\label{eqn-w}
w_\tau= \cL w +  \cE(w) 
\end{equation}
where $\cL$ is the drift Laplacian from \eqref{eq:L}, and 
\begin{equation}		\label{eqn-E1}
\begin{split}
\cE (w)  :&=    - \frac{u_y^2}{1+u_y^2} \, w_{yy} -  \frac{(u_y+\bu_y)\bu_{yy}}{(1+\bu_y^2)(1+u_y^2)}\, w_y +    \frac{2(n-1)-u\bu}{2u \bu} \, w\\
&=  c_2(y, \tau)\, w_{yy} +c_1(y, \tau)\, w_y+c_0(y, \tau)\, w.  
\end{split}
\end{equation}
Thus, the equation of $w$ may be expressed in the form 
\begin{equation}		\label{eqn-w2}
w_\tau= (1+ c_2(y, \tau))\, w_{yy} + \Bigl(- \frac y2 + c_1(y,\tau)\Bigr) \, w_y + (1+ c_0(y, \tau)) \, w.  
\end{equation}

The function  $W(y,\tau)$ satisfies
\begin{equation}		\label{eqn-W}
W_\tau =  \cL \, W +  \eta \, \cE(w)  + \cE(w,\eta)
\end{equation}
where $\cE(w)$ is as in \eqref{eqn-E1} and where $ \cE(w,\eta)$ is the error term coming from commuting the cutoff function with $\partial_\tau-\cL$:
\begin{equation}		\label{eqn-E2} 
\cE(w,\eta) = \mu_1  w + \mu_2   w_y, \quad \mu_1(\eta) = \eta_\tau - \eta_{yy} - \frac y2 \, \eta_y, \quad    \mu_2(\eta) =- 2  \eta_y.  
\end{equation}

\subsection{Outline of arguments that are common in both cases, finding  spherical and cylindrical singularities.  }
\label{ss-HHH}
\label{sec-common}
It is well known that the drift Laplacian $\cL$ is a self adjoint operator on the Hilbert space $\hilb=L^2(\R; e^{-y^2/4}dy)$ with discrete spectrum, and whose eigenfunctions are Hermite polynomials (see Appendix~\ref{sec:appendix hilbert spaces}).  
The space  $\hilb:= L^2\big(\mathbb{R}, e^{-y^2/4}\, dy\big)$ is a Hilbert space with respect to the norm and inner product
\[
\|f\|^2 = \int_{\mathbb R} f(y)^2 e^{-y^2/4}\, dy, \qquad \langle f, g \rangle = \int_{\mathbb R} f(y) g(y) e^{-y^2/4}\, dy.
\]
To facilitate future notation, we  define yet another Hilbert space $\hv$ by
\begin{equation}		\label{eqn-hv}
\hv = \{f\in \hilb \,\,\, :\,\,\, f, f_y \in \hilb\},
\end{equation}
equipped with a norm
\[
\|f\|^2_\hv = \int_{\mathbb{R}} \{f(y)^2 + f'(y)^2\| e^{-y^2/4}\, dy.
\]
If it were not for the error terms $\eta\cE(w)$ and $\cE(w, \eta)$, we could solve equation \eqref{eqn-W} for the truncated difference $W$ in terms of eigenfunctions of the drift Laplacian $\cL$.  Unfortunately, one cannot ignore $\eta\cE (w) + \cE(w, \eta)$ without further justification, and most of the analysis in this paper is meant to provide such justification. 

We deal with the error terms by means of an ``inner-outer estimate'' for the non-truncated difference $w$ in the transition region $L(\tau)\leq y\leq 2L(\tau)$ in terms of the size of $w$ at some fixed point $y = \ell$ in the inner region.  Our proof of the inner-outer estimate relies on the monotonicity of the peanut solution (see \S~\ref{sec:peanut property monotone}) to use different time translates of the Peanut as barriers for the perturbed solutions.  Because of this approach we need to assume the peanut solution $\bu(y, \tau)$ exists during some time interval $[\tau_0-N, \tau_0]$ \emph{before} the initial time $\tau_0$.

We decompose the truncated difference $W$ into its unstable and stable components, i.e.~we write
\begin{equation}		\label{eq-stable-unstable}
W=W^u+W^s, \quad\text{with}\quad
W^u := \pi^u (W), \quad W^s := \pi^s ( W)
\end{equation}
where $\pi^u, \pi^s$ are the projections of $\hilb$ onto the invariant subspaces
\[
\hilb^u = \mathrm{span}\{\hhm_0, \hhm_2\},\qquad
\hilb^s = \overline{\mathrm{span}\{\hhm_4, \hhm_6, \hhm_8, \dots\}}
\]
corresponding to the eigenvalues $\lambda_0, \lambda_2$ and $\lambda_4, \lambda_6, \dots $ of $\cL$ respectively.  More specifically,
\[
\pi^u\phi := \sum_{j=0}^1 \frac{\langle \hhm_{2j}, \phi\rangle}{\|\hhm_{2j}\|^2} \hhm_{2j},\qquad
\pi^s\phi := \phi-\pi^u\phi = \sum_{j=2}^\infty \frac{\langle \hhm_{2j}, \phi\rangle}{\|\hhm_{2j}\|^2} \hhm_{2j}.
\]

Combining the definitions \eqref{eqn-defnW} and~\eqref{eq-initial} of $w$ and $W$ we find
\[
W(y, \tau_0) = \epsilon\eta_0\Bigl(\frac{y}{L(\tau)}\Bigr) \eta_0\Bigl(\frac{y}{\ell_0}\Bigr) \bigl\{\Omega_0 \hhm_0(y) + \Omega_2\hhm_2(y)\bigr\}
\]
so we may regard the initial perturbation as a truncated linear combination of $\hhm_0, \hhm_2$.  The following lemma shows that $W(\cdot, \tau_0)$ almost lies in $\hilb^u$ in the sense that $W^s$ is much smaller than $W^u$.
\begin{lemma}		\label{lemma-lo}
There is a constant $C$ such that for any $\ell_0\in (\ell_{\rm int}, \tfrac12 L(\tau_0))$ and for all  $\epsilon>0$ and $\bf\Omega\in\mathbb S^1$ one has
\[
\|W^s(\cdot, \tau_0)\|  <   C\ell_0^{3/2}e^{-\ell_0^2/8}\,  \|W^u(\cdot, \tau_0)\| .  
\]
\end{lemma}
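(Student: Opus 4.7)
The plan is to exploit the fact that $W(\cdot,\tau_0)$ is essentially a linear combination of $\hm_0$ and $\hm_2$ truncated by $\eta_0(y/\ell_0)$, and that the truncation error is exponentially small in the Gaussian weight.

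First I would note that since $2\ell_0 \ll L(\tau_0)$, the outer cutoff $\eta_0\bigl(y/L(\tau_0)\bigr)$ equals $1$ on the support of $\eta_0(y/\ell_0)$, so that
\[
W(y,\tau_0) = \epsilon\,\eta_0\!\Bigl(\tfrac{y}{\ell_0}\Bigr)\,\phi(y),
\qquad \phi(y) := \Omega_0\hm_0(y)+\Omega_2\hm_2(y).
\]
Writing $W(\cdot,\tau_0) = \epsilon\phi - \epsilon(1-\eta_0(y/\ell_0))\phi$, the first term lies in $\hilb^u$ while the second is supported in $|y|\geq \ell_0$ and bounded pointwise by $|\phi(y)|$. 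Since $\pi^s$ annihilates $\epsilon\phi$, we get
\[
\|W^s(\cdot,\tau_0)\| = \bigl\|\pi^s\bigl[\epsilon(1-\eta_0(y/\ell_0))\phi\bigr]\bigr\| \leq \epsilon\,\bigl\|(1-\eta_0(y/\ell_0))\phi\bigr\|.
\]

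Second, I would estimate the tail. Because $\phi$ is a polynomial of degree at most $2$ with coefficients bounded uniformly for $\bom\in\mathbb{S}^1$, we have $|\phi(y)|\leq C(1+y^2)$, so
\[
\bigl\|(1-\eta_0(y/\ell_0))\phi\bigr\|^2 \leq C\int_{|y|\geq\ell_0}(1+y^4)\, e^{-y^2/4}\,dy.
\]
A standard Gaussian tail computation (integrating by parts, using $\int_{\ell_0}^\infty y^4 e^{-y^2/4}\,dy \sim 2\ell_0^{3}\,e^{-\ell_0^2/4}$ as $\ell_0\to\infty$) yields
\[
\bigl\|(1-\eta_0(y/\ell_0))\phi\bigr\| \leq C\,\ell_0^{3/2}\,e^{-\ell_0^2/8}.
\]

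Third, I would bound $\|W^u(\cdot,\tau_0)\|$ from below. Since the pieces $\Omega_0\hm_0$ and $\Omega_2\hm_2$ are orthogonal in $\hilb$, $\|\epsilon\phi\|^2 = \epsilon^2(\Omega_0^2\|\hm_0\|^2+\Omega_2^2\|\hm_2\|^2) \geq c_0^2\epsilon^2$ for some $c_0>0$ depending only on $n$ (using $\Omega_0^2+\Omega_2^2=1$). By the same triangle inequality,
\[
\|W^u(\cdot,\tau_0)\| \geq \|\epsilon\phi\| - \bigl\|\pi^u\bigl[\epsilon(1-\eta_0(y/\ell_0))\phi\bigr]\bigr\| \geq c_0\epsilon - C\epsilon\,\ell_0^{3/2}e^{-\ell_0^2/8},
\]
which is at least $\tfrac12 c_0\epsilon$ once $\ell_0\geq \ell_{\rm int}$ is chosen large enough (an assumption the paper already permits). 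Dividing the two bounds yields the claimed inequality.

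The calculation is essentially routine; the main subtlety, and the only point where care is needed, is getting the exponent $\ell_0^2/8$ rather than $\ell_0^2/4$ right — this comes from taking the square root of the Gaussian tail integral and is the origin of the prefactor $\ell_0^{3/2}$ rather than $\ell_0^3$. Everything else is triangle inequality and the orthogonality of Hermite polynomials in $\hilb$.
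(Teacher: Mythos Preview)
Your proof is correct and follows essentially the same route as the paper: both write $W(\cdot,\tau_0)$ as the untruncated combination $\epsilon\phi\in\hilb^u$ minus a tail supported in $|y|\geq\ell_0$, bound $\|W^s\|$ by the Gaussian tail integral $\int_{\ell_0}^\infty y^4 e^{-y^2/4}\,dy\lesssim \ell_0^3 e^{-\ell_0^2/4}$, and bound $\|W^u\|$ below by a positive constant times $\epsilon$. Your triangle-inequality argument for the lower bound on $\|W^u\|$ is in fact a bit cleaner than the paper's, which writes $\|W^u(\cdot,\tau_0)\|^2\geq \int_0^L|\phi|^2 e^{-y^2/4}\,dy$ without justification.
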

\begin{proof} The estimate is homogeneous in $W$, so we may assume $\epsilon=1$.

In \eqref{eq:1} we chose $\ell_0$ so that $2\ell_0 < L(\tau_0)$.  Since $\eta_0(s)=0$ for $s\geq 2$ and $\eta_0(s)=1$ for $s\leq 1$ we have $\eta_0 (y/\ell_0) = 1$ whenever $y\leq L(\tau_0)$.  Hence, with $\epsilon=1$,
\[
W(y,\tau_0) =  \eta_0(y/\ell_0)\,\big(\Omega_0 \hm_0(y)+\Omega_2\hm_{2}(y)\big).
\]
This implies that there is a $c_0>0$ such that
\begin{equation}		\label{eq:2}
\|W^u(\cdot, \tau_0)\| ^2  \geq \int_0^L |\Omega_0\hhm_0(y)+\Omega_2\hhm_2(y)|^2 e^{-y^2/4}dy \geq c_0>0,
\end{equation}
where $c_0$ does not depend on $\bom\in\mathbb S^1$.

Let $h(y)=  \Omega_0 \hm_0(y) + \Omega_2\hm_{2}(y)$.  Then $\pi^s h=0$, and hence
\[
\|W^s(\cdot, \tau_0)\|  = \|W^s(\cdot, \tau_0) - \pi^sh \|  = \|\pi^s(W(\cdot, \tau_0)-h)\|
\leq \|W(\cdot, \tau_0) - h\|  
\]
Since
\[
W(y, \tau_0)-h(y) =  (1-\eta_0(y/\ell_0)) \,\Bigl(\Omega_0\hm_0(y) + \Omega_2\hm_{2}(y)\Bigr)
\]
we have
\[
\| W(\cdot, \tau_0)-h\| ^2
\leq  \int_{\ell_0}^\infty \bigl(\Omega_0\hm_0(y) + \Omega_2\hm_{2}(y)\bigr)^2 \, e^{-y^2/4}\, dy.
\]
In view of $\Omega_0^2+\Omega_2^2=1$ and $|\hhm_j(y)| \lesssim y^{2j}$ we have for $y\geq 1$
\[
\bigl(\Omega_0\hhm_0(y) + \Omega_2\hhm_2(y)\bigr)^2 \lesssim C y^4.
\]
Therefore
\[
\| W(\cdot, \tau_0)-h\| ^2
\leq C  \int_{\ell_0}^\infty y^4 \, e^{-y^2/4}\, dy \leq C \ell_0^3 e^{-\ell_0^2/4}.
\]
Together with the lower bound $\|W^u(\cdot, \tau_0)\|^2\geq c_0$ this completes the proof.
\end{proof}

The $\hhm_4$ component of the peanut solution $\bu(y, \tau)$ is $-K_0 e^{-\tau}\hhm_4(y)$.  This term determines the shape of the peanut solution in the parabolic region, i.e.~for bounded $y$.  We will show that the perturbation $w(y, \tau)$ grows and  becomes large compared to $K_0 e^{-\tau}\hhm_4(y)$ at some time $\tau_1$, depending on the initial perturbation parameters~$\epsilon, \bom$.  The following definition contains a constant $M_1$, which we will specify later in Section~\ref{sec-sphere}.
\begin{definition} [The first exit time]
For any $\epsilon\geq 0$ and $\bom\in\mathbb S^1$we let $\tau_1(\epsilon,\bom)\in (\tau_0, \infty]$ be the maximal time for which
\begin{enumerate}[{\upshape (a)}]
\item the perturbed solution $\ueo$ is defined for all $\tau\in  [\tau_0, \tau_1(\epsilon,\bom))$ and $|y|\leq 2L(\tau)$
\item for all $\tau\in[\tau_0, \tau_1(\epsilon, \bom))$ the unstable component $W^u(\tau) = \pi^u W_{\epsilon,\bom}(\cdot, \tau)$ satisfies
\begin{equation}		\label{eqn-dfnfun}
\|W^u\| < M_1 e^{-\tau}
\end{equation}

\end{enumerate}
\end{definition}
\bigskip

We often abbreviate $\tau_1(\epsilon,\bom) = \tau_1$.

\begin{proposition} There is an $\bar\epsilon>0$ such that $\tau_1(\epsilon,\bom)<\infty$ for all $\epsilon\in(0, \bar\epsilon)$ and $\bom\in\mathbb{S}^1$.  Moreover, $W^u_{\epsilon,\bom}(\cdot, \tau)$ is still defined at $\tau=\tau_1$, at which time it satisfies
\begin{equation}		\label{eq:3}
\|W^u_{\epsilon,\bom}(\cdot, \tau_1)\|=M_1 e^{-\tau_1}.
\end{equation}
and
\begin{equation}		\label{eq:4}
\left(\frac{d}  {d\tau}e^\tau\|W^u(\tau)\|\right)_{\tau=\tau_1}>0.
\end{equation}
\end{proposition}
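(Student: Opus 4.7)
The plan is to project the equation \eqref{eqn-W} onto the two-dimensional unstable subspace $\hilb^u=\mathrm{span}\{\hhm_0,\hhm_2\}$, reduce the analysis to a planar ODE system whose leading dynamics are a growing mode (eigenvalue $\lambda_0=1$) and a neutral mode (eigenvalue $\lambda_2=0$), and then use this to force finite exit and verify transversality by direct differentiation.

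Setting $b_k(\tau)=\langle \hhm_k,W(\cdot,\tau)\rangle/\|\hhm_k\|^2$ for $k=0,2$ and using self-adjointness of $\cL$ on $\hilb$, the projections of \eqref{eqn-W} give
\begin{equation*}
b_0'(\tau)=b_0(\tau)+r_0(\tau),\qquad b_2'(\tau)=r_2(\tau),\qquad r_k:=\frac{\langle \hhm_k,\eta\cE(w)+\cE(w,\eta)\rangle}{\|\hhm_k\|^2}.
\end{equation*}
The crucial estimate is $|r_k(\tau)|\leq\alpha\|W^u(\tau)\|$ on $[\tau_0,\tau_1)$, with $\alpha$ as small as we like. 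For $\eta\cE(w)$, the peanut asymptotics \eqref{eq:um2}--\eqref{eq:um2-derivs} force the coefficients $c_0,c_1,c_2$ to be of size $e^{-\tau}\mathrm{poly}(|y|)$ throughout the parabolic region $|y|\leq 2L(\tau)$, and the needed $\hv$-norm of $W$ is then controlled by $\|W^u\|$ provided one propagates Lemma~\ref{lemma-lo} along the flow to keep $\|W^s\|$ much smaller than $\|W^u\|$ during $[\tau_0,\tau_1)$. The commutator $\cE(w,\eta)$ is supported on the annulus $L(\tau)\leq |y|\leq 2L(\tau)$ where the Gaussian weight $e^{-y^2/4}$ is already super-exponentially small; combined with the inner-outer estimate sketched in \S\ref{sec-common}, which uses the monotonicity \eqref{eq:utau positive} of the peanut to bound $w$ pointwise in this annulus in terms of its value at a fixed interior point, this gives the required smallness of the cutoff error.

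With $|r_k|\leq\alpha\|W^u\|$ in hand, the ODE system yields $\frac{d}{d\tau}\|W^u\|^2\geq -2\sqrt 2\,\alpha\|W^u\|^2$, so combined with the lower bound $\|W^u(\tau_0)\|\geq c_0\epsilon$ from Lemma~\ref{lemma-lo} one obtains $\|W^u(\tau)\|\geq c_0\epsilon\, e^{-\sqrt 2\,\alpha(\tau-\tau_0)}$ as long as $\tau<\tau_1$. Since this decays strictly slower than $M_1 e^{-\tau}$ once $\alpha<1/\sqrt 2$, the bound (b) must fail at some finite $\tau_1=O(\log(1/\epsilon))$. For $\epsilon\leq\bar\epsilon$ small, the constraint $\|W^u(\tau)\|\leq M_1 e^{-\tau}$ together with $\|W^s\|\ll \|W^u\|$ implies that $w$ stays $C^2$-small throughout $|y|\leq 2L(\tau)$, so the perturbed surface remains a small perturbation of the peanut and its tip satisfies $y_{\max,\epsilon,\bom}(\tau)>2L(\tau)$; this guarantees (a) still holds at $\tau=\tau_1$, so $W^u(\cdot,\tau_1)$ is defined and continuity of $\|W^u(\tau)\|$ yields \eqref{eq:3}.

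For the transversality \eqref{eq:4}, differentiating $e^\tau\|W^u\|^2=e^\tau(b_0^2+b_2^2)$ and substituting the ODE gives
\begin{equation*}
\frac{d}{d\tau}\bigl(e^\tau\|W^u\|^2\bigr)=e^\tau\bigl(3b_0^2+b_2^2+2b_0 r_0+2b_2 r_2\bigr)\geq e^\tau\,\|W^u\|^2\bigl(1-2\sqrt 2\,\alpha\bigr),
\end{equation*}
which is strictly positive once $\alpha<1/(2\sqrt 2)$; since $\|W^u\|>0$ at $\tau_1$, the same positivity is inherited by $\frac{d}{d\tau}(e^\tau\|W^u\|)$, giving \eqref{eq:4}. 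The single genuine obstacle is the estimate $|r_k|\leq\alpha\|W^u\|$ with small enough $\alpha$: the inner part $\eta\cE(w)$ is essentially immediate from \eqref{eq:um2-derivs}, but the outer commutator $\cE(w,\eta)$ requires the barrier-based inner-outer estimate leaning on the monotonicity of the peanut ends, and this is the only step in the argument that is not routine linear-ODE analysis.
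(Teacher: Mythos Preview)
Your approach is essentially the same as the paper's: project onto $\hilb^u$, show the error is small relative to $\|W^u\|$, deduce $\tfrac{d}{d\tau}\|W^u\|^2\ge -\delta\|W^u\|^2$, and conclude both finite exit and transversality. Two points deserve tightening.

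First, a harmless normalization slip: with your definition $b_k=\langle\hhm_k,W\rangle/\|\hhm_k\|^2$ one has $\|W^u\|^2=b_0^2\|\hhm_0\|^2+b_2^2\|\hhm_2\|^2$, not $b_0^2+b_2^2$. Your subsequent inequalities survive with the obvious weights inserted.

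Second, and more substantively, the claimed bound $|r_k(\tau)|\le\alpha\|W^u(\tau)\|$ is not what the inner--outer estimate gives directly. The commutator term $\cE(w,\eta)$, once bounded via Proposition~\ref{prop-inout}, contributes a \emph{nonlocal} term of the form $Ce^{-L(\tau)^2/16}\int_{\tau_0}^{\tau}e^{2s}\|W(\cdot,s)\|^2\,ds$. To absorb this into $\alpha\|W^u(\tau)\|^2$ you need the auxiliary inequality $\int_{\tau_0}^{\tau}e^{2s}\|W^u(\cdot,s)\|^2\,ds\le 2e^{2\tau}\|W^u(\cdot,\tau)\|^2$, which the paper establishes as a separate bootstrap step (Step~1 in the proof of Lemma~\ref{lemma-quotient1}) simultaneously with the propagation of $\|W^s\|\ll\|W^u\|$. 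Your sentence ``provided one propagates Lemma~\ref{lemma-lo} along the flow'' gestures at this, but the actual closure involves a two-way continuity argument (the time-integral bound and the quotient bound each imply a sharpened version of the other), together with the $L^\infty$ bootstrap of Lemma~\ref{cor-prop-funnel} that justifies the pointwise coefficient bounds you invoke for $\eta\cE(w)$. None of this changes your overall strategy, but the error estimate is not as immediate as written.
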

\begin{proposition} The function $(\epsilon,\bom)\mapsto \tau_1(\epsilon, \bom)$ is continuous on $(0, \bar\epsilon)\times\mathbb S^1$.
\end{proposition}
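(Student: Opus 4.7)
The plan is a standard first-exit-time argument based on (i) continuous dependence of the rescaled flow on its initial data, (ii) the strict transversality \eqref{eq:4} established in the previous proposition, and (iii) the fact that nothing catastrophic happens at the exit time itself (the exit is triggered by condition (b), not by loss of regularity).

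Fix $(\epsilon_0, \bom_0) \in (0,\bar\epsilon) \times \mathbb{S}^1$ and write $\tau_1^0 = \tau_1(\epsilon_0, \bom_0) < \infty$. First I would extend $u_{\epsilon_0, \bom_0}$ a little past $\tau_1^0$. This is legitimate because, by definition of $\tau_1$, the solution is smooth on the strip $\{|y|\leq 2L(\tau)\}$ throughout $[\tau_0, \tau_1^0]$, and at $\tau = \tau_1^0$ only the \emph{size condition} (b) fails marginally. Thus there exists $\sigma > 0$ such that $u_{\epsilon_0, \bom_0}(y,\tau)$ is smooth on $\{|y|\leq 2L(\tau)\}$ for $\tau \in [\tau_0, \tau_1^0 + \sigma]$. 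Standard parabolic continuous dependence applied to the RMCF equation \eqref{eq:u} (which is uniformly parabolic on the region $\{|y|\leq 2L(\tau)\}$ where $u$ stays bounded below away from zero, thanks to the peanut asymptotics and the smallness of $w$) then implies that the map $(\epsilon, \bom) \mapsto u_{\epsilon,\bom}(\cdot, \cdot)$ is continuous into $C^0([\tau_0, \tau_1^0 + \sigma]; C^k(\{|y|\leq 2L(\tau)\}))$ for any $k$, in a neighborhood of $(\epsilon_0, \bom_0)$. Hence so is the truncated difference $W_{\epsilon,\bom}$ and its unstable projection $W^u_{\epsilon,\bom} = \pi^u W_{\epsilon,\bom}$, as $\pi^u$ is a bounded operator on $\hilb$ whose image is the finite-dimensional space spanned by $\hm_0, \hm_2$.

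Now combine this with the transversality \eqref{eq:4}. The real-valued function $\Phi_{\epsilon,\bom}(\tau) := e^\tau \|W^u_{\epsilon,\bom}(\cdot, \tau)\|$ is continuous in $(\epsilon, \bom, \tau)$ on a neighborhood of $(\epsilon_0, \bom_0, \tau_1^0)$, and by \eqref{eq:3}--\eqref{eq:4} we have $\Phi_{\epsilon_0,\bom_0}(\tau_1^0) = M_1$ with $\Phi_{\epsilon_0,\bom_0}'(\tau_1^0) > 0$. Choosing $\sigma' \in (0,\sigma)$ small enough,
\[
\Phi_{\epsilon_0,\bom_0}(\tau_1^0 - \sigma') < M_1 - 2\kappa, \qquad \Phi_{\epsilon_0,\bom_0}(\tau_1^0 + \sigma') > M_1 + 2\kappa
\]
for some $\kappa > 0$. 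Continuity in $(\epsilon, \bom)$ then gives a neighborhood $\mathcal{N}$ of $(\epsilon_0, \bom_0)$ such that for all $(\epsilon, \bom) \in \mathcal{N}$,
\[
\Phi_{\epsilon,\bom}(\tau_1^0 - \sigma') < M_1 - \kappa, \qquad \Phi_{\epsilon,\bom}(\tau_1^0 + \sigma') > M_1 + \kappa.
\]
The first inequality, together with the fact that $\Phi_{\epsilon,\bom} < M_1$ throughout $[\tau_0, \tau_1^0 - \sigma']$ for $(\epsilon,\bom)$ close to $(\epsilon_0, \bom_0)$ (this follows from the uniform continuity of $\Phi$ on the compact interval $[\tau_0, \tau_1^0 - \sigma']$, where $\Phi_{\epsilon_0,\bom_0}$ is bounded away from $M_1$ by definition of $\tau_1^0$), shows $\tau_1(\epsilon,\bom) \geq \tau_1^0 - \sigma'$. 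The second inequality, combined with continuity of $\Phi_{\epsilon,\bom}$ in $\tau$, gives by the intermediate value theorem a first time $\tau_* \in (\tau_0, \tau_1^0 + \sigma']$ at which $\Phi_{\epsilon,\bom}(\tau_*) = M_1$; by definition $\tau_1(\epsilon,\bom) \leq \tau_*$, so $\tau_1(\epsilon,\bom) \leq \tau_1^0 + \sigma'$. Letting $\sigma' \to 0$ gives $\tau_1(\epsilon,\bom) \to \tau_1^0$, i.e.\ continuity at $(\epsilon_0, \bom_0)$.

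The main potential obstacle is the initial extension of the solution past $\tau_1^0$ together with the continuous-dependence statement in a strong enough topology to pass to the $\hilb$-norm of $W^u$. Since $\pi^u$ is projection onto a two-dimensional subspace spanned by polynomials, $\|W^u\|$ depends only on the values of $W$ against $\hm_0, \hm_2$ weighted by $e^{-y^2/4}$, so even crude $L^\infty$ control of $w$ on $\{|y|\leq 2L(\tau)\}$ (together with the Gaussian weight killing the tail $|y|>2L(\tau)$ where the cutoff $\eta$ vanishes) is enough to propagate continuous dependence from pointwise convergence of $u_{\epsilon,\bom}$ to continuity of $\Phi_{\epsilon,\bom}$. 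This is routine once the extension past $\tau_1^0$ is justified.
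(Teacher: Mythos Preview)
Your argument is correct and follows essentially the same route as the paper: the paper proves this by invoking continuous dependence of the MCF on initial data together with the strict exit condition $\frac{d}{d\tau}\bigl(e^{\tau}\|W^u(\cdot,\tau)\|\bigr)>0$ at $\tau=\tau_1$ (Lemma~\ref{lemma-hit-boundary}), and then refers to the first-exit-time argument in Lemma~3.2 of \cite{AV}. Your write-up is in fact more detailed than what the paper supplies.
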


\begin{proposition} For all $(\epsilon, \bom) \in (0, \bar\epsilon)\times\mathbb S^1$ and $\tau\in[\tau_0, \tau_1(\epsilon,\bom))$ one has $\|W^u_{\epsilon,\bom}(\cdot, \tau)\| > 0$.
\end{proposition}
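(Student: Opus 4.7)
The plan is to prove a Gr\"onwall-type lower bound on $\|W^u_{\epsilon,\bom}(\cdot,\tau)\|$ using two ingredients: that the unstable eigenvalues $\lambda_0=1$ and $\lambda_2=0$ of $\cL$ are nonnegative, so the linear part of the evolution cannot contract $\|W^u\|$, and that the nonlinear and cutoff error terms remain controllably small while $\tau<\tau_1$.

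First I would verify positivity at $\tau=\tau_0$. Since $\ell_0<L(\tau_0)/2$ by \eqref{eq:1}, on the support of $\eta_0(y/\ell_0)$ the outer cutoff satisfies $\eta_0(y/L(\tau_0))=1$, so the initial truncated perturbation is exactly
\[
W(y,\tau_0)=\epsilon\,\eta_0(y/\ell_0)\,\bigl(\Omega_0\hhm_0(y)+\Omega_2\hhm_2(y)\bigr).
\]
The estimate \eqref{eq:2} from the proof of Lemma \ref{lemma-lo} then gives $\|W^u(\cdot,\tau_0)\|^2\geq c_0\epsilon^2>0$, uniformly in $\bom\in\mathbb S^1$.

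Next I would project \eqref{eqn-W} onto $\hilb^u$. Since $\cL$ preserves $\hilb^u$ with $\cL\hhm_0=\hhm_0$ and $\cL\hhm_2=0$, writing $W^u=\alpha_0\hhm_0+\alpha_2\hhm_2$ gives $\langle W^u,\cL W^u\rangle=\alpha_0^2\|\hhm_0\|^2\geq 0$, so
\[
\frac{d}{d\tau}\|W^u\|^2 = 2\langle W^u,\cL W^u\rangle + 2\langle W^u,\pi^u(\eta\cE(w)+\cE(w,\eta))\rangle \geq -2\|W^u\|\cdot\|\pi^u(\eta\cE(w)+\cE(w,\eta))\|.
\]
The remaining task is an error bound of the form
\[
\|\pi^u(\eta\cE(w)+\cE(w,\eta))\|\leq \mathcal E(\tau)\,\|W^u(\tau)\|, \qquad \int_{\tau_0}^{\infty}\mathcal E(s)\,ds<\infty,
\]
holding on $[\tau_0,\tau_1)$. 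Once this is in hand, $\tfrac{d}{d\tau}\log\|W^u\|^2\geq-2\mathcal E(\tau)$, and integrating forward from $\tau_0$ gives $\|W^u(\tau)\|\geq\|W^u(\tau_0)\|\exp\bigl(-\int_{\tau_0}^{\tau}\mathcal E(s)\,ds\bigr)>0$ throughout $[\tau_0,\tau_1)$.

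The main obstacle is the error bound itself. In the parabolic region the coefficients $c_0,c_1,c_2$ of $\cE(w)$ are small because $\bu,\bu_y,\bu_{yy}$ are close to their cylinder values there (cf. \eqref{eq:um2}, \eqref{eq:um2-derivs}), but $\cE(w)$ also contains $w_y$ and $w_{yy}$, so one needs parabolic regularity estimates to pass from $w$ to its derivatives without blowing up the bound. The commutator term $\cE(w,\eta)$ is supported in $L(\tau)\leq|y|\leq 2L(\tau)$, where the Gaussian weight $e^{-y^2/4}$ is exponentially small in $\tau$, but to exploit this one needs the inner-outer estimate that controls $w$ in the transition region in terms of its values at a fixed interior point, which in turn uses time translates of $\bu$ as barriers via the monotonicity \eqref{eq:utau positive}. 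A companion ingredient is propagating the bound $\|W^s\|\leq C\|W^u\|$ from its initial version in Lemma \ref{lemma-lo} to interior times of $[\tau_0,\tau_1)$, so that smallness of $\|W^u\|$ (forced by $\|W^u\|\leq M_1 e^{-\tau}$) yields smallness of the full $W$, and hence of the error. With these subsequent results invoked as black boxes, the Gr\"onwall argument closes and rules out any first vanishing time of $\|W^u\|$ in $[\tau_0,\tau_1)$.
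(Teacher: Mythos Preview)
Your proposal is correct and matches the paper's approach: the paper derives the differential inequality $\frac{d}{d\tau}\|W^u\|^2 \geq -2\delta\|W^u\|^2$ (see \eqref{eq-unstable-useful} and \eqref{eqn-theta2}) via exactly the ingredients you list---nonnegativity of the unstable eigenvalues, the inner--outer estimate for the cutoff errors, and the propagated bound $\|W^s\|\lesssim\|W^u\|$---and integrates to obtain $\|W^u(\tau)\|^2\geq e^{-2\delta(\tau-\tau_0)}\|W^u(\tau_0)\|^2>0$. One minor point: you do not need $\int_{\tau_0}^\infty\mathcal E\,ds<\infty$, since $\tau_1<\infty$ means a merely bounded $\mathcal E$ (which is what the paper actually obtains, with $\mathcal E\equiv\delta$) already suffices.
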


This implies that the map $H^u$
\[
H^u(\epsilon, \bom, \tau) := \frac{W^u_{\epsilon,\bom}(\cdot, \tau)}{\|W^u_{\epsilon,\bom}(\cdot, \tau)\|}
\in \mathbb S^1_\hilb
\]
is continuous, where $\mathbb S^1_\hilb\subset\hilb^u$ is the unit circle in $\hilb^u$.  The map is defined for all $\epsilon\in(0, \bar\epsilon)$, $\bom\in\mathbb S^1$, and $\tau\in [\tau_0, \tau_1(\epsilon, \bom)]$.

\begin{proposition} For all $\epsilon\in(0, \bar\epsilon)$ the map $\mathscr H^u_\epsilon:\mathbb S^1 \to \mathbb S^1_\hilb$ defined by $\mathscr H^u_{\epsilon}(\bom) = H^u(\epsilon, \bom, \tau_1(\epsilon, \bom))$ is surjective.
\end{proposition}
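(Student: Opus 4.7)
The plan is to prove surjectivity of $\mathscr H^u_\epsilon:\mathbb S^1\to\mathbb S^1_\hilb$ by a topological degree argument: I would exhibit a continuous homotopy from $\mathscr H^u_\epsilon$ to the initial-time map $\bom\mapsto W^u_{\epsilon,\bom}(\cdot,\tau_0)/\|W^u_{\epsilon,\bom}(\cdot,\tau_0)\|$, and then verify that this initial-time map has nonzero degree. Since any map between circles of nonzero degree is surjective and degree is a homotopy invariant, this is enough.

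For the homotopy I would set $G_\epsilon : \mathbb S^1 \times [0,1]\to\mathbb S^1_\hilb$ by
\[
G_\epsilon(\bom,s) = \frac{W^u_{\epsilon,\bom}\bigl(\cdot,\tau_0 + s(\tau_1(\epsilon,\bom)-\tau_0)\bigr)}
{\bigl\|W^u_{\epsilon,\bom}\bigl(\cdot,\tau_0 + s(\tau_1(\epsilon,\bom)-\tau_0)\bigr)\bigr\|}.
\]
Three ingredients from the propositions immediately preceding this one make $G_\epsilon$ well defined and jointly continuous in $(\bom,s)$: the continuity of $\bom\mapsto\tau_1(\epsilon,\bom)$, the continuous dependence of the RMCF solution $\ueo$ on its initial perturbation over a fixed compact time interval, and the strict positivity $\|W^u_{\epsilon,\bom}(\cdot,\tau)\|>0$ on $[\tau_0,\tau_1(\epsilon,\bom)]$, where at the endpoint $\tau=\tau_1$ one invokes \eqref{eq:3}. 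By construction $G_\epsilon(\cdot,1)=\mathscr H^u_\epsilon$ and $G_\epsilon(\cdot,0)$ is the initial-time map.

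To compute the degree of the initial-time map I would use the explicit formula coming from \eqref{eqn-defnW}, \eqref{eq-initial} and \eqref{eq:1}, namely
\[
W_{\epsilon,\bom}(y,\tau_0) = \epsilon\,\eta_0(y/\ell_0)\bigl(\Omega_0\hm_0(y)+\Omega_2\hm_2(y)\bigr).
\]
The linear map $T:\R^2\to\hilb^u$ defined by $T(\Omega_0,\Omega_2) = \tfrac1\epsilon \pi^u W_{\epsilon,\bom}(\cdot,\tau_0)$ has matrix entries $\langle \eta_0(\cdot/\ell_0)\hm_{2j},\hm_{2i}\rangle/\|\hm_{2i}\|^2$ in the basis $\{\hm_0,\hm_2\}$. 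As $\ell_0\to\infty$ the cutoff $\eta_0(\cdot/\ell_0)\to 1$ in a dominated fashion on the supports, so by the orthogonality of Hermite polynomials this matrix converges to the identity. Hence for the $\ell_0$ dictated by \eqref{eq:1}, $T$ is a linear isomorphism of positive determinant, the induced map of circles is a degree-$(+1)$ homeomorphism, and homotopy invariance of degree gives $\deg(\mathscr H^u_\epsilon)=+1$.

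The chief conceptual obstacle has already been defused by the preceding three propositions: the continuity of $\tau_1$ (itself resting on the transversality \eqref{eq:4}), and the positivity of $\|W^u\|$ along the entire homotopy path. Without either, $G_\epsilon$ could develop discontinuities or pass through $0\in\hilb^u$ and the degree calculation would fail. The only new analytic input required—that the truncated Hermite matrix is close to the identity for large $\ell_0$—is routine, so once the preparatory results are in hand the degree argument is essentially mechanical.
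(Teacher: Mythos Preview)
Your proposal is correct and is essentially the same argument the paper gives in Lemma~\ref{lemma-homotopy}: a homotopy along the flow from the initial-time circle map $\bom\mapsto W^u_{\epsilon,\bom}(\cdot,\tau_0)/\|W^u_{\epsilon,\bom}(\cdot,\tau_0)\|$ to $\mathscr H^u_\epsilon$, well-defined by the nonvanishing of $W^u$ and the continuity of $\tau_1$. The paper simply asserts the initial map is a bijection, whereas you justify this more carefully via the near-identity matrix computation; otherwise the two proofs coincide.
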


\begin{definition}[The Funnel]		\label{def-funnel}
For any $\tau>\tau_0$ we consider the set 
\[
\cF_\tau = \{ \Gamma\subset\R^{n+1} | \Gamma\text{ satisfies \eqref{eq:Gamma smooth rot surface} and \eqref{eqn-dfnfun}}\}
\]
Here the first condition is
\begin{equation}		\label{eq:Gamma smooth rot surface}
\parbox[c]{0.8\textwidth}{$\Gamma$ is a smooth rotationally symmetric hypersurface in $\R^{n+1}$ whose
  profile function $y\mapsto u(y)$ is defined for $|y|\leq 2L(\tau)$.}
\end{equation}
This condition implies that the difference $w(y)=u(y)-\bu(y, \tau)$ and truncated difference $W(y)= \eta(y, \tau)w(y)$ are defined, which allows us to state the second condition: namely, the unstable component $W^u=\pi^u W$ is bounded by
\begin{equation}	
\|W^u(\cdot, \tau)\|_\hilb\leq M_1 e^{-\tau}.
\end{equation}
The constant $M_1$ will be chosen later on in Section \ref{sec-sphere}.
\end{definition}

To prove instability of the peanut solution, we show that for arbitrarily small $\epsilon>0$ there exist $\bom_n, \bom_c\in\mathbb S^1$ such that the solutions starting from either  $u_{\epsilon,\bom_n}$or $u_{\epsilon,\bom_c}$ moves away from the peanut, and in the case of $\bom_n$ forms a generic neck pinch, or, in the case of $\bom_c$ becomes convex before becoming singular. Since we will prove this for sufficiently small $\epsilon$, we may assume that
\begin{equation}		\label{eqn-epsilon}
0 <  \epsilon \ll   e^{- \tau_0}.  
\end{equation}
Under this assumption~\eqref{eq:um2} and \eqref{eq-initial} imply that at time $\tau_0$, the projection of $u(y,\tau_0) - \sqrt{2(n-1)}$ onto the unstable modes $\hm_0(y)$ and $\hm_2(y)$ is negligible compared to the projection of $u(y,\tau_0) - \sqrt{2(n-1)}$ on $\hm_4(y)$.  The linear part of equation~\eqref{eq:u} suggests that the $\hilb^s$ component will decay at least as fast as $e^{-\tau+o(\tau)}$, while the $\hilb^u$-component will not decay faster than $e^{o(\tau)}$

One of our first goals is to make sure that we can find the time at which the projection of $u(y,\tau) - \sqrt{2(n-1)}$ onto unstable modes will start catching up and hopefully start dominating the other projections.

Note that \eqref{eqn-epsilon} implies that all MCF solutions with initial data \eqref{eq-initial} belong to $\mc F_{\tau_0,\epsilon}$, so it makes sense to talk about our \emph{first goal  } we want to achieve, that is, to find the \emph{exit time}  for each of our solutions,  as described below.  

Our \emph{first goal}  is to show that  there exist a small $\epsilon > 0$, and $\tau_0 \gg 1$ so that for every $\bom \in \mathbb{S}^{1}$, every MCF solution with initial data \eqref{eq-initial} has the following property: there exists a time $\tau_1 = \tau_1(\epsilon, \bom)$, at which the solution hits the boundary of the funnel $\mc F_{\tau_1,\epsilon}$, meaning that $\| W^u(\cdot, \tau_1)\|  = M_1 e^{-\tau_1}$, where $M_1$ is a uniform constant, independent of $\epsilon$ and ${\bf \Omega}$.  We call this time $\tau_1$ the {\it exit time} for our solution.  
Note also that $\tau_1 = \tau_1(\epsilon, {\bf \Omega})$ is a continuous function of $\epsilon$ and $\bom$,  which follows by the continuous dependence of the MCF of initial data, and by the ``exit condition,'' which in this case says that $\frac{d}{d\tau}e^{\tau}\|W^u(\cdot, \tau)\| >0$ at $\tau=\tau_1$.

In order to prove the existence of the exit time, we need to look at the linearized equation satisfied by $W(y,\tau)$, around the round cylinder.  There will be two types of error terms in the equation for $W$, the ones that are roughly speaking, of quadratic nature, as in \eqref{eqn-E1}, and the others coming from the cut off functions, as in \eqref{eqn-E2}.  In order to deal with the errors coming from cut-off functions we need: (i)  the  inner-outer estimate
shown in Proposition \ref{prop-inout}, and (ii)   the $L^{\infty}$ estimates on $w(y,\tau)$ and its derivatives holding on large, time dependent sets.  We prove both (i)-(ii)  \emph{ simultaneously}, that is, we show that as long as our solution stays in the funnel, and as long as we have the  $L^{\infty}$ estimates with an auxiliary constant on our solution and its derivatives, the inner-outer estimate holds, and vice versa as long  as our solution stays in the funnel, and as long as the inner-outer estimate holds, our solution and its derivatives satisfy the $L^{\infty}$ estimates with sharper constants than what we have used in the previous step.  The fact that we get sharper constants in the latter step, enables us to run the argument which shows that as long as our solution stays in the funnel, both, (i) and (ii) hold simultaneously.  

In the course of proving (i) and (ii) above,  we also prove  that as long as our solution stays in the funnel $\mathcal F_{\tau,\epsilon}$, the unstable projection $\|W^u(\cdot, \tau)\| $ is much bigger than the stable projection $\| W^s(\cdot, \tau)\|$, for all $\tau \ge \tau_0$.  We actually show in Lemma~\ref{cor-prop-funnel} that 
\begin{equation}		\label{eqn-deltaW}
\| W^s(\cdot, \tau)\|  \le e^{-\ell_0^2/8} \, \| W^u(\cdot, \tau)\|
\end{equation}
for $\tau \ge \tau_0$.

Once we know that for every solution  we can find its exit time, we employ degree theory arguments to show that for every $\epsilon > 0$ small and every ${\bf\bar{\Omega}}\in \mathbb{S}^1$ there exists an $\bom\in \mathbb{S}^1$, so that 
\begin{equation}		\label{eqn-exit-omega}
W_{\epsilon,\bom}^u  = M_1 \,e^{-\tau_1} \big(\bar{\Omega}_0\, \hm_0 + \bar{\Omega}_{2} \, \hm_{2}\big).  
\end{equation}

\subsection{Case of spherical singularities}
\label{sec-case-sphere}
To show that in every small neighborhood of peanut solution we can find initial data whose mean curvature flow develops spherical singularities,  we choose  in  our discussion above,  ${\bf \bar{\Omega}} = (\Omega_0,\Omega_2) \in \mathbb{S}^{1}$, so that $\Omega_2 < 0$ and $\Omega_0^2 + \Omega_2^2 =1$ (that is, we have \eqref{eqn-exit-omega} with such a choice of ${\bf \bar{\Omega}}$).  In this case we show our solution at time $\tau_1$ is actually convex, and hence Huisken's result about convex solutions implies that our solution develops spherical type of singularity.  It will be clear from the proof that in order to prove   convexity  at time $\tau_1$, in a set where $\big(\hm_4(y)\big)_{yy}$ is possibly negative, we need to pick sufficiently big $M_1$ (it will be clear from the proof that we will need $-K_0\min_{|y| \le 2\ell_0}(\hm_4)_{yy}(y)- M_1/2 < 0$).  

\subsection{Case of cylindrical singularities}\label{sec-case-cyl}
 We will next outline the additional steps that are needed in order to show that for any $\epsilon >0$  we can pick initial data of the form \eqref{eq-initial} whose mean curvature flow   develops  a  nondegenerate neckpinch singularity.  

In the  definition of the funnel in \eqref{eqn-dfnfun} we can choose the same $M_1$, as in the spherical case.  
Let $\tau_1$ be as before the exit time, meaning that $\|W^u(\cdot,\tau_1)\| = M_1 e^{-\tau_1}$.  As a result we get that
\[
w(y, \tau) =  d_0(\tau)\, \hm_0(y) \, e^{- \tau} + d_2(\tau)\, \hm_2(y)\, e^{-\tau} +  o_{\tau} ( e^{-\tau}),
\]
on a compact set $|y| \le \ell$, $\tau\in [\tau_0, \tau_1]$, where $  |d_{0}(\tau)| + |d_2(\tau)| \leq C(M_1)$.

A new important ingredient in the cylindrical case versus the spherical case is a subtle construction of another family of barriers from below and above for $q(y,\tau) = u^2(y,\tau) - 2(n-1)$, whose purpose is to be able to guarantee that after  enough time has elapsed, the neutral mode $\hm_2(y)$ would start dominating in the asymptotic expansion of our solution around the singularity.  As we will describe in more detail below, these barriers provide a good asymptotic description of our entire surface up to some large time $\tau_2 \gg  \tau_1$
 enabling  us to then pass from below and above  some rough barriers at time $\tau_2$, and use the avoidance principle to conclude that our flow will develop a singularity at the origin, which will split the hypersurface into two disjoint parts, none of which disappears at the singular time.  This will imply the singularity is a nondegenerate neckpinch.  

To be more precise, call the supersolutions and subsolutions that we find in section \ref{sec-sub-super}, for $|y| \ge \ell_1$ and for $\tau \le\tau_2$, $\cQ^+_{\varepsilon,K}(y,\tau)$ and $\cQ^-_{\varepsilon, K}(y,\tau)$, respectively.  Here $\ell_1 > 0$ is a fixed constant and it will turn out that $\ell_0 \le \ell_1 \le 1000 \ell_0$, and $\ell_0$ is the constant that defines the support of the cut off function $\eta_0(y)$ that appears in the definition of our perturbations at time $\tau_0$, see \eqref{eq-initial}).  We would like to show they are actually the upper and the lower barriers for our solution, outside a large set $|y| \ge \ell_1$, for $\tau\in [\tau_1,\tau_2]$,  and for that we need to use the maximum principle with boundary.  In order to do that, we need to be able to compare our solution $q(y,\tau_1)$ with $\cQ^+_{\varepsilon,K}(y,\tau_1)$ and $\cQ^-_{\varepsilon, K}(y,\tau_1)$, for $|y|\ge \ell_1$.  In order to be able to do that we first use  the translates in time of the peanut solution as barriers from $\tau_0$ to $\tau_1$, as we did in section \ref{sec-inner-outer}.  This yields a good asymptotic description of our entire surface at time $\tau_1$, outside a large set $|y| \ge \ell_1$.  After that we use $\cQ^+_{\varepsilon, K}(y,\tau)$ and $\cQ^-_{\varepsilon,K}(y,\tau)$ as barriers from $\tau_1$ to $\tau_2 \gg \tau_1$, where $\varepsilon = M_1 \sqrt{2(n-1)}\, e^{-\tau_1}$.  The goal of these barriers is to show that our solution exists up to some time $\tau_2 \gg \tau_1$, and that thanks to the barriers we have a good asymptotic description of our solution at time $\tau_2$.  More precisely, combining the barriers and the $L^2$ theory we can finally guarantee that the neutral mode dominates over all other modes on a large parabolic neighborhood, implying that the maximum of  our  profile function $u_{\max} ( \cdot, \tau_2) $ can be made a  large absolute constant depending only on dimension $n$.

This fact and our barriers  then allow us to put the Angenent torus as an outer  barrier around the origin (where the radius of our solution $u(y,\tau_2)$ is close to $\sqrt{2(n-1)}$) 
  and simultaneously  large balls of radius $R$  inside of our solution, on both sides of the origin,  starting at time $\tau_2$.  The Angenent torus will shrink in a time  $T_a(n)$ that depends only on the dimension $n$, and hence by the avoidance principle for the MCF our solution needs to develop a singularity at time $T \le T_a(n)$.  On the other hand,  our subtle barriers show that at time $\tau_2$  we can place balls of  large radius $R$ (depending only on dimension $n$) inside of our solution on both sides of the origin (where the singularity happens).  The balls take time $\frac{R^2}{2n}$ to shrink to a point, and hence if we choose $R$ sufficiently big so that $\frac{R^2}{2n} > T_a(n)$, we will know that at the singular time our surface pinches off at the point and disconnects into two parts, none of which disappears at the singular time.  

The supersolutions and subsolutions $\cQ^+_{\varepsilon,K}(y,\tau)$ and $\cQ^-_{\varepsilon, K}(y,\tau)$, respectively, are constructed only outside a large compact set, hence to show they are the actual barriers  for our solution we need the comparison principle with the boundary, and for that we need to  show the right behavior of our solution at the boundary  $|y| = \ell_1$, for all times $\tau \in [\tau_1, \tau_2]$.  To show we have the right behavior of our solution on the boundary $|y| = \ell_1$, for all $\tau\in [\tau_1,\tau_2]$, we combine the $L^2$ theory for projections of our solution on different eigenspaces of linearlized operator around the cylinder, and  we also need to have good $L^{\infty}$ estimates on large sets in order to control the error terms.  More precisely we proceed as follows.  

First we assume we have all $L^{\infty}$ estimates that we need, on a large set whose size depends on time exponentially, with some auxiliary constant.  Then we employ suitable  $L^2$ arguments for the projections of $q(y,\tau): =u^2(y,\tau) - 2(n-1)$ onto stable, neutral and unstable modes of the linearized mean curvature operator around the cylinder to be able to say that at least for some large times the neutral mode starts dominating, which then yields the right behavior of our solution on a set $|y| \le \ell$.  In these $L^2$ arguments, we need the $L^{\infty}$ estimates on the solution and its derivatives, to be able to control the errors coming form cut off functions and estimate them by exponentially small terms, which become negligible in our analysis.  

In the second step, assuming that we have the right behavior of our solution at the boundary $|y| = \ell_1$, and that we have the upper and lower barriers for $|y| \ge \ell_1$, as described above, we show we have the $L^{\infty}$ estimates on the solution and its derivatives, with a sharper constant than the one we used in the $L^{\infty}$ estimates mentioned in the previous paragraph.  The sharper constant depends only on $\delta$, $n$ and $\ell_0$.  The fact we can get the  $L^{\infty}$ estimate  with the sharper constant allows us to close the argument, and tell us we have all we need in order to apply the maximum principle with the boundary.  This yields  the supersolutions and subsolutions we construct in section \ref{sec-sub-super}  are indeed the barriers for quite a long time, as we have wanted.  In turn, the precise asymptotics on compact sets, and the sharp $L^{\infty}$ estimates hold for quite a long time as well.  This has been shown in Proposition \ref{prop-barriers}.  

\section{Inner-Outer $L^2$ estimate}
\label{sec-inner-outer}

In order to be able work with ODEs for different projections of a difference of the perturbed solution and the peanut solution itself on eigenspaces of the linearized mean curvature flow operator around the round cylinder, we need first to prove  an inner-outer estimate.  It is needed in order to deal with the errors coming from the cut off functions in our ODE arguments for $L^2$ norms of the projections.  The goal in this section is to prove desired inner-outer estimate by using the translates in time of peanut solution itself as upper and lower barriers.  

We will assume throughout this section that for some auxiliary constant $\Lambda$, the perturbed solution $\ueo(\cdot, \tau)$ satisfies the $L^\infty$-bounds
\begin{equation}		\label{eqn-u-der-bounds}
|u(y,\tau) - \sqrt{2(n-1)} |+ |u_y(y,\tau)| + | u_{yy} (y,\tau)| \leq \Lambda\, e^{- \tau} (1+|y|^4)
\end{equation}
for all $ |y| \leq 2 L(\tau)$, and all $\tau\in[\tau_0, \tau_1(\epsilon, \bom)]$.  This estimate is analogous to the properties \eqref{eq:um2}, \eqref{eq:um2-derivs} for the peanut solution $\bu$.
This assumption will be removed in section \ref{sec-main-thm} (see Proposition \ref{prop-barriers}) where it will be shown that \eqref{eqn-u-der-bounds} holds with $\Lambda$ replaced by a constant that depends only on $M_1$ and the initial data. 

Our goal is to prove the following inner-outer $L^2$-estimate.  Recall that $L(\tau)= \rho e^{\tau/4}$, that our initial perturbation $w(y, \tau_0)$ is supported in the interval $|y|\leq 2\ell_0$ (see \eqref{eq-initial}), and that $\tau_0$ will be taken sufficiently large. Also, recall the $L^2$ norm with respect to the Gaussian weight defined
in subsection \ref{ss-HHH} and for any numbers $a < b$ we define 
\[
\| f \|_{\hilb[a, b]} = \int_a^b f(y)^2 \, e^{-y^2/4}\, dy.
\]

\begin{proposition}		\label{prop-inout} 
Assume that for all $y\in[\ell_0 , 1000 \ell_0]$ and $\tau \in [\tau_0, \tau_1(\epsilon,\bom)]$ one has
\begin{equation}		\label{eqn-uuu} 
        \frac 23   K_0 \, y^4 \, e^{-\tau} \leq     \sqrt{2(n-1)} -  \ueo(y,\tau) \leq  \frac 32  K_0 \, y^4 \, e^{-\tau}.  
\end{equation}
Then, for any $\tau \in [ \tau_0, \tau_1(\epsilon,\bom)] $,
\begin{equation}		\label{eqn-inout2} 
        \| e^\tau w(y, \tau)\|_{\hilb[L(\tau), 2L(\tau)]}^2 
        \leq C\,  e^{-\frac{1}{16}L(\tau)^2}\int_{\tau_0}^\tau \| e^{\tau'} w(y, \tau')\|_{\hilb[0, 4\ell_0]}^2   d\tau' 
\end{equation}
\end{proposition}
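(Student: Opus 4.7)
The plan is to use time-translates of the peanut solution itself, $\bu(y, \tau'\pm s)$, as upper and lower barriers for the perturbed profile $\ueo(y,\tau')$ on the region $\{(y,\tau') : \tau_0 \le \tau' \le \tau,\ y_0 \le y \le 2L(\tau')\}$, where $y_0\in[\ell_0, 1000\ell_0]$ is a fixed inner boundary (say $y_0 = 4\ell_0$). The key input is the monotonicity $\bu_\tau(y,\tau)>0$ from \S\ref{sec:peanut property monotone}, which makes the family $\bu(\cdot,\tau+s)$ monotone in $s$ on $y\ge \ell_{\rm int}$, so that a positive shift yields a strict super-solution and a negative shift a strict sub-solution. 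Because the initial perturbation is supported in $|y|\le 2\ell_0 < y_0$, we have $\ueo(\cdot,\tau_0)\equiv\bu(\cdot,\tau_0)$ on the comparison region, so the initial inequality holds trivially for any $s^\pm\ge 0$; the monotonicity of the peanut tip ensures no difficulty at the outer boundary.

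The maximum principle thus reduces the problem to choosing shifts $s^\pm(\tau)\ge 0$ satisfying
\[
\bu(y_0, \tau' - s^-) \le \ueo(y_0, \tau') \le \bu(y_0, \tau' + s^+), \qquad \tau' \in [\tau_0, \tau].
\]
Combining the hypothesis \eqref{eqn-uuu} with the intermediate-scale description \eqref{eq:u-outer} gives $\bu_\tau \approx K_0 y^4 e^{-\tau}$ on $[\ell_{\rm int}, 2L(\tau)]$, so the minimal valid shifts obey
\[
s^\pm(\tau) \;\le\; \frac{C}{K_0 y_0^4}\sup_{\tau'\in[\tau_0,\tau]} e^{\tau'}|w(y_0,\tau')|,
\]
and plugging back yields, for $y\in[L(\tau), 2L(\tau)]$,
\[
|w(y,\tau)| \;\le\; \bu(y,\tau+s^+) - \bu(y,\tau-s^-) \;\le\; C K_0 y^4 e^{-\tau}\bigl(s^+ + s^-\bigr).
\]
Squaring, multiplying by $e^{2\tau}$, and integrating against the Gaussian weight over $[L(\tau), 2L(\tau)]$, the polynomial factor is absorbed via $\int_{L}^{2L} y^8 e^{-y^2/4}\,dy \le L^9 e^{-L^2/4} \le C e^{-L(\tau)^2/16}$, which holds for $\tau_0$ sufficiently large.

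The main obstacle is the final step: promoting the resulting supremum of a pointwise-in-$y$ quantity into the time-integrated inner $L^2$-norm on $[0,4\ell_0]$ that appears in \eqref{eqn-inout2}. I would handle this in two stages. First, a local Sobolev/trace inequality on the bounded interval $[0, 4\ell_0]$, together with the derivative bound \eqref{eqn-u-der-bounds} used to control $\partial_y(e^{\tau'}w)$, gives $(e^{\tau'}w(y_0,\tau'))^2 \le C_{\ell_0} \|e^{\tau'} w\|_{\hilb[0,4\ell_0]}^2$. Second, to promote the supremum in $\tau'$ to a time integral, set $\Phi(\tau') := \|e^{\tau'} w\|_{\hilb[0,4\ell_0]}^2$ and use the evolution equation \eqref{eqn-w}, with the bounds that \eqref{eqn-u-der-bounds} provides on the coefficients in \eqref{eqn-E1}, to derive a Gronwall-type estimate $\Phi(\tau)\le e^{C}\Phi(\tau')$ whenever $\tau-\tau'\le 1$. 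This yields
\[
\sup_{\tau'\in[\tau_0,\tau]} \Phi(\tau') \;\le\; C\int_{\max(\tau_0,\tau-1)}^{\tau} \Phi(\tau')\,d\tau' \;\le\; C\int_{\tau_0}^{\tau} \Phi(\tau')\,d\tau',
\]
which, combined with the barrier-based estimate above, produces \eqref{eqn-inout2}.
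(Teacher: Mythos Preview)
Your barrier argument is correct and coincides with the paper's (Lemmas~\ref{lem:alpha}--\ref{lem:w pointwise}): monotonicity \eqref{eq:utau positive} and the vanishing of $w(\cdot,\tau_0)$ for $|y|\ge 2\ell_0$ give the pointwise bound $e^\tau|w(y,\tau)|\lesssim (y/\ell_0)^4\sup_{\tau'\le\tau}e^{\tau'}|w(y_0,\tau')|$ on $[L,2L]$, and the Gaussian integration then produces the factor $e^{-L^2/16}$.

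The gap is in your two final steps. First, the trace inequality you claim is not homogeneous: the a~priori bound \eqref{eqn-u-der-bounds} says $|e^{\tau'}w_y|\le\Lambda(1+|y|^4)$, a constant on $[0,4\ell_0]$ that is \emph{independent} of $\|e^{\tau'}w\|_{\hilb[0,4\ell_0]}$. Feeding this into a Sobolev/trace inequality yields $(e^{\tau'}w(y_0))^2\le C\|e^{\tau'}w\|_{\hilb[0,4\ell_0]}^2+C_{\ell_0}\Lambda^2$, an additive contamination that does not disappear and therefore does not prove \eqref{eqn-inout2} as stated. Second, the Gronwall step is problematic on two counts: localizing the energy to $[0,4\ell_0]$ produces boundary flux terms at $y=4\ell_0$ that are again only controllable by the same $\Lambda$-bound (another additive error), and even a clean inequality $\Phi(\tau)\le e^C\Phi(\tau')$ for $\tau>\tau'$ only gives $\Phi(\tau')\ge e^{-C}\Phi(\bar\tau)$ on the unit interval \emph{before} the sup time $\bar\tau$; if $\bar\tau$ is close to $\tau_0$ there is no room to integrate, and your displayed conclusion places the integration window $[\max(\tau_0,\tau-1),\tau]$ at the wrong end of $[\tau_0,\tau]$.

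The paper avoids both issues by inserting the $\hv$-norm as an intermediate quantity. The Sobolev trace $w(2\ell_0,\tau')^2\le C_{\ell_0}\|w(\tau')\|_{\hv[\ell_0,2\ell_0]}^2$ (Lemma~\ref{lemma:w  pointwise from D norm}) is genuinely homogeneous, and a standard localized parabolic energy estimate (Lemma~\ref{lem-energy}) bounds $\sup_{\tau'}\|e^{\tau'}w\|_{\hv[\ell_0,2\ell_0]}^2$ by the initial value plus $\int_{\tau_0}^\tau\|e^{\tau'}w\|_{\hilb[0,4\ell_0]}^2\,d\tau'$. The initial term is zero by the choice of support in \eqref{eq-initial}, so one obtains the purely multiplicative bound \eqref{eqn-inout2} with no $\Lambda$-dependent residue.
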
  

We first use the time translates $\bu(y, \tau+\alpha)$, $\bu(y, \tau-\alpha)$, $\alpha >0$   of the peanut solution as barriers from  above and below, 
which provides a pointwise estimate for $w(y, \tau)$ in the interval $L(\tau)\leq y\leq 2L(\tau)$ in terms of the values of $w$ at $y=2\ell_0$ at all previous times $\tau'\in[\tau_0, \tau]$.  Then we use the $\hv$ norm of $w(y, \tau')$ on the interval $[\ell_0, 2\ell_0]$ to bound $w(2\ell_0, \tau')$.  Finally, a standard parabolic regularity argument allows us to bound the $\hv$ norm of $w$ in terms of its $\hilb$ norm.  Combining these steps then leads to the estimate~\eqref{eqn-inout2} above.

\subsection{A pointwise estimate via the maximum principle}
\begin{lemma}		\label{lem:alpha}
Suppose that for some $\tau\in[\tau_0, \tau_1(\epsilon, \bom)]$ there is an $\alpha\in(0, \ln 2)$ such that 
\begin{equation}		\label{eq:alpha definition} 
  \bu(2\ell_0, \tau'-\alpha)\leq \ueo(2\ell_0, \tau') \leq \bu(2\ell_0, \tau'+\alpha)
\end{equation}
for all $\tau'\in[\tau_0, \tau]$, and let $\alpha(\tau)$ be the smallest $\alpha\in(0, \ln 2)$ with this property.  Then
\begin{equation}		\label{eqn-upm2}
\bu(y, \tau-\alpha(\tau)) \leq \ueo(y, \tau) \leq \bu(y, \tau+\alpha(\tau)).
\end{equation}
and
\begin{equation}		\label{eq:5}
\bu(y, \tau-\alpha(\tau))-\bu(y, \tau) \leq w(y, \tau) \leq \bu(y, \tau+\alpha(\tau))-\bu(y, \tau)
\end{equation}
for all $y\geq 2\ell_0$.
\end{lemma}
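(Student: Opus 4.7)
My plan is to deduce \eqref{eqn-upm2} by applying the comparison principle for rotationally symmetric MCF on the outer cap $\{(y,\tau'): y\ge 2\ell_0,\ \tau_0\le \tau'\le\tau\}$, using the two time-translated peanut profiles $\bu(y,\tau'+\alpha(\tau))$ and $\bu(y,\tau'-\alpha(\tau))$ as supersolution and subsolution barriers, respectively. Because \eqref{eq:u} is autonomous in $\tau$, both translates are themselves RMCF solutions on the relevant range of $(y,\tau')$; the assumption from \S\ref{sec:peanut property monotone} that $\bu$ is available on an interval $[\tau_0-N,\tau_0]$ with $N>\ln 2$ ensures that the lower translate $\bu(y,\tau'-\alpha)$ is defined throughout $\tau'\in[\tau_0,\tau]$ for any $\alpha\in(0,\ln 2)$. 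By the evenness \eqref{eq:everything even} it is enough to treat the side $y\ge 2\ell_0$.

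To invoke the comparison I verify the ordering on the parabolic boundary of the cap. At the initial slice $\tau'=\tau_0$ the cut-off $\eta_0(y/\ell_0)$ in \eqref{eq-initial} vanishes for $y\ge 2\ell_0$, so $\ueo(y,\tau_0)=\bu(y,\tau_0)$ there; combined with the monotonicity $\bu_\tau(y,\tau)>0$ on $y\ge\ell_{\rm int}$ from \eqref{eq:utau positive} (and $\ell_{\rm int}<2\ell_0$ by \eqref{eq:1}), this yields the sandwich $\bu(y,\tau_0-\alpha)\le \ueo(y,\tau_0)\le \bu(y,\tau_0+\alpha)$. On the inner lateral boundary $y=2\ell_0$, $\tau_0\le\tau'\le\tau$, the required inequalities are exactly the hypothesis \eqref{eq:alpha definition}.

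The main obstacle is the ``right'' boundary, namely the tips of the profiles. There $\bu\searrow 0$ and the coefficient $-(n-1)/u$ in \eqref{eq:u} becomes singular, so the scalar parabolic maximum principle cannot be applied in a routine way to $v(y,\tau'):=\ueo(y,\tau')-\bu(y,\tau'+\alpha)$ on the strip $\{y\ge 2\ell_0\}$. I would bypass this by passing back to the unrescaled MCF picture via \eqref{eq:cv} and invoking the geometric avoidance principle for two smooth rotationally symmetric MCF hypersurfaces with boundary: the caps $\{\|x'\|=\ueo(y,\tau'),\ y\ge 2\ell_0\}$ and $\{\|x'\|=\bu(y,\tau'+\alpha),\ y\ge 2\ell_0\}$ are compact embedded MCF evolutions sharing a circular boundary above $y=2\ell_0$ that remains ordered by \eqref{eq:alpha definition}; the initial strict enclosure therefore propagates, forcing the nesting $y_{\max,\epsilon,\bom}(\tau')\le y_{\max}(\tau'+\alpha)$ along the way and the pointwise inequality $\ueo(y,\tau')\le \bu(y,\tau'+\alpha)$ throughout the cap. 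The lower bound is symmetric, with $\bu(y,\tau'-\alpha)$ playing the role of subsolution barrier and the roles in the avoidance argument reversed.

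Evaluating at $\tau'=\tau$ yields \eqref{eqn-upm2}, and \eqref{eq:5} is then immediate upon subtracting $\bu(y,\tau)$ from both sides. The minimality of $\alpha(\tau)$ plays no logical role in the argument itself; it is built into the statement so that the resulting sandwich is as tight as possible, which is what is needed when this lemma is fed into the $w(2\ell_0,\tau')$-to-$w(y,\tau)$ step of the inner-outer $L^2$ estimate of Proposition \ref{prop-inout}.
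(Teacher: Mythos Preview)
Your proposal is correct and follows essentially the same route as the paper: both arguments apply the comparison principle on the cap $\{y\ge 2\ell_0,\ \tau_0\le\tau'\le\tau\}$, check the initial edge using $\ueo(y,\tau_0)=\bu(y,\tau_0)$ for $y\ge 2\ell_0$ together with the monotonicity \eqref{eq:utau positive}, and check the lateral edge $y=2\ell_0$ using the hypothesis \eqref{eq:alpha definition}. The paper's proof is terser and simply invokes ``the maximum principle'' without discussing the tip; your explicit remark that the scalar equation \eqref{eq:u} degenerates there and that one should pass to the geometric avoidance principle for the closed MCF caps is a legitimate (and arguably more careful) way to justify the same step. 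Your observation that the minimality of $\alpha(\tau)$ plays no role in the proof itself is also accurate and matches the paper's treatment.
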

\begin{proof}
By definition $w(y, \tau) = \ueo(y, \tau)-\bu(y,\tau)$, so the inequalities~\eqref{eqn-upm2} and~\eqref{eq:5} are equivalent.

Since $\bu(y, \tau)$ is a continuous and strictly increasing function of $\tau$ there is a smallest $\alpha=\alpha(\tau)$ for which \eqref{eq:alpha definition} holds for all $\tau'\in[\tau_0, \tau]$.  We claim that for this $\alpha$ the maximum principle implies
\begin{equation}		\label{eq:8}
\bu(y, \tau'-\alpha) \leq \ueo(y, \tau') \leq \bu(y, \tau'+\alpha)
\end{equation}
for all $y\geq 2\ell_0$ and $\tau'\in[\tau_0,\tau]$.

Indeed, $\bu(y, \tau\pm\alpha)$ and $\ueo(y, \tau)$ are both solutions of \eqref{eq:u} so we must verify \eqref{eq:8} on the parabolic boundary of the region $\{y\ge 2\ell_0, \tau_0\leq\tau'\le\tau\}$.  This boundary consists of a time-like edge $\{y= 2\ell_0, \tau_0\leq\tau'\leq \tau\}$ and a space-like initial edge $\{y\geq 2\ell_0, \tau'=\tau_0\}$.  Our assumption \eqref{eq:alpha definition}  implies that \eqref{eq:8} holds on the time-like edge so we only have to check \eqref{eq:8} at the initial edge.  The definition~\eqref{eq-initial} of $\ueo$ implies $\ueo(y, \tau_0)=\bu(y, \tau_0)$ for all $y\geq  2\ell_0$, so that \eqref{eq:8} with $\tau'=\tau_0$ follows from monotonicity of the peanut solution $\bu$.
\end{proof}

\begin{lemma}		\label{lem:w pointwise}
There is a constant $C>0$ depending only on  the dimension $n$ and  the parameters $(K_0, \delta)$ in the definition of the peanut, such that for all $y\in[2\ell_0, 2L(\tau)]$ we have
\begin{equation}		\label{eq:9}
e^\tau|w(y, \tau)|\leq C  \frac{y^4}{\ell_0^4}\sup_{\tau_0\leq \tau'\leq \tau} e^{\tau'} |w(2\ell_0, \tau')|.
\end{equation}
\end{lemma}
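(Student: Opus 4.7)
The plan is to combine the barrier comparison of Lemma~\ref{lem:alpha} with the intermediate-scale asymptotics \eqref{eq:u-outer} for $\bu$ and its monotonicity \eqref{eq:utau positive}. Writing $\alpha=\alpha(\tau)$ for the smallest admissible shift produced by Lemma~\ref{lem:alpha}, the estimate \eqref{eq:5} reduces the task to bounding $|\bu(y,\tau+\alpha)-\bu(y,\tau-\alpha)|$ for $y\in[2\ell_0,2L(\tau)]$ in terms of the boundary oscillation $\sup_{\tau_0\le\tau'\le\tau}e^{\tau'}|w(2\ell_0,\tau')|$.

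\emph{Step 1: bound $\alpha(\tau)$ from above.} By minimality of $\alpha(\tau)$ and continuity of $\bu(2\ell_0,\cdot)$ (which is strictly monotone by \eqref{eq:utau positive}), there is some $\tau^*\in[\tau_0,\tau]$ at which one of the two inequalities in Lemma~\ref{lem:alpha} becomes an equality, giving
$$|w(2\ell_0,\tau^*)|=|\bu(2\ell_0,\tau^*\pm\alpha(\tau))-\bu(2\ell_0,\tau^*)|.$$
The intermediate asymptotic \eqref{eq:u-outer} applied at $y=2\ell_0$, which is legitimate since \eqref{eq:1} gives $2\ell_0>\ell_{\rm int}$, together with \eqref{eq:utau positive}, produces $\bu_\tau(2\ell_0,\tau')\asymp K_0\ell_0^4\,e^{-\tau'}$, with implicit constants depending only on $n,K_0,\delta$. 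Hence for $\alpha\in(0,\ln 2)$ a mean-value computation yields
$$|\bu(2\ell_0,\tau'\pm\alpha)-\bu(2\ell_0,\tau')|\ \ge\ c\,K_0\,\ell_0^4\,e^{-\tau'}\,\alpha,$$
and rearranging gives $\alpha(\tau)\le C\ell_0^{-4}\sup_{\tau_0\le\tau'\le\tau}e^{\tau'}|w(2\ell_0,\tau')|$.

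\emph{Step 2: convert the shift to a pointwise bound on $w$.} For $y\in[2\ell_0,2L(\tau)]$ the same asymptotic \eqref{eq:u-outer} controls $\bu_\tau(y,\tau')$ from above by $CK_0 y^4 e^{-\tau'}$, so
$$|\bu(y,\tau+\alpha)-\bu(y,\tau-\alpha)|\ \le\ C\,K_0\,y^4\,e^{-\tau}\,\alpha.$$
By Lemma~\ref{lem:alpha} the left-hand side dominates $|w(y,\tau)|$; inserting the estimate on $\alpha(\tau)$ from Step~1 and multiplying by $e^\tau$ yields exactly \eqref{eq:9}.

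The only delicate point is Step~1: one must check that the $O(\delta)$ correction in \eqref{eq:u-outer} does not swallow the first-order $\alpha$-contribution in the expansion of $\bu(2\ell_0,\tau'\pm\alpha)-\bu(2\ell_0,\tau')$. This is precisely where smallness of $\delta$ (from the standing peanut parameter assumptions) and the genuine positivity \eqref{eq:utau positive} of $\bu_\tau$ are both needed; the factor $(1-e^{-\alpha})/\alpha$ remains bounded above and below on $\alpha\in(0,\ln 2)$, so all constants in the final estimate depend only on $n$, $K_0$ and $\delta$ as claimed.
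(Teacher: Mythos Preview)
Your overall strategy is exactly the one in the paper: bound the shift $\alpha(\tau)$ from above using a lower bound on $\bu_\tau$ at $y=2\ell_0$, then convert the shift to a pointwise bound on $w$ using an upper bound on $\bu_\tau$ for $y\in[2\ell_0,2L(\tau)]$. However, the estimate you invoke, \eqref{eq:u-outer}, controls $\bu$ itself with a $\delta$-gap between upper and lower envelopes; it does \emph{not} yield the two-sided pointwise bound $\bu_\tau(y,\tau')\asymp K_0 y^4 e^{-\tau'}$ you claim. Concretely, feeding the envelopes from \eqref{eq:u-outer} into Step~1 gives only
\[
\bu(2\ell_0,\tau'+\alpha)-\bu(2\ell_0,\tau')\ \gtrsim\ \ell_0^4 e^{-\tau'}\bigl[(\tilde K_0-\delta)-(\tilde K_0+\delta)e^{-\alpha}\bigr],
\]
and the bracket is \emph{negative} once $\alpha\lesssim 2\delta/\tilde K_0$. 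Since $\alpha(\tau)$ can be arbitrarily small (it is controlled by $\sup e^{\tau'}|w(2\ell_0,\tau')|$, which may be tiny), the $\delta$-error really does swallow the $\alpha$-contribution; your final paragraph does not close this, because \eqref{eq:utau positive} only says $\bu_\tau>0$, not that it is bounded below by $c\,\ell_0^4 e^{-\tau'}$. The same issue recurs in Step~2 for the upper bound.

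The fix is to cite \eqref{eq:um2-derivs} instead: that estimate holds on the full range $|y|\le 2L(\tau)$ and gives
\[
\bigl|\bu_\tau - K_0 e^{-\tau}\hhm_4(y)\bigr|\ \le\ \delta(1+|y|)^4 e^{-\tau}
\]
directly. Since $\hhm_4(y)\sim y^4$ for $y\ge\ell_0\gg1$, this yields the required two-sided bound $C_0^{-1}y^4 e^{-\tau}\le \bu_\tau(y,\tau)\le C_0 y^4 e^{-\tau}$ with $C_0=C_0(n,K_0,\delta)$, and then your Steps~1 and~2 go through verbatim by the Mean Value Theorem. This is exactly how the paper argues.
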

\begin{proof}
We begin by estimating $\alpha(\tau)$, which, by definition, is the smallest $\alpha\in(0, \ln 2)$ satisfying \eqref{eq:alpha definition}, or, equivalently,
\begin{equation}		\label{eq:12}
\forall \tau'\leq\tau:\quad
\bu(2\ell_0, \tau'-\alpha)-\bu(2\ell_0, \tau')\leq w(2\ell_0, \tau') \leq
\bu(2\ell_0, \tau'+\alpha)-\bu(2\ell_0, \tau').
\end{equation}
We can find upper and lower bounds for the differences on the left and right in this inequality by recalling that 
the peanut solution $\bu$ satisfies \eqref{eq:um2-derivs}, which implies that for some constant $C_0$ that
\[
\frac{1}{C_0}e^{-\tau}y^4 \leq \bu_\tau(y, \tau) \leq C_0 e^{-\tau}y^4
\]
holds for all $y\in[\ell_0, 2L(\tau)]$ and all $\tau\geq \tau_0$.  Hence,  for all $y\in[\ell_0, 2L(\tau)]$, all $\tau\geq \tau_0$, and any $\tilde\alpha$ with $|\tilde\alpha|\leq \ln 2$ we have
\begin{equation}		\label{eq:7}
\frac{1}{2C_0}e^{-\tau}y^4 \leq \bu_\tau(y, \tau+\tilde\alpha) \leq 2C_0 e^{-\tau}y^4.
\end{equation}
Since $\alpha\in (0,\ln 2)$ the Mean Value Theorem combined with the lower bound for $\bu_\tau$ from \eqref{eq:7} tells us
\begin{equation}		\label{eq:11}
        \begin{split}
                \bu(2\ell_0, \tau'+\alpha) - \bu(2\ell_0, \tau') \geq 
                \frac {1}{2C_0} e^{-\tau'}(2\ell_0)^4\alpha,
                \\
                \bu(2\ell_0, \tau'-\alpha) - \bu(2\ell_0, \tau')
                \leq -\frac{1}{2C_0}e^{-\tau'}(2\ell_0)^4\alpha
        \end{split}
\end{equation}
for all $\tau'\in[\tau_0, \tau]$.  Hence $\alpha = 2C_0(2\ell_0)^{-4}\sup_{\tau_0\leq \tau'\leq \tau} e^{\tau'}|w(2\ell_0, \tau')|$ satisfies \eqref{eq:12}.  This implies
\[
\alpha(\tau) \leq  2C_0(2\ell_0)^{-4}\sup_{\tau_0\leq \tau'\leq \tau} e^{\tau'}|w(2\ell_0, \tau')|.
\]
Next by \eqref{eqn-uuu} we see that we indeed have $2 C_0(2\ell_0)^{-4}\sup_{\tau_0\leq \tau'\leq \tau} e^{\tau'}|w(2\ell_0, \tau')| \leq \ln 2$, as claimed above. Note that \eqref{eqn-uuu} will be shown to hold in Lemma \ref{cor-prop-funnel}.
 
In the region $2\ell_0\leq y\leq 2L$ we apply the Mean Value Theorem to \eqref{eq:5} and conclude that for some $\alpha^*$ with $|\alpha^*|\leq \alpha(\tau)\leq \ln 2$
\[
|w(y, \tau)| \leq \alpha(\tau) \bu_\tau(y, \tau+\alpha^*)
\]
The upper bound for $\alpha(\tau)$ combined with the upper bound \eqref{eq:7} for $\bu_\tau$ then implies
\begin{align*}
|w(y, \tau)|
&\leq \left(\frac 18C_0\ell_0^{-4}\sup_{\tau_0\leq \tau'\leq \tau} e^{\tau'}|w(2\ell_0, \tau')| \right)\cdot \left(C_0 e^{-\tau-\alpha^*} y^4\right)\\
&\leq \frac 14C_0^2 e^{-\tau}y^4\ell_0^{-4}  \sup_{\tau_0\leq \tau'\leq \tau} e^{\tau'}|w(2\ell_0, \tau')|,
\end{align*}
which completes the proof of \eqref{eq:9}.
\end{proof}

 \subsection{Estimate in terms of the $\hv[\ell/2, \ell]$ norm}
 In this section we prove an $L^2$ version of the pointwise bound in Lemma~\ref{lem:w pointwise}.  For any real numbers $a<b$ the $\hv[a, b]$ norm of a function $h:\R\to\R$ is by definition given by
\begin{equation}		\label{dfn-hv}
\|h\|_{\hv[a,b]}^2 = \int_a^b \left\{ h'(y)^2 + h(y)^2\right\}e^{-\frac{y^2}{4}}dy.
\end{equation}
In the proof we use this elementary estimate:
\begin{lemma}		\label{lemma:w  pointwise from D norm}
{\upshape (Calculus inequality)}
For any $\tau\in[\tau_0, \tau_1]$
\begin{equation}
         w(2\ell_0,\tau)^2 \leq 5\ell_0^{-1}\, e^{\ell_0^2} \|w(\tau)\|_{\hv[\ell_0, 2\ell_0]}^2.
\end{equation}
\end{lemma}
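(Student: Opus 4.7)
The plan is to derive the estimate by integration by parts against the Gaussian weight, combined with a standard Young's inequality to extract a favorable $1/\ell_0$ factor. Since the weight $e^{-y^2/4}$ is much smaller at $y=2\ell_0$ than at $y=\ell_0$ (ratio $e^{-3\ell_0^2/4}$), the pointwise value $w(2\ell_0)^2$ must be multiplied by $e^{\ell_0^2}$ to be comparable with an $\hv$-norm, but the derivative term (which dominates in this kind of estimate) should contribute with a small constant $1/\ell_0$.

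First, for any $y_*\in[\ell_0,2\ell_0]$ I would compute, using the fundamental theorem of calculus,
\begin{equation*}
w(2\ell_0)^2\,e^{-\ell_0^2}-w(y_*)^2\,e^{-y_*^2/4}
=\int_{y_*}^{2\ell_0}\frac{d}{ds}\bigl[w(s)^2\,e^{-s^2/4}\bigr]\,ds
=\int_{y_*}^{2\ell_0}\Bigl(2ww'-\tfrac{s}{2}w^2\Bigr)e^{-s^2/4}\,ds.
\end{equation*}
Then I would apply the weighted Young inequality $2|ww'|\le\frac{s}{2}w^2+\frac{2}{s}(w')^2$, which is designed precisely to absorb the negative term $-\frac{s}{2}w^2$ coming from differentiating the Gaussian. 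This produces the clean bound
\begin{equation*}
w(2\ell_0)^2\,e^{-\ell_0^2}\le w(y_*)^2\,e^{-y_*^2/4}+\frac{2}{\ell_0}\int_{\ell_0}^{2\ell_0}(w')^2\,e^{-s^2/4}\,ds.
\end{equation*}

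Next, to handle the boundary term $w(y_*)^2 e^{-y_*^2/4}$, I would choose $y_*\in[\ell_0,2\ell_0]$ to be a point where $w(y)^2 e^{-y^2/4}$ is minimal. By the mean-value inequality,
\begin{equation*}
w(y_*)^2\,e^{-y_*^2/4}\le\frac{1}{\ell_0}\int_{\ell_0}^{2\ell_0}w(s)^2\,e^{-s^2/4}\,ds\le\frac{1}{\ell_0}\,\|w\|_{\hv[\ell_0,2\ell_0]}^2.
\end{equation*}
Combining the two estimates with the definition of $\|\cdot\|_{\hv[\ell_0,2\ell_0]}$ gives
\begin{equation*}
w(2\ell_0)^2\,e^{-\ell_0^2}\le\Bigl(\frac{1}{\ell_0}+\frac{2}{\ell_0}\Bigr)\|w\|_{\hv[\ell_0,2\ell_0]}^2\le\frac{5}{\ell_0}\,\|w\|_{\hv[\ell_0,2\ell_0]}^2,
\end{equation*}
which yields the claim after multiplying both sides by $e^{\ell_0^2}$. (The bound $3/\ell_0$ suffices in principle, so the constant $5$ absorbs any minor slack.)

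There is no real obstacle here; the only subtle point is noticing that the negative term $-\frac{s}{2}w^2$ produced by differentiating the Gaussian is exactly what the Young's inequality needs to absorb in order to give the gain $1/\ell_0$ on the derivative integral. A naive $2|ww'|\le w^2+(w')^2$ would instead give a constant of order $\ell_0$ rather than $1/\ell_0$, which would be far too weak for the subsequent argument in Proposition~\ref{prop-inout}. The mean-value choice of $y_*$ is a standard device to avoid an irrelevant boundary contribution at $y=\ell_0$.
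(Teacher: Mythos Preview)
Your proof is correct. The approach is morally the same as the paper's---both rest on the fundamental theorem of calculus applied to $w^2 e^{-y^2/4}$ together with a weighted Young inequality to exploit the decay of the Gaussian---but the bookkeeping differs. The paper invokes a ready-made inequality from \cite{ADS1} of the form $\ell\,e^{-\ell^2/4}f(\ell)^2 \le C\|f\|_{\hv[0,\ell]}^2$ and applies it with $\ell=2\ell_0$ to $f=w\zeta$, where $\zeta$ is a cutoff vanishing on $[0,\ell_0]$ and equal to $1$ near $2\ell_0$; the cutoff kills the unwanted boundary contribution at the lower endpoint. You instead work directly on $[\ell_0,2\ell_0]$ and dispose of the lower-endpoint term by choosing $y_*$ to minimize $w^2e^{-y^2/4}$ and bounding the minimum by the average. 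Your route is slightly more self-contained and in fact yields the sharper constant $3/\ell_0$ (even $2/\ell_0$, since $\tfrac{1}{\ell_0}\int w^2 + \tfrac{2}{\ell_0}\int (w')^2 \le \tfrac{2}{\ell_0}\|w\|_{\hv}^2$), whereas the cutoff costs a factor from $f_y^2\le 2w_y^2+2w^2$. Either way the constant is irrelevant for the subsequent inner--outer estimate.
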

\begin{proof}
We recall the bound 
\begin{equation}
                \label{eqn-good}
        \ell\, e^{-\ell^2/4} f(\ell)^2 
\end{equation}
which holds for any $\ell>0$ and any function $f \in C^1([0,\ell])$ (e.g., see \cite{ADS1}).   Apply \eqref{eqn-good} with $\ell=2\ell_0$ to $f(y)= w(y, \tau') \, \zeta(y)$, where $\zeta\in C^1([0,\ell])$ is a cut off function with $|\zeta|\leq 1, |\zeta_y|\leq 1$ and
\[
\zeta(y) = 0 \text { for } y\in [0,\ell_0] \text{ and } \zeta(y) = 1 \text{ for } y\in [\tfrac 43\ell_0,2\ell_0].  
\]
The Lemma now follows from $w(2\ell_0,\tau')=f(2\ell_0)$ and $f_y^2 \leq 2 w_y^2 \, \zeta^2 + 2 w^2 \, \zeta_y^2\leq 2w^2+2w_y^2$.
\end{proof}

\begin{lemma}		\label{lem:w D bound}
There is a constant, depending on $\ell_0$, such that for all $\tau\geq \tau_0$ one has
\[
\|e^\tau w(\tau)\|_{\hilb[L, 2L]}^2
\leq Ce^{-\frac {1}{16} L^2}   \sup_{\tau_0\leq \tau'\leq \tau} e^{2\tau'}\|w(\tau')\|_{\hv[\ell_0, 2\ell_0]}^2.
\]
\end{lemma}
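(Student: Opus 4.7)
The proof is a direct combination of the two preceding lemmas with a Gaussian integral estimate, so the main task is to execute the bookkeeping correctly; I expect no conceptual obstacle.

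First, I would apply the pointwise bound of Lemma~\ref{lem:w pointwise}, which is valid on $[2\ell_0, 2L(\tau)]$ and therefore on the sub-interval $[L(\tau), 2L(\tau)]$ (once $\tau_0$ is large enough that $L(\tau_0) \ge 2\ell_0$, which is guaranteed by \eqref{eq:1}). Squaring gives, for $y \in [L, 2L]$,
\[
e^{2\tau} w(y,\tau)^2 \leq \frac{C^2 y^8}{\ell_0^8} \, \Bigl(\sup_{\tau_0\leq \tau'\leq \tau} e^{\tau'}|w(2\ell_0, \tau')|\Bigr)^2 .
\]

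Second, I multiply by the Gaussian weight $e^{-y^2/4}$ and integrate over $[L,2L]$. The key Gaussian integral satisfies
\[
\int_{L}^{2L} y^8 \, e^{-y^2/4}\, dy \;\leq\; (2L)^8 \, L \, e^{-L^2/4} \;\leq\; C\, e^{-L^2/16}
\]
for all sufficiently large $L$, since the polynomial factor $L^9$ is swallowed by $e^{-3L^2/16}$. This already produces the exponentially small prefactor $e^{-L^2/16}$ claimed in the statement.

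Third, I invoke the calculus inequality (Lemma~\ref{lemma:w pointwise from D norm}) to convert the pointwise supremum at $y=2\ell_0$ into the $\hv$-norm:
\[
e^{2\tau'} w(2\ell_0,\tau')^2 \;\leq\; 5\ell_0^{-1} e^{\ell_0^2}\, e^{2\tau'}\, \|w(\tau')\|_{\hv[\ell_0,2\ell_0]}^2 .
\]
Taking the supremum over $\tau' \in [\tau_0,\tau]$ on both sides and combining this with the preceding two displays yields
\[
\|e^\tau w(\tau)\|_{\hilb[L,2L]}^2 \;\leq\; \frac{5 C^2}{\ell_0^9}\, e^{\ell_0^2} \, e^{-L^2/16} \sup_{\tau_0\leq\tau'\leq\tau} e^{2\tau'} \|w(\tau')\|_{\hv[\ell_0,2\ell_0]}^2 .
\]
Absorbing all $\ell_0$-dependent factors into a single constant $C = C(\ell_0)$ gives the stated bound. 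The only mildly nonroutine point is verifying that the polynomial growth of $y^8$ over the Gaussian tail is controlled with an exponent sharper than $-L^2/16$, but this is immediate from the dominance of $e^{-L^2/4}$ over any power of $L$ once $\tau_0$ (hence $L$) is chosen large enough.
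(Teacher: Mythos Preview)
Your proposal is correct and follows essentially the same approach as the paper: apply Lemma~\ref{lem:w pointwise} pointwise, integrate against the Gaussian weight over $[L,2L]$, and convert the pointwise value $w(2\ell_0,\tau')$ to the $\hv[\ell_0,2\ell_0]$ norm via Lemma~\ref{lemma:w  pointwise from D norm}. The only cosmetic differences are that the paper estimates the Gaussian integral via the splitting $e^{-y^2/4}=e^{-y^2/8}e^{-y^2/8}\le e^{-L^2/8}e^{-y^2/8}$ to get a clean $Ce^{-L^2/8}$, and then uses $L\ge 4\ell_0$ to absorb $e^{\ell_0^2}$ into the exponent (yielding $e^{-L^2/8+\ell_0^2}\le e^{-L^2/16}$), whereas you use a cruder polynomial bound on the integral and absorb $e^{\ell_0^2}$ directly into $C(\ell_0)$; both are fine since the statement allows the constant to depend on $\ell_0$.
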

Here and in the following proof we abbreviate $L=L(\tau)=\rho e^{\tau/4}$.

\begin{proof}
For $y\in [L,  2L]$  Lemmas~\ref{lem:w pointwise} and~\ref{lemma:w  pointwise from D norm} imply
\begin{align*}
  e^{2\tau}w(y, \tau)^2& \leq C y^8\ell_0^{-8}\sup_{\tau'}e^{2\tau'}w(2\ell_0, \tau')^2 \\
&\leq C y^8\ell_0^{-8}\sup_{\tau'} e^{2\tau'} \ell_0^{-1}e^{\ell_0^2}\|w(\tau')\|_{\hv[\ell_0,2\ell_0]}^2\\
  &= C y^8\ell_0^{-9} e^{\ell_0^2} \sup_{\tau'} e^{2\tau'}\|w(\tau')\|_{\hv[\ell_0,2\ell_0]}^2
\end{align*}
where all the suprema are over $\tau'\in[\tau_0, \tau]$.  Integrate the inequality over $L\leq y\leq 2L$, using
\[
\int_{L}^{2L} y^8 e^{-\frac{y^2}{4}}dy
< e^{-\frac{1}{8}L^2} \int_0^\infty y^8 e^{-\frac{y^2}{8}}\; dy
= C e^{-\frac{1}{8}L^2}.
\]
We get
\begin{align*}
\|e^\tau w(\tau)\|_{\hilb[L, 2L]}^2
&=  \int_L^{2L} e^{2 \tau} |w(y,\tau)|^2 e^{-\frac{y^2}{4}} dy\\
&\leq C  e^{-\frac 18 L^2+ \ell_0^2} 
\ell_0^{-9}\sup_{\tau_0\leq \tau'\leq \tau} \,  e^{2 \tau'}  \|w(\tau')\|_{\hv[\ell_0, 2\ell_0]}^2.
\end{align*}
Since $L=L(\tau) = \rho e^{\tau/4}\geq \rho e^{\tau_0/4} \geq 4\ell_0$ we have $\frac{1}{16}L^2 \geq \ell_0^2$ and  we conclude
\begin{equation}		\label{eqn-boundw12} 
\|e^\tau w(\tau)\|_{\hilb[L, 2L]}^2
 \leq C  \, e^{-\frac{1}{16}L^2}   \ell_0^{-9}\sup_{\tau_0\leq \tau'\leq \tau} e^{2\tau'}\|w(\tau')\|_{\hv[\ell_0,2\ell_0]}
\end{equation}
for a fixed $C$.  Since $\ell_0>1$ always, we may discard the factor $\ell_0^{-9}$, which then leads to the estimate in the Lemma.
\end{proof}

\subsection{Estimate in terms of the $\hilb$ norm}
To replace the $\hv$ norm in \eqref{eqn-boundw12} by an $\hilb$ norm, we use the following energy estimate.
\begin{lemma}		\label{lem-energy}
For any $\tau \geq \tau_0$, we have
\begin{equation}
 \label{eqn-333}
\sup_{\tau' \in [\tau_0,\tau]}  \|e^{\tau'}  w(\tau')  \|_{\hv[\ell_0,2\ell_0]}^2 \leq 
C\int_{\tau_0}^\tau e^{2\tau'} \| w(\tau') \|_{\hilb[0, 4\ell_0]} ^2 d\tau' .
\end{equation}
The constant $C$ in this lemma may depend  on $\ell_0$.
\end{lemma}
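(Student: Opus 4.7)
The plan is to prove \eqref{eqn-333} as a standard interior parabolic Caccioppoli (energy) estimate, exploiting two simplifying features of the localized region $|y|\le 4\ell_0$: (i) on this compact set the Gaussian weight $e^{-y^2/4}$ is uniformly equivalent to $1$ (with constants depending only on $\ell_0$), and (ii) equation \eqref{eqn-w2} is uniformly parabolic there, with coefficients $c_0, c_1, c_2$ bounded by an absolute constant depending on $\ell_0$, $\Lambda$, and the peanut parameters, as follows from the assumed $L^\infty$ bounds \eqref{eqn-u-der-bounds} together with \eqref{eq:um2}--\eqref{eq:um2-derivs} applied to $\bu$.

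First I would introduce the rescaled difference $v(y,\tau)=e^\tau w(y,\tau)$, which by \eqref{eqn-w2} satisfies a parabolic equation of the same form but with the zero-order coefficient shifted from $1+c_0$ to $2+c_0$, and then pick a time-independent cutoff $\chi\in C_c^\infty([0,4\ell_0])$ with $\chi\equiv 1$ on $[\ell_0/2, 3\ell_0]$ and $|\chi'|+|\chi''|\le C/\ell_0$. Multiplying the equation for $v$ by $v\chi^2 e^{-y^2/4}$ and integrating in $y$, the $(1+c_2)v_{yy}$ term, after integration by parts, produces the good coercive contribution $-\int(1+c_2)v_y^2\chi^2 e^{-y^2/4}\,dy$ plus commutator terms of the form $v v_y\chi\chi'$ and $v^2\chi''$; all commutators, together with the first-order and zero-order terms in the equation, are either supported in $[0,4\ell_0]\setminus[\ell_0,2\ell_0]$ or are bounded multiples of $v$ and $v_y$ on $[0,4\ell_0]$, and are absorbed into the coercive term by Cauchy--Schwarz. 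The upshot is the differential Caccioppoli inequality
\begin{equation*}
\frac{d}{d\tau}\int v^2\chi^2 e^{-y^2/4}\,dy \;+\; \tfrac12\int v_y^2\chi^2 e^{-y^2/4}\,dy \;\le\; C(\ell_0)\int_0^{4\ell_0} v^2 e^{-y^2/4}\,dy.
\end{equation*}

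Next I would integrate this from $\tau_0$ to any $\tau'\in[\tau_0,\tau]$, take the supremum in $\tau'$, and note that $\chi\equiv 1$ on $[\ell_0,2\ell_0]$, so the left side controls $\sup_{\tau'}\|v(\tau')\|_{\hilb[\ell_0,2\ell_0]}^2$ together with $\int_{\tau_0}^\tau\|v_y(\tau')\|_{\hilb[\ell_0,2\ell_0]}^2 d\tau'$. To upgrade the integrated-in-time bound on $v_y$ to the required pointwise-in-time bound needed for the $\hv$ norm on the left-hand side of \eqref{eqn-333}, I would repeat the energy estimate with a slightly smaller cutoff $\tilde\chi\equiv 1$ on $[\ell_0,2\ell_0]$ and $\tilde\chi$ supported in $[\ell_0/2,3\ell_0]$, this time testing the spatial derivative of \eqref{eqn-w2} with $v_y\tilde\chi^2 e^{-y^2/4}$; equivalently, one invokes standard interior parabolic regularity on bounded domains with bounded coefficients, which asserts that the $L^2$ norm over a larger parabolic cylinder controls the full $H^{2,1}$ norm over a smaller one.

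The main subtlety I expect is the contribution of the initial data at $\tau=\tau_0$: the naive energy estimate produces an additive term $e^{2\tau_0}\|w(\tau_0)\|_{\hv[\ell_0/2,3\ell_0]}^2$ on the right which is not literally present in \eqref{eqn-333}. Because the initial perturbation $w(\cdot,\tau_0)=\epsilon\,\eta_0(y/\ell_0)\bigl(\Omega_0\hm_0+\Omega_2\hm_2\bigr)$ is smooth and its $\hv$ norm on $[\ell_0/2,3\ell_0]$ is comparable to its $\hilb$ norm on the same interval (with constant depending only on $\ell_0$ and the Hermite polynomials), continuity of $\tau'\mapsto \|w(\tau')\|_{\hilb[0,4\ell_0]}^2$ near $\tau_0$ lets us absorb this initial contribution into $\int_{\tau_0}^\tau e^{2\tau'}\|w(\tau')\|_{\hilb[0,4\ell_0]}^2 d\tau'$ after adjusting $C$; alternatively one runs the Caccioppoli argument starting from a reference time strictly earlier than $\tau_0$, which is admissible because the peanut solution $\bu$ is defined on an interval $[\tau_0-N,\tau_0]$ by the standing assumption in Section~\ref{sec-common}, and the perturbed solution $u_{\epsilon,\bom}$ agrees with $\bu$ outside $[-2\ell_0,2\ell_0]$ at $\tau_0$ so the initial datum at the earlier reference time is simply $\bu$, and the difference is zero there.
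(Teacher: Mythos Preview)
Your overall scheme---localized Caccioppoli/energy estimates for the uniformly parabolic equation \eqref{eqn-w2} on $[0,4\ell_0]$, first for $v=e^\tau w$ and then for $v_y$---is exactly what the paper does (it says ``differentiate \eqref{eqn-w2} with respect to $y$'' and ``standard parabolic energy estimates''). The content of the argument is the same.

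The gap is in your treatment of the initial term. Your approach (a) does not work: at $\tau=\tau_0$ the left side of \eqref{eqn-333} equals $e^{2\tau_0}\|w(\tau_0)\|_{\hv[\ell_0,2\ell_0]}^2$ while the right side is $0$, so no finite constant $C$ can absorb a nonzero initial contribution into $\int_{\tau_0}^{\tau}$ uniformly for $\tau$ near $\tau_0$. Your approach (b) is also not available: the perturbed solution $u_{\epsilon,\bom}$ is only defined from $\tau_0$ onward, so there is no earlier time at which the difference $w$ exists and vanishes. The paper's resolution is much simpler and you overlooked it: by the choice of initial data \eqref{eq-initial}, the perturbation $w(\cdot,\tau_0)=\epsilon\,\eta_0(y/\ell_0)(\Omega_0\hm_0+\Omega_2\hm_2)$ is compactly supported, and the paper arranges the localization so that the initial boundary term $\|e^{\tau_0}w(\tau_0)\|_{\hv[\ell_0,2\ell_0]}^2$ sits on an interval where $w(\cdot,\tau_0)\equiv 0$, and hence simply vanishes. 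Once you place your cutoff $\chi$ so that its support avoids the support of $w(\cdot,\tau_0)$ (rather than taking $\chi$ supported all the way down to $y=0$), the initial term disappears and your energy argument goes through verbatim.
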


\begin{proof}We differentiate equation \eqref{eqn-w2} with respect to $y$ and use the fact that
\[
|c_2| + |c_1| + |c_0|+ |c_{1y} | + |c_{0y}| \leq o(1)
\]
in the considered region (which follows by the fact that $u, \bu$ satisfy the bounds \eqref{eqn-u-der-bounds}).  Standard parabolic energy estimates then imply
\[
\sup_{\tau'}  \|e^{\tau'} w(\tau')  \|_{\hv[\ell_0,2\ell_0]}^2 \leq 
\| e^{\tau_0} w(\tau_0) \|_{\hv[\ell_0,2\ell_0]}^2 + C\int_{\tau_0}^\tau e^{2\tau'} \| w(\tau') \|_{\hilb[0, 4\ell_0]} ^2 d\tau'.
\]
In \eqref{eq-initial} we  have chosen the initial condition of the perturbation so that $w(y, \tau_0)=0$ for $y\geq \ell_0$ and therefore the first term in this estimate vanishes.
\end{proof}

\smallskip

\begin{proof}[Proof of Proposition ~\ref{prop-inout}] 
The proof directly follows by combining Lemma \ref{lem:w D bound}  and Lemma \ref{lem-energy}. 
\end{proof}


\section{The growth of  the unstable mode of $w$  inside the funnel}
\label{sec-growth}

Recall that $\ueo(y,\tau)$ denotes the solution of \eqref{eq:u} with initial data $\ueo(y,\tau_0)$ defined by \eqref{eq-initial} and $\bu(y,\tau)$ denotes the peanut solution.  For now we drop the index and call $\ueo$ simply by $u$. The function $w:=u-\bu$ satisfies
\begin{equation}		\label{eqn-wtau0}
w(\cdot, \tau_0) =  \epsilon\,\eta_0(y/\ell_0) \, \big(\Omega_0 + \Omega_2 \hm_2(y)\big),
\end{equation}
where $\eta_0(y/\ell_0)$ is the cutoff function supported on a set $|y| \le 2\ell_0$ as defined in \eqref{eq-initial}.  Recall that $W(y,\tau) = \eta(y, \tau)w(y, \tau)$ was defined in \eqref{eqn-defnW}, where the cutoff function $\eta(y,\tau) = \eta_0(y/L(\tau))$, with $L(\tau) = \rho e^{\tau/4}$, satisfies
\begin{equation}		\label{eqn-eta}
|\eta_y| < c_0 e^{-{\tau/4}}, \qquad  |\eta_{yy}| < c_0 e^{-{\tau/2}}, \qquad |\eta_{\tau}| \le c_0 e^{-{\tau/4}},
\end{equation}
for a uniform constant $c_0  > 0$, and where all these derivatives are supported in $L(\tau) \le |y| \le  2L(\tau)$.

Since we can take $\tau_0$ large enough so that $e^{\tau_0/4} \geq 2\ell_0 $ (that is $\eta(y, \tau_0) \equiv 1$ on the support of the initial perturbation $w(\cdot, \tau_0)$) we have
\begin{equation}		\label{eqn-Wtau0}
W(\cdot, \tau_0) = \epsilon\, \eta_0 \, \big(\Omega_0 + \Omega_{2}\hm_{2}\big) .  
\end{equation}
The function  $W(y,\tau)$ satisfies \eqref{eqn-W}, where the error term  $g:=  \eta \, \cE(w)  + \cE(w,\eta)$  is given by \eqref{eqn-E1} and \eqref{eqn-E2}.  
\sk

Recall the definitions of $W^u(y,\tau)$ and $W^s(y,\tau)$, the projections of $W(y,\tau)$ onto subspaces $\hilb^u := \langle H_0, H_{2} \rangle$ and $\hilb^s = \langle H_{2i}, i= 2, \cdots \rangle$, respectively, as in \eqref{eq-stable-unstable}.  We can express $W = W^u +W^s$.

\subsubsection*{Notation: the dependence of constants $c_0, C_0$ and $c, C$ on $\ell_0$}

Throughout this section we will denote by $c_0, C_0$ positive universal constants depending only on dimension,
and $C, c$ positive constants that  depend on dimension and they may also depend on $\ell_0$.
\bigskip

Recall the Definition \ref{def-funnel} of the funnel $\mc F_{\tau}$.  Our  {\em goal in this section}  is to show that  the solution 
$u(y,\tau)$    hits the boundary of the funnel at some time $\tau_1$ that we call the exit time.  More precisely, we prove the following Proposition.

\begin{proposition}		\label{lemma-big-unstable} 
Let $M_1 >0$ be an arbitrary fixed constant.  There exists $\tau_0 \gg 1$ so that the following holds.  For every $\epsilon$ sufficiently small, where $\epsilon> 0$, and every $\bom\in \mathbb{S}^1$, there exists the first time $\tau_1 = \tau_1(\epsilon, \bom)$ so that
\begin{equation}		\label{eq-big-unstable} 
\|W^u(\cdot, \tau_1)\|  = M_1 \, e^{-\tau_1}
\end{equation}
Note that here we write shortly $W^u$ for $W_{\epsilon,\bom}^u$.  
\end{proposition}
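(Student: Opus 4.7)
My plan is to reduce Proposition~\ref{lemma-big-unstable} to a two-dimensional inhomogeneous linear ODE for the Hermite coefficients of $W^u$ and then invoke the intermediate value theorem.  Decompose $W^u(y,\tau)=a_0(\tau)\hm_0(y)+a_2(\tau)\hm_2(y)$.  Since $\hm_0$ and $\hm_2$ are eigenfunctions of the linearized operator appearing in \eqref{eqn-W} with eigenvalues $1$ and $0$ respectively, projecting the evolution of $W$ onto $\hilb^u$ gives
\[
a_0'(\tau)=a_0(\tau)+g_0(\tau),\qquad a_2'(\tau)=g_2(\tau),
\]
with inhomogeneities
\[
g_j(\tau)=\frac{\bigl\langle \eta\,\cE(w)+\cE(w,\eta),\,\hm_j\bigr\rangle}{\|\hm_j\|^2}\qquad (j=0,2).
\]
From \eqref{eqn-Wtau0} and the orthogonality of $\hm_0,\hm_2$ the initial values are $a_j(\tau_0)=\epsilon\Omega_j+O(\epsilon\,e^{-\ell_0^2/8})$.

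\smallskip

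The central estimate is that, as long as $\tau\in[\tau_0,\tau_1]$ so the flow lies in the funnel $\cF_\tau$, the forcings satisfy $|g_j(\tau)|\leq CM_1 e^{-2\tau}$.  The cutoff-generated piece $\cE(w,\eta)$ from \eqref{eqn-E2} is supported in $L(\tau)\leq |y|\leq 2L(\tau)$ with $\mu_1,\mu_2$ of size controlled by \eqref{eqn-eta}; Proposition~\ref{prop-inout} bounds its $\hm_j$-projection by a multiple of $e^{-L(\tau)^2/32}$ times the interior $\hilb$-norm of $w$, which is super-exponentially smaller than $e^{-2\tau}$.  For the quadratic-type piece $\eta\,\cE(w)$ from \eqref{eqn-E1}, the $L^\infty$ hypothesis \eqref{eqn-u-der-bounds} combined with the analogous bounds \eqref{eq:um2-derivs} for $\bu$ give $|c_i(y,\tau)|\lesssim e^{-\tau}(1+|y|)^4$ on the support of $\eta$; integrating against the Gaussian weight and using the funnel bound $\|W^u\|\leq M_1 e^{-\tau}$ together with the stable control $\|W^s\|\leq e^{-\ell_0^2/8}\|W^u\|$ from \eqref{eqn-deltaW} then produces $|g_j(\tau)|\leq CM_1 e^{-2\tau}$.

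\smallskip

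Given these bounds, Duhamel's formula yields
\[
a_0(\tau)=e^{\tau-\tau_0}a_0(\tau_0)+r_0(\tau),\qquad a_2(\tau)=a_2(\tau_0)+r_2(\tau),
\]
with $|r_0(\tau)|\leq CM_1 e^{\tau-3\tau_0}$ and $|r_2(\tau)|\leq CM_1 e^{-2\tau_0}$.  Choosing $\tau_0$ large and $\epsilon$ sufficiently small---in the window compatible with \eqref{eqn-epsilon} and with $\epsilon\gg M_1 e^{-2\tau_0}$---the Duhamel corrections are negligible compared to the main terms.  Since $\Omega_0^2+\Omega_2^2=1$, at least one of $|\Omega_0|,|\Omega_2|$ is bounded below by $1/\sqrt 2$, and consequently, uniformly in $\bom\in\mathbb S^1$,
\[
\|W^u(\cdot,\tau)\|\geq c_0\bigl(\epsilon|\Omega_0|\,e^{\tau-\tau_0}+\epsilon|\Omega_2|\bigr).
\]
At $\tau=\tau_0$ the condition \eqref{eqn-epsilon} gives $e^{\tau_0}\|W^u(\cdot,\tau_0)\|\ll M_1$, while the lower bound forces $e^\tau\|W^u(\cdot,\tau)\|\to\infty$ along the in-funnel interval.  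Continuity of $\tau\mapsto \|W^u(\cdot,\tau)\|$ together with the intermediate value theorem then deliver a first time $\tau_1=\tau_1(\epsilon,\bom)\in(\tau_0,\infty)$ at which $\|W^u(\cdot,\tau_1)\|=M_1 e^{-\tau_1}$.

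\smallskip

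The principal obstacle is the circular dependence flagged in \S\ref{sec-outline}: Proposition~\ref{prop-inout}, the $L^\infty$ bound \eqref{eqn-u-der-bounds}, and the stable estimate \eqref{eqn-deltaW} are each justified only while the solution stays in the funnel, so the ODE analysis above must be threaded through a simultaneous bootstrap that maintains the funnel condition, the inner-outer $L^2$ estimate, and the $L^\infty$ bounds on one common maximal sub-interval $[\tau_0,\tau_1]$.  Once that bootstrap is in place, the crossing argument above produces the exit time $\tau_1$.
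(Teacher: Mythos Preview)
Your approach is close in spirit to the paper's, but it contains a genuine gap stemming from your use of an \emph{additive} error bound $|g_j(\tau)|\le C e^{-2\tau}$ with a constant independent of $\epsilon$.  Once you feed this into Duhamel, the remainders $|r_0(\tau)|\lesssim e^{\tau-3\tau_0}$ and $|r_2(\tau)|\lesssim e^{-2\tau_0}$ are also $\epsilon$-independent, and so they swamp the initial coefficients $a_j(\tau_0)\approx\epsilon\Omega_j$ whenever $\epsilon\lesssim e^{-2\tau_0}$.  You explicitly impose the side condition $\epsilon\gg M_1 e^{-2\tau_0}$, but the proposition requires the conclusion for \emph{every} sufficiently small $\epsilon>0$ once $\tau_0$ is fixed; your argument only covers the window $e^{-2\tau_0}\ll\epsilon\ll e^{-\tau_0}$.

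The paper avoids this by exploiting the fact that $\cE(w)=c_2 w_{yy}+c_1 w_y+c_0 w$ is \emph{linear} in $w$ with coefficients that are small (not merely bounded).  Through Lemma~\ref{lemma-error-small10} and Proposition~\ref{prop-inout} one gets a \emph{multiplicative} control
\[
\|g\|_{\hv^*}^2\le \delta\|W\|_{\hv}^2+\delta\!\int_{L}^{2L} w^2\,d\mu,
\]
with $\delta$ as small as desired (by choosing $\rho$ small).  This yields the differential inequality $\tfrac{d}{d\tau}\|W^u\|^2\ge -\delta\|W^u\|^2$ (see \eqref{eq-unstable-useful}), hence $\|W^u(\cdot,\tau)\|^2\ge e^{-\delta(\tau-\tau_0)}c_n\epsilon^2$.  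Since $\delta<2$, this eventually exceeds $M_1^2 e^{-2\tau}$ \emph{regardless} of how small $\epsilon$ is.  Your additive bound effectively replaces $\|W\|$ by its funnel upper bound $M_1 e^{-\tau}$ in the error, discarding precisely the proportionality that makes the argument $\epsilon$-uniform.

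A secondary omission: you do not rule out finite-time blowup while in the funnel.  The paper handles this separately, using the $L^\infty$ bounds from Lemma~\ref{cor-prop-funnel} to preclude a singularity in $|y|\le 2\rho e^{\tau/4}$ and the peanut barriers \eqref{eqn-upm2} to preclude one in $|y|\ge\ell_0$, thereby forcing $\tau_{\max}=\infty$ under the no-exit hypothesis and completing the contradiction.
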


\medskip

In order to show  Proposition \ref{lemma-big-unstable} we first need to show a series of other related results.  We first make the \emph{a priori} assumption that  $w(y,\tau)$ and its derivatives satisfy the $L^\infty$-bounds 
\begin{equation}		\label{eq-apriori-Linfty}
|w(y,\tau)| + |w_y(y,\tau)| + |w_{yy}(y,\tau)|  + |w_{yyy}(y,\tau)| \leq \Lambda   \, e^{-\tau} (1+|y|^4)
\end{equation}
for all $ y \in  [-2L(\tau) , 2L(\tau) ]$, $\tau \in [\tau_0, \tau_1]$, as long as $\|W^u(\cdot,\tau)\| \le M_1\, e^{-\tau}$,
for all $\tau \in [\tau_0,\tau_1]$.  Here $\Lambda$ is an auxiliary  constant.  We address this a'priori assumption on $L^{\infty}$-bounds in section \ref{sec-Linfty-w}.  Note that by our choice of initial data  this condition holds at $\tau_0$ for $|y| \le 2L(\tau_0)$, and by short time 
regularity it also holds on $\tau \in [\tau_0,\tau_0+\theta_0]$, $|y| \le2 L(\tau)$,   for some $\theta_0 >0$ small, when we replace constant $\Lambda$ by a slightly larger constant.  Note also that due to the peanut asymptotics \eqref{eq:um2-derivs},  the bound  \eqref{eq-apriori-Linfty} also implies:
\begin{equation}		\label{eq-v-small}
|u(y,\tau) - \sqrt{2(n-1)}| + |u_y(y,\tau)| + |u_{yy}(y,\tau)| + |u_{yyy}(y,\tau)| \leq \Lambda\,(1 + |y|^4)\, e^{-\tau}
\end{equation}
for $|y| \le 2\,L(\tau)$, and $\tau\in [\tau_0,\tau_1]$.  Here the constant $\Lambda$ can be chosen the same as in \eqref{eq-apriori-Linfty}.

Recall the definitions of the Hilbert spaces $\hilb$ and $\hv$ and their norms, given in subsection \ref{ss-HHH}.  Denote by $\hv^*$ the dual of $\hv$. Since we have a dense inclusion $\hv \subset \hilb$, we also get a dense inclusion $\hilb \subset \hv^*$ where every $f \in \hilb$ is interpreted as a functional on $\hv$ via
\[
g\in \hv \to \langle f,g\rangle.
\]
Because of this we will also denote the duality between $\hv$ and $\hv^*$ by
\[
(f,g) \in \hv\times\hv^* \to \langle f, g\rangle.
\]
Since $\hilb \subset \hv^*$, for every $f \in \hilb$ we define the dual norm as usual by
\[
\|f\|_{\hv^*} := \sup\{ \langle f,g\rangle\,\,\, : \,\,\, \|g\|_\hv \le 1\}.
\]
\smallskip 
The next Lemma tells how we estimate error term $g:=  \eta \, \cE(w)  + \cE(w,\eta)$ in \eqref{eqn-W}  with respect to appropriately chosen norms. 

\begin{lemma}\label{lemma-error-small10}
Assume that $u \in \cF_{\epsilon, \tau_1}$ and $w$ satisfies \eqref{eq-apriori-Linfty}, which implies that $u$ satisfies \eqref{eq-v-small}.  Let $g:=\cE(w) + \cE(w,\eta)$ be  the error term  given by  equation \eqref{eqn-W}.  For $\tau_0 \gg 1$ chosen large as above (independent of the considered  initial data)  we have that for any $\delta > 0$ there exists a $\rho$ sufficiently small so that 
\begin{equation}		\label{eq-error-g}
\|g\|_{\hv ^*}^2 \le \delta\, \|W\|_\hv^2 + \delta \int_{L(\tau)}^{2L(\tau)} w^2 \, d\mu,
\end{equation}
for all $\tau\in [\tau_0,\tau_1]$, where  $d\mu = e^{-y^2/4} dy$, and  $L(\tau)= \rho\, e^{\tau/4}$. 
\end{lemma}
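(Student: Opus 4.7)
The approach is to split $g=\eta\,\cE(w)+\cE(w,\eta)$ into an interior piece $g_{\mathrm{int}}$ supported on all of $\mathrm{supp}(\eta)$ and a transition piece $g_{\mathrm{bdy}}$ supported only on $\{L(\tau)\le|y|\le 2L(\tau)\}$; the former will contribute to the $\delta\|W\|_\hv^2$ part of the estimate and the latter to the $\delta\int_L^{2L}w^2\,d\mu$ part. Using the identities $\eta w=W$, $\eta w_y=W_y-\eta_y w$, and $\eta w_{yy}=W_{yy}-2\eta_y w_y-\eta_{yy}w$, one rewrites
\[
g=\underbrace{c_2 W_{yy}+c_1 W_y+c_0 W}_{g_{\mathrm{int}}}\;+\;\underbrace{(\mu_1-c_1\eta_y-c_2\eta_{yy})w+(\mu_2-2c_2\eta_y)w_y}_{g_{\mathrm{bdy}}}.
\]
The a priori bounds \eqref{eq-apriori-Linfty}--\eqref{eq-v-small} give the pointwise estimates
\[
|c_0|\le C\Lambda(1+|y|^4)e^{-\tau},\qquad |c_1|,\,|c_2|,\,|c_{2y}|\le C\Lambda^2(1+|y|^4)^2 e^{-2\tau},
\]
so that, restricted to $\mathrm{supp}(\eta)=\{|y|\le 2\rho e^{\tau/4}\}$, each $L^\infty$ norm is at most $C\Lambda^2(e^{-\tau_0}+\rho^4)$, which can be made smaller than any prescribed $\delta$ by choosing $\rho$ small (with $\tau_0$ already large). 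On $[L,2L]$ the cutoff quantities satisfy $|\eta_y|,|\eta_{yy}|,|\mu_1|,|\mu_2|\le Ce^{-\tau/4}$.

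\textbf{Interior part and main obstacle.} Testing $g_{\mathrm{int}}$ against an arbitrary $\phi\in\hv$, the lower-order terms $c_0 W$ and $c_1 W_y$ are immediately bounded by $(\|c_0\|_\infty+\|c_1\|_\infty)\,\|W\|_\hv\|\phi\|_\hv$. For the second-derivative piece I integrate by parts against the Gaussian weight, obtaining
\[
\langle c_2 W_{yy},\phi\rangle=-\int c_{2y}W_y\phi\,d\mu-\int c_2 W_y\phi_y\,d\mu+\tfrac12\int y c_2 W_y\phi\,d\mu.
\]
The first two pieces are controlled by $(\|c_{2y}\|_\infty+\|c_2\|_\infty)\,\|W\|_\hv\|\phi\|_\hv$. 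The third piece is the \emph{main obstacle}: a direct sup-norm bound fails because on $\mathrm{supp}(\eta)$ one has $|yc_2|\sim\rho^9 e^{\tau/4}$, which is not small uniformly in $\tau$. I resolve this by invoking the Gaussian Poincar\'e-type inequality
\[
\|y\phi\|_\hilb\le C\|\phi\|_\hv,
\]
which follows from the identity $\int y^2\phi^2\,d\mu=2\|\phi\|_\hilb^2+4\int y\phi\phi_y\,d\mu$ (obtained by integrating $y^2 e^{-y^2/4}$ by parts) together with Cauchy--Schwarz applied to the right-hand side. This converts the bad $y$-factor on the coefficient side into a shift from $\hilb$ to $\hv$ on the test side, so $\bigl|\int yc_2 W_y\phi\,d\mu\bigr|\le\|c_2\|_\infty\|W_y\|_\hilb\|y\phi\|_\hilb\le C\|c_2\|_\infty\|W\|_\hv\|\phi\|_\hv$. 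Altogether, $\|g_{\mathrm{int}}\|_{\hv^*}\le C\,\max_j\|c_j\|_\infty\cdot\|W\|_\hv$.

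\textbf{Transition part and conclusion.} The $w$-term of $g_{\mathrm{bdy}}$ is supported on $[L,2L]$ with coefficient bounded by $Ce^{-\tau/4}$, and its $\hilb$ norm (hence its $\hv^*$ norm) is at most $Ce^{-\tau/4}\bigl(\int_L^{2L}w^2\,d\mu\bigr)^{1/2}$. For the $w_y$-term I again integrate by parts so as to remove the derivative from $w$, using the same Gaussian Poincar\'e bound to handle the resulting $y$-factor, and estimate the pieces via the $L^\infty$ smallness of $\mu_2,\eta_y,\eta_{yy}$; this yields $\|g_{\mathrm{bdy}}\|_{\hv^*}\le Ce^{-\tau/4}\bigl(\int_L^{2L}w^2\,d\mu\bigr)^{1/2}$. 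Squaring and combining,
\[
\|g\|_{\hv^*}^2\le C\,\max_j\|c_j\|_\infty^2\cdot\|W\|_\hv^2+Ce^{-\tau/2}\int_L^{2L}w^2\,d\mu,
\]
and choosing $\rho$ small (with $\tau_0$ already chosen large) makes both prefactors less than $\delta$, which proves the claim.
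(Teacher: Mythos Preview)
Your overall strategy---splitting $g$ into an interior piece expressed through $W$ and a boundary piece supported on $[L,2L]$, integrating by parts, and invoking the Gaussian Poincar\'e inequality $\|y\phi\|_\hilb\le C\|\phi\|_\hv$---is exactly the approach referenced by the paper (Lemmas~6.8--6.9 of \cite{ADS}), and your treatment of the interior term $g_{\mathrm{int}}$ is correct.

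There is, however, a genuine gap in the boundary estimate. Your claim $|\mu_1|\le Ce^{-\tau/4}$ is false: since $\eta(y,\tau)=\eta_0(y/L)$ with $L=\rho e^{\tau/4}$ and $L'/L=\tfrac14$, one has
\[
\eta_\tau=-\tfrac{y}{4L}\,\eta_0'(y/L),\qquad \tfrac{y}{2}\eta_y=\tfrac{y}{2L}\,\eta_0'(y/L),
\]
so $\eta_\tau-\tfrac{y}{2}\eta_y=-\tfrac{3y}{4L}\eta_0'(y/L)$, and on the support $|y/L|\le 2$. Hence $|\mu_1|\le C$ is a fixed constant independent of $\tau$ and $\rho$ (the paper itself notes this later, writing ``the largest term is $y\eta_y$''; the displayed bound \eqref{eqn-eta} on $\eta_\tau$ is a typo). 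Consequently your direct estimate $\|\mu_1 w\|_{\hv^*}\le\|\mu_1 w\|_\hilb\le C\bigl(\int_L^{2L}w^2\,d\mu\bigr)^{1/2}$ yields a \emph{fixed} prefactor $C^2$ after squaring, which cannot be made $\le\delta$.

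The repair uses precisely the tool you already introduced. For any $f$ supported on $\{|y|\ge L\}$, write $\langle f,\phi\rangle=\int (f/y)\,(y\phi)\,d\mu$ and apply Cauchy--Schwarz together with $\|y\phi\|_\hilb\le C\|\phi\|_\hv$ to get $\|f\|_{\hv^*}\le CL^{-1}\|f\|_\hilb$. Applying this to $f=\mu_1 w$ gives
\[
\|\mu_1 w\|_{\hv^*}^2\le \frac{C}{L^2}\int_L^{2L}w^2\,d\mu=\frac{C}{\rho^2}\,e^{-\tau/2}\int_L^{2L}w^2\,d\mu,
\]
which is $\le\delta\int_L^{2L}w^2\,d\mu$ once $\tau_0$ is taken large depending on $\rho$ (recall the peanut construction in \S\ref{sec:peanut properties} already requires $\tau_0$ large in terms of $\rho$). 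The same device handles the $\tfrac12\int y\nu w\phi\,d\mu$ term arising from your integration by parts of $(\mu_2-2c_2\eta_y)w_y$, where again $|y\nu|\sim\rho$ rather than $e^{-\tau/4}$. With these corrections your argument closes.
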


\begin{proof}
Having \eqref{eq-v-small}, the proof of Lemma is analogous to the proofs of Lemma 6.8 and Lemma 6.9 in \cite{ADS}.  Here we use the estimate \eqref{eq-v-small}, and the fact that we cut very far away, of the order of $L(\tau) = \rho e^{{\tau/4}}$, so that the errors coming from cut off functions are exponentially small.  To be more precise, by \eqref{eq-v-small}, \eqref{eqn-E1} and \eqref{eqn-E2}, keeping in mind that time and space derivatives of $\eta(y,\tau)$ are supported on $L(\tau) \le |y| \le 2L(\tau)$, and that $\rho > 0$ is sufficiently small we get \eqref{eq-error-g}.
\end{proof}

\sk
\sk  

\subsection{Estimating the quotient $\|W^s(\cdot, \tau)\| /\|W^u(\cdot, \tau)\| $}  In the following Lemma we show that as long as our solution stays inside the funnel, the $L^2$ norm of unstable projection $\|W^u(\cdot,\tau)\|$  dominates  the $L^2$ norm of stable projection, $\|W^s(\cdot,\tau)\|$.  Recall that $ L = L(\tau)= \rho\, e^{\tau/4}$ and that the support of the initial perturbation $w(y, \tau_0) = \eta_0(y/\ell_0)\bigl(\Omega_0 \hm_0(y) + \Omega_2 \hm_2(y)\bigr)$ is contained in the interval $(-2\ell_0, 2\ell_0)$ (See \eqref{eqn-wtau0}).

\begin{lemma}\label{lemma-quotient1}\label{cor-quotient}
There exists a large constant $\ell_0$, and small $\rho = \rho(\ell_0)$ so that if $u \in \cF_{\tau_1}$ satisfies \eqref{eqn-uuu} and $w$ satisfies \eqref{eq-apriori-Linfty}, then
\[
\|W^s(\cdot, \tau)\|   <   e^{-\ell_0^2/8}\,  \|W^u(\cdot, \tau)\| 
\]
for all $\tau\in [\tau_0,\tau_1]$. 
\end{lemma}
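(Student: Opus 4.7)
The plan is to derive a differential inequality for the quotient $Q(\tau) := \|W^s(\cdot,\tau)\|^2/\|W^u(\cdot,\tau)\|^2$ that exploits the spectral gap of the drift Laplacian $\cL$: its unstable eigenvalues are $\lambda_0=1$ and $\lambda_2=0$, while the stable ones are $\lambda_{2k}=1-k\le -1$ for $k\ge 2$. Writing $f:=\|W^u\|$ and $h:=\|W^s\|$, projecting the evolution \eqref{eqn-W} onto $\hilb^u,\hilb^s$ and taking $\hilb$-inner products gives
\[
\tfrac12(f^2)_\tau \ge -\|W^u\|_\hv\|g\|_{\hv^*}, \qquad \tfrac12(h^2)_\tau \le -h^2 + \|W^s\|_\hv\|g\|_{\hv^*},
\]
using $\langle W^u,\cL W^u\rangle\ge 0$ and $\langle W^s,\cL W^s\rangle\le -h^2$. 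Because $W^u$ is a polynomial of degree $\le 2$, all its norms are equivalent, so $\|W^u\|_\hv\le C_0 f$, while integration by parts against the Gaussian weight yields the identity $\|W^s\|_\hv^2 = -\langle W^s,\cL W^s\rangle + 2h^2$.

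To handle the forcing $g=\eta\cE(w)+\cE(w,\eta)$, I invoke Lemma~\ref{lemma-error-small10}: $\|g\|_{\hv^*}^2 \le \delta\|W\|_\hv^2 + \delta\int_{L}^{2L}w^2\,d\mu$, with $\delta$ arbitrarily small once $\rho$ is small and $\tau_0$ is large. By the hypothesis \eqref{eqn-uuu}, Proposition~\ref{prop-inout} controls the tail integral by $e^{-L(\tau)^2/16}$ times quantities from the inner region, making it negligible. The delicate step is Young's inequality with parameter $\beta\sim\sqrt\delta$: the $\delta\|W^s\|_\hv^2$ piece of $\|g\|_{\hv^*}^2$ must be absorbed \emph{into the dissipation} $-\langle W^s,\cL W^s\rangle$ via the identity above (keeping that term symbolic until the final step), not into $h^2$, otherwise one loses coercivity. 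After this absorption one obtains
\[
(h^2)_\tau \le -(2-C\sqrt\delta)\,h^2 + C\sqrt\delta\,f^2 + (\text{exp.\ small}),\qquad (f^2)_\tau \ge -C\sqrt\delta\,(f^2+h^2) - (\text{exp.\ small}),
\]
and hence $Q_\tau \le -(2-C\sqrt\delta)Q + C\sqrt\delta + (\text{exp.\ small})/f^2$; the last term is genuinely negligible because the unstable dynamics keeps $f\gtrsim\epsilon$ on $[\tau_0,\tau_1]$.

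To close the argument, Lemma~\ref{lemma-lo} gives $Q(\tau_0)\le C\ell_0^3 e^{-\ell_0^2/4}$. Taking $\ell_0$ large enough that this polynomial prefactor is swallowed by a small slice of the exponential, and then $\rho$ small enough that $C\sqrt\delta<\tfrac14 e^{-\ell_0^2/4}$, a contradiction/continuity argument finishes: if $\tau^*\in(\tau_0,\tau_1]$ were a first time with $Q(\tau^*)=e^{-\ell_0^2/4}$, the differential inequality would force $Q_\tau(\tau^*)\le-(2-o(1))\,e^{-\ell_0^2/4}+\tfrac14 e^{-\ell_0^2/4}<0$, contradicting that $Q$ is reaching this value from below. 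Hence $Q<e^{-\ell_0^2/4}$ on $[\tau_0,\tau_1]$, i.e.\ $\|W^s\|<e^{-\ell_0^2/8}\|W^u\|$.

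The main obstacle is the Young-inequality bookkeeping in the stable energy estimate: a naive bound of $\|W^s\|_\hv^2$ in terms of $h^2$ admits arbitrarily large high-frequency contributions, and it is precisely the coercive term $-\langle W^s,\cL W^s\rangle$ (via the identity relating it to $\|W^s\|_\hv^2$) that rescues the absorption. A secondary point is the hierarchy of parameters: $\ell_0$ is chosen first (large), and only then are $\rho$ and $\tau_0$ tuned depending on $\ell_0$ so that the error parameter $\delta$ is much smaller than $e^{-\ell_0^2/4}$; the Lemma's formulation allows exactly this order of quantifiers.
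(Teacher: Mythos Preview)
Your overall strategy---spectral gap, differential inequality for the quotient $Q=\|W^s\|^2/\|W^u\|^2$, error bound via Lemma~\ref{lemma-error-small10}, tail control via Proposition~\ref{prop-inout}, initial smallness from Lemma~\ref{lemma-lo}, and a continuity argument---matches the paper's. The gap is in the sentence ``the last term is genuinely negligible because the unstable dynamics keeps $f\gtrsim\epsilon$ on $[\tau_0,\tau_1]$.'' This step does not close, and the paper inserts an additional ingredient precisely here.

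The ``exponentially small'' term you carry is, via Proposition~\ref{prop-inout}, of the form
\[
\delta\int_L^{2L} w^2\,d\mu \;\le\; C\,e^{-L(\tau)^2/16}\,e^{-2\tau}\int_{\tau_0}^{\tau} e^{2s}\,\|W(\cdot,s)\|^2\,ds,
\]
so after dividing by $f^2=\|W^u\|^2$ you are left with the ratio
\[
\frac{\int_{\tau_0}^{\tau} e^{2s}\,\|W(\cdot,s)\|^2\,ds}{e^{2\tau}\,\|W^u(\cdot,\tau)\|^2}.
\]
Asserting $f\gtrsim\epsilon$ does not control this: first, that lower bound is itself a consequence of the lemma (it is derived in the paper only after Lemma~\ref{cor-prop-funnel}, which uses Lemma~\ref{cor-quotient}), so invoking it here is circular; second, even granting $f\ge c\epsilon$, the funnel bound $\|W^u\|\le M_1e^{-\tau}$ gives only $\int_{\tau_0}^{\tau}e^{2s}\|W\|^2\,ds\lesssim M_1^2\tau$, and $M_1^2\tau\,e^{-2\tau}/\epsilon^2$ is \emph{not} small uniformly in $\epsilon$, since $\epsilon$ is an arbitrary small parameter (cf.~\eqref{eqn-epsilon}) chosen after $\tau_0$, $\rho$, $\ell_0$.

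The paper handles this with a separate bootstrap (its Step~1): assuming $Q<e^{-\ell_0^2/8}$, it proves the \emph{relative} integral bound
\[
\int_{\tau_0}^{\tau} e^{2s}\,\|W^u(\cdot,s)\|^2\,ds \;<\; 2\,e^{2\tau}\,\|W^u(\cdot,\tau)\|^2,
\]
by deriving $\frac{d}{ds}(e^{2s}\|W^u\|^2)\ge\tfrac32 e^{2s}\|W^u\|^2$ and integrating. This converts the tail term into $C\,e^{-L^2/16}\,\|W^u\|^2$, which \emph{is} negligible after division by $f^2$, independently of $\epsilon$. Only then does the $Q$-inequality (the paper's Step~2, which is essentially your argument) close. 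Your proposal needs this intermediate step; without it the ``$(\text{exp.\ small})/f^2$'' term is not controlled.
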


\begin{proof}
Throughout the proof we assume that $u\in \mc F_{\tau_1}$.  The proof of the Lemma will immediately follow from the two steps below.  
\begin{step}		\label{step-one-side}
As long as $\|W^s(\cdot, \tau)\|  < e^{-\ell_0^2/8} \|W^u(\cdot, \tau)\| $, we have
\begin{equation}		\label{eq-cond}
\int_{\tau_0}^{\tau} \|W^u(\cdot, s)\| ^2  e^{2 s} \, ds  < 2 e^{2\tau}\|W^u(\cdot, \tau)\|^2 .
\end{equation}
This inequality holds at $\tau=\tau_0$.  By continuity \eqref{eq-cond} also holds for $\tau>\tau_0$ sufficiently close to $\tau_0$.  To show that \eqref{eq-cond} holds for all $\tau < \tau_1$, we argue by contradiction and assume that there is a first time $\bar \tau_1 \in (\tau_0, \tau_1)$ at which \eqref{eq-cond} fails while $\|W^s(\cdot, \tau)\| \le e^{-\ell_0^2/8}\|W^u(\cdot, \tau)\| $ holds for $\tau\in [\tau_0,{\bar \tau_1}]$.  Thus, for all $\tau\in [\tau_0,\bar \tau_1)$ the inequality~\eqref{eq-cond} holds, while at $\tau=\bar\tau_1$ we have equality in \eqref{eq-cond}.

Since $\|W^u(\cdot, s)\|>0$ for all $s\in[\tau_0, \bar\tau_1)$ that are sufficiently close to $\tau_0$, the integral on the left in \eqref{eq-cond} is always positive, and therefore
\begin{equation}		\label{eq:Step 1 Wu not zero}
\|W^u(\cdot, s)\|>0\text{ for all }s\in[\tau_0, \bar\tau_1).
\end{equation}

From the equation $\frac{\partial W}{\partial\tau} = \mc{L} W + g$ for $W$, using that the unstable eigenspace is finite dimensional, similarly as in section 6 in \cite{ADS}, using \eqref{eq-apriori-Linfty}, after integration by parts and using the Cauchy Schwartz inequality we get 
\begin{equation}		\label{eq-unstable-W1}
\frac{d}{ds}\|W^u\|^2 \ge -\delta\, \|W^u\|^2 -\delta \int_L^{2L} w^2\, d\mu \ge -c_0 \, \delta \, \|W^u\|^2 -\delta \int_L^{2L} w^2\, d\mu,
\end{equation}
Here $\delta>0$ is a constant that can be made as small as we want by taking $\rho$ small.  We can rewrite the previous inequality as
\[
\frac{d}{ds} \Big(e^{2s} \|W^u\|^2 \Big) \ge (2 - c_0\delta) \,   e^{2s}\|W^u\|^2 - \delta \, e^{2s} \|w\|_{\hilb[L, 2L]}^2 .  
\]
Since we assume \eqref{eqn-uuu} holds, Proposition \ref{prop-inout} tells us that
\[
\frac{d}{ds} \Big(e^{2s} \|W^u\|^2  \Big)\geq(2-c_0\delta) \, \|W^u\|^2  e^{2s} - \delta \int_{\tau_0}^s e^{2s'} \|W\|^2  ds'.
\]
By $\|W^s\|^2 \le \delta \|W^u\| ^2$, and \eqref{eq-cond}, for all $\tau\in [\tau_0,{\bar \tau_1})$ we  have
\[
\frac{d}{ds} \Bigl(e^{2s} \|W^u\|^2  \Bigr)
\geq (2-c_0\delta-\delta) e^{2s} \|W^u\|^2  
\geq \frac{3}{2} e^{2s} \|W^u\|^2  ,
\]
provided we make sure $\delta$ is sufficiently small.
For any $s\in[\tau_0, \bar\tau_1)$ we integrate this differential inequality over  $[s,\bar \tau_1]$ to get
\[
e^{2s} \|W^u(\cdot,s)\|^2  \le e^{2{\bar \tau_1}}\|W^u(\cdot, {\bar \tau_1})\|^2  e^{-3/2\,({\bar \tau_1} - s)},
\]
for all $s\in [\tau_0,{\bar \tau_1}]$.  Integrate this in $s$, from $\tau_0$ to ${\bar \tau_1}$ to get
\begin{equation}		\label{eq:13}
\int_{\tau_0}^{\bar \tau_1} e^{2s}\|W^u(\cdot, s)\|^2 ds
\leq e^{2\bar\tau_1}\|W^u(\cdot, \bar\tau_1)\|^2 \int_{\tau_0}^{\bar\tau_1}e^{-\frac 32 (\bar\tau_1-s)}ds
\leq \frac 23 e^{2\bar\tau_1}\|W^u(\cdot, \bar\tau_1)\|^2.
\end{equation}
On the other hand we began the proof of Step 1 with the  assumption that
\begin{equation}		\label{eq:14}
\int_{\tau_0}^{\bar\tau_1} \|W^u(\cdot, s)\| ^2  e^{2 s} \, ds  = 2 e^{2\bar\tau_1}\|W^u(\cdot, \bar\tau_1)\|^2 .
\end{equation}
We have shown in~\eqref{eq:Step 1 Wu not zero} that $\|W^u(\cdot, \bar\tau_1)\|\neq 0$, so \eqref{eq:13} and \eqref{eq:14} cannot both be true, which completes the proof of Step~\ref{step-one-side}.
\end{step}

\begin{step}		\label{step-two-side}
As long as \eqref{eq-cond} holds we have $\|W^s(\cdot, \tau)\|^2  < e^{-3\ell_0^2/16} \|W^u(\cdot, \tau)\|^2 $.  

To show this, we begin by observing that similarly to the proof of Lemma 6.6 in \cite{ADS}, we have
\[
\frac{d}{d\tau}\|W^s\|^2  \le - 2 c  \, \|W^s\|^2_{ \hv} + \|g\|_{\hv^*} \|W^s\|_\hv
\]
for some $c >0$. 

Using the above inequality,  Lemma \ref{lemma-error-small10}, and Cauchy-Schwarz inequality,  we get
\[
\frac{d}{d\tau}\|W^s\|^2  \le  -\frac 53  \, c \, \|W^s\|^2_\hv + \delta \|W\|^2_\hv + \delta\int_L^{2L}w^2 \, d\mu.
\]
Using $\|W\|_\hv^2 = \|W^s\|_\hv^2 + \|W^u\|_\hv^2$,  the fact that $\|W^u\|_\hv\leq C \|W^u\|$ (because $\hilb^u$ is finite dimensional), and assuming that $\delta<\frac 16$, we get
\begin{equation}		\label{eq-stable-W1}
\frac{d}{d\tau} \|W^s\|^2  \le -\frac32 \, c \, \|W^s\|^2_\hv + \delta \|W^u\|^2  + \delta \int_L^{2L} w^2 \, d\mu,
\end{equation}
Since we are assuming \eqref{eqn-uuu}, by \eqref{eq-stable-W1}, \eqref{eq-unstable-W1} and Proposition \ref{prop-inout},  we have
\[
\frac{d}{d\tau} \|W^s\|^2  \le -\frac{3}{2} \, c \, \|W^s\|^2  + \delta\| W^u\|^2  + \delta\, \int_{\tau_0}^{\tau} e^{-2(\tau-s)}\, \|W\|^2 \, ds
\]
and 
\[
\frac{d}{d\tau} \|W^u\|^2  \ge -\delta \|W^u\|^2  - \delta\, \int_{\tau_0}^{\tau} e^{-2(\tau-s)}\, \|W\|^2 \, ds.  
\]
The above hold for  a tiny constant $\delta$. 

Assume $\bar \tau_1 < \tau_1$ is the first time so that
\begin{itemize}
\item  \eqref{eq-cond} holds for all $\tau\in [\tau_0, \bar \tau_1]$, 
\item $\|W^s(\cdot, \tau)\|  <  e^{-3\ell_0^2/16} \|W^u(\cdot, \tau)\| $ holds for all $\tau\in [\tau_0, \bar \tau_1)$, 
\item $\|W^s(\bar \tau_1)\| = e^{-3\ell_0^2/16} \|W^u(\bar \tau_1)\| $
\end{itemize}
Using the same notation $\delta$ for a tiny constant that can vary from line to line, the above estimates yield, for all $\tau\in [\tau_0, \bar \tau_1]$, the bounds
\begin{align}
  \frac{d}{d\tau} \|W^s\|^2 & \le - c \,  \|W^s\|^2  + \delta \|W^u\|^2 ,    \\
  \frac{d}{d\tau} \|W^u\|^2 & \ge - 2\delta\, \|W^u\|^2. \label{eq-unstable-useful}
\end{align}
Set $Q(\tau) := \frac{\| W^s (\cdot, \tau)  \|^2}{\| W^u (\cdot, \tau)  \|^2}.$ The last two inequalities yield
\[
\frac{d}{d\tau} Q \le - (c - 2 \delta ) \, Q  + \delta = -c_0 Q+\delta,
\]
where $c_0:=c-2\delta $.  Assume $\delta>0$ is so small that $c_0>0$, and use variation of constants to integrate the  inequality from $\tau_0$ to any $\tau\in (\tau_0,\bar \tau_1]$.  This leads to
\[
Q(\tau) \leq e^{-c_0(\tau-\tau_0)}Q(\tau_0) + \frac{\delta}{c_0}\big(1-e^{-c_0(\tau-\tau_0)})
<Q(\tau_0) + \frac{\delta}{c_0}.
\]
By Lemma \ref{lemma-lo} we have $Q(\tau_0) \leq e^{- 5\ell^2_0/16}$.  Recalling that $\delta$ can be made as small as we want by taking $\rho$ small,  we conclude we can choose $\rho = \rho(\ell_0)$ so small that
\[
  Q(\tau) \le   e^{-5\ell_0^2/16} + \frac{\delta}{c_0} < e^{-3\ell_0^2/16} 
\]
for all $\tau\in [\tau_0, \bar \tau_1]$.  This concludes the proof of the statement in Step \ref{step-two-side}.
\end{step}

To conclude the proof of Lemma \ref{lemma-quotient1}, we claim that Step \ref{step-one-side} and Step \ref{step-two-side}, together with the assumptions in the Lemma, imply that $\|W^s(\cdot, \tau)\| \le e^{-\ell_0^2/8}\, \|W^u(\cdot, \tau)\| $ and \eqref{eq-cond} hold for all $\tau\in [\tau_0,\bar{\tau_1}]$.  To justify this we argue as follows.  At time $\tau_0$ by Lemma \ref{lemma-lo} we have $\|W^s(\cdot,\tau_0)\| \le e^{-5\ell_0^2/32}\, \|W^u(\cdot,\tau_0)\| $.  Let $\tilde{\tau}_1 \in (\tau_0,\bar{\tau_1}]$ be the maximal time so that $\|W^s(\cdot,\tau)\| \le e^{-\ell_0^2/8}\|W^u(\cdot,\tau)\| $ for all $\tau\in[\tau_0, \tilde\tau_1]$.  Assume $\tilde{\tau}_1 < \bar{\tau}_1$, since otherwise there is nothing to prove.  By Step 1 we have~\eqref{eq-cond} for all $\tau\in [\tau_0,\tilde{\tau}_1)$.  We can now apply Step 2 to conclude that $\|W^s(\cdot,\tau)\| < \, e^{-3\ell_0^2/16}\,\|W^u(\cdot,\tau)\| \ll e^{-\ell_0^2/8} \|W^u(\cdot,\tau)\| $, for all $\tau\in [\tau_0,\tilde{\tau}_1)$, hence contradicting the maximality of $\tilde{\tau}_1$.  This implies $\tilde{\tau}_1 = \bar{\tau}_1$ as claimed.  The proof of Lemma \ref{lemma-quotient1} is now complete.
\end{proof}

\begin{remark}		\label{rem-gtau}
Lemma \ref{lemma-quotient1} implies that if $u \in \cF_{\tau_1}$ satisfies \eqref{eqn-uuu}, $w$ satisfies \eqref{eq-apriori-Linfty}, and if $\|W^u(\cdot, s)\| > 0$ for all $\tau\in [\tau_0,\bar{\tau}_1]$, then $\|W^s(\cdot, \tau)\| < e^{-\ell_0^2/8}\, \|W^u(\cdot, \tau)\| $, for all $\tau\in[\tau_0,\bar{\tau}_1]$, where $\bar{\tau}_1 \le \tau_1$.  Step~1 now implies that~\eqref{eq-cond} holds for all $\tau\in [\tau_0,\bar{\tau}_1]$ as well. 
\end{remark}

\subsection{$L^\infty$ estimates}		\label{sec-Linfty-w}

Lemma~\ref{lemma-quotient1} plays a crucial role in the proof of Proposition \ref{lemma-big-unstable} and consequent results.  In the proof of Lemma \ref{lemma-quotient1} we made two \emph{a priori} assumptions:
\begin{enumerate}[\upshape(i)]
\item the bound \eqref{eqn-uuu} that was used in the proof of Proposition \ref{prop-inout} which in turn was used in the proof of Lemma \ref{lemma-quotient1}, and
\item the \emph{a priori} $L^{\infty}$ estimate \eqref{eq-apriori-Linfty} that was used directly in Lemma \ref{lemma-quotient1}.
\end{enumerate}
Assuming \eqref{eqn-uuu} and \eqref{eq-apriori-Linfty}, we will now improve these two bounds in such a way that at the end we will get that the conclusion of Lemma \ref{cor-quotient} holds independently of these \emph{a priori} assumptions.

We start with the following  consequence of  Lemma~\ref{lemma-quotient1}.

\begin{lemma}\label{cor-expansion} 
Assume that $u \in \mc F_{\tau_1}$ and that $ \|W^s (\cdot, \tau)\| \le e^{-\ell_0^2/16}\, \|W^u(\cdot, \tau)\| $ holds for all $\tau\in[\tau_0,\tau_1]$.  Let $\ell$ be any large constant satisfying  $ 1 \ll \ell \leq {\ell_0}/{10} \ll e^{\tau_0/4}$.  For all $\tau \in [\tau_0, \tau_1]$ with $\tau_0$ sufficiently large, we have
\begin{equation}		\label{eqn-wexpansion} 
\big \| w(\cdot, \tau) -  W^u(\cdot,\tau) \big \|_{C^2[0,\ell]} \le  e^{-\ell_0^2/30}  \, e^{- \tau}.
\end{equation}
\end{lemma}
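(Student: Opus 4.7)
The plan starts with the observation that on $[0,\ell]$, since $2\ell\le\ell_0/5\ll L(\tau_0)\le L(\tau)$ (using $\ell_0\ll e^{\tau_0/4}$ together with $\tau_0$ large), the spatial cutoff $\eta(y,\tau)=\eta_0(y/L(\tau))$ equals $1$. Hence $W(\cdot,\tau)=w(\cdot,\tau)$ on this interval, so $w-W^u=W-W^u=W^s$ on $[0,\ell]$, and \eqref{eqn-wexpansion} reduces to a $C^2$-estimate for the stable component $W^s$ itself.

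Combining the hypothesis $\|W^s(\cdot,\tau)\|_\hilb \le e^{-\ell_0^2/16}\|W^u(\cdot,\tau)\|$ with the funnel bound $\|W^u(\cdot,\tau)\|\le M_1 e^{-\tau}$ gives $\|W^s(\cdot,\tau)\|_\hilb \le M_1\, e^{-\ell_0^2/16-\tau}$. On $[0,2\ell]$ the Gaussian weight satisfies $e^{-y^2/4}\ge e^{-\ell^2}\ge e^{-\ell_0^2/100}$, so passing to the ordinary $L^2$ norm costs at most a factor $e^{\ell^2/2}\le e^{\ell_0^2/200}$:
\[
\|W^s\|_{L^2[0,2\ell]}\le e^{\ell^2/2}\,\|W^s\|_\hilb \le M_1\, e^{-\beta\ell_0^2}\,e^{-\tau},\qquad \beta:=\tfrac{1}{16}-\tfrac{1}{200}>\tfrac{1}{18}.
\]

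Next I would establish $C^k$ bounds on $W^s$ on $[0,2\ell]$ for arbitrary $k$. The a priori hypothesis \eqref{eq-apriori-Linfty} controls derivatives of $w$ through third order, and since the coefficients in equation \eqref{eqn-w2} depend smoothly on $u,\bu$ (which satisfy \eqref{eq-v-small}), iterated differentiation of \eqref{eqn-w2} in $y$ combined with standard interior parabolic Schauder estimates yields $\|w\|_{C^k[0,2\ell]}\le C_k(1+\ell_0)^{N_k}\,e^{-\tau}$ for every $k$. The unstable component $W^u(y,\tau)=c_0(\tau)\hm_0(y)+c_2(\tau)\hm_2(y)$ is a quadratic polynomial in $y$ with $|c_j(\tau)|\le CM_1\,e^{-\tau}$, so it satisfies the same type of bound. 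Therefore $\|W^s\|_{C^k[0,2\ell]}\le C_k(1+\ell_0)^{N_k}\,e^{-\tau}$ for every $k$.

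The final step applies one-dimensional Gagliardo--Nirenberg interpolation on the interval: for $k$ sufficiently large,
\[
\|W^s\|_{C^2[0,\ell]}\le C\,\|W^s\|_{L^2[0,2\ell]}^{\,1-\theta_k}\,\|W^s\|_{C^k[0,2\ell]}^{\theta_k}+C\,\|W^s\|_{L^2[0,2\ell]},
\]
with $\theta_k\to 0$ as $k\to\infty$. Substituting the two previous bounds yields an estimate of the form $C_k(1+\ell_0)^{M_k}\,e^{-(1-\theta_k)\beta\ell_0^2}\,e^{-\tau}$. Choosing $k$ so large that $(1-\theta_k)\beta>1/30$, and then $\ell_0$ large enough that the polynomial factor $(1+\ell_0)^{M_k}$ is absorbed by the gain $e^{((1-\theta_k)\beta-1/30)\ell_0^2}$, this becomes $\le e^{-\ell_0^2/30}\,e^{-\tau}$, proving \eqref{eqn-wexpansion}. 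The main technical obstacle is the parabolic bootstrap in the third paragraph --- upgrading \eqref{eq-apriori-Linfty} from $C^3$ to $C^k$ for arbitrary $k$ --- but this is a standard consequence of the smoothness of the coefficients in \eqref{eqn-w2} and the regularising property of parabolic equations; the remainder of the argument is a bookkeeping of exponents.
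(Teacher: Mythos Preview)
Your argument is correct and follows the same starting identification $w-W^u=W^s$ on $[0,\ell]$ that the paper uses, but the two proofs diverge in how they upgrade the weighted $L^2$ smallness of $W^s$ to a $C^2$ bound. The paper derives the PDE satisfied by $f=w-W^u$, namely $f_\tau=\cL f+\tilde g$ with $\tilde g=\cE(w)-g^u$, and then has to estimate the unstable projection $g^u$ of the full error $g=\eta\cE(w)+\cE(w,\eta)$ (this is where the cutoff contributions on $[L,2L]$ enter and must be shown to be harmless). From this the paper gets a $C^0$ bound via a local parabolic $L^2\to L^\infty$ estimate, and finally interpolates between $C^0$ and the a~priori $C^3$ bound on $w$. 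Your route sidesteps the computation of $\tilde g$ entirely: you bootstrap $w$ (hence $W^s=w-W^u$) to $C^k$ for large $k$ and apply Gagliardo--Nirenberg directly between $L^2$ and $C^k$. The paper's approach has the advantage of using only the $C^3$ bound already assumed in \eqref{eq-apriori-Linfty}, whereas you need the higher-order parabolic bootstrap you flag at the end; on the other hand, your approach avoids the somewhat delicate estimate of $\|g\|_\hilb$ and, because $\theta_k\to 0$, actually reaches the target exponent $e^{-\ell_0^2/30}$ more comfortably than the paper's $C^0$--$C^3$ interpolation (which, taken literally with Landau--Kolmogorov, yields only $e^{-\ell_0^2/60}$ for the second derivative).
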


\begin{proof} 
We write $W^u(\cdot, \tau)\in\hilb^u$ as a linear combination of $\hm_0, \hm_2$:
\[
W^u(y, \tau) = e^{-\tau}d_0(\tau)\hm_0(y) + e^{-\tau}d_2(\tau)\hm_2(y).
\]
Since $u \in \mc F_{\tau_1}$ we have $\|W^u(\cdot, \tau) \|  \leq M_1 \, e^{-\tau}$ and $ |d_0(\tau)|+ |d_{2}(\tau)| \leq C(M_1)$ for all $\tau \in [\tau_0, \tau_1]$.

By Lemma \ref{cor-quotient} and the definition \ref{def-funnel} of the funnel, we have 
\[
\|W(\cdot,\tau)  -  W^u(\cdot, \tau)   \|_\hilb   \leq    M_1 e^{-\ell_0^2/16} \, e^{-\tau}
\]
for all $\tau \in [\tau_0, \tau_1]$.   Since $w(y,\tau)=W(y,\tau)$ for $|y| \leq L(\tau)$ (from the definition of $W$) and $4\ell < \ell_0 < L(\tau)$, we have $W=w$ on $[0, 4\ell]$, so 
that the previous estimate implies  
\begin{equation}		\label{eqn-w234} 
\|   w(\cdot,\tau)  -  W^u(\cdot, \tau) \|_{L^2([0, 4\ell])}    \leq   M_1 \,  e^{2\ell^2} e^{-\ell_0^2/16}  e^{-\tau}
\end{equation}
for all $\tau \in [\tau_0, \tau_1]$, where the exponential $e^{2\ell^2}$ on the right hand side comes from converting the weighted $\| \cdot\| $ norm to standard $L^2$ norm.\footnote{Namely,  for any $f\in L^2([0, 4\ell])$
\[
  \|f\|_{L^2([0, 4\ell])}^2 = \int_0^{4\ell} f(y)^2 \leq e^{\frac{(4\ell)^2}{4}}\int_0^{4\ell} f(y)^2 e^{-\frac{y^2}{4}}
  =e^{4\ell^2}\int_0^{4\ell} f(y)^2 e^{-\frac{y^2}{4}} = e^{4\ell^2} \|f\|_{\hilb[0,4\ell]}^2
\]
}

We next apply standard interior $L^\infty$ estimates on $f(y,\tau):= w(y,\tau) - W^u(y, \tau)$ to derive a bound on $\| f(\cdot, \tau) \|_{C^0([0, 2\ell])}$, $\tau \in [\tau_0, \tau_1]$.
To this end, let us recall that $W$ satisfies equation $W_\tau = \cL W + g$, where $g:= \eta\cE(w) + \cE(w, \eta)$ and $\cE(w)$, $\cE(w, \eta)$ are defined in \eqref{eqn-E1}, \eqref{eqn-E2} respectively.  It then follows that $W^u$ satisfies equation $(W^u)_\tau = \cL W^u + g^u$, where $g^u$ denotes the projection of $g$ onto the unstable subspace $\hilb^u := \langle H_0, H_{2} \rangle$ of $\hilb$.  Therefore, by the above equation for $W^u$ and \eqref{eqn-w}, $f := w-W^u$ satisfies
\begin{equation}		\label{eqn-Wu100}
f_\tau = \cL f  + \tilde g, \qquad \tilde g = \cE(w)  - g^u
\end{equation}
on $y \in [0, 4\ell]$, $\tau \in [\tau_0, \tau_1]$. 

From now on, one can argue similarly to  the proof of Lemma 4.2 in \cite{AV}. First,  one applies standard  interior $L^\infty$ estimates for the equation \eqref{eqn-Wu100}  for $\tau \in [\tau_0+1, \tau_1]$ (if $\tau_1 < \tau_0+1$
we ignore this case) to obtain the bound  
\begin{equation}		\label{eqn-f234}
\| f(\cdot, \tau) \|_{C^0([0, 2\ell])}   \leq   \sup_{\tau' \in [\tau-1, \tau]}  C(\ell)  \, \big ( \| f(\cdot, \tau') \|_{L^2([0, 4\ell])}  + \| \tilde g(\cdot, \tau') \|_{L^2([0, 4\ell])}  \big ). 
\end{equation}
all $\tau \in [\tau_0 +1, \tau_1]$, where here and below $C(\ell)$ denote constants that may vary from line to line, but they are at most polynomial in $\ell$.  

We next claim that  
\begin{equation}		\label{eqn-tg100}
\| \tilde g(\cdot, \tau) \|_{L^2 [0, 4\ell]} \leq C(\ell) \, \Lambda^2 \, e^{-2\tau}
\end{equation}
To prove this, we first write 
\begin{align*}
	\tilde g 
	&= \cE(w) - g^u \\
	&=  c_2(y, \tau)w_{yy} + c_1(y, \tau)w_y + c_0(y, \tau)w 
	-\langle g, \hm_0\rangle \frac{\hm_0}{\|\hm_0\|^2} 
	-\langle g, \hm_2 \rangle\frac{\hm_2}{\|\hm_2\|^2}.
\end{align*}
Therefore
\[
\|\tilde g\|_{L^2([0, 4\ell])} \lesssim \|c_2 w_{yy} + c_1 w_y + c_0 w\|_{L^2([0, 4\ell])} +  |\langle g, \hm_0\rangle|+  |\langle g, \hm_2\rangle|.
\]
The coefficients $c_j$ are defined in \eqref{eqn-E1}, and thus we can estimate them by
\begin{align*}
	|c_2(y, \tau)|&\leq u_y^2 \lesssim \bu_y^2 + w_y^2 
	\lesssim (\delta^2+\Lambda^2) (1+y^4)^2e^{-2\tau}
	\lesssim \Lambda^2 (1+y^4) e^{-2\tau}
	\\
	|c_1(y, \tau)| &\lesssim u_y^2 + \bu_y^2 + \bu_{yy}^2  \lesssim \Lambda^2 (1+y^4)^2 e^{-2\tau} 
	\\
	|c_0(y, \tau)|&\lesssim \Big|u-\sqrt{2(n-1)}\Big| + \Big|\bu-\sqrt{2(n-1)}\Big| 
	\leq  |w| + \Big|\bu-\sqrt{2(n-1)}\Big| \\
	&\lesssim \Lambda (1+y^4) e^{-\tau}.
\end{align*}
Hence 
\begin{equation}		\label{eq:cEw-pointwise}
	|\cE(w)| \lesssim \Lambda^2 (1+y^4)^2 e^{-2\tau} \text{ for }
|y|\leq 2L(\tau), 
\end{equation}
and thus $\|\cE(w)\|_{L^2([0,4\ell])} \lesssim \Lambda^2 e^{-2\tau}$.

To estimate $\tilde g = \cE(w) + g^u$ we still need to estimate $\langle g, \hm_j\rangle$ for $j=0, 2$.
Since
\begin{align*}
  g &= \eta \cE(w) + \cE(\eta, w) \\
    &= \eta \cE(w) + \bigl(\eta_\tau - \eta_{yy} - \tfrac y2 \eta_y\bigr)w - 2\eta_y w_y
\end{align*}
we have
\[
\|g\|_{\hilb}\leq \|\eta \cE(w)\|_{\hilb} + \|\cE(\eta, w)\|_{\hilb}.
\]
The first term is bounded by the pointwise estimate \eqref{eq:cEw-pointwise} which implies 
\[
\|\eta\cE(w)\|_{\hilb} \lesssim \Lambda^2 e^{-2\tau}.
\]
The term $\cE(\eta, w) =  \bigl(\eta_\tau - \eta_{yy} - \tfrac y2 \eta_y\bigr)w - 2\eta_y w_y$ is supported in the interval $L(\tau)\leq y\leq 2L(\tau)$, while the derivatives of the cut-off functions are bounded by
\[
 |\eta_\tau| + |\eta_{yy}| + |\tfrac y2 \eta_y| + |\eta_y| \leq C,
\]
for some constant $C$ (the largest term is $y\eta_y$).  It follows that
\[
|\cE(\eta, w)|\lesssim \Lambda (1+y^4) e^{-\tau} \chi_{[L(\tau), 2L(\tau)]}(y) 
\]
which implies
\[
\|\cE(\eta,w)\|_\hilb \lesssim \sqrt{\int_L^{2L} y^8 e^{-\frac{y^2}{4}} dy }
\lesssim \Lambda L^{\frac 72} e^{-\frac{L^2}{8}}.
\]
Hence we get
\[
\|g\|_\hilb  \lesssim \Lambda^2 e^{-2\tau} + \Lambda L(\tau)^{\frac 72} e^{-\frac{L(\tau)^2}{8}}.
\]
Since $L(\tau)=\rho e^{\tau/4}$ we can choose $\tau_0$ large enough, depending on $\Lambda$, to ensure that 
\[
L(\tau)^{\frac{7}{2}} e^{-\frac{L(\tau)^2}{8}} \leq \Lambda^2 e^{-2\tau}
\]
for all $\tau\geq \tau_0$.   It then follows that
\[
|\langle g, \hm_j \rangle| \lesssim\|g\|_\hilb \lesssim \Lambda^2 e^{-2\tau}
\]
which implies $\|\tilde g\|_{L^2([0, 4\ell])} \lesssim \Lambda^2 e^{-2\tau}$, that is \eqref{eqn-tg100}  holds. 

Inserting the bounds \eqref{eqn-w234} and $\| \tilde g(\cdot, \tau) \|_{L^2 [0, 4\ell]} \leq C(\ell) \, \Lambda^2 \, e^{-2\tau}$  in the estimate \eqref{eqn-f234}, we get 
\begin{equation}		
 \| f(\cdot, \tau) \|_{C^0([0, 2\ell])}
 \leq C(\ell) \, \bigl( M_1 \,  e^{2\ell^2} e^{-\ell_0^2/16}  e^{-\tau} +\Lambda^2 \, e^{-2\tau}\bigr)
 \leq C(\ell )  e^{2\ell^2} e^{-\ell_0^2/16}   e^{-\tau}
\end{equation}
provided that $\tau_0 $ is sufficiently large.    Under our assumption $1 \ll \ell \leq  \ell_0/10$, we  get   $C(\ell)  \, e^{2\ell^2} e^{-\ell_0^2 /16} \leq e^{-\ell^2_0/20}$. 
This gives the $L^\infty$  bound 
\begin{equation}		\label{eqn-f111}
 \| w - W^u \|_{C^0([0, 2\ell])}  =  \| f(\cdot, \tau) \|_{C^0([0, 2\ell])}  \leq e^{-\ell^2_0/20} \, e^{-\tau}
\end{equation}
To get the derivative bound in \eqref{eqn-wexpansion} one interpolates between the  uniform  $C^3$-bound $|e^{\tau} f_{yyy} | = |e^{\tau} w_{yyy} | \leq \Lambda \, (1 + (4\ell)^4) $
 that follows from \eqref{eq-apriori-Linfty}, 
and the $L^\infty$ bound in \eqref{eqn-f111}, and takes $\ell_0 \gg 1$. Hence, the Lemma readily follows in the case
that $\tau \in [\tau_0+1, \tau_1]$.  For more details, see Lemma 4.1 in \cite{AV}. 
 
In the case where $\tau \in [\tau_0, \tau_0 +1]$ one argues similarly to  the proof of Lemma 4.2 in \cite{AV} to obtain  similar  bounds.
\end{proof}

We will next show  an  improvement of \eqref{eqn-uuu}.

 \begin{lemma}\label{cor-lll}
Assume  that $u \in \mc F_{\tau_1}$ and that $
\|W^s (\cdot, \tau)\|    \le e^{-\ell_0^2/16}\, \|W^u(\cdot, \tau)\| 
$
holds for all $\tau\in[\tau_0,\tau_1]$ and $\ell_0 \gg 1$.  Then by taking $\ell_0=\ell_0(M_1, K_0)$ sufficiently large, we get 
\begin{equation}		\label{eqn-uuu23} 
 \frac 34  K_0 \, \ell^4 \, e^{-\tau} \leq     \sqrt{2(n-1)} -  u(\ell,\tau) \leq  \frac 54\,   K_0 \, \ell^4 \, e^{-\tau}  
\end{equation}
for all $\ell \in [\ell_0/10, 1000 \ell_0], \,\,\, \tau\in [\tau_0,\tau_1]$.
 \end{lemma}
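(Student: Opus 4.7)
The plan is to combine Lemma~\ref{cor-expansion} with a barrier argument using time-translates of the peanut, in the spirit of Lemma~\ref{lem:alpha}, but applied now on the larger region $y\ge \ell_0/10$.  First, I would apply Lemma~\ref{cor-expansion} at the choice $\ell := \ell_0/10$ to obtain
\[
\|w(\cdot,\tau)-W^u(\cdot,\tau)\|_{C^0[0,\ell_0/10]} \;\le\; e^{-\ell_0^2/30}\, e^{-\tau}
\]
for all $\tau\in[\tau_0,\tau_1]$.  Since $W^u(y,\tau)=e^{-\tau}\bigl(d_0(\tau)\hm_0(y)+d_2(\tau)\hm_2(y)\bigr)$ with $|d_0(\tau)|+|d_2(\tau)|\le C(M_1)$ (because $u\in\mathcal F_{\tau_1}$ implies $\|W^u(\cdot,\tau)\|\le M_1 e^{-\tau}$), one has the pointwise bound $|W^u(y,\tau)|\le C(M_1)(1+y^2)e^{-\tau}$, giving the boundary estimate
\[
|w(\ell_0/10,\tau)|\;\le\; C(M_1)\,\ell_0^2\,e^{-\tau}\qquad\text{for all } \tau\in[\tau_0,\tau_1].
\]
This plays the role that the trivial bound $w(2\ell_0,\tau_0)=0$ played for the boundary at $y=2\ell_0$ in \S\ref{sec-inner-outer}, except that now the boundary is much closer to the origin.

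I would then mirror the barrier construction of Lemma~\ref{lem:alpha} on the region $\{y\ge \ell_0/10\}$, which is valid since $\ell_0/10\ge \ell_{\rm int}$ once $\ell_0$ is taken large.  The peanut bound \eqref{eq:um2-derivs} yields $\tfrac{1}{2C_0}y^4 e^{-\tau}\le \bu_\tau(y,\tau\pm\tilde\alpha)\le 2C_0 y^4 e^{-\tau}$ on this range, so setting
\[
\alpha(\tau):= 2C_0\,(\ell_0/10)^{-4}\sup_{\tau'\in[\tau_0,\tau]} e^{\tau'}|w(\ell_0/10,\tau')|,
\]
the first step gives $\alpha\le C(M_1)/\ell_0^2\ll\ln 2$ for $\ell_0$ large.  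The parabolic boundary conditions for $\bu(y,\tau\pm\alpha)$ versus $u$ are verified in two pieces: on the timelike edge $y=\ell_0/10$ the inequality holds by the choice of $\alpha$; on the initial edge $\tau=\tau_0$, $y\in[\ell_0/10,2\ell_0]$, the initial perturbation $|w(y,\tau_0)|\lesssim \epsilon\ell_0^2$ is dominated by $|\bu(y,\tau_0\pm\alpha)-\bu(y,\tau_0)|\gtrsim \alpha\ell_0^4 e^{-\tau_0}/10^4$, using \eqref{eqn-epsilon}, while for $y\ge 2\ell_0$ the initial $w$ vanishes.  The maximum principle, using monotonicity \eqref{eq:utau positive} at the tip, then yields
\[
\bu(y,\tau-\alpha)\le u(y,\tau)\le \bu(y,\tau+\alpha),\qquad y\ge \ell_0/10,\ \tau\in[\tau_0,\tau_1].
\]

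Finally, I would pass from this sandwich to the desired bounds.  For $\ell\in[\ell_0/10,1000\ell_0]\subset[\ell_{\rm int},2L(\tau)]$, the peanut asymptotics \eqref{eq:u-outer} give $\sqrt{2(n-1)}-\bu(\ell,\tau\pm\alpha)=K_0\ell^4 e^{-(\tau\pm\alpha)}\bigl(1+O(\delta)\bigr)$; since $\alpha=O(1/\ell_0^2)$ forces $e^{\pm\alpha}=1+O(1/\ell_0^2)$, the sandwich yields
\[
\sqrt{2(n-1)}-u(\ell,\tau) \;=\; K_0\ell^4 e^{-\tau}\bigl(1+O(\delta)+O(1/\ell_0^2)\bigr),
\]
and choosing $\delta$ small (fixed in the peanut construction) together with $\ell_0=\ell_0(M_1,K_0)$ sufficiently large pushes the relative error below $\tfrac14$, establishing \eqref{eqn-uuu23}.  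The main obstacle, I expect, is the initial-edge check in the barrier step: the initial perturbation is supported on $|y|\le 2\ell_0$ and is not negligible on $[\ell_0/10,2\ell_0]$, so one cannot place the barrier boundary at a point where $w(\cdot,\tau_0)$ vanishes as was done in Lemma~\ref{lem:alpha}.  Balancing $\alpha=C(M_1)/\ell_0^2$, the pointwise initial size $\epsilon\ell_0^2$, and the peanut's $\tau$-derivative lower bound at $\tau_0$ is precisely where the smallness assumption \eqref{eqn-epsilon} is used; one also needs $\ell_0/10\ge \ell_{\rm int}$ so that the peanut asymptotics and monotonicity apply at the barrier boundary.
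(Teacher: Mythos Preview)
Your proposal is correct and follows essentially the same route as the paper: control $w$ at the inner boundary $y=\ell_0/10$ via Lemma~\ref{cor-expansion}, then run the peanut time-translate barriers of \S\ref{sec-inner-outer} on $\{y\ge\ell_0/10\}$, checking the initial edge separately using $\epsilon\ll e^{-\tau_0}$, and finally read off \eqref{eqn-uuu23} from the peanut asymptotics.

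The one substantive difference is your choice of $\alpha$. You take $\alpha(\tau)=2C_0(\ell_0/10)^{-4}\sup_{\tau'\le\tau}e^{\tau'}|w(\ell_0/10,\tau')|$, mimicking Lemma~\ref{lem:alpha}. This yields the upper bound $\alpha\le C(M_1)/\ell_0^2$, but for the initial-edge check you need a \emph{lower} bound on $\alpha$ so that $\alpha y^4 e^{-\tau_0}$ dominates $\epsilon y^2$ on $[\ell_0/10,2\ell_0]$; your sup-formula $\alpha$ could be tiny (for instance if $\Omega_0+\Omega_2\hm_2(\ell_0/10)$ happens to vanish). The paper avoids this by taking a \emph{fixed} shift $\alpha$ with $e^{-\alpha}=0.95$: this is automatically large enough for the initial edge once $\epsilon<\frac{0.05}{200}K_0 e^{-\tau_0}\ell_0^2$, and the boundary inequality at $y=\ell_0/10$ follows because Lemma~\ref{cor-expansion} plus the peanut expansion already pins $\sqrt{2(n-1)}-u(\ell_0/10,\tau)$ to within $1\%$ of $K_0(\ell_0/10)^4 e^{-\tau}$ for $\ell_0$ large. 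Your version is easily repaired by replacing the sup-formula with this fixed $\alpha$ (or by taking the maximum of the two); either way the final error is $O(\delta)+O(\ell_0^{-2})$ and the conclusion follows. You have correctly anticipated that the initial-edge check is where the care is needed.
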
 

\begin{proof}  Fix  $ \bell:= \ell_0/10$ where $\ell_0$ is sufficiently large.  Lemma \ref{cor-expansion} says that   $w=u-\bu$ satisfies the bound 
\[
|w(\bell,\tau)| \leq  \big(C(M_1) \, \bell^2 + e^{-\ell_0^2/20}\big) e^{-\tau}
\]
for some constant $C(M_1)$ depending  on $M_1$.
Combining this with  the peanut asymptotics shown in subsection \ref{sec:peanut properties}  yield  
\begin{equation*}
\begin{split}
K_0 - C(M_1)  \bell^{-2}  - \bell^{-4} e^{-\ell_0^2/20} + o_\tau(1) &\leq \big (    \sqrt{2(n-1)} - u(\bell, \tau)  \big ) \, e^{\tau } \bell^{-4} \\
&\leq  K_0 +  C(M_1)   \bell^{-2}  + \bell^{-4} e^{-\ell_0^2/20} + o_\tau(1)
\end{split} 
\end{equation*}
for all $\tau \in [\tau_0,\tau_1]$.  By choosing 
 $\ell_0 = \ell_0(M_1, K_0)$  big so that $C(M_1)  \bell^{-2}  - \bell^{-4} e^{-\ell_0^2/20} + o_{\tau}(1) < 0.01\, K_0$,  we  readily see  that 
\begin{equation}		\label{eqn-uuu232} 
(1- 0.01)  \,  K_0 \, \bell^4 \, e^{-\tau} \leq     \sqrt{2(n-1)} -  u(\bell,\tau) \leq  (1+0.01)  \, K_0 \, \bell^4 \, e^{-\tau}.  
\end{equation}
and all $\tau \in [\tau_0, \tau_1]$ provided $\tau_0 \gg 1$. 

We will next use the peanuts as barriers to expand the behavior in \eqref{eqn-uuu232} from $\bell:=\ell_0/10\,\,\,$  to any 
$\ell \in [\ell_0/10, 1000\ell_0]$. To this end, we first observe that  the peanut asymptotics and \eqref{eqn-uuu232} imply that 
\[
\bu(\bell, \tau-\alpha) \leq u(\bell, \tau) \leq  \bu(\bell, \tau + \alpha)
\]
for all $\tau \in [\tau_0, \tau_1]$, if we choose $e^{-\alpha} = 1-0.05$.  Furthermore, we may assume that $\epsilon$ is sufficiently small
so that our initial profile $u(y, \tau_0) := \bu(y, \tau_0) + \epsilon \, ( \Omega_0 + \Omega_2 \, \hhm_2(y) ) \, \eta_0$ satisfies
\begin{equation}		\label{eq-est-between}
\bu(y, \tau_0-\alpha) \leq u(y, \tau_0) \leq  \bu(y, \tau_0 + \alpha)
\end{equation}
for $\ell_0 \ge |y| \geq \bell := \ell_0/10$.  Note that for this to hold we need to have $\epsilon < \frac{0.05}{200} K_0 e^{-\tau_0} \ell_0^2$. 

Recall that since  $\eta_0 =0$,  for $|y| \geq \ell_0$, we have  $u=\bu=0$,  for $|y| \geq \ell_0$,  at $\tau = \tau_0$, and that   the peanut solution satisfies  $\bu_\tau  >0$ outside a fixed compact set,  (see in subsection \ref{sec:peanut property monotone}). 
Now, we can apply the comparison principle with boundary on $|y| \geq \bell$ to conclude that 
\begin{equation}		\label{eqn-comp-peanuts}
\bu(y, \tau-\alpha) \leq u(y, \tau) \leq  \bu(y, \tau + \alpha), \qquad \mbox{on} \,\, |y| \geq \bell:=\frac{\ell_0}{10}, \,\, \tau \in [\tau_0, \tau_1].
\end{equation}

Using \eqref{eqn-comp-peanuts},   the peanut asymptotics on $[\ell_0/10, 1000\ell_0]$,  and the definition of $\alpha$ by $e^{-\alpha} = 1-0.05$,
we obtain that 
\begin{equation}
(1-0.1) \,  K_0 \, \ell^4 \, e^{-\tau} \leq     \sqrt{2(n-1)} -  u(\ell,\tau) \leq (1+0.1) \,   K_0 \, \ell^4 \, e^{-\tau}  
\end{equation}
holds for $y \in [\ell_0/10, 1000\ell_0]$, $\tau \in [\tau_0, \tau_1]$, provided that $\ell_0=\ell_0(M_1, K_0)$ is chosen sufficiently large and
also $\tau_0 \gg 1$.  In  particular this shows that \eqref{eqn-uuu23} holds, thus finishing the proof of the lemma.

\end{proof} 

\begin{remark}
\label{rem-ell}
We can see easily that the same proof as above yields for any fixed $\eta$ small (in the proof above we took $\eta = 0.05$ for simplicity), as long as $\epsilon < \frac{\eta K_0 e^{-\tau_0}\ell_0^2}{200}$ we have
\begin{equation}		\label{eqn-esti-ueta}
(1-\eta) K_0 \ell_0^4 e^{-\tau} \le \sqrt{2(n-1)} - u(y,\tau) \le (1 + \eta) K_0 \ell^4 e^{-\tau},
\end{equation}
for all $y \in [\ell_0/10, 1000 \ell_0]$, and all $\tau\in [\tau_0,\tau_1]$.
\end{remark}

We will now see the $L^\infty$-estimate \eqref{eq-apriori-Linfty} that we have assumed in proving Proposition \ref{lemma-big-unstable}
holds with the auxiliary constant    $\Lambda$ replaced by a constant $M_2$  that depends only on $M_1$, and the peanut constant $K_0$.  

\begin{prop}[$L^\infty$-estimate]\label{prop-Linfty-w}  There exists an $\ell_0 = \ell_0(M_1,K_0)$ large so that if $u \in \mc F_{\tau_1}$ satisfies \eqref{eqn-uuu}
and  $
\|W^s (\cdot, \tau)\|    \le e^{-\ell_0^2/16}\,\|W^u(\cdot, \tau)\| 
$
holds for all $\tau\in[\tau_0,\tau_1]$,
then, the bound 
\begin{equation}		\label{eqn-linftyw-final}
 |w(y,\tau)| + |w_y(y,\tau)| + |w_{yy}(y,\tau)| + |w_{yyy}(y, \tau) \leq M_2 \, (1+ |y|^4)  \,  e^{-\tau}
\end{equation}
holds for all $|y| \leq  2\rho e^{\tau/4}$, $\tau \in [\tau_0,\tau_1]$.  Here the constant $M_2$ depends only on $M_1$ and $K_0$. 
\end{prop}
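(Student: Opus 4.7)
The goal is to upgrade the a priori bound \eqref{eq-apriori-Linfty}, which involved the auxiliary constant $\Lambda$, to the estimate \eqref{eqn-linftyw-final} with a constant $M_2$ depending only on $M_1$ and the peanut constants. The plan is to split $[-2L(\tau),2L(\tau)]$ into an \emph{inner} piece $|y|\leq \ell_0/10$ and an \emph{intermediate/outer} piece $\ell_0/10\leq |y|\leq 2L(\tau)$, and bound $w$ and its first three derivatives on each, then combine.

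On the inner piece I would invoke Lemma \ref{cor-expansion} with $\ell=\ell_0/10$ to get
\[
\|w-W^u\|_{C^2([0,\ell_0/10])}\leq e^{-\ell_0^2/30}\,e^{-\tau}.
\]
Writing $W^u(y,\tau)=e^{-\tau}\bigl(d_0(\tau)\hm_0(y)+d_2(\tau)\hm_2(y)\bigr)$, the funnel condition $\|W^u(\cdot,\tau)\|\leq M_1 e^{-\tau}$ together with finite-dimensionality of $\hilb^u$ forces $|d_0(\tau)|+|d_2(\tau)|\leq C(M_1)$, so $|W^u|+|W^u_y|+|W^u_{yy}|\leq C(M_1)(1+y^2)e^{-\tau}$ and hence the same bound holds for $|w|+|w_y|+|w_{yy}|$. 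For $|w_{yyy}|$ I would interpolate between this $C^2$-estimate and the a priori $C^3$-bound \eqref{eq-apriori-Linfty} on a slightly larger interval, choosing $\ell_0$ large so that the factor $e^{-\ell_0^2/30}$ absorbs the $\Lambda$-dependence and leaves a constant depending only on $M_1$.

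On the outer piece the central tool is the peanut-barrier sandwich \eqref{eqn-comp-peanuts} derived in Lemma \ref{cor-lll},
\[
\bu(y,\tau-\alpha)\leq u(y,\tau)\leq \bu(y,\tau+\alpha),
\]
valid for $|y|\geq \ell_0/10$ with $\alpha$ a fixed constant depending only on the peanut parameters. Combining this with the mean value theorem and the peanut derivative asymptotics \eqref{eq:um2-derivs} gives the pointwise bound $|w(y,\tau)|\leq C(K_0)(1+y^4)e^{-\tau}$. To promote this $C^0$ bound to $C^3$ I would apply standard interior parabolic regularity to the RMCF equation \eqref{eq:u} for $u$: the sandwich forces $u$ to stay close to $\sqrt{2(n-1)}$, making \eqref{eq:u} uniformly parabolic on each unit parabolic neighborhood in the $(y,\tau)$ variables; Schauder estimates then bound $u_y, u_{yy}, u_{yyy}$, and subtracting the analogous peanut bounds \eqref{eq:um2-derivs} for $\bu$ yields bounds on the corresponding derivatives of $w$.

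The main obstacle I expect is producing \emph{scale-appropriate} derivative bounds in the outer region, i.e. the polynomial-in-$y$ weights $y^3, y^2, y$ for the first, second, third derivatives respectively, rather than a flat $(1+y^4)$. The cleanest route is a bootstrap on the linear equation \eqref{eqn-w2} for $w$ itself: using the pointwise bound just obtained for $w$ and the peanut bounds \eqref{eq:um2-derivs} for $\bu$, the coefficients $c_2, c_1, c_0$ from \eqref{eqn-E1} are controlled with constants depending only on $M_1$ and $K_0$, and a standard interior Schauder estimate applied on unit parabolic neighborhoods around each $(y_0,\tau)$ then yields the scale-appropriate derivative bounds with the required $M_2=M_2(M_1,K_0)$. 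Gluing the inner and outer estimates completes the proof.
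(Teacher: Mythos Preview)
Your proposal is correct and follows essentially the same route as the paper: split into the inner piece $|y|\le \ell_0/10$ handled by Lemma~\ref{cor-expansion} plus the funnel bound on $W^u$, and the outer piece $|y|\ge \ell_0/10$ handled by the peanut sandwich \eqref{eqn-comp-peanuts} from Lemma~\ref{cor-lll}; then upgrade the resulting $L^\infty$ bound on $u-\sqrt{2(n-1)}$ to derivative bounds via interior regularity for the quasilinear equation \eqref{eq:u} (the paper cites Lemma~6.2 of \cite{AV} for this step) and subtract the peanut bounds \eqref{eq:um2-derivs}. One small simplification: the statement only asks for the weight $(1+|y|^4)$ on \emph{all} derivatives, so your worry about producing sharper scale-appropriate weights $y^3,y^2,y$ is unnecessary, and the bootstrap on the $w$-equation you sketch at the end is not needed.
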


\begin{proof} 

Our beginning point is \eqref{eqn-comp-peanuts} which implies that for $|y| \geq \ell_0/10$, $\tau \in [\tau_0, \tau_1]$ we have 
\begin{equation}		\label{eqn-est-w12} 
|w(y,\tau) | \leq \max \big ( \bu(y,\tau+\alpha) - \bu(y,\tau), \bu(y, \tau) - \bu(y,\tau-\alpha) \big )
\end{equation}
where $e^{-\alpha}=0.95$. Therefore, by the peanut asymptotics (see subsection \ref{sec:peanut properties})  we get the crude but sufficient bound $\sup_{\ell_0/10 \leq |y| \leq 4\rho e^{\tau/4}} |w(y,\tau)| \leq 4 K_0 \, e^{- \tau}\, |y|^4.$ On the other hand, estimate \eqref{eqn-wexpansion} in Lemma \ref{cor-expansion} shows $\sup_{0 \leq |y| \leq  \ell_0/10} |w(y,\tau)| \leq \Big ( C(M_1) \, |y|^2 + e^{-\ell_0^2/20} \big )\, e^{-\tau}.$ Combining these two bounds, while taking $\ell_0=\ell_0(M_1, K_0)$ sufficiently large, yields
\begin{equation}		\label{eqn-w200}
|w(y,\tau)| \leq C(M_1, K_0)  \,  e^{- \tau}\, |y|^4, \qquad \mbox{on} \,\, |y| \leq 4\rho e^{\tau/4}
\end{equation}
for some constant $C(M_1, K_0)$ that depends only on $M_1$ and $K_0$. For the rest of proof we will denote by $C(M_1, K_0)$ constants that may change from line to line but they only depend on 
$M_1$ and $K_0$. 
 
To estimate the derivatives $w_y$ and $w_{yy}$  for $|y| \leq  2\rho  e^{\tau/4}$ it is easier to prove the same bounds  for our  solution $u$ of \eqref{eq:u}, since 
then the bounds for $w$ will follow by the bounds on $u$ and the bounds on the peanut solution $\bu$. 
To this end, we recall first that $\bu$ satisfies 
\begin{equation}		\label{eqn-bu200}
 |\bu(y,\tau)- \sqrt{2(n-1)}| + |\bu_y(y,\tau)| + |\bu_{yy}(y,\tau)| + |\bu_{yyy}(y,\tau)|  \leq C(K_0) \, (1+ |y|^4)  \,  e^{-\tau}
\end{equation}
which also hold on $ |y| \leq 4\rho e^{\tau/4}$ for some constant $C(K_0)$, depending on $K_0$. Then, the $L^\infty$ bounds in \eqref{eqn-w200} and \eqref{eqn-bu200} imply that 
\begin{equation}		\label{eqn-u200}
|u(y,\tau) - \sqrt{2(n-1)}| \leq C(M_1, K_0)  \,  e^{- \tau}\, |y|^4, \qquad \mbox{on} \,\, |y| \leq 4\rho e^{\tau/4}
\end{equation}
for some other constant  $C(M_1, K_0)$ still depending on $K_0$ and $M_1$. 

One can  now use \eqref{eqn-u200} and derivative estimates for quasilinear equations to obtain the rest of the derivative bounds in \eqref{eqn-u200}.
Indeed this has been done in detail in Lemma 6.2 in \cite{AV} where it was shown the $L^\infty$ bound \eqref{eqn-u200} implies the derivative bounds 
\begin{equation}		\label{eqn-u300}
 |u_y(y,\tau)| + |u_{yy}(y,\tau)| + |u_{yyy}(y, \tau)| \leq C(M_1, K_0)  \, (1+ |y|^4)  \,  e^{-\tau}
\end{equation}
 holding in the region $ |y| \leq 2\rho e^{\tau/4}$. 
 Now combining \eqref{eqn-bu200} and \eqref{eqn-u300} readily give us the derivative bounds in  \eqref{eqn-est-w12}, thus  finishing the proof of the proposition.

%
%

\end{proof}

We can now combine the previous three lemmas to conclude the following result that justifies all our \textit{a priori} assumptions used in this section hence conclude the proof of of Proposition \ref{lemma-big-unstable}.

\begin{lemma}
\label{cor-prop-funnel}
Let $\ell_0 = \ell_0(M_1,K_0)$ be chosen as above so that all previous results hold. Let $\eta > 0$ be a small fixed number. There exists a $\tau_0$ sufficiently big and $\epsilon > 0$ sufficiently small ($\epsilon$  is the size of initial perturbation)  so that if $u \in \mc F_{\tau_1}$,  then $u(y,\tau)$ satisfies the following properties
\begin{enumerate}[\upshape(i)]
\item
$|u(y,\tau)| + |u_y(y,\tau)| + |u_{yy}(y,\tau)| + |u_{yyy}(y, \tau)| \leq 2M_2 \, (1+ |y|^4)  \,  e^{-\tau}$
holds,   $\forall y \in [-2\rho e^{{\tau/4}}, 2\rho e^{ \tau/4}]$, $\tau \in [\tau_0,\tau_1]$.  Here $M_2$ is the same constant as in \eqref{eqn-linftyw-final} and depends only on $M_1$ and $K_0$, and
\item $(1-\eta) K_0 \, \ell^4 \, e^{-\tau} \leq \sqrt{2(n-1)} - u(\ell,\tau) \leq (1+\eta) K_0 \, \ell^4 \, e^{-\tau}$ holds for $\tau\in [\tau_0,\tau_1]$, and $\ell_0 \le \ell \le 1000\ell_0 < e^{\tau_0/4}$.
\item $\|W^s (\cdot, \tau)\| \le e^{-\ell_0^2/8}\, \|W^u(\cdot, \tau)\| $ holds, for all $\tau\in[\tau_0,\tau_1]$.
\end{enumerate}
\end{lemma}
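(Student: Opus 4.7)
The strategy is a closed continuity/bootstrap argument that removes all three a priori assumptions made in Lemmas \ref{lemma-quotient1}, \ref{cor-lll} and Proposition \ref{prop-Linfty-w} simultaneously, by playing them off against each other. I introduce the bootstrap time $\tau^* \in [\tau_0,\tau_1]$ defined as the supremum of those $\tau$ for which, on $[\tau_0,\tau]$, the solution satisfies the \emph{relaxed} versions of (i)–(iii), namely
\[
|u|+|u_y|+|u_{yy}|+|u_{yyy}| \le 2 M_2 (1+|y|^4)e^{-\tau}, \quad (1-2\eta)K_0\ell^4 e^{-\tau} \le \sqrt{2(n-1)} - u(\ell,\tau) \le (1+2\eta)K_0\ell^4 e^{-\tau},
\]
together with $\|W^s(\cdot,\tau)\| \le e^{-\ell_0^2/16}\|W^u(\cdot,\tau)\|$, each over the appropriate range of $y$ or $\ell$. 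The goal is to prove $\tau^* = \tau_1$ by deriving strictly sharper versions of each of these bounds on $[\tau_0,\tau^*]$.

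First I verify that all three relaxed inequalities hold strictly at $\tau=\tau_0$. Lemma \ref{lemma-lo} gives $\|W^s(\cdot,\tau_0)\| \le C\ell_0^{3/2}e^{-\ell_0^2/8}\|W^u(\cdot,\tau_0)\|$, which beats $e^{-\ell_0^2/16}$ once $\ell_0$ is large. For the middle condition, the perturbation $u(\cdot,\tau_0) - \bu(\cdot,\tau_0)$ has size $O(\epsilon)$ and is supported in $|y|\le 2\ell_0$, so combined with the peanut asymptotics of section \ref{sec:peanut properties} the estimate holds with a constant far tighter than $2\eta$ provided $\epsilon \ll K_0\ell_0^4 e^{-\tau_0}$. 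The $L^\infty$ bound holds at $\tau_0$ by adding the peanut bound on $\bu$ and the small perturbation. By continuity of the MCF this forces $\tau^* > \tau_0$.

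On $[\tau_0,\tau^*]$ the three relaxed bounds are precisely the a priori hypotheses needed to invoke the preceding three results in cascade. Lemma \ref{lemma-quotient1}, applied with the auxiliary constant $\Lambda = 2M_2$ from the relaxed (i) and with \eqref{eqn-uuu} supplied by the relaxed (ii), upgrades the quotient bound to $\|W^s\| \le e^{-\ell_0^2/8}\|W^u\|$, strictly better than $e^{-\ell_0^2/16}$. With the sharp quotient bound now in force, Lemma \ref{cor-lll} together with Remark \ref{rem-ell} upgrades the middle estimate to the sharp $(1\pm\eta)$-form, beating $(1\pm 2\eta)$ as soon as $\epsilon < \eta K_0 e^{-\tau_0}\ell_0^2/200$. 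Finally, with both sharp inputs in place, Proposition \ref{prop-Linfty-w} upgrades the $L^\infty$ bound to hold with constant $M_2$, strictly better than $2M_2$. Consequently, if $\tau^* < \tau_1$, all three relaxed conditions hold with strict inequality at $\tau^*$, contradicting its maximality by continuity; hence $\tau^* = \tau_1$ and (i)–(iii) are proved.

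The main subtlety to monitor is the logical ordering of the cascade: the quotient estimate must be upgraded before (ii), and both before (i), so the continuity argument is essential to break the circular dependency among the three hypotheses. The only genuine technical choices are the slack factors ($2M_2$, $2\eta$, $e^{-\ell_0^2/16}$), which must be chosen compatibly with the strict improvements furnished by the three cited results, and the standing smallness/largeness conditions on $\epsilon$, $\ell_0$ and $\tau_0$; both are handled by the parameter choices already in force throughout the section.
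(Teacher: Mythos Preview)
Your proposal is correct and follows essentially the same bootstrap/continuity strategy as the paper: verify the conditions at $\tau_0$, assume relaxed versions up to a maximal time, then cascade Lemma~\ref{lemma-quotient1}, Lemma~\ref{cor-lll}/Remark~\ref{rem-ell}, and Proposition~\ref{prop-Linfty-w} to obtain the strict versions, contradicting maximality. The only difference is organizational---the paper nests two continuity arguments (an outer one on (i) and an inner one on \eqref{eqn-uuu}) whereas you run a single continuity argument on all three relaxed conditions simultaneously---but the logical content and the key improvements are identical.
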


\begin{proof}
We first note that all (i), (ii) and (iii) are satisfied at time $\tau_0$.  More precisely, parts (i) and (ii) easily follow from $u(y,\tau_0) = \bu(y,\tau_0) + \eta_0(y) (\Omega_0 + \Omega_2 \hm_2(y))$ and the asymptotics of peanut solution (note that we can assume part (i) holds at time $\tau_0$ with $M_2$ replaced by $M_2/2$).  Furthermore, by Lemma \ref{lemma-lo} we have $\|W^s(\cdot,\tau_0)\|  \le e^{-5\ell_0^2/32}\, \|W^u(\cdot,\tau_0)\| $, which obviously implies (iii) at time $\tau_0$.  

Assume that $\tilde{\tau}_1 < \tau_1$ is the maximal time up to which (i) holds.  We will first show that (ii) and (iii) need to hold up to time $\tilde{\tau}_1$ as well.  In order to do that, we will first show that \eqref{eqn-uuu} needs to hold all the way to $\tilde{\tau}_1$.  Assume $\tilde{\tau}_2 < \tilde{\tau}_1$ is the maximal time up to which \eqref{eqn-uuu} holds.  Then by Lemma \ref{cor-quotient} we have that (iii) holds for all $\tau\in [\tau_0, \tilde{\tau}_2)$.  Now we can apply Lemma \ref{cor-lll} (more precisely see Remark \ref{rem-ell}) which implies that  actually (ii)  holds for some small but fixed $0 < \eta  < \frac14$, for all $\tau\in [\tau_0,\tilde{\tau}_2)$.  Since the constants in (ii) are sharper than in \eqref{eqn-uuu}, this contradicts the maximality of $\tilde{\tau}_2$.  All these imply that $\tilde{\tau}_2 = \tilde{\tau}_1$, and that \eqref{eqn-uuu}  holds for all $\tau\in [\tau_0,\tilde{\tau}_1]$.  Having (i)  and \eqref{eqn-uuu} for all $\tau\in [\tau_0,\tilde{\tau}_1]$, Lemma \ref{cor-quotient} implies we have (iii) on that time interval as well.  Lemma \ref{cor-lll} (more precisely Remark \ref{rem-ell}) and Proposition \ref{prop-Linfty-w} now imply that actually (i) and (ii) hold for all $\tau\in [\tau_0,\tilde{\tau}_1]$ as well.  

We claim that $\tilde{\tau}_1 = \tau_1$.  If not, let $\tilde{\tau}_1 < \tau_1$ be the maximal time so that (i) holds on $[\tau_0, \tilde{\tau}_1]$.  Similar argument as above implies that (ii) and (iii) hold on $[\tau_0,\tilde{\tau}_1]$.  By Proposition \ref{prop-Linfty-w} we now get that  actually (i) holds for $\tau\in [\tau_0,\tilde{\tau}_1]$ (with constant $M_2 < 2M_2$), hence contradicting the maximality of $\tilde{\tau}_1$.  This implies $\tilde{\tau}_1 = \tau_1$ as claimed.  
\end{proof}

\smallskip

We are now ready to give the proof of Proposition \ref{lemma-big-unstable}.  

\begin{proof}[Proof by contradiction of Proposition \ref{lemma-big-unstable}]
Assume the statement is not true, implying
\begin{equation}		\label{eq-contra}
\|W^u(\cdot, \tau)\|  < M_1 \, e^{-\tau},
\end{equation}
for all $\tau \in [\tau_0,\tau_{\max})$, where $\tau_{\max} \le \infty$ is the maximal existence time for $u(y,\tau)$.  
Having Lemma \ref{cor-prop-funnel}, similar computation to the one in the proof of Lemma \ref{lemma-quotient1} yields
\[
\frac{d}{d\tau} \|W^u\|^2  \ge -\delta \, \|W^u\|^2 
\]
for some very small $\delta >0$, and all $\tau\in [\tau_0,\tau_{\max})$.  
This yields
\begin{equation}		\label{eqn-theta2} 
\|W^u(\cdot, \tau)\|^2   \ge e^{- \delta (\tau -\tau_0)} \,  \|W^u(\cdot, \tau_0)\|^2 
\end{equation}
where $\|W^u(\cdot, \tau_0)\|^2 \geq c_n \, \epsilon^2$ for a dimensional constant $c_n >0$ (here we use Lemma \ref{lemma-lo}).  If $\tau_{\max} = \infty$, since the previous estimate would then hold for all $\tau \ge \tau_0$, and since $\delta$ can be taken to be a small number, inequality \eqref{eqn-theta2} would contradict \eqref{eq-contra} for large values of $\tau$.  Hence, $\tau_{\max} < \infty$.  By part (i) of Lemma \ref{cor-prop-funnel} we conclude that a singularity can not form in the region $|y| \le \rho \, e^{{\tau/4}}$.  On the other hand, by \eqref{eqn-upm2}, which holds for all $|y| \ge \ell_0$ and all $\tau\in [\tau_0,\tau_{\max})$, we conclude the singularity can not happen on $|y| \ge \ell_0$ either.  Hence we conclude there exists the exit time $\tau_1 = \tau_1(\epsilon, \bom) < \tau_{\max}$ satisfying \eqref{eq-big-unstable}.  This concludes the proof of Proposition \ref{lemma-big-unstable}.
\end{proof}

To summarize, let us recall that if $\epsilon > 0$ and $\bom = (\Omega_1,\Omega_2)\in \mathbb{S}^1$, we denote by $u_{\epsilon, \bf{\Omega}}(y,\tau)$ a solution to the Mean Curvature Flow that starts at $\tau_0$ as
\[
\ueo(y,\tau_0) = \bu(y,\tau_0) + \epsilon\, \eta_0(y)\,\big(\Omega_0 + \Omega_{2} \hm_{2}(y)\big).  
\]
In Proposition \ref{lemma-big-unstable} we showed that for every $\epsilon > 0$ small and every $\bom \in \mathbb{S}^1$,  there exists the first time $\tau_1 = \tau_1(\epsilon,\bom)$ so that 
\[
\|W^u(\cdot, \tau_1)\|  = M_1 \, e^{-\tau_1}.  
\]

Moreover we also observe the following.
\begin{lemma}
\label{lemma-hit-boundary}
Let $\tau_1$ be as above, that is the first time so that $\|(W_{\epsilon,\bom})^u(\cdot, \tau_1)\|  = M_1 \, e^{-\tau_1}$ holds.    Then we have that $\frac{d}{d\tau}\big(e^{\tau}\, \|W^u(\cdot,\tau)\|\big) > 0$ at $\tau = \tau_1$.
\end{lemma}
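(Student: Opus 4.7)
The plan is to exploit the fact that the unstable subspace $\hilb^u=\mathrm{span}\{\hm_0,\hm_2\}$ consists of eigenfunctions of $\cL$ with non-negative eigenvalues ($\lambda_0=1$, $\lambda_2=0$). In the purely linear evolution $W^u_\tau=\cL W^u$, this would already give $\tfrac{d}{d\tau}\|W^u\|^2\geq 0$, so $e^\tau\|W^u\|$ would be strictly increasing whenever $W^u\not\equiv 0$. The only obstruction is that $W$ actually satisfies $W_\tau=\cL W+g$ with the error term $g=\eta\,\cE(w)+\cE(w,\eta)$ from \eqref{eqn-W}; the task reduces to showing that this error is too small to reverse the sign of $\tfrac{d}{d\tau}\bigl(e^\tau\|W^u\|\bigr)$ at $\tau=\tau_1$.

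First I would project the $W$-equation onto $\hilb^u$ to obtain $(W^u)_\tau=\cL W^u+g^u$ with $g^u=\pi^u g$, and take the inner product with $W^u$ in $\hilb$:
\[
\tfrac12\,\tfrac{d}{d\tau}\|W^u\|^2 = \langle \cL W^u,W^u\rangle+\langle g^u,W^u\rangle \geq \langle g^u,W^u\rangle,
\]
using the non-negativity of the eigenvalues of $\cL$ on $\hilb^u$. By Lemma \ref{cor-prop-funnel}, the whole interval $[\tau_0,\tau_1]$ already enjoys the $L^\infty$ bounds \eqref{eq-v-small}, the peanut asymptotic control \eqref{eqn-uuu}, and the comparison $\|W^s\|\leq e^{-\ell_0^2/8}\|W^u\|$. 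Hence Lemma \ref{lemma-error-small10} and Proposition \ref{prop-inout} apply verbatim, and reproducing the computation that led to \eqref{eq-unstable-useful} in the proof of Proposition \ref{lemma-big-unstable} yields
\[
\frac{d}{d\tau}\|W^u\|^2 \geq -\delta\,\|W^u\|^2,
\]
where $\delta>0$ can be chosen arbitrarily small by fixing $\rho$ small, $\ell_0$ large, and $\tau_0$ large enough.

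The conclusion is then automatic: since we may arrange $2-\delta>0$,
\[
\frac{d}{d\tau}\bigl(e^{2\tau}\|W^u\|^2\bigr)
= e^{2\tau}\Bigl(2\|W^u\|^2 + \tfrac{d}{d\tau}\|W^u\|^2\Bigr)
\geq (2-\delta)\, e^{2\tau}\|W^u\|^2.
\]
Evaluating at $\tau=\tau_1$, where $\|W^u(\cdot,\tau_1)\|=M_1 e^{-\tau_1}>0$, gives the strict lower bound $(2-\delta) M_1^2 > 0$, which is equivalent to $\tfrac{d}{d\tau}\bigl(e^{\tau}\|W^u(\cdot,\tau)\|\bigr)>0$ at $\tau=\tau_1$. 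I do not foresee any serious obstacle: this lemma is essentially a transversality statement at the exit time, and all the heavy analytic content (the inner--outer estimate, the $L^\infty$ control, and the stable/unstable comparison) has already been packaged into Lemma \ref{cor-prop-funnel} and the proof of Proposition \ref{lemma-big-unstable}. The only point requiring care is ensuring that the $\delta$ in the differential inequality is strictly less than $2$, which is a matter of choosing $\rho,\ell_0,\tau_0$ appropriately.
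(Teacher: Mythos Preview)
Your proof is correct and follows essentially the same approach as the paper: invoke Lemma~\ref{cor-prop-funnel} to justify the differential inequality \eqref{eq-unstable-useful}, then use it to show $\tfrac{d}{d\tau}\bigl(e^{2\tau}\|W^u\|^2\bigr)>0$ at $\tau_1$, which is equivalent to the claim. The paper's proof is a two-line reference to these same ingredients; your write-up is more explicit about the spectral picture and the role of the error term, but the substance is identical.
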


\begin{proof}
By Lemma \ref{cor-prop-funnel} and~\eqref{eq-unstable-useful}, we have that for all $\tau\in [\tau_0,\tau_1]$,
\[
\frac{d}{d\tau}  \big(e^{\tau} \|W_u\|^2\big) \ge (1 - 2\delta) \|W_u\|^2 > 0
\]
as claimed.
\end{proof}

As in \eqref{eqn-defnW}, we let $W_{\epsilon, \bom}(y,\tau) := (\ueo(y,\tau) - \bu(y,\tau))\, \eta(y,\tau)$.  The following result that uses a homotopy degree argument tells us that we can say something more precise about our solution at the exit time $\tau_1$.  

\begin{lemma}[Degree theory lemma] 

\label{lemma-homotopy} Let  $M_1 >0$ be an arbitrary constant.  For every $\epsilon > 0$ small, and every ${\bar\bom} \in \mathbb{S}^1$,   there exist  an $\bom \in \mathbb{S}^1$, and a $\tau_1 = \tau_1(\epsilon, \bom)$, such that
\[
\begin{split}
&\|W_{\epsilon,\bom}^u(\cdot, \tau_1)\|  = M_1 \, e^{-\tau_1} \qquad \mbox{and} \\  &W_{\epsilon,\bom}^u(\cdot,\tau_1)
= M_1 \,e^{-\tau_1}\, \left(\bar{\Omega}_0 \, \frac{\hm_0}{\|\hm_0\|}
+ \bar\Omega_2\, \frac{\hm_2}{\|\hm_2\|}\right).  
\end{split}
\]
\end{lemma}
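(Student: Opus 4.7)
The plan is a degree-theoretic homotopy argument on $\mathbb{S}^1$, deforming $\mathscr{H}^u_\epsilon$ back to the time-$\tau_0$ map, which is (up to a small tail error) an explicit diffeomorphism of the circle. Identifying $\mathbb{S}^1_\hilb\subset\hilb^u$ with the standard $\mathbb{S}^1\subset\R^2$ via the orthonormal basis $\{\hm_0/\|\hm_0\|,\,\hm_2/\|\hm_2\|\}$, I would set $\tau_s(\epsilon,\bom):=\tau_0+s\bigl(\tau_1(\epsilon,\bom)-\tau_0\bigr)$ for $s\in[0,1]$ and define the homotopy
\[
\mathcal{K}^s_\epsilon(\bom)\;:=\;\frac{W^u_{\epsilon,\bom}(\cdot,\tau_s(\epsilon,\bom))}{\|W^u_{\epsilon,\bom}(\cdot,\tau_s(\epsilon,\bom))\|}\;\in\;\mathbb{S}^1_\hilb.
\]
This is well-defined because $\|W^u_{\epsilon,\bom}(\cdot,\tau)\|>0$ for all $\tau\in[\tau_0,\tau_1(\epsilon,\bom)]$ (the third Proposition stated in the outline, together with the identity $\|W^u(\cdot,\tau_1)\|=M_1e^{-\tau_1}$), and it is jointly continuous in $(s,\bom)$ by continuous dependence of RMCF on initial data and continuity of $\tau_1(\epsilon,\bom)$ in $\bom$ (the second Proposition above, which itself follows from the transversality $\frac{d}{d\tau}\bigl(e^\tau\|W^u\|\bigr)\big|_{\tau_1}>0$ shown in Lemma~\ref{lemma-hit-boundary} and the implicit function theorem). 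At $s=1$ one recovers $\mathcal{K}^1_\epsilon=\mathscr{H}^u_\epsilon$.

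To compute $\deg(\mathcal{K}^0_\epsilon)$: by the choice~\eqref{eq:1} of $\ell_0$, $\eta(\cdot,\tau_0)\equiv 1$ on the support of $w(\cdot,\tau_0)$, so by~\eqref{eqn-Wtau0} we have $W_{\epsilon,\bom}(y,\tau_0)=\epsilon\,\eta_0(y/\ell_0)\bigl(\Omega_0\hm_0(y)+\Omega_2\hm_2(y)\bigr)$. Pairing with $\hm_j$ for $j=0,2$ and using the same Gaussian tail estimate as in the proof of Lemma~\ref{lemma-lo} yields
\[
\langle W(\cdot,\tau_0),\hm_j\rangle \;=\; \epsilon\,\Omega_j\|\hm_j\|^2 + r_j(\epsilon,\bom),\qquad |r_j|\leq C\,\epsilon\,e^{-c\ell_0^2},
\]
so in the chosen basis $W^u(\cdot,\tau_0)/\epsilon$ has coordinates $\bigl(\|\hm_0\|\Omega_0,\|\hm_2\|\Omega_2\bigr)+O(e^{-c\ell_0^2})$. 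After normalization, $\mathcal{K}^0_\epsilon$ is therefore an $O(e^{-c\ell_0^2})$-perturbation of the map
\[
\bom \;\mapsto\; \frac{\bigl(\|\hm_0\|\Omega_0,\;\|\hm_2\|\Omega_2\bigr)}{\sqrt{\|\hm_0\|^2\Omega_0^2+\|\hm_2\|^2\Omega_2^2}},
\]
which is a diffeomorphism of $\mathbb{S}^1$ (being the radial normalization of a positive-definite diagonal linear isomorphism of $\R^2$) and hence has degree $\pm 1$. For $\ell_0$ sufficiently large the tail correction is too small to change the degree, so $\deg(\mathcal{K}^0_\epsilon)=\pm 1$.

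Homotopy invariance then gives $\deg(\mathscr{H}^u_\epsilon)=\deg(\mathcal{K}^1_\epsilon)=\deg(\mathcal{K}^0_\epsilon)=\pm 1\neq 0$; a continuous map $\mathbb{S}^1\to\mathbb{S}^1$ of nonzero degree is surjective, so for any prescribed $\bar\bom\in\mathbb{S}^1$ there exists $\bom\in\mathbb{S}^1$ with $\mathscr{H}^u_\epsilon(\bom)=\bar\bom$. Combined with $\|W^u_{\epsilon,\bom}(\cdot,\tau_1)\|=M_1\,e^{-\tau_1}$ this gives exactly the identity claimed in the Lemma. The main technical hurdle is the joint continuity of the homotopy $\mathcal{K}^s_\epsilon$, which hinges on the continuity of the exit time $\tau_1(\epsilon,\bom)$ in $\bom$; that in turn hinges on the transversal crossing of $\|W^u\|$ through the funnel boundary, already recorded in Lemma~\ref{lemma-hit-boundary}.
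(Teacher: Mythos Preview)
Your proposal is correct and follows essentially the same approach as the paper: define a homotopy along the time interval $[\tau_0,\tau_1(\epsilon,\bom)]$ from the exit-time map $\mathscr{H}^u_\epsilon$ back to the initial-time map, observe the latter is (up to a Gaussian tail error) the radial normalization of a diagonal linear isomorphism and hence a circle diffeomorphism, and invoke homotopy invariance of degree to conclude surjectivity. The paper's proof is the same homotopy with the parametrization reversed, asserts the initial map $\mc G$ is a bijection rather than computing its degree, and points to the transversal crossing (your Lemma~\ref{lemma-hit-boundary}) for continuity of $\tau_1$; your write-up is in fact slightly more careful in handling the tail correction to $\mc G$ via degree stability rather than asserting bijectivity outright.
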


\begin{proof}
 Let $\ueo(y,\tau)$ be the mean curvature flow solution starting at $u_{\epsilon,{\bf \Omega}}(y,\tau_0)$.  Let $\tau_1 = \tau_1(\epsilon,\bom)$ be chosen as in Proposition \ref{lemma-big-unstable}, so that $\|W_{\epsilon,\bom}^u(\cdot, \tau_1)\|  = M_1 \, e^{-\tau_1}$.  Fix $\epsilon > 0$ small. Let $\mathbb S^1_{\R^2}\subset\R^2$ and $\mathbb S^1_{\hilb^u}$ be the unit circles in $\R^2$ and $\hilb^u$, respectively, and define a map $\mc F: \mathbb{S}^1_{\R^2} \to \mathbb{S}^1_{\hilb^u}$ as follows
\[
\mc F(\Omega_0, \Omega_2) = \frac{W_{\epsilon, \bom}^u(\tau_1)}{\|W_{\epsilon, \bom}^u(\tau_1)\|_\hilb}.  
\]
The map $\mc F$ is well defined because Lemma~\eqref{lemma-quotient1} implies that $W_{\epsilon,\bom}(\tau)\neq0$ as long as $W(\tau)$ lies in the funnel.

Our goal is to show that the map $\mc F$ is surjective.  In order to do that, define a map $\mc G : \mathbb{S}^1  \to \mathbb{S}^1$ as follows
\[
\mc G(\bom) = \frac{W_{\epsilon, \bom}^u(\tau_0)}{\|W_{\epsilon, \bom}^u(\tau_0)\|}.  
\]
It is straightforward to check that the map $\mc{G}({\bf\Omega})$ is a bijection. Thus in order to achieve our goal it is sufficient to define a homotopy between the maps $\mc{F}({\bf\Omega})$ and $\mc{G}({\bf\Omega})$.  Lemma~\eqref{lemma-quotient1} tells us that $W_{\epsilon,\bom}^u(\tau) \neq 0$ for all $\epsilon, \bom$and for as long as $W_{\epsilon,\bom}(\tau)$ lies in the funnel.  Therefore we may define a map $\mc H_1: \mathbb{S}^1_{\R^2}\times[0,1] \to \mathbb{S}^1_{\hilb^u}$ by
\[
\mc H_1(\bom, s) = \frac{W_{\epsilon, \bom}^u(s\tau_0 + (1-s)\tau_1)}{\|W_{\epsilon, \bom}^u(s\tau_0 + (1-s)\tau_1)\|}.
\]
We claim $\mathcal{H}_1$ is indeed a homotopy between $\mc F$ and $\mc G$. To show this, fix an $\epsilon > 0$ small. We will first argue that the map $\mc H_1$ is continuous.  We claim that the exit time $\tau_1 = \tau_1(\epsilon, \bom)$ is a continuous function of ${\bf \Omega}$. To show this claim we use Lemma \ref{lemma-hit-boundary} and argue as in the proof of Lemma 3.2 in \cite{AV}.  It also has the property that $\mc{H}_1(\bom,0) = \mc F(\bom)$, and $\mc H_1(\bom,1) = \mc G(\bom)$, hence making it the homotopy between the maps $\mc F$ and $\mc G$.  This concludes the proof of the Proposition.
\end{proof}

Using Lemma \ref{lemma-homotopy} we have two goals: one  goals is to show that,  for every $\epsilon > 0$,   we can actually choose initial data at time $\tau_0$  so that our solution starting at that initial data {\em becomes convex at time $\tau_1$.}  Huisken's theorem then  implies this solution develops a singularity that is modeled on a round sphere $\mathbb S^n$.  Our second goal is to show
that at  the same time, for  every $\epsilon > 0$ we can choose initial data at $\tau_0$, so that the solution starting at that initial data develops a {\em nondegenerate neckpinch}  singularity.  We address the formation of spherical singularities in  Section \ref{sec-sphere}, and the formation of nondegenerate neckpinch singularity in Section \ref{sec-main-thm}.


\section{Initial data developing spherical singularities}
\label{sec-sphere}

 Let $\tau_0$, $\ell_0$ be chosen large enough so that all our previous estimates hold.   We start with the following observation.
 
\begin{lemma}		\label{lem-convexity}
For any $(\epsilon, \bom)$, let $u=\ueo$ and $\tau_1=\tau_1(\epsilon,\bom)$ 
be as in Proposition \ref{lemma-big-unstable}.   
Then, there exist $r_0 = r(K_0) \gg 1$ and $\tau_0 \gg 1$ sufficiently large so that, for every small $\epsilon$ and all $\tau_0 \leq \tau \leq \tau_1$, we have
\begin{equation}		\label{eqn-conv-r0}
u_{yy}(y,\tau) \leq 0, \qquad \mbox{for} \,\, |y| \geq  r_0.  
\end{equation}
This holds provided $\ell_0$ is chosen so that $\ell_0 \gg r_0$.  
\sk

In addition, if we assume that the unstable mode at time $\tau_1$, given by
\[
W^u(y,\tau_1) =\bigl(d_0(\tau_1) + d_2(\tau_1) \hm_2(y) \bigr) e^{-\tau} ,
\]
satisfies
\begin{equation}		\label{eq:10}
d_2(\tau_1)<0 \quad\text{ and }\quad |d_0(\tau_1)|\, \|\hm_0\| \leq M_1/2,
\end{equation}
and if we also assume that $M_1=M_1(K_0)$ was chosen sufficiently large, then we have
\[
u_{yy}(\cdot,\tau_1) <0 \text{ for all }y\in\R,
\]
i.e., our solution is convex at $\tau_1$.

\end{lemma}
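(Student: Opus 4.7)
My plan is to prove the two claims in sequence, both leaning on the $C^2$ expansion $w \approx W^u$ from Lemma \ref{cor-expansion} in the inner region, combined with the peanut asymptotics of \S\ref{sec:peanut property inner}--\ref{sec:peanut property intermediate} and the peanut sandwich \eqref{eqn-comp-peanuts} in the outer region.

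For the first claim (concavity on $|y|\ge r_0$), I would split into two ranges. On $r_0\le|y|\le \ell_0/10$, writing $W^u(y,\tau)=(d_0(\tau)+d_2(\tau)\hm_2(y))e^{-\tau}$ and using $(\hm_0)_{yy}=0$ while $(\hm_2)_{yy}$ is a positive constant, Lemma \ref{cor-expansion} yields
\begin{equation*}
u_{yy}(y,\tau)=\bu_{yy}(y,\tau)+2\,d_2(\tau)\,e^{-\tau}+O\bigl(e^{-\ell_0^2/30}\,e^{-\tau}\bigr).
\end{equation*}
The inner peanut expansion \eqref{eq:um2-derivs} gives $\bu_{yy}(y,\tau)=-K_0 e^{-\tau}(\hm_4)_{yy}(y)+O(\delta(1+|y|)^4 e^{-\tau})$, and $(\hm_4)_{yy}(y)$ grows like $c\,y^2$ for $|y|$ bounded away from $0$. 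Since $|d_2(\tau)|$ is bounded by a constant $C(M_1)$ by virtue of the funnel inequality \eqref{eqn-dfnfun}, taking $r_0=r_0(K_0)$ large enough (once $M_1=M_1(K_0)$ is fixed) makes $-K_0(\hm_4)_{yy}(y)e^{-\tau}$ dominate the bounded $2\,d_2(\tau)e^{-\tau}$ contribution, giving $u_{yy}\le 0$. On $|y|\ge \ell_0/10$ I would use the barrier $\bu(y,\tau-\alpha)\le u(y,\tau)\le \bu(y,\tau+\alpha)$ from \eqref{eqn-comp-peanuts} with $\alpha$ small, together with the intermediate-scale asymptotics \eqref{eq:u-outer} under which $\bu\approx\sqrt{2(n-1)-\tilde K_0 e^{-\tau}y^4}$; a direct differentiation expresses $\bu_{yy}$ as a sum of two manifestly negative terms. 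The $C^2$-control of $u$ from Proposition \ref{prop-Linfty-w}, together with smallness of $\alpha$ (as in Lemma \ref{lem:w pointwise}), transfers concavity from the sandwiching peanuts to $u$.

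For the second claim (global convexity at $\tau=\tau_1$), the first claim reduces matters to showing $u_{yy}(y,\tau_1)<0$ on $|y|\le r_0\subset|y|\le \ell_0/10$. Lemma \ref{cor-expansion} at $\tau=\tau_1$ gives
\begin{equation*}
u_{yy}(y,\tau_1)=e^{-\tau_1}\bigl(-K_0(\hm_4)_{yy}(y)+2\,d_2(\tau_1)\bigr)+O\bigl((\delta+e^{-\ell_0^2/30})\,e^{-\tau_1}\bigr).
\end{equation*}
The worst case is $y=0$, where $(\hm_4)_{yy}$ attains its (negative) minimum over $|y|\le 2\ell_0$. The constraint $\|W^u(\cdot,\tau_1)\|=M_1\,e^{-\tau_1}$ together with $|d_0(\tau_1)|\,\|\hm_0\|\le M_1/2$ forces $|d_2(\tau_1)|\,\|\hm_2\|\ge \tfrac{\sqrt{3}}{2}M_1$, and since $d_2(\tau_1)<0$ this gives $2\,d_2(\tau_1)\le -c_n M_1$ for a dimensional constant $c_n>0$. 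Taking $M_1=M_1(K_0)$ large enough so that $c_n M_1>-2K_0\min_{|y|\le 2\ell_0}(\hm_4)_{yy}$, with margin to absorb the error terms, yields $u_{yy}(y,\tau_1)<0$ on $|y|\le r_0$, and combined with the first claim, on all of $\R$.

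The main obstacle is the careful ordering of constants: $M_1=M_1(K_0)$ must be chosen first, large enough to dominate the peanut's neck contribution $-K_0(\hm_4)_{yy}(0)$; then $\ell_0=\ell_0(M_1,K_0)$ is fixed so that the Lemma \ref{cor-expansion} / Proposition \ref{prop-Linfty-w} machinery applies with exponentially small error; and finally $r_0=r_0(K_0,M_1)$ and $\epsilon,\tau_0$ are chosen so that the quadratic decay of $-K_0(\hm_4)_{yy}$ dominates the bounded $2\,d_2(\tau)$ term for $|y|\ge r_0$. No circularity arises.
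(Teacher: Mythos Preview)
Your argument for the second claim (global convexity at $\tau_1$ on $|y|\le r_0$) matches the paper's and is fine. The gap is in the first claim, specifically your treatment of the outer range $|y|\ge \ell_0/10$.

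You propose to use the $C^0$ sandwich $\bu(y,\tau-\alpha)\le u(y,\tau)\le \bu(y,\tau+\alpha)$ from \eqref{eqn-comp-peanuts} together with the $C^2$ bound of Proposition~\ref{prop-Linfty-w} to ``transfer concavity'' from $\bu$ to $u$. This does not work: a pointwise $C^0$ sandwich between concave functions says nothing about the sign of $u_{yy}$, and the $C^2$ estimate $|w_{yy}|\le M_2(1+|y|^4)e^{-\tau}$ is far too crude. In the inner/intermediate range the peanut satisfies $\bu_{yy}\approx -K_0\,\hm_4''(y)\,e^{-\tau}\sim -12K_0\,y^2 e^{-\tau}$, so the error allowed on $w_{yy}$ is two powers of $|y|$ larger than $|\bu_{yy}|$ itself; you cannot conclude $u_{yy}=\bu_{yy}+w_{yy}\le 0$. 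Near the tip Proposition~\ref{prop-Linfty-w} doesn't even apply.

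The paper avoids this entirely by applying the maximum principle directly to $u_{yy}$ on the region $\{|y|\ge r_0,\ \tau\in[\tau_0,\tau_1]\}$. It only needs to verify concavity on the parabolic boundary: (i)~at the fixed lateral edge $|y|=r_0$ for all $\tau$, where your inner expansion via Lemma~\ref{cor-expansion} gives exactly $u_{yy}(r_0,\tau)\le 0$ once $r_0^2\gtrsim M_1/K_0$; and (ii)~at the initial time $\tau=\tau_0$, where $u(\cdot,\tau_0)=\bu(\cdot,\tau_0)+\epsilon\,\eta_0(\cdot/\ell_0)(\Omega_0+\Omega_2\hm_2)$ is explicit and $u_{yy}(\cdot,\tau_0)\le 0$ follows from the known concavity of the peanut outside a compact set plus smallness of~$\epsilon$. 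The maximum principle then propagates $u_{yy}\le 0$ to the interior, including all the way to the tip, with no need for sharp $C^2$ control of $w$ at large~$|y|$. Replace your direct-estimate argument in the outer range by this maximum-principle step.
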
 

\begin{proof} 
Note that from the definition of our initial data $u(y,\tau_0)$, we have $w(y,\tau_0) =0$ for $|y| \geq 2\ell_0$, and that $ u_{yy}(y,\tau_0) = \bu_{yy}(y,\tau_0) \leq 0$ on $|y| \geq 2\ell_0$.   It was shown in \cite{AV} that there exists $r_0 >0$, depending only on $K_0$ and dimension, such that $\bu_{yy}(y,\tau) \le 0$, for all $|y| \ge r_0$ and all $\tau \ge \tau_0$, provided that $\tau_0 \gg1$.   At the end of the proof we will choose $M_1$  and $r_0$  to depend on  $K_0$ and universal constant.  We will also need to have $\ell_0 \geq 20 r_0$ so that we can apply  Lemma \ref{cor-expansion}.

We need to choose $r_0$ sufficiently large (still depending on $K_0$)  and $\ell_0 \gg r_0$.   Then, the maximum principle with boundary, applied to $u_{yy}$, shows that in order to prove \eqref{eqn-conv-r0}, it is sufficient to show that:
\begin{enumerate}[(i)]
\item $u_{yy}(r_0, \tau) \leq 0$, for all $\tau \in [\tau_0, \tau_1]$, and
\item $u_{yy}(y,\tau_0) \le 0$ for all $r_0 \le |y| \le 2\ell_0$ (since $u_{yy}(y, \tau_0) = \bu_{yy}(y, \tau_0) $ for $|y| \geq 2\ell_0$).
\end{enumerate}
Let us prove these last two claims next.

\smallskip

Claim (i) follows from Lemma \ref{cor-expansion}.  Indeed, since $w=u-\bu$ and $W^u (\cdot, \tau) = \bigl(d_0(\tau) + d_2(\tau) \, \hm_2(y)\bigr)e^{-\tau}$, the estimate \eqref{eqn-wexpansion} applied on $[0,2r_0]$ implies the bound
\[
\big|u_{yy}(y,\tau) - \bu_{yy} (y, \tau) - d_{2}(\tau)\hm_{2}'' (y) e^{-\tau}| \le e^{-\ell_0^2/30}\, e^{- \tau}.
\]
provided that $2r_0 \leq \ell_0/10$.  
This, combined with the peanut asymptotics~\eqref{eq:um2-derivs}, implies
\[
|\bu_{yy} (y, \tau) +  K_0 \, \hm_4''(y)  e^{-\tau}| \le  C_0 \, r_0^2  \,   \delta  \, e^{-  \tau}
\]
for some universal $C_0$.  
We conclude that for all $|y| \leq 2r_0$, and all $\tau \in [\tau_0, \tau_1]$, we have
\begin{equation}		\label{eqn-uuu2}
\big| e^{\tau}\, u_{yy}(y,\tau) +    K_0 \,  \hm_4'' (y)- d_2(\tau)\hm_2''(y)\big| \le   e^{-\ell_0^2/30} + \delta \, C_0 \, r_0^2.   
\end{equation}
Let us evaluate the above at $y = r_0$.   Note that we have that $r_0 \ll \ell_0$, but we can still take $r_0$ big enough so that%
\footnote{~Since, by \eqref{eq:Hermite polynomials explicit form}, $\hm_4(y) = y^4-12y^2+12$, we have $\hm_4''(y) = 12y^2-24$.   If $y\geq 2$ then $24<6y^2$ so $\hm_4''(y)\geq 6y^2$.   For the subsequent estimate involving $\hm_2$ we recall $\hm_2(y) = y^2 - 2$ and thus $\hm_2''(y) = -2$.}%
$\hm_4''(r_0 ) \geq 6 r_0^{2} $ and furthermore, because $d_0(\tau)^2\|\hm_0\|^2 + d_2(\tau)^2 \|\hm_2\|^2 \leq M_1^2$ for $\tau \in [\tau_0, \tau_1]$, we have $\big| d_{2}(\tau)\, \hm_{2}''(r_0)\big| = 2|d_2(\tau)| \leq \frac{2}{\|\hm_2\| } M_1$.   Hence, we can guarantee that $u_{yy}(r_0, \tau ) < 0$, for all $\tau \in [\tau_0, \tau_1]$ provided
\[
-  K_0   \hm_4''(r_0) + d_2(\tau)\hm_{2}''(r_0) \leq - cK_0 \, r_0^2   + \frac{2}{\|\hm_2\|} M_1 < - 2
\]
and
\[
 e^{-\ell_0^2/30} + \delta \, C_0 \, r_0^2<1.
\]

To satisfy the first inequality we need to take  $r_0^2 > C(K_0)  \, (M_1 +1)$ (where $M_1$ will be taken in the sequel  to be a  constant
depending on  $K_0$).  With such a choice
of $r_0$ we can guarantee the second inequality by taking  $\delta $ sufficiently small depending only on  $K_0$ and  $M_1$.

\smallskip 
Now let us  check that (ii) holds, that is $u_{yy}(y,\tau_0) \le 0$,  for all $|y| \geq r_0$.  By the definition of $u(\cdot, \tau_0)$ we have that $u=\bu$ for $|y| \geq 2\ell_0$.  Hence, for $|y| \geq 2\ell_0$ we have  $u_{yy}(y,\tau_0) = \bu_{yy}(y,\tau_0) \leq 0$ from our choice of $r_0$.  For 
$r_0 \le |y| \le 2\ell_0$, using $u(y,\tau_0) = \bu (y, \tau_0) + \epsilon\,\eta_0 \, \big(\Omega_0 + \Omega_2 \hm_2(y)\big)$, we compute 
\[
u_{yy}(y,\tau_0) = \bar{u}_{yy}(y,\tau_0) + 2\epsilon \, \Omega_2 \, \eta_0 + \epsilon (\eta_0)_{yy}(\Omega_0 + \Omega_2\hm_2)   + 4\epsilon y (\eta_0)_y\Omega_2 \le 0,
\]
using the asymptotics of $\bar{u}(y,\tau_0)$, if we choose $\epsilon$ sufficiently small (compared to $e^{-\tau_0}$).  Combining the two cases shows that $u_{yy}(y, \tau_0) \leq 0$, for $|y| \geq r_0$, thus concluding the proof that \eqref{eqn-conv-r0} holds.

\smallskip 
Together (i), (ii) and  the maximum principle  yield \eqref{eqn-conv-r0}, thus finishing the proof of the first part of the Lemma.  
\sk

To show the second part of the Lemma, assume that at $\tau_1$ we have $d_2(\tau_1) <0$, and $d_0(\tau_1)^2\|\hm_0\|^2 + d_2(\tau_1)^2\|\hm_2\|^2 = M_1^2$, implying that $|d_2(\tau_1) |\;\|\hm_2\| \ge M_1/2$, since we are assuming $|d_0(\tau_1)|\; \|\hm_0\|\le M_1/2$.   Then, $d_2(\tau_1) \hm_2''(y) < - M_1/\|\hm_2\|$, and thus, similarly as above, we obtain that for $|y| \le r_0$ and $r_0 \ll \ell_0$,
\begin{equation}		\label{eq:15}
 e^{\tau_1}\, u_{yy}(y,\tau_1) \leq -K_0 \hm_4''(y)- M_1  + e^{-\ell_0^2/30}
 \leq 24K_0  -  M_1 + e^{-\ell_0^2/30}
\end{equation}
because $\hm_4(y)=y^4-12y^2+12$ implies $\hm_4''(y) = 12y^2-24\geq -24$.    Thus the right hand side in \eqref{eq:15} is indeed negative provided
\begin{equation}		\label{eq:16}
M_1 > 24K_0 +  e^{-\ell_0^2/30}.
\end{equation}
We then conclude, from the above discussion, that if $d_2(\tau_1) <0$ and $|d_0(\tau_1)|\;\|\hm_0\| \le M_1/2$, then at $\tau_1$, $u_{yy} (y,\tau_1) <0$, everywhere on our hypersurface, meaning that our solution is convex at $\tau_1$.  

\smallskip
Note that since $\ell_0 \gg 1$, to guarantee that \eqref{eq:16} holds, it is sufficient to choose $M_1 = 24 K_0 + 1$ and with that choice of $M_1$, we required above that $r_0^2 > C(K_0) \, M_1$.  Further more we need to take (depending on $K_0$) $r_0$ so that $\bu_{yy}(y,\tau) \le 0$, for all $|y| \ge r_0$, and $\tau \ge \tau_0$, provided that $\tau_0 \gg1$ (the existence of such $r_0$ is shown in \cite{AV}.
\end{proof}


Let $\bar M_{\epsilon,\bom}(t)$ ($0\leq t<T_{\epsilon,\bom}$) be the solution to MCF that starts from the hypersurface of rotation with profile $u_{\epsilon,\bom}(\cdot, \tau_0)$.  It is a surface of rotation with profile function
\[
r = \sqrt{1-t}\; u_{\epsilon, \bom_\epsilon}\Bigl( \frac{x}{\sqrt{1-t}}, \tau_0 - \ln(1-t)\Bigr)
\]
for $t<\min\{1, T_{\epsilon,\bom}\}$.  

\begin{prop}		\label{prop-sphere}
For every small enough $\epsilon>0$ there exists an $\bom_\epsilon \in \mathbb S^1$ such that the Mean Curvature Flow $\{\bar M_{\epsilon,\bom_\epsilon}(t)\}$ develops a spherical singularity.
\end{prop}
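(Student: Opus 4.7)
The plan is to combine the selection principle of Lemma~\ref{lemma-homotopy} with the convexity criterion of Lemma~\ref{lem-convexity}, and then invoke Huisken's theorem~\cite{Hu}. First I would fix $M_1 := 24 K_0 + 1$, which meets the threshold~\eqref{eq:16} of Lemma~\ref{lem-convexity}. Then, for each sufficiently small $\epsilon>0$, I would apply Lemma~\ref{lemma-homotopy} with target direction $\bar{\bom} = (0,-1) \in \mathbb{S}^1$ to obtain $\bom_\epsilon \in \mathbb{S}^1$ and an exit time $\tau_1 = \tau_1(\epsilon, \bom_\epsilon)$ at which
\[
W^u_{\epsilon, \bom_\epsilon}(\cdot, \tau_1) \;=\; - M_1 \, e^{-\tau_1} \, \frac{\hm_2}{\|\hm_2\|}.
\]
Writing $W^u(\cdot, \tau_1) = \bigl(d_0(\tau_1) \hm_0 + d_2(\tau_1) \hm_2\bigr) e^{-\tau_1}$ and matching coefficients forces $d_0(\tau_1) = 0$ and $d_2(\tau_1) = - M_1/\|\hm_2\| < 0$, so both hypotheses in~\eqref{eq:10} of Lemma~\ref{lem-convexity} are verified. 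Combining the two parts of Lemma~\ref{lem-convexity} (the first part handling $|y|\geq r_0$, the second part handling $|y|\leq r_0$) then yields $u_{yy}(\cdot, \tau_1) < 0$ throughout the domain of the rescaled profile $u_{\epsilon, \bom_\epsilon}(\cdot,\tau_1)$.

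Next I would unwind the rescaling~\eqref{eq:cv}: setting $t_1 := T - e^{-\tau_1}$, one has $U_{xx}(x, t_1) = (T - t_1)^{-1/2}\, u_{yy}(y, \tau_1) < 0$, so the unrescaled profile $U(\cdot, t_1)$ is strictly concave on its domain. Together with positivity $U>0$ on the interior (guaranteed by Lemma~\ref{cor-prop-funnel}(i) in the parabolic region, and by the peanut barrier comparison~\eqref{eqn-comp-peanuts} in the intermediate and tip regions), this gives strict convexity of the closed rotationally symmetric hypersurface $\bar M_{\epsilon, \bom_\epsilon}(t_1) \subset \R^{n+1}$: both the meridional curvature $-U_{xx}/(1 + U_x^2)^{3/2}$ and the azimuthal curvature $1/(U\sqrt{1 + U_x^2})$ are strictly positive away from the tips, while convexity at the two tips is inherited from the peanut via the same barrier inclusion.

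Finally, Huisken's theorem~\cite{Hu} applied to the smooth closed strictly convex initial surface $\bar M_{\epsilon, \bom_\epsilon}(t_1)$ shows that the subsequent MCF contracts smoothly to a round point in finite time, so the unique tangent flow at the singularity is a round shrinking sphere. This produces a spherical singularity in the sense of the introduction and proves the proposition. The main technical obstacle I anticipate is verifying strict convexity of $\bar M_{\epsilon,\bom_\epsilon}(t_1)$ at the two tips, where $U\to 0$ and the curvature formulas degenerate; here the monotonicity \eqref{eq:utau positive} of the peanut together with the sandwiching~\eqref{eqn-comp-peanuts} (which traps the tip of the perturbed surface between two time translates of the peanut tip) is the key input, since the peanut itself is convex near its tip by the construction in~\cite{AV}. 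Everything else in the argument is an immediate chaining of Lemmas~\ref{lemma-homotopy}, \ref{lem-convexity}, and~\ref{cor-prop-funnel} with Huisken's theorem.
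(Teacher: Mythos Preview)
Your proposal is correct and follows essentially the same route as the paper: invoke Lemma~\ref{lemma-homotopy} to hit a target direction satisfying~\eqref{eq:10}, apply Lemma~\ref{lem-convexity} to conclude convexity at~$\tau_1$, pass to the unrescaled flow, and finish with Huisken's theorem. The paper's proof is terser---it simply asserts that ``Lemma~\ref{lem-convexity} provides an $\bom_\epsilon$'' and that the rescaled and unrescaled flows differ only by scaling---whereas you spell out the choice $\bar{\bom}=(0,-1)$ and the curvature computation explicitly; your tip-convexity concern is already absorbed into the conclusion ``our solution is convex at $\tau_1$'' of Lemma~\ref{lem-convexity}, so no additional barrier argument is needed there.
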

\begin{proof}
For any sufficiently small $\epsilon>0$ Lemma~\ref{lem-convexity} provides an $\bom_\epsilon\in\mathbb S^1$ such that the Rescaled MCF $u_{\epsilon, \bom_\epsilon}$ becomes convex at time $\tau_1(\epsilon, \bom_\epsilon)$.  Since the RMCF given by $u_{\epsilon,\bom}$ and the non-rescaled MCF $\bar M_{\epsilon,\bom}(t)$ differ only by rescaling and a time change, it follows that $\bar M_{\epsilon, \bom_\epsilon}(t)$ is convex when $t=t_1(\epsilon)$ where
\[
t_1(\epsilon) = 1- e^{-(\tau_1(\epsilon, \bom_\epsilon) - \tau_0)}.
\]
Huisken's theorem on convex Mean Curvature Flow \cite{Hu} then implies that $\bar M_{\epsilon, \bom_\epsilon}(t)$ is convex for all $t\in[t_1(\epsilon), T_{\epsilon,\bom_\epsilon})$ and shrinks to a ``round point'' as $t \nearrow T_{\epsilon, \bom_\epsilon}$.
\end{proof}

\section{Upper and lower barriers  needed in the cylindrical case}
\label{sec-sub-super}

To conclude the proof of Theorem \ref{thm-main} we still  need to find perturbations of peanut solution developing nondegenerate neckpinch singularities. The first step in finding those perturbations is the construction of families of subsolutions and supersolutions that will be used in Sections \ref{sec-right-bc}
and \ref{sec-main-thm}  as  lower and upper barriers, respectively, thus providing good asymptotic behavior of our perturbations.  In this section we will carry out the construction of those subsolutions and supersolutions.

\subsection{Set up for our barriers and formal analysis}

Let $\varepsilon > 0$ be a very small constant and assume that  $\tau_0$ is any  large
number so that  $\varepsilon \, e^{\tau_0} \ll 1$.  Denote by $\tau_2 > \tau_0$ the time when 
\begin{equation}		\label{eqn-tau2} \varepsilon \, e^{\tau_2/2 } =\sigma_n, \qquad \mbox {for fixed  large $\sigma_n$}.  
\end{equation}
The constant  $\sigma_n$ will be chosen in the proof of Theorem \ref{thm-cylinder} and will depend only on $K_0$ and dimension $n$.  
 We will use the notation 
\[
\sigma(\tau) :=\varepsilon \, e^{\tau/2 }, \qquad \mbox {where $\tau \leq \tau_2$, implying that $0 \leq  \sigma \leq \sigma_n$}
\]
and we simply use the notation $\sigma=\sigma(\tau)$ keeping in mind that $\sigma$ depends on $\tau$.  

For any $\ell_1, \ell_2$  large constants, we  define the \emph{intermediate region}   
\[
\cI_{\ell_1, \ell_2}  = \big  \{ (y, \tau):\,\, |y| \geq \ell_1, \,\,  2(n-1) + q_0 \geq \ell_2 \, e^{-\tau/2}, \,\,\,\,  \tau \in [\tau_0, \tau_2] \, \big \}
\]
and the \emph{tip region}   
\[
\cT_{\ell_2}  = \big  \{ (y, \tau):\,\, 2(n-1) + q_0 \leq \ell_2 \, e^{-\tau/2}, \,\,\,\,  \tau \in [\tau_0, \tau_2] \, \big \}.  
\]
Note that $\ell_1$ marks the end of parabolic region, while $\ell_2$ marks the end of the intermediate region and the beginning of the tip region.

Our goal is to define upper and lower barriers for solutions $u(y,\tau)$ to  the rescaled MCF equation \eqref{eq:u} 
that are perturbations of the peanut solution.  These barriers will be defined for all $|y| \geq \ell_1$, where $\ell_1$ is a sufficiently large uniform constant.  

Function   $q(y, \tau) := u(y, \tau)^2-2(n-1)$ satisfies a nice equation:
\[
q_\tau = \frac{q_{yy}-2u_y^2}{1+u_y^2} - \frac y2 q_y + q 
 = q_{yy} - \frac y2 q_y + q - \frac{q_{yy} + 2}{1+u_y^2}u_y^2
\]
Use $2uu_y = q_y$, so
\[
  \frac{u_y^2}{1+u_y^2} = \frac{q_y^2}{4u^2 + q_y^2} = \frac{q_y^2}{8(n-1) + 4q + q_y^2} 
\]
and hence
\begin{equation}		\label{eq:q}
q_\tau = q_{yy} - \frac y2 q_y + q  -  \frac{(q_{yy} + 2)q_y^2}{8(n-1) + 4q + q_y^2}.  
\end{equation}

Equation \eqref{eq:q} has an advantage over \eqref{eq:u}, namely, the nonlinear terms are bounded by $q_y^2$ and $q_y^2q_{yy}$, i.e. by the derivatives of the solution.  This means that they are still small in the intermediate region where the $y$-derivatives are small.  On the other hand $u-\sqrt{2(n-1)}$ is not small in the intermediate region, so the nonlinear terms in the usual linearization of \eqref{eq:u} cannot be ignored.  

\subsubsection{Guessing the shape of $q(y,\tau)$} Expand $q(y,\tau)$ in Hermite polynomials:
\[
  q(y, \tau) = a_0(\tau)H_0(y) + a_2(\tau)H_2(y) + a_4(\tau)H_4(y) +\cdots
\]
One formally expects to obtain the following  ODEs for projections $a_j(\tau)$:
\[
  a_0' =  a_0+\cdots, \quad
  a_2' = \cO(a_2^2) + \cdots, \quad
  a_4' = -a_4+\cdots
\]
For now assume $a_0=0$, since this will be controlled in terms of the dominating modes.  Assume that $a_2$ starts out with $a_2(\tau) = \varepsilon$.  Then $a_2(\tau) \sim \frac{\varepsilon}{1+C\varepsilon\tau}$, so that we may assume $a_2(\tau) \approx \varepsilon$ if $\tau\ll \varepsilon^{-1}$.  
The equation for $a_4$ tells us that we should expect  $a_4(\tau) \approx Ke^{-\tau}$.  Thus the  approximate solution is 
\begin{equation}
 q(y, \tau) \approx \varepsilon H_2(y) - Ke^{-\tau}H_4(y)  
 = \varepsilon(y^2-2) - Ke^{-\tau} (y^4-12y^2+12).  
\end{equation}
The approximation should be valid for $\tau \in [ \tau_0, \tau_2]$.  

Since we are interested only in  large values of $|y|$ we approximate $H_k(y)\approx y^k$ and get
\[
  q(y, \tau) \approx \varepsilon y^2 - K e^{-\tau}y^4 =: q_0.  
\]
This approximation cannot be good beyond the point where $q_0=-2(n-1)$, because $u^2=q_0+2(n-1) \geq 0$.  This happens when $ K y^4 - \varepsilon e^{\tau} y^2  -2(n-1)e^{\tau}=0$, i.e.  when $y \approx Y_0(\tau)$ where
\begin{equation}		\label{eqn-ytau}
  Y_0(\tau)^2  = \frac {e^\tau}{2K} \bigl\{\varepsilon + \sqrt{\varepsilon^2 + 8(n-1) K e^{-\tau}}\bigr\}
\end{equation}
We can simplify this when $\tau$ is not too large, and when $\tau$ is very large.  
If $\varepsilon^2 \ll e^{-\tau}$ then $Y_0(\tau)^2 \approx e^{\tau/2} \, \sqrt{\frac{2(n-1)}K}$.  If $\varepsilon^2 \gg e^{-\tau}$ then $Y_0(\tau)^2 \approx \frac{\varepsilon}K e^\tau$.  	

So we have
\begin{equation}		\label{eqn-tip0}
Y_0(\tau) \approx
\begin{cases}
\sqrt[4]{2(n-1)K^{-1 }}  \, e^{\tau/4}   & (\tau_0\leq \tau \ll 2\ln\frac{1}{\varepsilon})  \\ \
\sqrt{ \varepsilon K^{-1}}\, e^{\tau/2} & (2\ln\frac{1}{\varepsilon} \ll \tau \leq \tau_2)
\end{cases}
\end{equation}
where $\tau_2$ is given by \eqref{eqn-tau2}.  

On the interval $|y|\leq Y_0(\tau)$ our approximation for $q$  is  valid, except near the end point where we expect to find Bowl solitons.  Thus we get the following approximation for $u$:
\begin{equation}
	u(y, \tau) \approx  \sqrt{2(n-1) + \varepsilon y^2 - K e^{-\tau}y^4}
\end{equation}
for large $|y| \geq \ell_1 \gg 1$.  The maximum occurs at $y_{\max} =\sqrt{ \frac \varepsilon {2K}} e^{\tau/2}$; at this maximum we have
\begin{equation}		\label{eqn-height}
  u(y_{\max},\tau) \approx \sqrt{2(n-1) + \tfrac {\varepsilon^2 e^{\tau}}{4K}}.  
\end{equation}

This results in the following observation which turns out to be important for our purposes.

\begin{remark}\label{rem-height} [The significance of estimate \eqref{eqn-height}]
Thus by choosing $\tau_2 $ so that $\varepsilon \, e^{\tau_2/2 } =\sigma_n \gg 1$ we can guarantee using \eqref{eqn-height} that  $u_{\max} (\tau_2) \approx \frac {\sigma_n}{2\sqrt {K} } \gg 1$. This  would allow us at  the end of the paper when we prove Theorem \ref{thm-main} 
to pass  sufficiently large  rough barriers from below at  $\tau_2$,  which in turn would imply that our solution develops a non-degenerate neckpinch.  
\end{remark}

\subsubsection{ The scaling at the tip region}\label{ss-stip}

Assuming that $q(y,\tau) \approx q_0(y,\tau)$ for $|y| \geq \ell_1$, the two tips occur approximately at  $\pm Y_0(\tau)$, where $Y_0(\tau)$ satisfies \eqref{eqn-tip0}.  Let us  focus on the left tip $-Y_0(\tau)$ and find the scaling at
which we see the bowl soliton.  For the moment we call this $\alpha(\tau)$.  Note that the results in \cite{AV}  imply that when $\varepsilon=0$, then $\alpha(\tau) \sim e^{-\tau/4}$.  
However, as $\varepsilon$ increases $\alpha(\tau)$ changes depending on $\varepsilon$.

Assuming that $\alpha(\tau)$ is the suitable scaling at the tip, we define the rescaled solution  
$w(\xi,\tau), \xi >0$ by 
\begin{equation}		\label{eqn-wxi1}  w(\xi, \tau)= \alpha(\tau) \, u(y,\tau), \qquad \xi = (-Y_0(\tau) + y)\, \alpha(\tau) >0.  
\end{equation}
Note that by definition $w(0,\tau)=0$, for all $\tau$.  
Lets compute the equation of $w(\xi,\tau)$ from the equation of $u(y,\tau)$ in \eqref{eq:u}.  
We use:
\[
y=-Y_0(\tau) + \alpha^{-1}  \xi, \quad u(y,\tau)=\alpha^{-1} \, w(\xi,\tau), \quad u_y=w_\xi, \quad u_{yy} = \alpha \, w_{\xi\xi}
\]
and
\[
u_\tau = \frac 1{\alpha}\, w_\tau + \Big ( \frac{\alpha'}{\alpha^2} \, \xi + Y_0'(\tau) \Big ) \, w_\xi - \frac{\alpha'}{\alpha^2} \, w.  
\]
Plugging the above in \eqref{eq:u} we find 
\begin{equation*}
\begin{split}
\frac 1{\alpha}\, w_\tau + \Big ( \frac{\alpha'}{\alpha^2} \, \xi &+   Y_0'(\tau) \Big ) \, w_\xi - \frac{\alpha'}{\alpha^2} \, w  \\
&= \alpha \, \frac{w_{\xi\xi}}{1 + w_\xi^2} - \frac 12 \, \big ( -Y_0(\tau) + \frac \xi{\alpha}\big ) \, w_\xi + \frac 1{2\alpha} \, w - \alpha \, \frac{n-1}w.  
\end{split}
\end{equation*}
We divide by $\alpha$ and  rearrange terms to express the above equation as:
\begin{equation}		\label{eqn-w202} 
\frac 1{\alpha^2}\, w_\tau + \frac 1{\alpha^2} \Big ( \frac{\alpha'}{\alpha}  + \frac 12 \Big )  \big (  \xi \, w_\xi -   \, w \big ) =\frac{w_{\xi\xi}}{1 + w_\xi^2} -  \frac{n-1}w  +   \frac {  \frac 12 Y_0(\tau) - Y_0'(\tau) }{\alpha(\tau)}\, w_\xi.  
\end{equation}
We want to choose $\alpha(\tau)$ so that the rescaled solution  $w(\xi,\tau) \to   W(\xi)$, where $W(\xi), \xi >0$ denotes the 
profile (in our coordinates) of the  translating bowl soliton of speed one.  $W(\xi)$ satisfies the equation 
\begin{equation}		\label{eqn-Ws} 
\frac{W_{\xi\xi}}{1 + w_\xi^2} -  \frac{n-1}W +  1 \cdot W_\xi =0, \qquad W(0)=0
\end{equation}
and the asymptotic behavior
\[
W^2(\xi)   = 2(n-1)  \xi + (n-1)   \ln  \xi  + \cO(\ln \xi), \mbox{as} \,\, \xi \to +\infty
\]
which can be differentiated in $\xi$.  
In addition $W(\xi) \sim \sqrt{\xi}$ near $\xi \sim 0$.  All these  asymptotics  follow from the fact that,  after switching coordinates, that is, write $\xi=\cB(w)$,  the profile $\cB(w)$ of the   Bowl soliton of speed one
is smooth at the origin and satisfies the asymptotic behavior  
\begin{equation}		\label{eqn-Ba}
\cB(w) = \frac {w^2}{2(n-1)} - \ln w + \cO(w^{-2}), \qquad \mbox{as}\,\, w \to +\infty
\end{equation}
 which can be differentiated in $w$ (see Proposition 2.1 in \cite{AV}).

Going back to \eqref{eqn-w202}, in order to have $w(\xi,\tau)  \approx W(\tau)$ we would need to
 choose $\alpha(\tau) >0$ such that 
\[
\frac {   Y_0'(\tau)  -  \frac 12 Y_0(\tau)}{\alpha(\tau)} = - 1   \cdot  (1+ o(1)), \qquad \mbox{as} \,\, \tau \to \infty.  
\]
Note that 
\begin{equation}		\label{eqn-EW}
E[W] = \frac 1{\alpha^2} \, \Big ( \frac{\alpha'}{\alpha}  + \frac 12 \Big ) \, \big ( \xi \, W_\xi -  W ).  
\end{equation}
is expected to  be an error term.

\begin{remark}\label{rem-atau}  For  $\varepsilon^2 \ll e^{-\tau}$ we have $Y_0(\tau) \approx \sqrt[4]{\frac {2(n-1)}K } \, e^{\tau/4} $, hence we can take 
$\alpha(\tau) := \frac 12 Y_0(\tau) - Y'_0(\tau) = \frac 14 \, \sqrt[4]{\frac {2(n-1)}K } \, e^{\tau/4}$, that is $\alpha(\tau) \sim e^{\tau/4}$  (as in \cite{AV}).  On the other hand, for $\varepsilon^2 \gg e^{-\tau}$, we have $Y_0(\tau) \approx \sqrt{ \frac \varepsilon K}  e^{\tau/2}$ 
and at first order $Y_0'(\tau)-\frac 12 Y_0(\tau) =0$.  So we will need to compute  $Y_0'(\tau)-\frac 12 Y_0(\tau)$ up to the second order, and we will do that 
for both cases above together, that is for all $\tau \in [\tau_1, \tau_2]$ independently from  how $\varepsilon^2$ compares to $e^{-\tau}$.  
\end{remark}

\begin{lemma}\label{lemma-atau}  Let $\alpha(\tau) := \frac 12 Y_0(\tau) - Y'_0(\tau)$ and $\sigma:= \varepsilon e^{\tau/2}$, $\brho=\sqrt{\sigma^2  + 8(n-1) K}$.  Then,  
\begin{equation}		\label{eqn-atau}
\alpha(\tau)=  \frac 14 \, Y_0(\tau) \cdot   \frac{\brho- \sigma}{ \brho} = \frac{(n-1) \sqrt{2K} }{ \brho \, \sqrt{\sigma + \brho}  }  \, e^{\tau/4} = c_1 \, e^{\tau/4}
\end{equation}
where we have used that 
\begin{equation}		\label{eqn-Ytau}
  Y_0(\tau)  = \sqrt{ \frac {\sigma + \brho}{2 K}}\, e^{\tau/4}.  
\end{equation}
Note that 
$c_1= c_1(\sigma)  \sim 1$, since $\sigma \leq \sigma_n$.   
\end{lemma}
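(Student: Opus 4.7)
The proof is a direct computation, and the plan is essentially to differentiate $Y_0(\tau)$ explicitly and rearrange. First I would rewrite the square root appearing in \eqref{eqn-ytau} in the $\sigma$-variable: since $\varepsilon^2 + 8(n-1)Ke^{-\tau} = e^{-\tau}(\sigma^2 + 8(n-1)K) = e^{-\tau}\brho^2$, I get
\[
Y_0(\tau)^2 = \frac{e^{\tau/2}}{2K}(\sigma + \brho),
\]
which gives the formula \eqref{eqn-Ytau} for $Y_0(\tau)$ after taking a square root.

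Next I would differentiate. Using $\sigma' = \tfrac12\sigma$ and, from $\brho^2 = \sigma^2 + 8(n-1)K$, the identity $\brho' = \sigma\sigma'/\brho = \sigma^2/(2\brho)$, I would compute
\[
(Y_0^2)' = \frac{e^{\tau/2}}{4K}(\sigma+\brho) + \frac{e^{\tau/2}}{2K}(\sigma' + \brho') = \frac{e^{\tau/2}(\sigma+\brho)^2}{4K\brho},
\]
after factoring $\sigma' + \brho' = \tfrac{\sigma(\sigma+\brho)}{2\brho}$. Dividing by $2Y_0$ yields $Y_0'(\tau) = \tfrac14 Y_0\cdot \tfrac{\sigma+\brho}{\brho}$, so
\[
\alpha(\tau) = \tfrac12 Y_0(\tau) - Y_0'(\tau) = \tfrac14 Y_0(\tau)\,\frac{2\brho - (\sigma+\brho)}{\brho} = \tfrac14 Y_0(\tau)\,\frac{\brho - \sigma}{\brho},
\]
which is the first identity in \eqref{eqn-atau}.

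For the middle expression I would rationalize $\brho - \sigma$ via $(\brho-\sigma)(\brho+\sigma) = 8(n-1)K$, so
\[
\tfrac14 Y_0 \frac{\brho-\sigma}{\brho} = \tfrac14 Y_0 \cdot \frac{8(n-1)K}{\brho(\sigma+\brho)} = \frac{2(n-1)K\,e^{\tau/4}}{\brho\sqrt{2K(\sigma+\brho)}} = \frac{(n-1)\sqrt{2K}\,e^{\tau/4}}{\brho\sqrt{\sigma+\brho}},
\]
after plugging in $Y_0 = e^{\tau/4}\sqrt{(\sigma+\brho)/(2K)}$. Setting $c_1(\sigma) := (n-1)\sqrt{2K}/\bigl(\brho\sqrt{\sigma+\brho}\bigr)$ gives $\alpha(\tau) = c_1\,e^{\tau/4}$. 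Finally, since $\sigma\in[0,\sigma_n]$ the quantity $\brho = \sqrt{\sigma^2+8(n-1)K}$ is bounded above and below by positive constants depending only on $K_0, \sigma_n, n$, so $c_1 \sim 1$ uniformly on $[\tau_0,\tau_2]$. There is no real obstacle here; the only point that requires a little care is combining the two terms in $(Y_0^2)'$ so that the numerator becomes the perfect square $(\sigma+\brho)^2$, which is what makes the final expression for $\alpha(\tau)$ so clean.
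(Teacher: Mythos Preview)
Your proof is correct and follows essentially the same direct-computation approach as the paper. The only minor variation is that the paper obtains $Y_0'$ by implicitly differentiating the defining equation $2(n-1) + \varepsilon Y_0^2 - Ke^{-\tau}Y_0^4 = 0$, whereas you differentiate the explicit formula $Y_0^2 = \tfrac{e^{\tau/2}}{2K}(\sigma+\brho)$ directly; both routes lead to the same expression for $\alpha(\tau)$.
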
 

\begin{proof} First note that \eqref{eqn-ytau} and $\sigma=\varepsilon e^{\tau/2}$, $\brho=\sqrt{\sigma^2  + 8(n-1)K}$ imply \eqref{eqn-Ytau}.  
Furthermore, since  the tip $Y_0(\tau)$  satisfies  equation 
\[
2(n-1) + \varepsilon \, Y_0(\tau)^2 - K e^{-\tau} \, Y_0(\tau)^4=0
\]
by differentiating  in $\tau$ and solving  for $Y_0'(\tau)$ we  obtain 
$Y_0' =   \frac  {Y_0^3} { 2  (2\,  Y_0^2 - \varepsilon\, K^{-1} \,  e^{\tau}  )}.  $
Hence, 
\[
Y_0' - \frac 12 Y_0 =  \frac  { Y_0 \big ( -  Y_0^2 + \varepsilon K^{-1} e^{\tau} \big )} { 2 \, \big (2\,  Y_0^2 - \varepsilon K^{-1} e^{\tau} \big )}= \frac  { Y_0\big ( -  Y_0^2 + \sigma  K^{-1} e^{\tau/2} \big )} { 2 \, \big (2\,  Y_0^2 - \sigma K^{-1} e^{\tau/2} \big )}  
\]
Using \eqref{eqn-Ytau} we  obtain \eqref{eqn-atau}.  
\end{proof} 

\smallskip

\begin{remark}[The range of the intermediate region] Since $8(n-1) + 4q_0 \approx 4u^2 := 4 \alpha^{-2} w(\xi, \tau)$, where $\xi := \alpha \, |\pm Y_0 -  y| $ and we expect $w(\xi,\tau) \approx W(\xi)$ (the Bowl soliton) 
and $q_{0y}^2 \sim \alpha(\tau)^{-2}$, we see that   $8(n-1) + 4q_0 \gg q_{0y}^2$ iff $\alpha^{-2} w(\xi, \tau) \gg \alpha^{-2}$, iff $W(\xi) \gg 1$.  
Using the behavior $W(\xi) \sim \sqrt{\xi}$ for $\xi \gg 1$, we conclude that the intermediate region $8(n-1) + 4q_0 \gg q_{0y}^2$ reaches up to the soliton region.  

\end{remark} 
\smallskip

\subsection{Statement of our result} 

The equation for $q$ is \eqref{eq:q}.  Ignoring the nonlinear terms we have a solution
\[
 \bq_0(y, \tau) = \varepsilon H_2(y) - K e^{-\tau} H_4(y).  
\]

Our formal analysis above indicates that  $\bq_0(y, \tau)$ is a good approximation in the intermediate  region $\cI_{\ell_1, \ell_2}$ 
which extends up to  order $\cO(e^{-\tau/4})$ close to the tip.  Thus we will seek for super and subsolutions that are small perturbations
of $\bq_0$.  

First,  by plugging $\bq_0$ in equation \eqref{eq:q} and using that $\bq_0 $ satisfies  $\bq_{0\tau} =  \bq_{0yy} - \frac y2 \bq_{0y} + \bq_{0}$ we conclude that 
\emph{$\bq_0$ is a supersolution of equation \eqref{eq:q}}  for all $|y| \geq \ell_1$.  
Hence, we will use this for  a supersolution,  
and we rename it $Q^+_{\varepsilon, K}$ to indicate its dependence on $\varepsilon, K$.  

To   construct  a subsolution  $q^{-}$  to \eqref{eq:q} for   $|y| \geq \ell_1 \gg 1$ we note that  in this region we have  
$\bq_0 (y, \tau) \approx \varepsilon \, y^2 - K e^{-\tau} y^4$.  Hence, we set $q_0 :=  \varepsilon \, y^2 - K e^{-\tau} y^4$ and we look for a sub-solution in the form 
\[
  q^-(y, \tau) = q_0(y, \tau) +   q_1(y, \tau),
\]
in which $q_1$ is suitable correction term.  To this end we will  consider separately   the intermediate and tip regions.  

\smallskip

We summarize the results in this section as follows.  

\begin{proposition}\label{prop-sub-sup}  Let $\varepsilon > 0$ be a very   small constant and assume that  $\tau_0$ is  any  large
number so that  $\varepsilon \, e^{\tau_0} \ll 1$.  Denote by $\tau_2 > \tau_0$ the time when 
$ \varepsilon \, e^{\tau_2/2 } =\sigma_n$, for any fixed large constant  $\sigma_n$.  Then, for any fixed $K > 0$ the following hold for $\tau\in [\tau_0,\tau_2]$,
\begin{itemize}
\item the function $\cQ^+_{\varepsilon, K} :=  \varepsilon \, H_2(y)- K e^{-\tau} H_4(y)$ is a supersolution of equation \eqref{eq:q} on $|y| \geq \ell_1$, that is on
$\cI_{\ell_1, \ell_2} \cup \cT_{\ell_2}$.  

\item the function $\cQ^-_{\varepsilon, K}$ defined in \eqref{eqn-barq} for $y <0$,  and extended by reflection $\cQ^-_{\varepsilon, K}(y,\tau)
:=\cQ^-_{\varepsilon, K}(-y,\tau)$ for $y >0$,  is a  subsolution of equation \eqref{eq:q} on $|y| \geq \ell_1$, that is on
$\cI_{\ell_1, \ell_2} \cup \cT_{\ell_2}$.  
\end{itemize} 
\noindent Both functions $\cQ^+_{\varepsilon, K}$ and $Q^-_{\varepsilon, K}$ satisfy the asymptotics 
\begin{equation}		\label{eqn-bound-QQQQ}
\cQ^\pm_{\varepsilon, K}(y,\tau) = ( \varepsilon y^2 - K e^{-\tau} y^4 ) (1+ o_\tau(1)), \qquad \mbox{on} \,\, \cI_{\ell_1, \ell_2}.  
\end{equation}
Equivalently $U^\pm_{\varepsilon, K} := \sqrt{ 2(n-1) + \cQ^\pm_{\varepsilon, K} }$ define super and subsolutions of equation \eqref{eq:u} in the
same region $\cI_{\ell_1, \ell_2} \cup \cT_{\ell_2}$.  Finally, the barriers are still valid for $\varepsilon=0$ in which case $\tau_2 > \tau_0$ 
can be any number.

\end{proposition}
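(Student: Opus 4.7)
\textbf{Proof plan for Proposition \ref{prop-sub-sup}.}

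My strategy is to exploit that Hermite polynomials are eigenfunctions of $\mathcal{L}+1$, where $\mathcal{L}f=f_{yy}-(y/2)f_y$, so that the linear part of \eqref{eq:q} can be solved explicitly, and then control the nonlinear correction $N[q]:=(q_{yy}+2)q_y^2/\bigl(8(n-1)+4q+q_y^2\bigr)$ via size estimates in each region. For the supersolution $\cQ^+_{\varepsilon,K}=\varepsilon H_2-Ke^{-\tau}H_4$, I would first compute $\partial_\tau\cQ^+=Ke^{-\tau}H_4$ and $(\mathcal{L}+1)\cQ^+=\varepsilon\cdot0+(-Ke^{-\tau})(1-2)H_4=Ke^{-\tau}H_4$, so the linear part cancels \emph{exactly}. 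It then remains to verify $N[\cQ^+]\ge 0$ in $\cI_{\ell_1,\ell_2}\cup\cT_{\ell_2}$. Since $H_4''(y)=12y^2-24$, we get $\cQ^+_{yy}+2=2+2\varepsilon+24Ke^{-\tau}-12Ke^{-\tau}y^2$; this is nonnegative as long as $y^2\le(1+\varepsilon)/(6Ke^{-\tau})+2$, a threshold of order $e^{\tau}$, which comfortably dominates the largest $|y|$ allowed by $\cT_{\ell_2}$ (bounded by $Y_0(\tau)^2\lesssim(\sigma_n/K)e^{\tau/2}$ thanks to $\tau\le\tau_2$). The denominator $8(n-1)+4\cQ^++(\cQ^+_y)^2$ is positive precisely on the domain where $U^+=\sqrt{2(n-1)+\cQ^+}$ is defined, since it equals $4(U^+)^2(1+(U^+_y)^2)$.

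For the subsolution I would treat the intermediate and tip regions separately and then glue by taking the pointwise minimum (infimum of two subsolutions of a degenerate parabolic equation of this form is a subsolution in the viscosity sense). In $\cI_{\ell_1,\ell_2}$, starting from the leading monomial $q_0=\varepsilon y^2-Ke^{-\tau}y^4$, a direct computation gives $\partial_\tau q_0-(\mathcal{L}+1)q_0=-2\varepsilon+12Ke^{-\tau}y^2$, so $q_0$ is \emph{not} a subsolution by itself; I would add a correction $q_1$ of the form $q_1=-A\,e^{-\tau}y^2-B\,\varepsilon$ with $A,B$ dimensional constants chosen so that, after including the nonlinear term $N[q_0+q_1]$, which in $\cI$ is small relative to $e^{-\tau}y^2$ because $q_y^2\lesssim \varepsilon^2y^2+K^2e^{-2\tau}y^6$ and $8(n-1)+4q\ge\ell_2e^{-\tau/2}$, one obtains $\mathcal{N}[q_0+q_1]\le 0$. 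In the tip region $\cT_{\ell_2}$ I would pass to the rescaled variable $\xi=(-Y_0(\tau)+y)\alpha(\tau)$, $w=\alpha u$ from \S\ref{ss-stip}, and use a shifted bowl soliton $W(\xi+\xi_0)$ (with $\xi_0$ small) as subsolution, since $W$ solves \eqref{eqn-Ws} and the error $E[W]$ in \eqref{eqn-EW} has a definite sign for the correct choice of $\alpha(\tau)$ determined in Lemma \ref{lemma-atau}; the bowl asymptotics \eqref{eqn-Ba} guarantee $W^2\sim 2(n-1)\xi$, which in the original variables matches $2(n-1)+q_0$ at the interface $\partial\cT_{\ell_2}\cap\partial\cI_{\ell_1,\ell_2}$, enabling the gluing.

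To close, I would verify \eqref{eqn-bound-QQQQ} by reading off the leading behavior of $\cQ^\pm$ in $\cI_{\ell_1,\ell_2}$: for $\cQ^+$ this is immediate from $H_2(y)=y^2-2$, $H_4(y)=y^4-12y^2+12$, which contribute only $o_\tau(1)$ lower-order terms compared to $\varepsilon y^2-Ke^{-\tau}y^4$ throughout the intermediate region; for $\cQ^-$ the correction $q_1$ is by construction of smaller order. Finally, to reinterpret these as barriers for \eqref{eq:u}, I would use that the substitution $q=u^2-2(n-1)$ yields $U^\pm=\sqrt{2(n-1)+\cQ^\pm}$, and the chain rule applied to \eqref{eq:q} (which is algebraically equivalent to \eqref{eq:u} wherever $U^\pm>0$) transfers the sub/supersolution inequalities, with the monotonicity of $\sqrt{\cdot}$ preserving the direction of the inequality. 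The case $\varepsilon=0$ follows by setting $\varepsilon=0$ throughout; the bounds on $\tau$ disappear because the tip scale reverts to $\alpha(\tau)\sim e^{\tau/4}$ as in \cite{AV}.

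\textbf{The main obstacle} I anticipate is the subsolution construction near the crossover between $\cI_{\ell_1,\ell_2}$ and $\cT_{\ell_2}$: in that strip both the polynomial ansatz $q_0+q_1$ and the bowl ansatz are valid only marginally, and one must tune the shift $\xi_0$ and the constants $A,B$ so that the two pieces overlap with the correct ordering to justify taking the pointwise minimum as a global subsolution, while preserving the sharp asymptotic \eqref{eqn-bound-QQQQ}. Controlling the nonlinear term $N[\cQ^-]$ in this overlap, where the denominator $4U^2(1+U_y^2)$ degenerates, is the delicate point.
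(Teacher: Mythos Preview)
Your treatment of the supersolution $\cQ^+_{\varepsilon,K}$ is correct and actually more complete than the paper's: you verify that $q_{yy}+2\ge 0$ throughout the region, whereas the paper simply asserts that the nonlinear term has the right sign. This part is fine.

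The subsolution construction, however, has a genuine gap. Your claim that ``$N[q_0+q_1]$ is small relative to $e^{-\tau}y^2$ in $\cI_{\ell_1,\ell_2}$'' fails near the outer boundary of the intermediate region. There, $|y|$ is close to $Y_0(\tau)\sim e^{\tau/4}$ and $2(n-1)+q_0$ is by definition only $\ell_2 e^{-\tau/2}$, while $q_{0y}^2\sim e^{-\tau/2}$. Hence
\[
N[q_0]\;\sim\;\frac{2\,q_{0y}^2}{8(n-1)+4q_0}\;\sim\;\frac{e^{-\tau/2}}{\ell_2\,e^{-\tau/2}}\;=\;\frac{1}{\ell_2},
\]
a \emph{fixed constant}, whereas your correction $q_1=-Ae^{-\tau}y^2-B\varepsilon$ contributes only $O(e^{-\tau}y^2)=O(e^{-\tau/2})$ to the defect $\partial_\tau q-(\cL+1)q$. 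No choice of dimensional constants $A,B$ can make a term of order $e^{-\tau/2}$ absorb a term of order $1/\ell_2$ for large $\tau$. Thus $q_0+q_1$ cannot be a subsolution on the whole of $\cI_{\ell_1,\ell_2}$.

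The paper's fix is not a polynomial correction but one tailored to the degeneracy of the denominator: it sets $q_1=\vartheta Q$ where $Q$ (equation \eqref{eqn-QQn}) contains the factor $\ln\bigl(e^{\tau/2}(2(n-1)+q_0)\bigr)$. Applying the transport operator $\partial_\tau+\tfrac{y}{2}\partial_y-1$ to this logarithm produces precisely a term $\frac{q_{0y}^2}{2(n-1)+q_0}$ that matches and dominates $N[q]$ up to the boundary of $\cI_{\ell_1,\ell_2}$; see \eqref{eqn-q15}--\eqref{eqn-q13} and Lemma \ref{lem-sub-inter}. This logarithmic structure is the missing idea in your intermediate-region ansatz. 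As a consequence, the matching with the tip subsolution also requires a $\tau$-dependent shift $\beta(\tau)\sim\tau$ (Lemma \ref{lem-match}) rather than a fixed $\xi_0$, and the paper checks the derivative ordering \eqref{eqn-mder} at a specific interface instead of taking a pointwise minimum. Your identification of the crossover as the ``main obstacle'' is apt, but the difficulty originates earlier, in the form of $q_1$ itself.
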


\subsection{Subsolution   in the intermediate region $\cI_{\ell_1, \ell_2}$}\label{sec-sub1} 

We begin my observing that $q_0$ solves 
\begin{equation}		\label{eqn-q020}
q_{0\tau} +   \frac{y}{2} q_{0y} -q_0=0.  
\end{equation}
Therefore, the  function $q^-:= q_0+ q_1$ is a subsolution of \eqref{eq:q} if  $q_1$ satisfies 

\begin{equation}		\label{eqn-q12}
  q_{1\tau}   +  \frac{y}{2} q_{1y} -q_1 < -     \frac{(q^-_{yy} + 2)(q^-_y)^2}{8(n-1) + 4q^- + (q_y^-)^2}  +   q^-_{yy} := \cE(q^-).  
\end{equation}
As a temporary step we estimate the right hand side of \eqref{eqn-q12}  by substituting $\cE(q^-)$ by  $\cE(q_0):=  - \frac{(q_{0yy} + 2) \, q_{0y}^2}{8(n-1) + 4q_0 + (q_{0y})^2}    + q_{0yy}$.  

Formulas   \eqref{eqn-atau} and \eqref{eqn-Ytau}  imply that for $\sigma \leq \sigma_n$ we have 
$Y_0(\tau) \sim   \sqrt{\brho} \, e^{\tau/4}= \cO(e^{\tau/4})$, since $ \brho:=\sqrt{\sigma^2+ 8(n-1)K}.  $
Furthermore, 
\begin{equation}
 \label{eqn-der1}
q_{0y} \approx 2\varepsilon y - K e^{-\tau} 4y^3, \quad  |q_{0y}|\lesssim \varepsilon y + e^{-\tau}y^3, \quad q_{0y}^2 \lesssim \varepsilon^2 y^2 + e^{-2\tau}y^6 
\end{equation}
and
\begin{equation}		\label{eqn-der2}
q_{0yy}\approx 2\varepsilon - 12 K e^{-\tau}y^2, \qquad |q_{0yy}|\lesssim \varepsilon + e^{-\tau}y^2
\end{equation}
imply 
\[
 q_{0y}^2 = \cO(e^{-\tau/2}), \qquad q_{0yy} =\cO(e^{-\tau/2}).  
\]
It follows that 
\[
\cE(q_0) \geq -   \frac{3\,q_{0y}^2}{8(n-1) + 4q_0 } -  |q_{0yy}| \geq - B \, \Big (
 \frac{\varepsilon^2 y^2 + K^2 e^{-2\tau} y^6}{2(n-1) +  q_0} + \varepsilon +  e^{-\tau} y^2 \Big )
\]
 for some constant $B=B(\sigma_n, n)$.  

The above discussion leads to defining the correction of our subsolution  $q^-:= q_0+ q_1$ as $q_1 := \vartheta Q$ for  an appropriate  solution $Q$ of equation 
\begin{equation}		\label{eqn-Q}
Q_\tau + \frac y2 Q_y - Q =  -  \frac{\varepsilon^2 y^2 + K^2 e^{-2\tau} y^6}{2(n-1) +  q_0} - \varepsilon -  e^{-\tau} y^2.  
\end{equation}

To solve  \eqref{eqn-Q} we integrate along characteristics and find,  by  direct calculation,   that the general  solution of  \eqref{eqn-Q} is given by 
\begin{equation}		\label{eqn-QQ}
Q(y,\tau) =   \frac{\varepsilon^2 y^2 + K^2 e^{-2\tau} y^6}{2(n-1)} \,  \ln \big ( \frac{ 2(n-1) +q_0 }{e^{-\tau} y^4} \big )  + (\varepsilon + e^{-\tau} y^2) + y^2 h(\tau-2\ln |y|) 
\end{equation}
for any smooth parameter function $h$ (that is zero along characteristics).  We will momentarily choose  the function $h$ in such a way that $Q(y,\tau ) >0$ on $\cI_{\ell_1, \ell_2}$.  
With such a  choice of $Q$, the function $q_1:=\vartheta \, Q$ satisfies 
\begin{equation}		\label{eqn-q15}  q_{1\tau}   +  \frac{y}{2} q_{1y} -q_1 = -  \vartheta \,  \Big (  \frac{\varepsilon^2 y^2 + K^2 e^{-2\tau} y^6}{2(n-1) +  q_0} + \varepsilon + K e^{-\tau} y^2 \Big )
\end{equation}
and therefore  \eqref{eqn-q12} holds in the region $\cI_{\ell_1, \ell_2}$  provided 
\begin{equation}		\label{eqn-q13}
  \frac{(2+q^-_{yy})  (q^-_{y})^2}{8(n-1) + 4q^- + (q^-_y)^2}  -  q^-_{yy} < \vartheta \, \Big (  \frac{\varepsilon^2 y^2 + K^2 e^{-2\tau} y^6}{2(n-1) +  q_0} + \varepsilon + K e^{-\tau} y^2  \Big )
\end{equation}
where $\vartheta$ is a  sufficiently large  constant (depending only on  $\sigma_n$ and $n$).  

\sk

\subsubsection{The choice of  the parameter function $h$ in \eqref{eqn-QQ}.  }
Before we proceed  we will need to make an appropriate choice of solution $Q(y,\tau)$ of \eqref{eqn-Q} by  specifying the parameter function $h$
in \eqref{eqn-QQ}.  The subtlety comes from the fact that  $2(n-1)+q_0$ is very tiny ($= \ell_2 \, e^{-\tau/2}$)   at the intersection of the  intermediate  
and tip regions.  Hence to control the denominator $8(n-1) + 4 q^-$ in \eqref{eqn-q13} in terms of $2(n-1)+q_0$ 
one needs to guarantee  $8(n-1) + 4q_0 + 4 \vartheta Q > 2(n-1) + q_0$.  We will simply make $Q>0$.  

To this end, first  to simplify the notation, we set 
$
\cA(y,\tau) :=  \frac{\varepsilon^2 y^2 + K^2 e^{-2\tau} y^6}{2(n-1)}
$
and observe that since $|y| \leq Y_0 = \cO(e^{\tau/4})$, we have 
\begin{equation}		\label{eqn-AAA0} 
\cA(y,\tau) :=  \frac{\varepsilon^2 y^2 + K^2 e^{-2\tau} y^6}{2(n-1)} = \cO(e^{-\tau/2})
\end{equation}
where $a_0=a_0(\sigma, n)=o(1)$.  Since $\cA(y,\tau) $ is of the form $y^2 \tilde h(y^2 e^{-\tau})=y^2 h (\tau-2 \ln y)$,  we will choose $h$ so that $y^2 h(\tau - 2\ln y)=
\cA \cdot f(\tau  - 2\ln y)$ for some appropriate  $f$.  With such a choice and after rearranging terms we express 
$Q=Q(y, \tau)$ as 
\[
Q := \cA \,  \big \{   \ln  \big ( e^{\frac \tau 2}  (2(n-1) +q_0  ) \big ) + \frac \tau 2 -  4 \ln |y|  + f(\tau - 2\ln |y|) \big  \} + (\varepsilon + K e^{-\tau} y^2).  
\]
We will choose $f$ so that $ \frac \tau 2 -  4 \ln |y|  + f(\tau - 2\ln |y|)  \sim \tau$ at $|y|=Y_0$.  For example, setting   $f := 2 (\tau - 2 \ln |y|)$ we guarantee that at $y \approx Y_0  = c_\sigma \,  e^{\tau/4} $ we have 
\begin{equation}		\label{eqn-tau6}
\frac \tau 2 -  4 \ln |y|  + f = \frac \tau 2 -  4 \ln |y| +  2 \tau - 4\ln|y|  =  \frac {5\tau}2 - 8 \ln |y| \approx \frac \tau2.  
\end{equation}

\sk
From now one \emph{we fix this choice of   $Q$}, namely  
\begin{equation}		\label{eqn-QQn}
Q(y,\tau) :=  \cA \cdot  \big \{   \ln  \big ( e^{\tau/2}  (2(n-1) +q_0  ) \big ) + \frac {5\tau}2 - 8 \ln |y|   \big  \} + (\varepsilon + K e^{-\tau} y^2)
\end{equation}
where
$\cA(y,\tau)$ is given by \eqref{eqn-AAA0}.  We claim the following:

\begin{claim}\label{claim-QQQ}  \emph{If $Q$ is given  by \eqref{eqn-QQn} we have }
\begin{equation}		\label{eqn-Q-linfty}
0 < Q(y,\tau) \leq  \cO_{\ell_1, \ell_2} \big  ( \tau e^{-\tau/2} \big ), \qquad \mbox{on}\,\,\,  \cI_{\ell_1, \ell_2}
\end{equation}
 provided $\ell_1, \ell_2 \gg 1$ and $\tau  \gg 1$.  

\end{claim}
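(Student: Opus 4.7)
The plan is to estimate each factor of $Q$ in \eqref{eqn-QQn} separately, exploiting two consequences of being in $\cI_{\ell_1,\ell_2}$: the lower bound $2(n-1)+q_0 \geq \ell_2 e^{-\tau/2}$ from the region definition, and the upper bound $|y|\leq Y_0(\tau)$ which by \eqref{eqn-Ytau} combined with $\sigma=\varepsilon e^{\tau/2}\leq \sigma_n$ gives $Y_0(\tau)^2 \leq C e^{\tau/2}$ for a constant $C=C(\sigma_n,K,n)$.

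First I would bound the prefactor $\cA$ and the trailing term. Writing $\varepsilon^2 = \sigma^2 e^{-\tau}$ with $\sigma\leq \sigma_n$, and using $y^2\leq Y_0^2\leq C e^{\tau/2}$ together with $y^6 \leq Y_0^6 \leq C^3 e^{3\tau/2}$, one obtains
\[
\cA(y,\tau) = \frac{\varepsilon^2 y^2 + K^2 e^{-2\tau}y^6}{2(n-1)} \leq C_1(\sigma_n, K, n)\, e^{-\tau/2},
\]
and similarly $\varepsilon + K e^{-\tau}y^2 \leq (\sigma_n + KC)e^{-\tau/2}$. This is the source of the $e^{-\tau/2}$ factor in the claimed bound, while the $\tau$ factor will come from the logarithmic bracket.

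Next, I would estimate the bracket $\cB(y,\tau):=\ln\bigl(e^{\tau/2}(2(n-1)+q_0)\bigr) + \tfrac{5\tau}{2} - 8\ln|y|$. For the upper bound, $2(n-1)+q_0 \leq 2(n-1)+\varepsilon y^2 \leq 2(n-1)+\sigma_n^2$ and $|y|\geq \ell_1$ give $\cB \leq \tau/2 + C - 8\ln\ell_1 + 5\tau/2 = 3\tau + \cO_{\ell_1}(1)$; hence $\cA \cdot \cB = \cO_{\ell_1}(\tau e^{-\tau/2})$, which together with the $(\varepsilon + K e^{-\tau}y^2)$ term yields the upper bound in \eqref{eqn-Q-linfty}. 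For the lower bound (and for positivity of $Q$), the key point is the computation already flagged in \eqref{eqn-tau6}: using $2(n-1)+q_0 \geq \ell_2 e^{-\tau/2}$ and $|y|\leq Y_0 \leq \sqrt{C}\,e^{\tau/4}$, one has
\[
\cB(y,\tau) \geq \ln\ell_2 + \frac{5\tau}{2} - 8\ln\bigl(\sqrt{C}\,e^{\tau/4}\bigr) = \ln\ell_2 + \frac{\tau}{2} - 4\ln C,
\]
which is strictly positive once $\tau\geq \tau_0$ is large. Since $\cA > 0$ and $\varepsilon + K e^{-\tau}y^2 > 0$, this forces $Q > 0$ throughout $\cI_{\ell_1,\ell_2}$.

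The only point that needs care is the lower bound on $\cB$: a priori $-8\ln|y|$ could be as negative as $-2\tau+\cO(1)$ at the inner edge $|y|\approx Y_0$ of the intermediate region, and it is precisely the $+5\tau/2$ offset built into the definition of $Q$ via the choice $f(\tau-2\ln|y|)=2(\tau-2\ln|y|)$ that leaves a clean remainder of $\tau/2$. This is the reason for the special normalization made in the paragraph preceding \eqref{eqn-QQn}, and once it is in place the claim reduces to the elementary bookkeeping outlined above.
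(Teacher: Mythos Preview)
Your proposal is correct and follows essentially the same approach as the paper's own proof: both use the lower bound $2(n-1)+q_0\geq \ell_2 e^{-\tau/2}$, the estimate $\cA=\cO(e^{-\tau/2})$ from \eqref{eqn-AAA0}, and the computation \eqref{eqn-tau6} showing that $\tfrac{5\tau}{2}-8\ln|y|\gtrsim \tau/2$ at the far edge $|y|\approx Y_0$. Your write-up is more explicit than the paper's (which is a two-line sketch), and the only imprecision is the bound $\varepsilon y^2\leq \sigma_n^2$---the correct constant also depends on $K$ and $n$ via $Y_0^2$---but this is harmless since any $\cO(1)$ bound suffices there.
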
 

\begin{proof}[Proof of Claim \ref{claim-QQQ}] 
The claim follows  from  the condition $2(n-1) + q_0 \geq \ell_2 e^{-\tau/2}$, \eqref{eqn-AAA0} and  \eqref{eqn-tau6} that  hold in $\cI_{\ell_1, \ell_2}$.  
First, inserting these  bounds in \eqref{eqn-QQn} we get $Q(y, \tau) > \cA(y, \tau)  \, \big (  \ln  \ell_2 + \frac \tau 4  \big ) >0.  $
Furthermore, the same bounds imply $Q(y, \tau) \leq  \cA(y, \tau)  \, \big (  \ln  \ell_2 + \cO(\tau)  \big )$.  
\end{proof} 

\sk

\begin{lemma}\label{lem-sub-inter}  There exists  $\vartheta >0$ depending on $\sigma_n, n$, such that the function $q_1:=\vartheta Q$ satisfies \eqref{eqn-q12} in the intermediate region $ \cI_{\ell_1, \ell_2} $ 
provided $\ell_1, \ell_2$ and $\tau_0$ are sufficiently large.  Subsequently, $q^-=q_0+ q_1$ 
 is a subsolution of \eqref{eq:q} in the same region.  

 \end{lemma}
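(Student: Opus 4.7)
By construction \eqref{eqn-q15}, the function $q_1 = \vartheta Q$ satisfies
\[
q_{1\tau} + \tfrac{y}{2} q_{1y} - q_1 = -\vartheta \Bigl(\tfrac{\varepsilon^2 y^2 + K^2 e^{-2\tau} y^6}{2(n-1)+q_0} + \varepsilon + K e^{-\tau} y^2\Bigr),
\]
so the subsolution inequality \eqref{eqn-q12} reduces precisely to \eqref{eqn-q13}. My plan is to show that on $\cI_{\ell_1,\ell_2}$ the correction $q_1$ is small enough that $q^-_y$ and $q^-_{yy}$ are comparable to $q_{0y}$ and $q_{0yy}$, and the denominator $8(n-1) + 4 q^-$ is comparable to $8(n-1) + 4 q_0$ (from below). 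Once these comparisons are established, \eqref{eqn-q13} is a routine consequence of the derivative estimates \eqref{eqn-der1}--\eqref{eqn-der2} together with the positivity $Q>0$, and a large enough constant $\vartheta = \vartheta(\sigma_n,n)$ absorbs all universal factors.

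The first step is to differentiate the explicit formula \eqref{eqn-QQn} to obtain pointwise bounds on $Q$, $Q_y$, $Q_{yy}$ in the intermediate region. Writing $\cA(y,\tau) = (\varepsilon^2 y^2 + K^2 e^{-2\tau}y^6)/(2(n-1))$, Claim~\ref{claim-QQQ} already gives $0 < Q \le \cO_{\ell_1,\ell_2}(\tau e^{-\tau/2})$. A similar calculation, using $|y|\le Y_0(\tau) = \cO(e^{\tau/4})$ and $2(n-1)+q_0 \ge \ell_2 e^{-\tau/2}$, yields bounds of the form $|Q_y| \lesssim \tau\, e^{-\tau/2}/|y|$ and $|Q_{yy}| \lesssim \tau \, e^{-\tau/2}/y^2$ (the logarithmic factor only costs a $\tau$, while differentiating $\log(2(n-1)+q_0)$ produces a factor $q_{0y}/(2(n-1)+q_0) \lesssim \varepsilon|y|/(\ell_2 e^{-\tau/2}) \lesssim \varepsilon |y| e^{\tau/2}/\ell_2$, which remains small for $\ell_2$ large). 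Multiplying by $\vartheta$, these bounds are still much smaller than $|q_{0y}| \sim \varepsilon |y|$ and $|q_{0yy}| \sim \varepsilon$ respectively, provided $\ell_1, \ell_2, \tau_0$ are chosen sufficiently large depending on $\vartheta$, $\sigma_n$, $n$.

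With these comparisons in hand, I would estimate each term of the LHS of \eqref{eqn-q13} from above. For the first term, $(q^-_y)^2 \le 2 q_{0y}^2 + 2 q_{1y}^2 \lesssim q_{0y}^2$, while $|2+q^-_{yy}| \le 2 + |q_{0yy}| + |q_{1yy}| \le 3$, and positivity $Q>0$ gives $8(n-1)+4q^- \ge \tfrac12 (8(n-1)+4q_0)$ in $\cI_{\ell_1,\ell_2}$. Hence
\[
\frac{(2+q^-_{yy})(q^-_y)^2}{8(n-1)+4q^-+(q^-_y)^2} \le \frac{6\, q_{0y}^2}{8(n-1)+4q_0} \lesssim \frac{\varepsilon^2 y^2 + K^2 e^{-2\tau}y^6}{2(n-1)+q_0},
\]
and the remaining term $|q^-_{yy}| \lesssim \varepsilon + e^{-\tau} y^2$. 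Both of these are dominated by the RHS of \eqref{eqn-q13} once $\vartheta$ is chosen large depending only on $\sigma_n$ and $n$.

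The main obstacle is checking that the corrections from the $\log(2(n-1)+q_0)$ piece in $Q$ really do stay controlled uniformly up to the inner boundary $2(n-1)+q_0 = \ell_2 e^{-\tau/2}$ of $\cI_{\ell_1,\ell_2}$. This is where the specific choice of parameter function in \eqref{eqn-QQn}, engineered in \eqref{eqn-tau6} to keep the bracket of order $\tau/2$ rather than degenerating, pays off: the logarithmic singularity is absorbed into a harmless $\tau$ factor, and the derivative losses incurred by differentiating the logarithm stay polynomial in $|y|^{-1}$ and tame on account of $|y|\ge \ell_1 \gg 1$. Once this is verified, the inequality \eqref{eqn-q13} is a bookkeeping exercise, and $q^- = q_0 + \vartheta Q$ is a subsolution in $\cI_{\ell_1,\ell_2}$.
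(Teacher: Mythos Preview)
Your overall strategy---reduce to \eqref{eqn-q13}, then show $q_1$ is a small enough perturbation that $q^-_y, q^-_{yy}$ behave like $q_{0y}, q_{0yy}$---is the same as the paper's, and the treatment of the first (fractional) term on the left of \eqref{eqn-q13} is essentially correct. The gap is in your handling of the $-q^-_{yy}$ term.

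Your claimed bound $|Q_{yy}|\lesssim \tau e^{-\tau/2}/y^2$ is false near the inner boundary of $\cI_{\ell_1,\ell_2}$, where $2(n-1)+q_0=\ell_2 e^{-\tau/2}$ and $|y|\sim Y_0\sim e^{\tau/4}$. Differentiating the logarithm in \eqref{eqn-QQn} twice produces the term $-\cA\,q_{0y}^2/(2(n-1)+q_0)^2$; there one has $\cA\sim e^{-\tau/2}$, $q_{0y}^2\sim e^{-\tau/2}$ (note $q_{0y}\sim e^{-\tau}|y|^3$ dominates $\varepsilon|y|$ near the tip, contrary to your parenthetical), and $(2(n-1)+q_0)^2=\ell_2^2 e^{-\tau}$, so this term is of order $1/\ell_2^2$---a \emph{fixed} constant, not decaying in $\tau$. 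Consequently $\vartheta|Q_{yy}|$ cannot be made small compared to $|q_{0yy}|\sim e^{-\tau/2}$ by taking $\ell_1,\ell_2,\tau_0$ large, and your assertion $|q^-_{yy}|\lesssim \varepsilon+e^{-\tau}y^2$ with a $\vartheta$-independent constant breaks down. Since $\tau_2\to\infty$ as $\varepsilon\to 0$, no fixed choice of $\ell_2$ rescues this.

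The paper gets around this by proving only the \emph{one-sided} bound $-Q_{yy}\le o\bigl(\cA/(2(n-1)+q_0)\bigr)$ (Claim~\ref{claim-Q12}): the large piece of $Q_{yy}$ identified above is negative, so it contributes \emph{positively} to $-\vartheta Q_{yy}$, but only at the scale $\cA/(2(n-1)+q_0)$ rather than $\varepsilon+e^{-\tau}y^2$. This contribution is then absorbed by the \emph{first} summand $\vartheta\cdot\frac{\varepsilon^2 y^2+K^2 e^{-2\tau}y^6}{2(n-1)+q_0}$ on the right of \eqref{eqn-q13} (the factors of $\vartheta$ cancel, and the $o(1)$ comes from $\ell_2\gg1$), leaving only $-q_{0yy}=-2\varepsilon+12Ke^{-\tau}y^2$ to be beaten by $\vartheta(\varepsilon+Ke^{-\tau}y^2)$. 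You need this sign observation and this allocation of the $-\vartheta Q_{yy}$ term; routing it through the second summand as you propose does not close.
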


\begin{proof} Throughout the proof we will  repeatedly use that  $\varepsilon = \cO(e^{-\frac \tau2})$ and 
\begin{equation}		\label{eqn-est}
|y| \lesssim  e^{- \frac \tau4}, \,\, |g_{0y}| \lesssim e^{- \frac \tau4}, \,\,  |g_{0yy}| \lesssim e^{- \frac \tau2}, \,\, \ell_2 e^{-\frac \tau2} \leq 2(n-1) + q_0 \leq c_n.  
\end{equation}
We will also assume that $y >0$ since the $y <0$ is identical.

We have defined $q_1$ to satisfy \eqref{eqn-q15} and we have seen that $q^-=q_0+q_1$ is a subsolution on \eqref{eq:q} provided
\eqref{eqn-q13} holds.  We will now show that with  our choice of $q_1=\vartheta Q$, where  $Q$ is  given  by \eqref{eqn-QQ}  we indeed have that  \eqref{eqn-q13} holds.  To do so, we will need some estimates on $Q_{y}$ and $Q_{yy}$.  
Recall our notation  $\cA :=  \frac{\varepsilon^2 y^2 + K^2 e^{-2\tau} y^6}{2(n-1)}$ and observe that 
\begin{equation}		\label{eqn-cA} 
\cA= \cO(e^{-\tau/2}), \quad  \cA_y=\cO(\cA \, y^{-1} ) =\cO(e^{-3\tau/4}), \quad \cA_{yy}=\cO(\cA \, y^{-2} ) =\cO(e^{-\tau}).
\end{equation}
Differentiating $Q$ in $y$  gives  
\[
Q_{y} = \cA \,   \,  \big (   \frac{q_{0y}}{2(n-1) +q_0}   - \frac 8{y}    \big  )  + \cA_y \, \big \{    \ln \big ( e^{\tau/2}  (2(n-1) +q_0  ) \big ) + \frac{5\tau}2   - 8 \ln y   \big  \} + 2 e^{-\tau} y
\]
and by applying the estimates    \eqref{eqn-est} and \eqref{eqn-cA} we   get 
\begin{equation}		\label{eqn-q1y}
Q_y =o(e^{-\tau/4}). 
\end{equation}

Now let us look at $Q_{yy}$.  We claim the following.  

\begin{claim}\label{claim-Q12} \emph{On the whole intermediate region we have} 
\begin{equation}		\label{eqn-Q13} Q_{yy} = o(1), \qquad - \, Q_{yy} \leq  o \Big ( \frac{\cA}{2(n-1) + q_0}  \Big )
\end{equation}
{\em provided $\ell_1, \ell_2 \gg 1$.  } 

\end{claim}

\begin{proof}[Proof of Claim \ref{claim-Q12}]
Call $\cB:=   \ln \big ( e^{\tau/2}  (2(n-1) +q_0  ) \big ) + \frac{5\tau}2   - 8 \ln y  $. Then, formula \eqref{eqn-QQn} 
and  
\[
Q_{yy} = {\cA}   \cdot  \cB_{yy}  + 2 {\cA}_y \cdot  \cB_y  + {\cA}_{yy} \cdot \cB + 2 K e^{-\tau}.  
\]
Furthermore direct calculation shows
\[
 \cB_y = \big (   \frac{q_{0y}}{2(n-1) +q_0}   - \frac 8{y}    \big  ), \qquad \cB_{yy} = \frac {  q_{0yy}}{ 2(n-1) + q_0} -  \frac { q_{0y}^2}{(2(n-1) + q_0 )^2} + \frac 8{y^2}.
\]
One can easily check using the last two formulas,  \eqref{eqn-est} and \eqref{eqn-cA}, that 
$Q_{yy} = o(1)$ that is the first bound in \eqref{eqn-Q13} holds. 

Let us know concentrate in proving the  second bound in \eqref{eqn-Q13}. 
Dropping the last term in $Q_{yy}$ that has the correct sign and using  \eqref{eqn-cA} we get 
\begin{equation}		\label{eqn-Q14}
- \, Q_{yy} \leq -  {\cA} \,  \big (  \cB_{yy}  + 2 y^{-1}  \cB_y  + y^{-2} \cB \big )
\end{equation}
hence it is sufficient to show that
\[
 - \cB_{yy}  -  2 y^{-1}  \cB_y  -   y^{-2} \cB  \leq   o(\frac 1{2(n-1) + q_0}).
\]
To this end, we observe using \eqref{eqn-est} that  
\[
- \cB_{yy} = \frac{1}{2(n-1) + q_0} \, \big ( o(1) + o(1) + o(1) \big ) = o \big ( \frac{1}{2(n-1) + q_0}\big ).
\]
and
\[
 - y^{-1} \, \cB_y = o \big ( \frac{1}{2(n-1) + q_0}\big )
\]
and (since $e^{\tau/2}  (2(n-1) +q_0  ) \geq \ell_2 \gg 1$) the first and second terms in $\cB$
are positive, hence 
\[
- y^{-2} \, \cB \leq  8 \, y^{-2}  \, \ln y = o ( \frac{1}{2(n-1) + q_0}\big ).
\]

Combining the last three estimates with \eqref{eqn-Q14} yields that the second bound in \eqref{eqn-Q13} holds,
thus finishing the proof of Claim \ref{claim-Q12}. 
\end{proof}

\smallskip

We are now in position to finish the proof of the Lemma by showing  that \eqref{eqn-q13} holds. 
We will  combining   \eqref{eqn-est},  \eqref{eqn-q1y}, \eqref{eqn-Q13} and $8(n-1) +  4 q^-  > 4(n-1)+ 2q_0$.  These estimates imply  $(2+q^-_{yy})  (q^-_{y})^2 < 3 q_{0y}^2$ and 
$8(n-1) + 4q^- + (q^-_y)^2 > 4(n-1) + 2 q_0$, thus 
\begin{equation*}
  \frac{(2+q^-_{yy})  (q^-_{y})^2}{8(n-1) + 4q^- + (q^-_y)^2} < \frac{3 q_{0y}^2 }{ 4(n-1) + 2 q_0} <  B_1 \cdot 
 \frac{\varepsilon^2 y^2 + K^2 e^{-2\tau} y^6}{2(n-1) +  q_0}
 \end{equation*}
 for some absolute constant $B_1$.  Furthermore, \eqref{eqn-Q13} and $q_{0yy} = 2\varepsilon - 12 e^{-\tau} y^2$ give 
\[
- q^-_{yy} = - q_{0yy} -  \vartheta Q_{yy} \leq  - 2\varepsilon  + 12 K e^{-\tau} y^2 +  o \Big ( \frac{\varepsilon^2 y^2 + K^2 e^{-2\tau} y^6}{2(n-1) + q_0}  \Big ) 
\]
The last two estimates yield that \eqref{eqn-q13} holds provided $\vartheta > \max (B+1, 12) $.  
Consequently, $q_1:=\vartheta Q$ satisfies \eqref{eqn-q12} on 
$\cI_{\ell_1, \ell_2}$ and $q^-=q_0+ q_1$ 
 is a subsolution of \eqref{eq:q}.  
\end{proof}

\subsection{Subsolution in   the tip region}\label{sec-sub2} 
We will now construct a subsolution  of equation \eqref{eq:u} near the tip  $-Y_0(\tau)$ where $2(n-1)+q_0=0$ and match it with our subsolution in the intermediate  region.  
For a solution $u$ of \eqref{eq:u}, at the tip we  perform the change of coordinates 
\begin{equation}		\label{eqn-wxi} w(\xi, \tau) := \alpha(\tau) \, u(y,\tau), \quad y = -Y_0 + \alpha^{-1} \xi
\end{equation}
where $\alpha(\tau)$, $Y_0(\tau)$  are given by \eqref{eqn-atau}, \eqref{eqn-Ytau} respectively (see in subsection \ref{ss-stip}   for details).   

Thus  \eqref{eqn-w202}   implies that $w(\xi,\tau)$ satisfies 
\begin{equation}		\label{eqn-w55} 
\frac 1{\alpha^2}\, w_\tau + \frac 1{\alpha^2} \Big ( \frac{\alpha'}{\alpha}  + \frac 12 \Big )  \big (  \xi  w_\xi -   \, w \big ) = \frac{w_{\xi\xi}}{1 + w_\xi^2} -  \frac{n-1}w  - w_\xi.  
\end{equation}

We are interested  to construct a \emph{subsolution} $w^-(\xi,\tau)$ of \eqref{eqn-w55}  in the region $|\xi| \leq L$ for some large constant $L$.  Because equation \eqref{eqn-w55}
becomes degenerate at the tip, it is more convenient  to  switch coordinates, that is write $\xi = \xi (w,\tau)$.  Then, \eqref{eqn-w55} becomes
\begin{equation}		\label{eqn-xi0}
\frac 1{\alpha^2} \xi_\tau + \frac 1{\alpha^2} \Big ( \frac{\alpha'}{\alpha}  + \frac 12 \Big )  \big (  w \xi_w -   \, \xi \big )  = \frac{\xi_{ww}}{1 + \xi_w^2} + \frac{n-1}w  \, \xi_w  - 1.  
\end{equation}

A subsolution of \eqref{eqn-w55} means a supersolution of \eqref{eqn-xi0}.  Following  \cite{AV} 
we seek for a supersolution of \eqref{eqn-xi0} of the form 
$\xi^+(w,\tau) = \cB(w)  + \tau \, \alpha(\tau)^{-2} \xi_1(w)$, (the  choice of scaling for $\xi_1$ will come apparent in the sequel)  
where  $\cB(w)$ denotes the profile of the Bowl soliton of speed one in the new coordinates that satisfies 
\begin{equation}		\label{eqn-Phi}
\frac{\cB''}{1 + (\cB')^2} + \frac{n-1}w  \, \cB'  - 1=0, \quad \cB(0)=\cB'(0)=0
\end{equation}
and $\xi_1(w)$ is an error term that will be taken   to solve 
\[
 \Big ( \frac {\xi_1'}{1+ (\cB')^2 } \Big )' + \frac{n-1}w  \, \xi_1'  = - C, \qquad \xi_1(0) =0=\xi_1'(0)=0
\]
for a constant $C >0$.  We will show the following: 

\begin{lemma} By choosing $C>0$ sufficiently large, 
$\xi^+(w,\tau) = \cB(w)  + \tau  \, \alpha^{-2} \,  \xi_1(w)$
becomes a supersolution of equation \eqref{eqn-xi1} on $0 \leq w \leq R$,
for any $R \gg 1$ and satisfies  $\xi^+(0,\tau)=0$.  

\end{lemma}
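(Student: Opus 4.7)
The strategy is to verify the supersolution inequality directly by substitution, exploiting the ODE for $\cB$ and the ODE for $\xi_1$ to cancel the leading order terms and produce a favorable remainder. Recall that $\xi^+$ is a supersolution of \eqref{eqn-xi0} precisely when
\[
\frac{1}{\alpha^2}\xi^+_\tau + \frac{1}{\alpha^2}\Bigl(\frac{\alpha'}{\alpha}+\frac{1}{2}\Bigr)(w\xi^+_w-\xi^+) - \frac{\xi^+_{ww}}{1+(\xi^+_w)^2} - \frac{n-1}{w}\xi^+_w + 1 \geq 0.
\]
The boundary condition $\xi^+(0,\tau) = 0$ is immediate from $\cB(0) = \xi_1(0) = 0$.

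First I would isolate the contribution of $\cB$. Since $\cB$ is time-independent and satisfies \eqref{eqn-Phi}, plugging $\cB$ alone in place of $\xi^+$ above yields
\[
\frac{1}{\alpha^2}\Bigl(\frac{\alpha'}{\alpha}+\frac{1}{2}\Bigr)\bigl(w\cB'-\cB\bigr).
\]
By Lemma \ref{lemma-atau} the factor $\alpha'/\alpha+\tfrac12$ stays bounded away from zero uniformly in $\sigma\in[0,\sigma_n]$, and convexity of the Bowl profile together with $\cB(0)=\cB'(0)=0$ yields $w\cB'(w)-\cB(w) = \int_0^w s\,\cB''(s)\,ds \geq 0$. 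Hence $\cB$ by itself contributes a nonnegative quantity of size $O(\alpha^{-2})$.

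Next I would linearize around $\cB$ to extract the effect of the correction $v := \tau\alpha^{-2}\xi_1$. Writing $M[\varphi] := \bigl(\varphi'/(1+(\cB')^2)\bigr)' + \frac{n-1}{w}\varphi'$, a Taylor expansion of $\xi\mapsto \xi_{ww}/(1+\xi_w^2)$ about $\cB$ gives
\[
\frac{(\cB+v)_{ww}}{1+((\cB+v)_w)^2} + \frac{n-1}{w}(\cB+v)_w = \frac{\cB_{ww}}{1+\cB_w^2} + \frac{n-1}{w}\cB_w + M[v] + R_2(v),
\]
with $R_2(v)$ pointwise quadratic in $(v',v'')$. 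The defining ODE for $\xi_1$ gives $M[v] = \tau\alpha^{-2} M[\xi_1] = -C\tau\alpha^{-2}$. For the parabolic side, since $(\tau\alpha^{-2})' = \alpha^{-2}(1-2\tau\alpha'/\alpha)$, direct computation yields
\[
\frac{1}{\alpha^2}v_\tau + \frac{1}{\alpha^2}\Bigl(\frac{\alpha'}{\alpha}+\frac{1}{2}\Bigr)(wv'-v) = \frac{1-2\tau\alpha'/\alpha}{\alpha^4}\xi_1 + \frac{\tau(\alpha'/\alpha+1/2)}{\alpha^4}(w\xi_1'-\xi_1) = O\bigl(\tau\alpha^{-4}\bigr)
\]
uniformly on $[0,R]$. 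Assembling, the expression on the left of the supersolution inequality becomes
\[
\underbrace{\frac{1}{\alpha^2}\Bigl(\frac{\alpha'}{\alpha}+\frac{1}{2}\Bigr)(w\cB'-\cB)}_{\geq 0} \; + \; O(\tau\alpha^{-4}) \; + \; C\tau\alpha^{-2} \; - \; R_2(v).
\]
Since $\alpha^{-2} = O(e^{-\tau/2})$ and $v,v',v''$ are all $O(C\tau\alpha^{-2})$ on $[0,R]$, we have $R_2(v) = O(C^2\tau^2\alpha^{-4})$. Factoring out $C\tau\alpha^{-2}$,
\[
(\text{expression}) \;\geq\; C\tau\alpha^{-2}\bigl[\,1 - O_R(\alpha^{-2}) - O_R(C\tau\alpha^{-2})\,\bigr],
\]
which is nonnegative once $\tau_0$ is chosen large enough, depending on $C$, $R$ and $\sigma_n$. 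The matching argument in the next subsection, which must glue these tip barriers to the subsolution of Section \ref{sec-sub1} in the intermediate region, will force $C$ to be a sufficiently large constant depending on $K_0$ and $n$, so I would fix $C = C(R,n,K_0)$ first and only then enlarge $\tau_0$.

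The main obstacle is the tension between linear and quadratic scaling in $C$: because the ODE for $\xi_1$ is linear with right-hand side $-C$, the correction $\xi_1$ (hence $v,v',v''$) scales linearly in $C$, so the quadratic remainder $R_2(v)$ scales like $C^2$, while the favorable term we gain is only $C\tau\alpha^{-2}$, linear in $C$. The saving grace is the small prefactor $\tau\alpha^{-2} = O(\tau e^{-\tau/2})$ inside every error term, which is made arbitrarily small by enlarging $\tau_0$. A prerequisite for all the estimates above is a Frobenius analysis of the ODE for $\xi_1$ at the regular singular point $w=0$, giving $\xi_1(w) = -\tfrac{C}{2n}w^2 + O(w^3)$, together with standard linear ODE theory on $[\eta,R]$ yielding uniform $C^2$ bounds for $\xi_1$ on all of $[0,R]$; these bounds are exactly what is needed to make the $O$-symbols above effective.
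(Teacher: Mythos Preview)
Your approach is essentially the paper's: substitute $\xi^+=\cB+\tau\alpha^{-2}\xi_1$, expand the nonlinear term $\xi_{ww}/(1+\xi_w^2)$ around $\cB$, use the ODEs for $\cB$ and $\xi_1$ to reduce the right side to $-C\tau\alpha^{-2}+O(\tau^2\alpha^{-4})$, and check that this dominates the $O(\alpha^{-2})$ and $O(\tau\alpha^{-4})$ terms coming from the left side. The only cosmetic difference is that you use convexity of the Bowl profile to get $w\cB'-\cB\geq 0$ and drop that term, while the paper simply bounds $|w\cB'-\cB|$ on $[0,R]$ by a constant and absorbs it into the choice of $C$.

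One point to tighten: the lemma asks for inequality \eqref{eqn-xi1}, which is the supersolution inequality for \eqref{eqn-xi0} \emph{plus} an extra $\tau/\alpha^2$ on the right. This margin is not decorative; it is exactly what is consumed in Corollary~\ref{cor-sub} to absorb the $\beta(\tau)$-shift. Your final estimate already delivers it, since you obtain $(\text{expression})\geq C\tau\alpha^{-2}\bigl[1-o_{\tau_0}(1)\bigr]$, so taking $C>2$ and $\tau_0$ large gives $(\text{expression})>\tau\alpha^{-2}$. Just make sure you state the conclusion as \eqref{eqn-xi1} rather than merely ``supersolution of \eqref{eqn-xi0}''.
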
 

\begin{proof} We plug $\xi^+(w,\tau) = \cB(w)  + \tau \, \alpha(\tau)^{-2} \xi_1(w)$ in \eqref{eqn-xi1}.  
By writing $\xi_{ww}/(1 + \xi_w^2) = (\arctan \xi_w)_w$ and expanding in powers of $\tau \alpha^{-2} \xi_1$
we find that the right hand side in \eqref{eqn-xi0} is 
\begin{equation*}
\begin{split}
\tau \alpha^{-2} \Big  \{  \Big ( \frac {\xi_1'}{1+ (\cB')^2 } \Big )' + \frac{n-1}w  \, \xi_1'  \Big   \} & +
\cO(\tau^2 \alpha^{-4} \big (  (\xi_1'')^2 +  (\xi_1')^2  \big ) \\&= - C \, \tau \alpha^{-2} + \cO(\tau^2 \alpha^{-4} \big (  (\xi_1'')^2 +  (\xi_1')^2 \big ).  
\end{split}
\end{equation*}
On the other hand, using $0 < \frac{\alpha'}{\alpha} + \frac 12 \leq \cO(n)$ (that follows by \eqref{eqn-atau}) and a direct calculation yields that the left hand side in \eqref{eqn-xi0} is
\[
 \alpha^{-2} \Big  \{  \cO(\tau \alpha^{-2} ( |\xi_1| + w |\xi'_1| )   +  \cO_n(1) \,  \big (  w \cB' -   \, \cB \big )  \Big \}.  
\]
We conclude that  by choosing $C >0$ sufficiently large (depending also on $ \cO_n(1)$), $\xi^+(w,\tau)$ satisfies 
\begin{equation}		\label{eqn-xi1}
\frac 1{\alpha^2} \xi^+_\tau + \frac 1{\alpha^2} \Big ( \frac{\alpha'}{\alpha}  + \frac 12 \Big )  \big (  w \xi^+_w -   \, \xi^+ \big )  
>   \frac{\xi_{ww}}{1 + \xi_w^2} + \frac{n-1}w  \, \xi_w  - 1 + \frac{ \tau }{\alpha^{2}} 
\end{equation}
that is $\xi^+$ is  a supersolution of equation  \eqref{eqn-xi0} in the region $0 < w < R$.  We also note that $\xi^+(0, \tau) = 0$,
since $\cB(0)=\xi_1(0)=0$.  

\end{proof} 

Define next  $w^-(\xi,\tau)$ to be the \emph{inverse}  of the function  $\xi=\xi^+(w,\tau)$ and compute (by direct calculation using \eqref{eqn-xi1})
\begin{equation}		\label{eqn-w52} 
\frac 1{\alpha^2}  w_\tau^- + \frac 1{\alpha^2} \Big ( \frac{\alpha'}{\alpha}  + \frac 12 \Big )  \big (  \xi  w_\xi ^- -  w^- \big ) <  \frac{w_{\xi\xi}^-}{1 + (w_\xi^-)^2} -  \frac{n-1}{w^-}  
- w_\xi^-  - \frac{\tau}{\alpha^2}  w_\xi^-.  
\end{equation}

\begin{lemma}\label{lem-sub-tip}
Let $w^-(\xi,\tau)$  be the inverse of the function  $\xi=\xi^+(w,\tau)$.  Then, for any large $\xi^* >1$,  $w^-$ is a sub-solution of equation \eqref{eqn-w55}
on $0 < \xi < \xi^*$.  In addition,  satisfies $w^-(0, \tau)=0$ and for all $0 < \xi < \xi^*$ we have 
\begin{equation}		\label{eqn-wmas}
w^-(\xi,\tau) = W(\xi) + \cO_{\xi^*}(\tau \alpha^{-2})
\end{equation}
where the dependence in $\xi^*$ is at most exponential.  It follows that for $\xi \gg 1$, $w^-(\xi, \tau)$ satisfies the asymptotics 
\[
w^-(\xi, \tau)^2  = 2(n-1) \Big \{   \xi + \frac 12   \ln  \big [ 2(n-1) \xi + \cO(\ln \xi)  \big ]    +\cO_{\xi^*} ( \tau \alpha^{-2} ) \Big \} 
\]
and
\[
\partial_\xi  ( w^-  (\xi, \tau)^2)     = 2 (n-1)  + (n-1) \xi^{-1}  + o(\xi^{-1})   + \cO_{\xi^*} ( \tau \alpha^{-2} ).  
\]

\end{lemma}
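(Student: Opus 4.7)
The plan is to verify the claims in three steps: the subsolution property together with $w^-(0,\tau)=0$; the quantitative expansion \eqref{eqn-wmas}; and finally the large-$\xi$ asymptotics for $(w^-)^2$ and $\partial_\xi (w^-)^2$.

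For the first step, $\xi^+(0,\tau) = \cB(0) + \tau\alpha^{-2}\xi_1(0) = 0$ immediately gives $w^-(0,\tau) = 0$. The inversion of a strictly increasing graph turns a supersolution of the equation for $\xi$ into a subsolution of the equation for $w$; explicitly, the inequality \eqref{eqn-w52} already derived differs from \eqref{eqn-w55} only by the extra term $-\tau\alpha^{-2}\, w^-_\xi$ on its right hand side, which is non-positive as soon as $w^-_\xi \ge 0$. This monotonicity of $w^-$ follows from strict monotonicity of $\xi^+ = \cB + \tau\alpha^{-2}\xi_1$ in $w$: the Bowl profile $\cB$ is strictly increasing on $(0,\infty)$, and $\xi_1$ satisfies a linear second-order ODE with $\xi_1(0) = \xi_1'(0) = 0$ and forcing $-C < 0$, which yields $\xi_1' \ge 0$ on $[0,R]$. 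Hence $\xi^+_w > 0$ for $w > 0$, and \eqref{eqn-w52} becomes the desired subsolution inequality for \eqref{eqn-w55}.

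For the expansion, I would invert $\xi = \cB(w^-) + \tau\alpha^{-2}\xi_1(w^-)$ perturbatively around $w^- = W(\xi) := \cB^{-1}(\xi)$. Setting $w^- = W(\xi) + \psi$ and Taylor expanding gives
\[
\cB'(W(\xi))\,\psi + \tau\alpha^{-2}\,\xi_1(W(\xi)) + \text{(quadratic in } \psi \text{ and } \tau\alpha^{-2}\text{)} = 0.
\]
Since $\cB'(W(\xi)) > 0$ on any compact sub-interval of $(0,\xi^*]$, the contraction mapping principle produces a unique small solution with $|\psi| \le C(\xi^*)\,\tau\alpha^{-2}$ once $\tau\alpha^{-2}$ is small enough. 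The constant $C(\xi^*)$ is controlled by $\sup_{[0,W(\xi^*)]}\xi_1 \big/ \inf_{(0,W(\xi^*)]}\cB'$, which is at most exponential in $W(\xi^*) \lesssim \sqrt{\xi^*}$ because $\xi_1$ solves a linear ODE with polynomially growing coefficients.

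For the tip asymptotics, substituting \eqref{eqn-Ba} into $\xi = \cB(W(\xi))$ and inverting yields $W(\xi)^2 = 2(n-1)\xi + (n-1)\ln(2(n-1)\xi) + \cO(\ln\xi)$ as $\xi \to \infty$, so squaring $w^- = W(\xi) + \cO_{\xi^*}(\tau\alpha^{-2})$ gives the first displayed formula. The derivative formula comes from implicit differentiation of $\xi = \cB(w^-) + \tau\alpha^{-2}\xi_1(w^-)$, which produces
\[
\partial_\xi (w^-)^2 = \frac{2 w^-}{\cB'(w^-) + \tau\alpha^{-2}\,\xi_1'(w^-)}.
\]
Expanding via $\cB'(w) = w/(n-1) - 1/w + \cO(w^{-3})$ together with $w^- = W(\xi)(1+o(1))$ then yields the stated expansion. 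The main obstacle is obtaining the $\cO_{\xi^*}(\tau\alpha^{-2})$ error on the derivative from the $C^0$ bound on $w^- - W$; this can be handled either by bootstrapping parabolic regularity for the quasilinear equation of $w^-$ on $(0,\xi^*)$, or by running the implicit function theorem directly in $C^1$, which costs one additional derivative of $\xi_1$ but preserves the at-most-exponential dependence on $\xi^*$.
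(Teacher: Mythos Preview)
Your approach is essentially the same as the paper's: the paper derives \eqref{eqn-w52} just before the lemma and then says in one line that the subsolution property is ``clear,'' after which it plugs the Bowl asymptotics \eqref{eqn-Ba} into $\xi^+(w,\tau)=\cB(w)+\tau\alpha^{-2}\xi_1(w)$ and inverts algebraically to read off the expansion for $(w^-)^2$.  Your write-up is more detailed (IFT for \eqref{eqn-wmas}, implicit differentiation for the derivative), which is fine.

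There is one genuine slip.  You assert that the forcing $-C<0$ gives $\xi_1'\ge 0$; in fact the opposite sign holds.  Writing $g=\xi_1'/(1+(\cB')^2)$ one gets $g' + \tfrac{n-1}{w}(1+(\cB')^2)g=-C$ with $g(0)=0$, and the integrating-factor formula shows $g<0$, hence $\xi_1'<0$ on $(0,R]$.  This does not break your argument, because what you actually need is $\xi^+_w=\cB'+\tau\alpha^{-2}\xi_1'>0$, and that follows simply from smallness of $\tau\alpha^{-2}$: near $w=0$ one has $\cB'(w)\sim w/n$ and $\xi_1'(w)\sim -Cw/n$, so $\xi^+_w\sim (1-C\tau\alpha^{-2})w/n>0$, while on any compact $[\epsilon,R]$ the positive lower bound on $\cB'$ dominates.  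You should also rephrase the IFT bound for \eqref{eqn-wmas}: the ratio $\sup\xi_1/\inf\cB'$ you write down is formally infinite since $\cB'(0)=0$, but because $\xi_1$ and $\cB$ vanish to the same (quadratic) order at $0$ the pointwise ratio $\xi_1(W(\xi))/\cB'(W(\xi))$ stays bounded, which is what the argument really uses.
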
 

\begin{proof} It is clear that $w^-(\xi,\tau)$ is a subsolution of \eqref{eqn-w55}.  The asymptotics 
simply follow from the definition of  $\xi=\xi^+(w,\tau)$  and  the asymptotic behavior
of the Bowl soliton $\cB(w)$, as $w \to +\infty$, namely \eqref{eqn-Ba}.  
Indeed, combining these two we get 
\[
\xi^+(w,\tau) =  \frac {w^2}{2(n-1)} - \frac 12 \ln w^2  + \cO(w^{-2}) + \cO_R(\tau \alpha^{-2})
\]
and therefore, setting $\xi=\xi^+(w,\tau) $ and inverting with respect to $w$ we get $w^2 \approx  2(n-1)\, \xi + O (\ln \xi)$  and 
\begin{equation*}\begin{split}
\frac{w^-(\xi,\tau)^2}{2(n-1)}   &=  \xi  + \frac 12 \ln w^2 +  \cO(w^{-2}) +  \cO_{\xi^*}(\tau \alpha^{-2}) \\
&=  \xi +   \frac 12 \ln [ 2(n-1) \, (\xi   + \cO (\ln \xi)] + 
 \cO_{\xi^*}(\tau \alpha^{-2}).  
 \end{split}
 \end{equation*}
Then, the lemma follows.

\end{proof}

\subsection{Matching the sub-solutions of  the intermediate  and tip regions}\label{sec-sub3}

Recall the definitions of $\alpha(\tau)$, $Y_0$ and $\cA(y,\tau)$  in \eqref{eqn-atau}, \eqref{eqn-Ytau} and \eqref{eqn-AAA0} from
which we easily see that 
\begin{equation}		\label{eqn-use}
\alpha(\tau) \sim Y_0(\tau) \sim e^{\tau/4}, \qquad  a_0:= \cA(Y_0,\tau)  \, \alpha(\tau)^{-2}  \sim 1.  
\end{equation}
We will use these in our computations below.

Let $w^-$ be the subsolution we constructed in Lemma \ref{lem-sub-tip} of the previous  section.  Recalling the change of variables \eqref{eqn-wxi}  (near the left tip $y=-Y_0$) and $q= u^2 - 2(n-1)$,
we define    
\begin{equation}		\label{eqn-qqq0} \tilde q^-(y,\tau) := \alpha^{-2} w^-(\xi + \beta(\tau), \tau)^2 - 2(n-1), \qquad    y  = - Y_0 + \alpha^{-1} \xi,
\end{equation}
or an appropriate correction term $\beta(\tau)$ to be defined momentarily.  This barrier is defined for $- \beta(\tau) < \xi \leq \xi^*$,   for some $\xi^*$ large.  

Our goal in this section is to match $\tilde q^-(y,\tau) $ with  the intermediate region barrier $q^-(y,\tau)$
when $y <0$.  \emph{The matching for $y >0$ is similar due to symmetry.  } 
We will see that the matching happens at  $y^*(\tau)  = -Y_0 + \alpha^{-1} \xi^*$,  where  \emph{$\xi^*$ is a fixed  large number.  }   As a result  the barrier
\begin{equation}		\label{eqn-barq}
\cQ^-_{\varepsilon,K} (y,\tau) := \begin{cases} q^-(y,\tau), \quad \mbox{on} \,\,   - Y_0 + \alpha^{-1} \xi^* \leq y \leq -\ell_1\\
\tilde q^-(y,\tau), \quad \mbox{on} \,\,   y  \ge - Y_0 + \alpha^{-1} \xi, \, - \beta(\tau)  <  \xi < \xi^*
\end{cases}
\end{equation}
will be a subsolution of equation \eqref{eq:q} on $\cI_{\ell_1, \ell_2} \cup \cT_{\ell_2}$ for $y <0$,  where $\ell_2 \approx 2(n-1) \xi^*$.

To this end,   \emph{we define}  $\beta(\tau)$  so that 
\begin{equation}		\label{eqn-qqq}
q^-( y^*, \tau) = \tilde q^-(y^*, \tau), \qquad \mbox{at some}\,\,\,  y^*(\tau)  = - Y_0 + \alpha^{-1} \xi^*
\end{equation}
where $\xi^*$ is a fixed sufficiently large number (depending also on our choice of $\ell_2$).  
Furthermore, we will see that with such a choice of $\beta(\tau)$,  $\tilde q^-(y,\tau)$ is a sub-solution 
of equation \eqref{eq:q} in the tip region.

\begin{lemma}[Matching of $q^-$ and $\tilde q^-$]\label{lem-match}  Define  $\beta(\tau)$ such that \eqref{eqn-qqq} holds.  
Then,  
\begin{equation}		\label{eqn-btau} \beta (\tau) =  \frac{\vartheta a_0}{4(n-1)}  \, ( \tau  + \cO(\ln \tau) +  \cO(1) )
\end{equation}
where $a_0(\tau) := \alpha^{-2} \cA(Y_0,\tau) \sim 1$.  
Furthermore, at  $y^*$ we have
\begin{equation}		\label{eqn-mder}
q^-_y (y^*,\tau)  > {\tilde q}^-_y(y^*,\tau).  
\end{equation}
Consequently  $\hat  q^-(y,\tau)$   defined by \eqref{eqn-barq} is a sub-solution of \eqref{eq:q}
on $\cI_{\ell_1, \ell_2} \cup \cI_{\ell_2} $.  
\end{lemma}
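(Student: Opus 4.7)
The plan is to determine $\beta(\tau)$ by imposing the matching condition $q^-(y^*,\tau) = \tilde q^-(y^*,\tau)$ at $y^* = -Y_0 + \alpha^{-1}\xi^*$, then to check the derivative ordering $q^-_y > \tilde q^-_y$ at $y^*$, and finally to realize $\cQ^-_{\varepsilon,K}$ locally as the pointwise maximum of the two pieces (each of which is a subsolution of \eqref{eq:q} on its respective region). Throughout, I would exploit $\varepsilon Y_0^2 - Ke^{-\tau}Y_0^4 = -2(n-1)$ together with $\brho^2-\sigma^2 = 8(n-1)K$ and Lemma \ref{lemma-atau} to obtain the clean simplification $q_0'(-Y_0) = 2(n-1)/\alpha$.

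A one-term Taylor expansion then gives $q_0(y^*) + 2(n-1) = 2(n-1)\xi^*/\alpha^2 + O(\alpha^{-4})$. Evaluating \eqref{eqn-QQn} at $y^*$: the prefactor $\cA(y^*) = a_0\alpha^{-2}(1+o(1))$ is $\Theta(\alpha^{-2})$, while the bracket simplifies as $\ln(e^{\tau/2}(2(n-1)+q_0(y^*))) + \tfrac{5\tau}{2} - 8\ln|y^*| = \tfrac{\tau}{2} + O(\ln\xi^*) + O(1)$, because the first term is $\ln O(\xi^*)$ by the previous line and $\ln|y^*| = \tau/4 + O(1)$. Hence $q^-(y^*)+2(n-1) = \tfrac{\vartheta a_0}{2}\alpha^{-2}\tau + O(\alpha^{-2})$, with constants polynomial in $\xi^*$ and $\vartheta$. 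On the tip side, Lemma \ref{lem-sub-tip} gives for $\xi^*+\beta \gg 1$ the asymptotic
\[
\alpha^{-2}w^-(\xi^*+\beta,\tau)^2 = 2(n-1)\alpha^{-2}\big\{\beta + \tfrac{1}{2}\ln\bigl(2(n-1)\beta\bigr) + O(1)\big\}.
\]
Equating these two expressions, cancelling the common factor $\alpha^{-2}$, and iterating once against the self-consistent ansatz $\beta = \Theta(\tau)$ (so that $\ln\beta = \ln\tau + O(1)$) produces $\beta = \tfrac{\vartheta a_0}{4(n-1)}\tau - \tfrac{1}{2}\ln\tau + O(1) = \tfrac{\vartheta a_0}{4(n-1)}(\tau + O(\ln\tau) + O(1))$, which is \eqref{eqn-btau}.

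Differentiating both expansions once more gives $q^-_y(y^*) = 2(n-1)/\alpha + \vartheta Q_y(y^*) + O(\alpha^{-3})$; the dominant part of $\vartheta Q_y(y^*)$ comes from $\vartheta \cA(y^*)\,q_{0y}(y^*)/(2(n-1)+q_0(y^*)) = \vartheta a_0/(\alpha\xi^*) + o(\alpha^{-1})$ (positive), while the derivative asymptotics in Lemma \ref{lem-sub-tip} yield $\tilde q^-_y(y^*) = 2(n-1)/\alpha + (n-1)/(\alpha\beta) + o(1/(\alpha\beta))$. The leading $2(n-1)/\alpha$ cancels; since $\beta \sim \tau \to \infty$ while $\xi^*$ is fixed, the positive correction $\vartheta a_0/(\alpha\xi^*)$ outweighs $(n-1)/(\alpha\beta)$, giving $q^-_y(y^*) > \tilde q^-_y(y^*)$.

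Combined with $q^-(y^*) = \tilde q^-(y^*)$, this derivative ordering forces $\tilde q^- > q^-$ just left of $y^*$ and $q^- > \tilde q^-$ just right of $y^*$, so locally $\cQ^-_{\varepsilon,K} = \max(q^-,\tilde q^-)$. Both $q^-$ (on $\cI_{\ell_1,\ell_2}$, by Lemma \ref{lem-sub-inter}) and $\tilde q^-$ (on the tip region) are subsolutions of \eqref{eq:q}; the subsolution property of $\tilde q^-$ follows by re-deriving \eqref{eqn-w52} for the shifted argument $w^-(\xi+\beta(\tau),\tau)$, absorbing the extra terms $-\beta'(\tau)w^-_\xi$ and $(\alpha'/\alpha+1/2)\beta\, w^-_\xi$ into the built-in slack $\tau\alpha^{-2}w^-_\xi$, since $\beta'(\tau) = O(1)$ and $\beta = O(\tau)$. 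Hence $\cQ^-_{\varepsilon,K}$ is a viscosity subsolution of \eqref{eq:q} on $\cI_{\ell_1,\ell_2} \cup \cT_{\ell_2}$. I expect the main obstacle to be the bookkeeping in Steps 1--2: one must extract the $\tau/2$ from the bracket in $Q(y^*)$ and match it simultaneously against the linear $\beta$ \emph{and} the logarithmic $\ln\beta$ from the Bowl tail to see the $O(\ln\tau)$ correction in $\beta$; in parallel, one must verify that in Step 3 the $\xi^*$-factor from $\vartheta Q_y(y^*)$ beats both the positive $1/\beta$ correction from differentiating the Bowl asymptotics and the negative subleading contribution from $q_0''(-Y_0)$ once $\beta \sim \tau$ is substituted.
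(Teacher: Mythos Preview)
Your proposal is correct and follows essentially the same route as the paper: compute $2(n-1)+q_0(y^*)\approx 2(n-1)\xi^*/\alpha^2$ via $q_{0y}(-Y_0)=2(n-1)/\alpha$, evaluate the bracket in \eqref{eqn-QQn} at $y^*$ to extract $\tau/2+\cO(1)$, match against the Bowl tail $w^-(\xi^*+\beta,\tau)^2$ from Lemma~\ref{lem-sub-tip} to solve for $\beta$, and then compare the $\alpha^{-1}$-order derivative corrections $\vartheta a_0/(\alpha\xi^*)$ versus $(n-1)/(\alpha\beta)$ to get \eqref{eqn-mder}.

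The one organizational difference is that you package the conclusion as ``$\cQ^-_{\varepsilon,K}=\max(q^-,\tilde q^-)$ locally, hence a viscosity subsolution,'' and you fold the verification that the $\beta$-shifted tip piece $\tilde q^-$ remains a subsolution (absorbing $\beta'$ and $(\alpha'/\alpha+\tfrac12)\beta$ into the slack $\tau\alpha^{-2}w^-_\xi$) directly into this lemma. The paper instead records the derivative inequality \eqref{eqn-mder} here and defers the $\beta$-shifted subsolution check to a separate Corollary~\ref{cor-sub}. Your packaging is slightly cleaner and self-contained; the paper's split makes the role of the slack term in \eqref{eqn-w52} more visible. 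Either way the content is the same.
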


\begin{proof} 
Recall that $\alpha \sim e^{\tau/4}$.  First, we  observe  that near $-Y_0$ we have $q_{0y} >0$ and 
\begin{equation}		\label{eqn-asy1}
\alpha \, q_{0y}  =2(n-1) (1+ \cO(\alpha^{-1}) ), \quad \mbox{for}\,\,\, y  = -  Y_0 \,  (1+ \cO(\alpha^{-1}) ).  
\end{equation}
In fact,  $q_{0y} = 2\varepsilon y - 4Ke^{-\tau} y^3$, \eqref{eqn-Ytau}, and $\varepsilon = \sigma e^{-\tau/2}$ imply that for for   $y =- Y_0 \, (1+ \cO(e^{-\tau/4}))$ we have
\[
|q_{0y} | = 2 e^{-\frac \tau 2} |y| \, | \sigma - 2 K e^{-\frac \tau2} y^2 | 
= \sqrt{ \frac 2K}  \, \bar \sigma \, \sqrt{\sigma + \bar \sigma} \, e^{-\tau/4} \, \big (1+ \cO(e^{\frac \tau 4}) \big )
\]
which combined with \eqref{eqn-atau} yields
\begin{equation*}
\alpha \, |q_{0y}|  \approx  \frac{\sqrt{2K} (n-1) e^{\frac \tau 4} }{\brho \sqrt{\sigma + \brho}} \cdot \sqrt{\frac 2K} \, \brho \sqrt{\sigma + \brho} \, e^{-\frac  \tau4}
=2(n-1)  \, \big (1+ \cO(\alpha^{-1}) \big )
\end{equation*}
that is \eqref{eqn-asy1} holds.  

\sk

\emph{Matching of $q^-$ with  $\tilde q^-$ at $y^*$ and determining  $\beta(\tau)$:}  From the definition of $ \tilde q^-$ in \eqref{eqn-qqq0}  we have  
\[
q^-( y , \tau) = \tilde q^-(y, \tau) \quad \mbox{iff} \quad 
2(n-1) + q^-( y , \tau) = \alpha^{-2} w^-(\xi+\beta(\tau), \tau)^2
\]
 where $q^- = q_0 + \vartheta Q$, with $Q$ is given by \eqref{eqn-QQn}.  
Now,  \eqref{eqn-asy1}, $q_0(-Y_0, \tau)=-2(n-1)$, the definition of  the matching point $y^*$,  and the mean value theorem,   imply 
\begin{equation}		\label{eqn-q23}
2(n-1) + q_0(y^*,\tau) \approx  q_{0y}(y^*, \tau) \, (y^*+Y_0) \approx   2 (n-1) \alpha^{-2} \,  \xi^* \, (1 + \cO(1) ).  
\end{equation}
Inserting  the above in  \eqref{eqn-QQn} we get 
\[
Q (y^*, \tau) =   a_0 \, \alpha^{-2}  \Big ( \ln  \big ( e^{\tau/2} \cdot  2(n-1) \,  \alpha^{-2} \xi^* \big ) + \frac {5\tau}2 - 8 \, \ln (Y_0)  \Big ) + \varepsilon + K\, e^{-\tau} y^2
\]
where we used $a_0 := \cA(y^*,\tau)\, \alpha^2 \sim 1$ (see \eqref{eqn-use}).  
By  $a_0 \sim 1$,  $  \alpha^{-2} \sim e^{-\tau/2}$, $\ln Y_0 = \tau/4 + C_\sigma$,  and $\varepsilon + K\, e^{-\tau} y^2 = \cO(\alpha^{-2})$, we obtain 
\[
Q(y^*, \tau) = a_0\,  \alpha^{-2}  \big ( \ln \xi^*  + \frac \tau2 + \cO_\tau(1)  \big ).  
\]
Since $q^- = q_0 +  \vartheta \, Q$,  the above and \eqref{eqn-q23} yield  that at $y^*$ we have 
\[
2(n-1) + q^- = 2(n-1) + q_0 + \vartheta Q = \alpha^{-2} \Big ( 2(n-1) \xi^* + \vartheta a_0 \, (\ln \xi^* +\frac  \tau 2) +   \cO_\tau(1) \Big ).  
\]
Thus, to have  $2(n-1)+  q^- = \alpha^{-2} w^-(\xi^*+\beta(\tau),\tau)^2$  at  $y^*$ we must choose $\beta(\tau)$ such that
\[
 w^-(\xi^*+\beta(\tau),\tau)^2  =  2(n-1) \xi^* +  \frac{\vartheta a_0}2 \, ( \tau + 2 \ln \xi^* )  +  \cO_\tau(1).  
\]
On the other hand,  the asymptotics for  $ w^-(\xi,\tau)^2$ from Lemma \ref{lem-sub-tip}, give that at first order
$ w^-(\xi^*+\beta(\tau),\tau)^2 =  2(n-1) ( \xi^* + \beta(\tau)) + (n-1)    \ln  (\xi^* + \beta(\tau))   + \cO_\tau(1)$.  Hence, we must choose
$\beta(\tau)$ so that 
\[
  2(n-1) ( \xi^* + \beta(\tau)) + (n-1)   \ln  (\xi^* + \beta(\tau))     =  2(n-1) \xi^* + \frac{\vartheta a_0}2 \, ( \tau +2 \ln \xi^* ) +  \cO_\tau(1) 
\]
leading to \eqref{eqn-btau}.  

\sk

\emph{ Matching  the derivatives $q^-_y$ and  $\tilde q^-_y$ at $y^*$:}
Having determined $\beta(\tau)$,  will finally match  the derivatives of ${\tilde q}^-$ and $q^-$ at $y^* = - Y_0 + \alpha^{-1} \xi^*$ to show that
\begin{equation}		\label{eqn-qym} q^-_y(y^*, \tau ) > {\tilde q}^-_y(y^*, \tau ).  
\end{equation}

Assume that $y$ is near $y^* = - Y_0 + \alpha^{-1} \xi^*$, where  $q_{0y} >0$.  Then,  
\[
q_y^-= q_{0y} +  \vartheta Q_y \approx 2 (n-1)\,  \alpha^{-1}  (1 + \cO(\alpha^{-2}) ) +  \vartheta Q_y.  
\]
On the other hand by Lemma \ref{lem-sub-tip}, we have
\[
{\tilde q}^-_y= \alpha^{-1} \, \partial_\xi (w^-)^2  (\xi + \beta(\tau))   = 2 (n-1) \,  \alpha^{-1}  + (n-1) \alpha^{-1}  (\xi+\beta(\tau))^{-1} (1 + o(1)).  
\]
Thus for \eqref{eqn-qym} to hold, we need to have 
\begin{equation}		\label{eqn-qneed}
 \vartheta Q_y (y^*, \tau)  > (n-1) \alpha^{-1} (\xi^* + \beta(\tau))^{-1}.  
\end{equation}
But by \eqref{eqn-QQn} we have 
\[
Q_y (y^*, \tau) = \cA \big ( \frac{q_{0y}}{2(n-1) + q_0} -  \frac 8{y} \big ) + \cA^{-1} \cA_y \, Q + \cO(e^{-\frac{3\tau}4}).  
\]
For $y \approx -Y_0$, the second term $ \cA^{-1} \cA_y \, Q = \cO(e^{-3\tau/4})$,
 since   $\cA  \sim e^{-\tau/2}$,  $A_y \sim e^{-3\tau/4}$ and $Q \sim e^{-\tau/2}$.  
For the first term, we use  $\cA =  a_0 \, \alpha^{-2}$, for  $a_0 \sim 1$,
$2(n-1)+q_0 \approx  q_{0y} \cdot (y^*+Y_0)$  and $y^* <0$ to obtain 
We conclude that 
\[
\vartheta Q_y (y^*, \tau) > a_0 \vartheta \alpha^{-1} (\xi^*)^{-1} + \cO(e^{-\frac {3\tau}4})
\]
 and hence condition \eqref{eqn-qneed} clearly holds since $\beta(\tau) >0$.  
We conclude that we can make $q^-_y (y^*,\tau)  > {\tilde q}^-(y^*,\tau)$.

\end{proof} 

Having determined in \eqref{eqn-barq} that our tip region subsolution $\tilde q^-(y,\tau)$ 
is defined in  the  region  $0  < \zeta:= \xi + \beta(\tau)  \leq \xi^* + \beta(\tau)$ which becomes  unbounded as $\tau \gg 1$, 
our final step is  to verify that Lemma \ref{lem-sub-tip} extends in this region, that is $\tilde q^-$ is indeed a subsolution.  
We do this next.

\begin{corollary}\label{cor-sub} Let $\tilde q^-(y,\tau) := \alpha^{-2} w^-(\xi + \beta(\tau), \tau)^2 - 2(n-1)$, $\xi   =  \alpha \, (Y_0 + y)$.  Then, 
$\tilde q^-(y,\tau) $ is a subsolution of \eqref{eq:q} in the region $0  < \xi + \beta(\tau)  \leq \xi^*$.  
\end{corollary}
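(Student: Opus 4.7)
The plan is to pull back to the tip coordinates and reduce the question to a subsolution inequality for $\tilde w(\xi,\tau):=w^-(\xi+\beta(\tau),\tau)$ with respect to equation \eqref{eqn-w55}, then absorb the error introduced by the time-dependent shift $\beta(\tau)$ using the slack $-\tau\alpha^{-2}\,w_\xi^-$ already built into \eqref{eqn-w52}. Writing $\tilde q^-(y,\tau)+2(n-1)=\alpha^{-2}\tilde w(\xi,\tau)^2$ with $\xi=\alpha(Y_0+y)$, the change-of-variables computation that produced \eqref{eqn-w202} from \eqref{eq:u} is reversible, and consequently $\tilde q^-$ is a subsolution of \eqref{eq:q} on the stated region precisely when $\tilde w$ is a subsolution of \eqref{eqn-w55} on the corresponding range in $(\xi,\tau)$.

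To compute the shift error, let $\mathcal{N}[w](\xi,\tau)$ denote the left-hand side minus the right-hand side of \eqref{eqn-w55}, so that $\mathcal{N}[w]\le 0$ means subsolution. Setting $\zeta:=\xi+\beta(\tau)$ and applying the chain rule, with $\tilde w_\xi=w^-_\zeta$, $\tilde w_{\xi\xi}=w^-_{\zeta\zeta}$, and $\tilde w_\tau=w^-_\tau+\beta'(\tau)\,w^-_\zeta$ all evaluated at $(\zeta,\tau)$, a direct calculation gives
\[
\mathcal{N}[\tilde w](\xi,\tau) \;=\; \mathcal{N}[w^-](\zeta,\tau) + \frac{w^-_\zeta(\zeta,\tau)}{\alpha(\tau)^2}\Big\{\beta'(\tau) - \beta(\tau)\bigl(\tfrac{\alpha'}{\alpha}+\tfrac12\bigr)\Big\}.
\]
Lemma \ref{lem-sub-tip}, specifically \eqref{eqn-w52}, provides $\mathcal{N}[w^-](\zeta,\tau)\le -\tau\alpha^{-2}w^-_\zeta(\zeta,\tau)$, and since $w^-$ is the inverse of an increasing function we have $w^-_\zeta>0$. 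Combining these,
\[
\mathcal{N}[\tilde w](\xi,\tau) \;\le\; \frac{w^-_\zeta}{\alpha^2}\,\Big\{-\tau + \beta'(\tau) - \beta(\tau)\bigl(\tfrac{\alpha'}{\alpha}+\tfrac12\bigr)\Big\},
\]
so it suffices to verify $\beta'(\tau)-\beta(\tau)\bigl(\alpha'/\alpha+\tfrac12\bigr)\le\tau$ throughout $[\tau_0,\tau_2]$.

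This inequality is immediate from the asymptotics established earlier. Formula \eqref{eqn-btau} gives $\beta(\tau)=\frac{\vartheta a_0}{4(n-1)}\bigl(\tau+O(\ln\tau)+O(1)\bigr)$, so $\beta'(\tau)=O(1)$. Lemma \ref{lemma-atau} yields $\alpha(\tau)=c_1(\sigma)e^{\tau/4}$ with $c_1\sim1$, and using $\sigma'=\sigma/2$ together with $\sigma\le\sigma_n$ one checks that $\alpha'/\alpha+\tfrac12$ is uniformly positive and bounded. Consequently $\beta(\tau)\bigl(\alpha'/\alpha+\tfrac12\bigr)\ge c_*\tau$ for some $c_*>0$, and the left-hand side is bounded above by $C-c_*\tau\le\tau$ once $\tau_0$ is large enough. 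For the subsolution property of $w^-$ itself to hold on the enlarged range in $\zeta$ that the shifted argument requires, one simply applies Lemma \ref{lem-sub-tip} with its parameter $\xi^*$ replaced by the larger value needed; the lemma was proved for an arbitrary large constant after choosing $C$ in the definition of $\xi_1$ accordingly, and $\tau_2$ is finite since $\varepsilon e^{\tau_2/2}=\sigma_n$.

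The main obstacle is more conceptual than technical: $\beta(\tau)$ was chosen to match function \emph{values} of $q^-$ and $\tilde q^-$ at $y^*$, with no provision for the subsolution property propagating, so a priori the shift error $\beta'-\beta(\alpha'/\alpha+\tfrac12)$ could have had the wrong sign. The fortunate observation is that $\beta(\tau)$ grows linearly and positively in $\tau$ while $\beta'(\tau)$ is bounded, so the drift correction $-\beta(\alpha'/\alpha+\tfrac12)w^-_\zeta/\alpha^2$ is strongly negative and dwarfs $\beta'w^-_\zeta/\alpha^2$. In fact the slack $-\tau\alpha^{-2}w^-_\zeta$ built into \eqref{eqn-w52} by the factor of $\tau$ in $\xi^+(w,\tau)=\cB(w)+\tau\alpha^{-2}\xi_1(w)$ is far more than is needed here, which retroactively explains why that factor was included in the first place.
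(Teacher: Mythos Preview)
Your proof is correct and follows essentially the same approach as the paper's: reduce to showing $\tilde w(\xi,\tau)=w^-(\xi+\beta(\tau),\tau)$ is a subsolution of \eqref{eqn-w55}, then use the chain rule together with the slack term $-\tau\alpha^{-2}w^-_\zeta$ from \eqref{eqn-w52} to absorb the shift error. Your computation of the shift error as $\alpha^{-2}\bigl\{\beta'-\beta(\alpha'/\alpha+\tfrac12)\bigr\}w^-_\zeta$ is in fact more accurate than the paper's displayed formula (which drops both the $\tilde w_\xi$ factor and the $(\alpha'/\alpha+\tfrac12)$ coefficient on $\beta$), though both versions lead to the same conclusion since $\beta'=O(1)$, $\beta>0$, and $\alpha'/\alpha+\tfrac12>0$.
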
 
\begin{proof} We have seen that $u$ solves \eqref{eq:u} iff $q:= u^2-2(n-1)$ solves \eqref{eq:q}.  
Similarly, ${\tilde q}^-$ is a subsolution of \eqref{eq:q} iff 
$\tilde u^-(y,\tau):=  \alpha^{-1} w^-(\xi+\beta(\tau), \tau)$ is a subsolution of \eqref{eq:u} or equivalently ${\tilde w}^-(\xi,\tau):= w^-(\xi+\beta(\tau),\tau)$
is a subsolution of \eqref{eqn-w202}.  Note that by our choice of $\alpha(\tau) : = - (Y_0' - \frac 12 Y_0)$ in Lemma \ref{lemma-atau},  the last term in \eqref{eqn-w202}
is exactly equal to $-1 \cdot w_\xi$.  Consequently,  we need to have  that $\tw^-$ satisfies 
\begin{equation}		\label{eqn-tw1} 
\frac 1{\alpha^2}\, \tw^-_\tau + \frac 1{\alpha^2} \Big ( \frac{\alpha'}{\alpha}  + \frac 12 \Big )  \big (  \xi  \tw^-_\xi -   \tw \big ) < \frac{\tw_{\xi\xi}^-}{1 + (\tw^-_\xi)^2} -  \frac{n-1}{\tw^-}  - \tw_\xi^-  
\end{equation}

Call $\zeta:= \xi+ \beta(\tau)$ so that ${\tilde w}^-(\xi, \tau):= w^-(\zeta,\tau)$.  By \eqref{eqn-w52} $w^-(\zeta, \tau)$ satisfies 
\begin{equation*}
\frac 1{\alpha^2}  w_\tau^- + \frac 1{\alpha^2} \Big ( \frac{\alpha'}{\alpha}  + \frac 12 \Big )  \big (  \zeta  w_\zeta ^- -  w^- \big ) <  \frac{w_{\zeta\zeta}^-}{1 + (w_\zeta^-)^2} -  \frac{n-1}{w^-}  
- w_\zeta^-  - \frac{\tau}{\alpha^2}  w_\zeta^-.  
\end{equation*}
Since $w_\tau = \tw_\tau - \beta'(\tau) \tw_\xi$ and $\zeta = \xi + \beta(\tau)$,  after rearranging terms, the above equation  can be re-written as 
\begin{equation*}
\frac 1{\alpha^2}  \tw_\tau^- + \frac 1{\alpha^2} \Big ( \frac{\alpha'}{\alpha}  + \frac 12 \Big )  \big ( \xi   \tw_\xi^- -  \tw^- \big ) < 
 \frac{\tw_{\xi\xi}^-}{1 + (\tw_\xi^-)^2} -  \frac{n-1}{\tw^-}  
- \tw_\xi^-   - \frac 1{\alpha^2} \big ( \tau  - \beta'(\tau)   + \beta(\tau) ).  
\end{equation*}
Our choice of $\beta$ guarantees that $\tau  - \beta'(\tau)   + \beta(\tau) >0$ and hence the last formula implies \eqref{eqn-tw1}.  This finishes the proof of the corollary. 

\subsection{The proof of Proposition \ref{prop-sub-sup}} 
 Recall the definition of $\tau_2$ in \eqref{eqn-tau2}.
   First, it is clear that for  every $\varepsilon \geq 0$ and $K >0$,   the function $\cQ^+_{\varepsilon, K}:= \varepsilon \hhm_2(y)  - K e^{-\tau} \hhm_4(y)$  
   is a supersolution of equation \eqref{eq:q} on the set $\big  \{ y: |y| \geq \ell_1 \, \mbox{and} \,\, 2(n-1) + \cQ^+_{\varepsilon, K}(y, \tau) \}  >0\big \}$,
   for all and $\tau \in [\tau_1, \tau_2]$.  This simply follows from  $\partial_\tau \cQ^+_{\varepsilon, K} -  \cL \cQ^+_{\varepsilon, K} =0$ and the error term in \eqref{eq:q} negative. 
   Furthermore, note that that $ u^+_{\varepsilon, K}$ defined by $(u^+_{\varepsilon, K})^2 = 2(n-1) + \cQ^+_{\varepsilon, K}$ defines the profile of a hypersurface that is smooth at its tip 
   $ u^+_{\varepsilon, K}=0$ and hence a supersolution including  tip. 

The fact that $\cQ^-_{\varepsilon, K}$  is a subsolution of \eqref{eq:q} for all $\varepsilon >  0$ and $K >0$ was shown in subsections \ref{sec-sub1}  - \ref{sec-sub3} (see,  in particular, Lemmas \ref{lem-sub-inter}, \ref{lem-sub-tip}, \ref{lem-match} and Corollary \ref{cor-sub}).  
In addition, one can see that the arguments in subsections \ref{sec-sub1} - \ref{sec-sub3} 
also apply when  $\varepsilon =0$ (actually this is a simpler case), hence $\cQ^-_{0, K}$ is a subsolution on $[\tau_0, \tau_2]$, for all $K >0.$
Finally, it is simple  to show that the bound \eqref{eqn-bound-QQQQ} holds for $\cQ^-_{\varepsilon, K}$ and it obviously holds for $\cQ^+_{\varepsilon, K}$, by definition.

\end{proof}

Before we finish this section we will show the following bounds that will play a crucial role in  the next section.
Let us call $\bq(y,\tau) = \bu^2  - 2(n-1)$,
where $\bu(y,\tau)$ is the profile of the peanut solution  which satisfies the asymptotics $\bu(y,\tau) \approx \sqrt{2(n-1) - K  e^{-\tau} y^4}$
according to \cite{AV}.  Then,  $\bq (y,\tau) \approx - K e^{-\tau} y^4$.  

\begin{claim} \label{claim-peanut-toti}Assume that $\varepsilon = M_1 \, e^{-\tau_1}$, and $\tau_1 \gg 1$ so that all asymptotics holds.  
Then, for any $\eta >0$ small but fixed number we have 
\begin{equation}		\label{eqn-p1-p2}
\cQ^-_{\varepsilon, K + \eta } (y, \tau_1) \leq \bq(y, \tau_1) \leq \cQ^+_{\varepsilon,K-\eta}(y, \tau_1)
\end{equation}
on $|y| \geq \ell_2$, provided that $\ell_2  >  \sqrt{2 M_1 \eta^{-1}}$, $\ell_2 \gg 1$  and $\tau_1 \gg 1$.  
\end{claim}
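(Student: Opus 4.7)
The plan is to reduce both inequalities to algebraic comparisons by inserting the explicit expressions of the barriers and using the sharp peanut asymptotics at the intermediate scale. The starting point is to square the intermediate-scale peanut inequalities \eqref{eq:u-outer}, obtaining
\[
-(K + C\delta)\, e^{-\tau_1} y^4 \;\leq\; \bq(y,\tau_1) \;\leq\; -(K - C\delta)\, e^{-\tau_1} y^4 \qquad (\ell_{\mathrm{int}} \leq |y| \leq 2L(\tau_1)),
\]
with $C$ depending only on dimension. Since $\delta$ is a free parameter in the construction of the peanut, I may assume $C\delta < \eta/4$. The upper bound $\bq \leq \cQ^+_{\varepsilon, K-\eta}$ then follows immediately by expanding the Hermite polynomials: one computes $\cQ^+_{\varepsilon, K-\eta} - \bq \geq \varepsilon y^2 + (\eta - C\delta)\, e^{-\tau_1} y^4 + \text{l.o.t.}$, both of whose leading terms are nonnegative, and the lower-order Hermite contributions are absorbed once $|y| \geq \ell_2 \gg 1$.

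For the lower bound I split into the intermediate and tip regions where $\cQ^-_{\varepsilon, K+\eta}$ is defined. On the intermediate part one has $\cQ^- = q_0 + \vartheta Q$ with $q_0 = \varepsilon y^2 - (K+\eta) e^{-\tau_1} y^4$ and $\vartheta Q > 0$ explicit from \eqref{eqn-QQn}. Combining with the peanut envelope gives
\[
\bq - \cQ^- \;\geq\; (\eta - C\delta)\, e^{-\tau_1} y^4 - M_1 e^{-\tau_1} y^2 - \vartheta Q,
\]
and the hypothesis $\ell_2 > \sqrt{2M_1/\eta}$ forces $M_1 y^2 \leq \tfrac{\eta}{2} y^4$ for $|y| \geq \ell_2$, so the first two terms contribute $\gtrsim \eta\, e^{-\tau_1} y^4$. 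It then remains to show $\vartheta Q \ll \eta\, e^{-\tau_1} y^4$ uniformly on the intermediate range, which follows by direct inspection of \eqref{eqn-QQn} with $\varepsilon = M_1 e^{-\tau_1}$ once $\tau_1 \gg 1$. In the tip region, $\cQ^-$ is governed by the Bowl-soliton piece $\tilde q^-$, and its tip $Y_0(\tau_1) \approx (2(n-1)/(K+\eta))^{1/4} e^{\tau_1/4}$ sits strictly inside the peanut's own tip $\approx (2(n-1)/K)^{1/4} e^{\tau_1/4}$. Evaluating both sides at $y = -Y_0(\tau_1)$ via the peanut asymptotics and the Bowl-soliton expansion respectively gives $\bq \approx -2(n-1) K/(K+\eta)$ versus $\cQ^- = -2(n-1) + O(\tau_1 e^{-\tau_1/2})$, with a positive gap of order $\eta$. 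Combined with the matching at $y = y^* = -Y_0 + \alpha^{-1}\xi^*$ already handled in the intermediate step, this extends $\cQ^- \leq \bq$ throughout the tip region.

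The main obstacle will be the sharp bookkeeping of $\vartheta Q$. Claim~\ref{claim-QQQ} only provides the crude bound $\vartheta Q = O(\tau e^{-\tau/2})$, which is far larger than $\eta e^{-\tau_1} \ell_2^4$ at fixed $|y| = \ell_2$ and would therefore be insufficient on its own. The resolution is that \eqref{eqn-QQn}, with $\varepsilon = M_1 e^{-\tau_1}$, in fact gives $\vartheta Q = O(e^{-\tau_1})$ at fixed $|y|$, the crude $\tau e^{-\tau/2}$ estimate being attained only near the tip where $e^{-\tau_1} y^4$ is already of order one. Thus one must keep track of the genuine $|y|$-dependence of $Q$ rather than just its supremum in order to absorb the correction into the $\eta e^{-\tau_1} y^4$ margin on the full range $\ell_2 \leq |y| \leq Y_0(\tau_1)$.
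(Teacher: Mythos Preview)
Your approach is essentially the same as the paper's: both reduce to the peanut envelope $-(K+C\delta)e^{-\tau_1}y^4 \leq \bq \leq -(K-C\delta)e^{-\tau_1}y^4$, dispatch the upper bound by direct expansion of $\cQ^+$, and for the lower bound split into intermediate and tip regions. Your identification of the $\vartheta Q$ obstacle and its resolution also match the paper precisely: the paper makes the $|y|$-dependence explicit via $Q \leq \cA \cdot O(\tau_1) = O(\tau_1 e^{-2\tau_1}y^6)$ (using $\varepsilon^2 y^2 \lesssim e^{-2\tau_1}y^6$ from $\varepsilon = M_1 e^{-\tau_1}$), and then observes $\tau_1 e^{-2\tau_1}y^6 / (\eta e^{-\tau_1}y^4) = \tau_1 e^{-\tau_1}y^2/\eta \leq \tau_1 e^{-\tau_1/2}/\eta \to 0$ since $y^2 \leq Y_0^2 = O(e^{\tau_1/2})$ in the intermediate region.

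There is, however, a genuine gap in your tip-region argument. You verify $\cQ^- \leq \bq$ only at the tip of $\cQ^-$ and at the matching point $y^*$, and then assert this ``extends'' throughout. But both $\cQ^-$ and the lower envelope for $\bq$ are \emph{increasing} in $y$ on the tip interval $[-Y_0 - \alpha^{-1}\beta, \, y^*]$, so two endpoint checks do not by themselves control the interior. The paper closes this with a max--min comparison: since the tip interval has width $\alpha^{-1}(\xi^*+\beta) = O(\tau_1 e^{-\tau_1/4})$, one has $y^{*4} = Y_0^4(1+o_{\tau_1}(1))$, and therefore
\[
\max_{\text{tip}}\bigl(2(n-1)+\cQ^-\bigr) \;=\; 2(n-1)+\cQ^-(y^*) \;\approx\; 2(n-1) - (K+\eta)e^{-\tau_1}Y_0^4,
\]
while the minimum of $2(n-1)-(K+C\delta)e^{-\tau_1}y^4$ on the same interval (attained where $|y|$ is largest, i.e.\ near $-Y_0$) is $\approx 2(n-1) - (K+C\delta)e^{-\tau_1}Y_0^4$. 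Since $\eta > C\delta$, the max of the left is strictly below the min of the right, giving $\cQ^- \leq \bq$ uniformly on the tip region. Your ingredients (tip location, gap of order $\eta$) are correct; what is missing is this uniform comparison rather than a pointwise one.
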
 
\begin{proof}[Proof of Claim] 
We will first explain why the estimates in \cite{AV}  imply that for any $\eta >0$
\begin{equation}		\label{eqn-comp-peanut}
- (K + \eta)  \,  e^{-\tau} y^4  \leq  \bq (y, \tau) \leq - (K-\eta) \,  e^{-\tau} y^4 
\end{equation}
holds on  $|y| \geq \ell_1 \gg 1$ and for $\tau \ \gg 1$.  
First, by the analogue of Theorem 7.1 in \cite{AV} we have that 
$\lim_{\tau \to +\infty} \bu (y, \tau) = 2(n-1) - K \, e^{-\tau} y^4$
and the convergence is uniform on any compact subset of $z:= e^{-\tau/4 } y  \in (-(2(n-1)K^{-1})^{1/4}, (2(n-1)K^{-1})^{1/4})$.  
In particular for $z:=  e^{-\tau/4 } y  \in (-z_0/2, z_0/2)$.  Now the barriers constructed in Section 5 in \cite{AV} (see in particular subsection 5.2 (intermediate barrier) and 
Lemmas 5.3 and 5.4 (tip barrier))  imply that the bounds \eqref{eqn-comp-peanut} extend all the way to the tip of $\bu_K$.  

Knowing \eqref{eqn-comp-peanut},  in order  to show \eqref{eqn-p1-p2} it is sufficient to establish the bounds 
\begin{equation}		\label{eqn-p2-p3}
\cQ^-_{\varepsilon, K +  2\eta } (y, \tau_1) \leq - (K + \eta)  \,  e^{-\tau} y^4 \quad \mbox{and} \quad 
- (K-\eta) \,  e^{-\tau} y^4  \leq \cQ^+_{\varepsilon,K-2\eta}(y, \tau_1)
\end{equation}
on $|y| \geq \ell_1 \gg 1$, provided that $\tau_1 \gg 1$.  The second bound is clear, since by definition $\cQ^+_{\varepsilon,K-2\eta}(y, \tau_1) := \varepsilon y^2 - (K-2\eta) e^{-\tau_1} y^4$.  
So, lets concentrate on the first bound.  Recall the definition of $\cQ^+_{\varepsilon, K+2\eta}$ in \eqref{eqn-barq} and lets see first that the desired inequality holds in $\cI_{\ell_2, \ell_2}$
where $\cQ^+_{\varepsilon,K+2\eta}(y, \tau_1) := \varepsilon y^2 - (K+2\eta) e^{-\tau_1} y^4 + \vartheta Q(y, \tau_1)$ with $Q$ as in \eqref{eqn-QQn} when $K$ is replaced by $K+2\eta$.  
In this case, the desired bound is equivalent to  $\varepsilon y^2  + \vartheta Q (y,\tau_1) \leq \eta \, e^{-\tau_1} y^4$.  
We have seen in the proof of Claim \ref{claim-QQQ} that  $Q(y, \tau_1) \leq  \cA(y, \tau_1)  \, \big (  \ln  \ell_2 + \cO(\tau_1)  \big ) =  \cO  (\tau_1 ( \varepsilon^2 y^2 +  e^{-2\tau_1} y^6) )
=  \cO(\tau_1 e^{-2\tau_1} y^6)$, where we used  $\varepsilon = M_1 e^{-\tau_1}$.  Hence $Q(y, \tau_1) < \frac 1 2 \eta \, e^{-\tau_1} y^4$ provided $\tau_1 \gg 1$.  
In addition, we can make  $\varepsilon y^2 = M_1 e^{-\tau_1} y^2 < \frac 12 \, \eta e^{-\tau_1} y^4$, 
provided $\ell_1^2  > 2 M_1 \eta^{-1}$.  Combining the two last bounds yields $ \varepsilon y^2  + \vartheta Q (y,\tau_1) \leq \eta \, e^{-\tau_1} y^4$ which shows that the first bound in \eqref{eqn-p2-p3} holds
on $\cI_{\ell_1, \ell_2}$.  

To finish the proof of the claim, it remains to show that  the first bound in  \eqref{eqn-p2-p3} holds,  near the tip, that is, for 
$ y  = - Y_0 + \alpha^{-1} \xi, \, 0  < \zeta:= \xi +\beta(\tau_1)< \xi^*+\beta(\tau_1)$.  Here $Y_0(\tau_1)$ denotes the vanishing point of $2(n-1) + \varepsilon y^2 - (K+2\eta) e^{-\tau_1} y^4$.  
According to Remark \ref{rem-atau} since $\varepsilon^2  = M_1^2  e^{-2\tau_1} \ll e^{-\tau_1}$ we have   $Y_0^4=  \frac {2(n-1)}{K+2\eta } \, e^{\tau_1} (1+o_{\tau_1}(1))$ and $\alpha(\tau_1) =  \frac 14 \, Y_0(\tau_1)$.  
 In this range \eqref{eqn-barq} and \eqref{eqn-qqq0} tell us that
$\cQ^+_{\varepsilon,K+2\eta}(y, \tau_1) :=  \alpha^{-2} w^-(\xi + \beta(\tau_1), \tau_1)^2 - 2(n-1) $ where $w^-(\xi, \tau_1) = W(\xi) (1+ o_{\tau_1}(1))$ by  \eqref{eqn-wmas}.  
Thus,  it is sufficient to prove that 
\[
\max_{  \xi \in [-\beta(\tau_1), \xi^*]}   \alpha^{-2}  W(\xi+\beta) < 
\min_{  \xi \in [-\beta(\tau_1), \xi^*]} ( 2(n-1) - (K+\eta)  e^{-\tau_1} y^4) (1+ o_{\tau_1}(1))
\]
for $y= - Y_0 + \alpha^{-1} (\xi + \beta)$.  The maximum on the left is attained at $\xi  =  \xi^*$ and in the definition of  $\cQ^+_{\varepsilon,K+2\eta}$ 
 in  \eqref{eqn-barq} we have  chosen $\xi^*$  so that \eqref{eqn-qqq} holds, implying  that 
$ \alpha^{-2}  W(\xi^*+\beta)  = ( 2(n-1) - (K+2\eta)  e^{-\tau_1} y^{*4}) (1+ o_{\tau_1}(1))$.  
On the other hand, the  minimum on the right is attained at $Y_0$.  Hence, it is sufficient to see that 
$2(n-1) - (K+2\eta)  e^{-\tau_1} y^{*4} <  \big ( 2(n-1) - (K+\eta)  e^{-\tau_1} Y_0^4 \big )  (1+ o_{\tau_1}(1))$ for $y^*= - Y_0 + \alpha^{-1} (\xi^* + \beta)$.  
Since $\beta = \cO(\tau_1)$ and $\alpha^{-1} = \cO(e^{- \frac {\tau_1}4})$ we have $y^{*4} = Y_0^4 - \cO(\tau_1 e^{-\frac{\tau_1}4})$, hence the
above clearly holds, provided that $\tau_1 \gg 1$.  
\end{proof}

\section{$L^2$ arguments to control the solution on compact sets}
\label{sec-right-bc}

For the purpose  of finding perturbations of the peanut solution that develop  nondegenerate neckpinch singularities, we consider the funnel defined by Definition \eqref{def-funnel}, where $M_1$ is an arbitrary
 uniform constant that will be chosen later.    Apply Proposition \ref{lemma-big-unstable} and Lemma \ref{lemma-homotopy} to find sufficiently big $\tau_0$ (we can choose it big enough so that Theorem \ref{prop-sphere} holds as well), so that  for every $\epsilon > 0$, and every $\bar \bom := (\bar \Omega_0, \bar \Omega_2)$, where $\bar \Omega_0^2+\bar  \Omega_2^2 = 1$, there exists an initial data 
$u_{\epsilon, \bom}(y,\tau_0)$  at time $\tau_0 \gg 1$, defined by \eqref{eq-initial},  and  time $\tau_1 > \tau_0$ so that 
\begin{equation}		\label{eq-u-tau1-omega}
\big(u_{\epsilon, \bom}(y,\tau_1) - \bu(y,\tau_1)\big)\, \eta (y,\tau_1)  =   e^{-\tau_1} \, M_1 \, \big ( \bar \Omega_0 +   \bar \Omega_2\, H_2(y) \big ) + o(e^{-\tau_1}) \big ),
\end{equation}
in the $L^2$-sense. Recall that \eqref{eq-initial} means that $u_{\epsilon, \bom}(\cdot,\tau_0)$ is an $\epsilon$-perturbation  of the peanut solution $\bar u(\cdot, \tau_0)$ at time $\tau_0$, where     ${\bf  \Omega} = (\Omega_0, \Omega_2)$  is the $2$ dimensional parameter that we need to choose at time $\tau_0$ so that at time $\tau_1$ we have  \eqref{eq-u-tau1-omega}. 
 
 Also, recall  that  $\eta(y,\tau)$ is the same cut off function as in Section \ref{sec-outline}, that is, 
 $\eta(y,\tau) := \eta_0 \left(\frac{y}{\rho e^{\tau/4}}\right)$, for some $\rho >0$, and $\eta_0(y)$ is a  cut off function defined   in \eqref{eqn-def-eta0}. 

%

\smallskip
To simplify the notation, for the rest of the section  we use $u(y,\tau)$ instead of $u_{{\bf \Omega},\epsilon}(y,\tau)$. 
Define 
\begin{equation}		\label{eqn-qV}
q(y,\tau) := u(y,\tau)^2 - 2(n-1), \qquad  V(y,\tau) := q(y,\tau)\,  \eta(y,\tau)
\end{equation}
where $\eta(y,\tau)$ is the same cut off function that appears in \eqref{eq-u-tau1-omega}.   Recall that $q(y,\tau)$ satisfies equation \eqref{eq:q}.  An easy computation shows that $V(y,\tau)$ satisfies the equation of form 
\[
V_{\tau} = \mc L V + \eta^2 \, \bar{\mc{E}}_1(q) + \bar{\mc E}_2(\eta,q)
\]
where $\mc{E}_1(q) = -\frac{(q_{yy}+2)q_y^2}{8(n-1)+4q+q_y^2}$ and $\bar{\mc {E}_2}(\eta,q)$ is the error coming from the cut off function and its derivatives.  
Let $V_+ := \pi^+(V)$ where $\pi^+$ is the projection of $L^2(\mathbb R, e^{-y^2/4})$ onto $\langle \hm_0\rangle$.  
Equation \eqref{eq-u-tau1-omega} and the asymptotics for $\bu$ in \eqref{eq:um2} imply 
\[
\big(u(y,\tau_1) - \sqrt{2(n-1)}  \big) \, \eta (y,\tau_1) -   e^{-\tau_1} \big ( M_1\, \bar \Omega_0 + M_1 \, \bar \Omega_2\, H_2(y) -
 K_0 \, H_4(y)  \big) =   o(e^{-\tau_1})
\]
where we also used that the $L^2$-norm of $K_0 \, H_4(y) e^{-\tau_1}$ outside the support of $\eta$ is $o(e^{-\tau_1})$. 
The last estimate in turn gives that  the following holds in the $L^2$ sense:
\begin{equation}		\label{eq-Qtau1}
f:= V(\cdot,\tau_1) -  2\sqrt{2(n-1)}  \, e^{-\tau_1}\, M_1   \, \big (  \bar \Omega_0  +  \bar \Omega_2 \, H_2 - K_0  \, H_4 \big ) = o(e^{-\tau_1}). 
\end{equation}
To show \eqref{eq-Qtau1}  we  split  
$\int f^2   e^{-y^2/4}  dy = \int_{|y| \leq   e^{\tau_1/8} }  f^2 \, e^{-y^2/4} \, dy +   \int_{|y| >  e^{\tau_1/8} }  f^2 \, e^{-y^2/4} \, dy$
and use the   bound (i)  in Lemma \ref{cor-prop-funnel} to get  $ \int_{|y| >  e^{\tau_1/8} }  f^2 \, e^{-y^2/4} \, dy = o(e^{-\tau_1}).   $ 
To bound the the integral on $|y| \leq   e^{\tau_1/8}$ we write   $q =   (u-\sqrt{2(n-1)}) \, (u + \sqrt{2(n-1)})$,  and use again (i)  in Lemma \ref{cor-prop-funnel}, to get 
$ u + \sqrt{2(n-1)} =    2 \sqrt{2(n-1)} + O(e^{-\tau_1/8})$, 
 holding in the $L^\infty$ sense on  $|y| \leq e^{\tau/8}$. Thus, $ \int_{|y| >  e^{\tau_1/8} }  f^2 \, e^{-y^2/4} \, dy = o(e^{-\tau_1})$,
 and \eqref{eq-Qtau1} holds. 

\smallskip
Now \eqref{eq-Qtau1} implies 
\begin{equation}		\label{eq-Q+-tau1}
V_+(\tau_1) = 2 \sqrt{2(n-1)}\, M_1\, \bar \Omega_0 e^{-\tau_1}\langle 1, 1\rangle + o(e^{-\tau_1}),
\end{equation}
 implying that 
\begin{equation}		\label{eq-V+theta}
|  V_+(\tau_1) - 2\sqrt{2(n-1)} M_1 \bar \Omega_0 e^{-\tau_1}\langle1,1 \rangle | < \theta \, e^{-\tau_1},
\end{equation}
where $\theta > 0$ is a small number, independent of the choice of $\bar \Omega_0$, and it  can be made very  small by taking $\ell_0$  very large (which can be seen by part (iii) of  Lemma \ref{cor-prop-funnel}).  
\smallskip

Let us fix  $\varepsilon$ and $\tau_2$  such that 
\begin{equation}		\label{eqn-fixve}
\varepsilon := 2\sqrt{2(n-1)}\, M_1 e^{-\tau_1}  \qquad \mbox{and} \quad  \tau_2 :=  2 \ln ( \varepsilon ^{-1} \sigma_n ) 
\end{equation}
where $\sigma_n$ is a   sufficiently large  constant that will be chosen later in section \ref{sec-main-thm} to depend  only on dimension and $K_0$.  Note that the same  choice of $\tau_2$ also appeared in the previous section in \eqref{eqn-tau2}.

\smallskip

 This section is dedicated to proving  Proposition \ref{prop-q-beh-bound}, by  employing   $L^2$-theory methods.  We will make the a'priori assumption that  the function $q(y,\tau)$ is defined  for $|y| \leq  2 \rho\,  e^{\tau}$ and $\tau \in [\tau_0, \tau_2]$,  for some $\rho >0$, and that {\em  in the time interval $[\tau_1, \tau_2]$} 
it  satisfies  the bounds 
\begin{equation}
 \label{eq-Linfty-qq}
|q(y,\tau)| +  |q_y(y,\tau)| + |q_{yy}(y,\tau)| \le  \varepsilon  \, \Lambda \, (1 + |y|^4), \qquad |y| \leq  2 \rho\,  e^{\tau}
\end{equation}
for some auxiliary constant $\Lambda >0$.  The constant $\Lambda$ will be  found  in Proposition \ref{prop-barriers} 
 where it will be  shown that the domain of  $q(\cdot,\tau)$ 
contains the interval $|y| \leq 2 \rho  e^\tau$,  for some $\rho >0$. It turns out that both $\Lambda$ and $\rho$ can be taken to depend only on $M_1$, $K_0$ and the  dimension $n$. 

\begin{proposition}
\label{prop-q-beh-bound} 
Assume that \eqref{eq-Linfty-qq} holds. Then
we  can  choose $\bar \Omega_0$ in \eqref{eq-u-tau1-omega} so that
\begin{equation}		\label{eq-q-asymp-help}
q(y,\tau) = \varepsilon \, H_2(y) - \bar{K}_0 e^{-\tau}\, H_4(y) + o(\varepsilon),
\end{equation}
holds in the $L^2$ sense, for all $\tau\in [\tau_1,\tau_2]$, where $\bar{K}_0 := 2\sqrt{2(n-1)} K_0$.
\end{proposition}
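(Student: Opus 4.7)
The plan is to project $V = q\eta$ onto the Hermite basis $\{H_{2j}\}_{j\ge 0}$ of $\hilb$ and reduce the equation $V_\tau = \mathcal L V + \eta^2\bar{\mathcal E}_1(q) + \bar{\mathcal E}_2(\eta,q)$ to a decoupled system of scalar ODEs
\[
a_{2j}'(\tau) = \lambda_{2j}\,a_{2j}(\tau) + R_{2j}(\tau), \qquad \lambda_{2j} = 1-j,
\]
for the Hermite coefficients $a_{2j}(\tau) := \langle V(\cdot,\tau),H_{2j}\rangle / \|H_{2j}\|^2$. Using \eqref{eq-Linfty-qq} one has $|\bar{\mathcal E}_1(q)| \lesssim q_y^2 \lesssim \varepsilon^2\Lambda^2(1+y^4)^2$ on the support of $\eta$, while $\bar{\mathcal E}_2(\eta,q)$ is supported in $\{L(\tau)\le|y|\le 2L(\tau)\}$ and hence contributes an integrand weighted by $e^{-L(\tau)^2/4}$, which is superexponentially small in $\varepsilon$. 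Pairing against $H_{2j}$ under the Gaussian weight gives $|R_{2j}(\tau)| \le C_j\,\varepsilon^2$ uniformly for $\tau\in[\tau_1,\tau_2]$.

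From \eqref{eq-Qtau1}--\eqref{eq-V+theta} and $\varepsilon = 2\sqrt{2(n-1)}\,M_1\,e^{-\tau_1}$, the initial data at $\tau_1$ decompose as
\[
a_0(\tau_1) = \varepsilon\bar\Omega_0 + o(\varepsilon),\quad a_2(\tau_1) = \varepsilon\bar\Omega_2 + o(\varepsilon),\quad a_4(\tau_1) = -\bar K_0\,e^{-\tau_1} + o(\varepsilon),
\]
with $a_{2j}(\tau_1)=o(\varepsilon)$ for $j\ge 3$. For the stable modes ($j\ge 2$), variation of constants immediately yields $a_4(\tau) = -\bar K_0\,e^{-\tau_1}e^{-(\tau-\tau_1)} + O(\varepsilon^2) = -\bar K_0\,e^{-\tau} + o(\varepsilon)$ and $a_{2j}(\tau) = o(\varepsilon)$ for $j\ge 3$. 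The neutral mode satisfies $a_2' = R_2$, and integration gives $|a_2(\tau) - \varepsilon\bar\Omega_2| \le C\varepsilon^2(\tau-\tau_1) = O(\varepsilon^2\log\varepsilon^{-1}) = o(\varepsilon)$; once I have $|\bar\Omega_0|=O(\varepsilon)$ (so that $\bar\Omega_2 = 1 - O(\varepsilon^2)$), this yields $a_2(\tau) = \varepsilon + o(\varepsilon)$ as required.

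The delicate point is the unstable mode $a_0' = a_0 + R_0$. From $a_0(\tau) = e^{\tau-\tau_1}\bigl[a_0(\tau_1) + \int_{\tau_1}^{\tau}e^{-(s-\tau_1)}R_0(s)\,ds\bigr]$ and $e^{\tau_2-\tau_1}\sim \sigma_n^2/(M_1\varepsilon)$, the linear amplification over $[\tau_1,\tau_2]$ inflates an $\varepsilon\bar\Omega_0$ initial datum by a factor of order $1/\varepsilon$ and the forced term by $\varepsilon^2\cdot 1/\varepsilon = \varepsilon$, so a generic choice of $\bar\Omega_0$ would leave $|a_0(\tau_2)|$ of order $\varepsilon$, violating \eqref{eq-q-asymp-help}. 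I propose a one-dimensional shooting on $\bar\Omega_0\in[-c\varepsilon,c\varepsilon]$: the map $\bar\Omega_0\mapsto a_0(\tau_2)$ is continuous by continuous dependence of the RMCF on initial data and the continuity of $\bar\bom\mapsto\bom$ established in Lemma~\ref{lemma-homotopy}, and for a suitable constant $c$ it has opposite signs at the two endpoints because the signal $\varepsilon\bar\Omega_0\,e^{\tau_2-\tau_1}$ then dominates the forced $O(\varepsilon)$ contribution. The intermediate value theorem supplies $\bar\Omega_0^{\ast}$ of size $O(\varepsilon)$ with $a_0(\tau_2)=0$; writing $a_0(\tau) = -\int_{\tau}^{\tau_2}e^{\tau-s}R_0(s)\,ds$ and using $e^{\tau-s}\le 1$ then forces $|a_0(\tau)| \le \sup|R_0| \le C\varepsilon^2 = o(\varepsilon)$ throughout $[\tau_1,\tau_2]$. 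Reassembling the modes yields $V = \varepsilon H_2 - \bar K_0\,e^{-\tau}H_4 + o(\varepsilon)$ in $\hilb$, and since $\eta\equiv 1$ on any fixed compact $y$-interval once $\tau$ is large, the identity passes to $q$, giving \eqref{eq-q-asymp-help}. The main obstacle is ensuring the shooting map is genuinely continuous and its image straddles~$0$ uniformly in the auxiliary constants; this rests on the \textit{a priori} bound \eqref{eq-Linfty-qq} being preserved as $\bar\Omega_0$ varies in a small neighborhood of $0$, a point to be discharged in Proposition~\ref{prop-barriers} of the next section.
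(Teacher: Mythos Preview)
Your overall strategy---Hermite decomposition of $V$, scalar ODEs for the coefficients with forcing $|R_{2j}|\le C\varepsilon^2$, and a one-dimensional shooting on $\bar\Omega_0$ for the unstable $H_0$-mode---is correct and is exactly what the paper does (via Lemmas~\ref{lemma-exit}, \ref{lemma-omega0}, \ref{lemma-L2-V}). But your shooting interval $\bar\Omega_0\in[-c\varepsilon,c\varepsilon]$ is too small, and this makes the sign argument fail.

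The issue is the size of the $o(\varepsilon)$ in your initial datum $a_0(\tau_1)=\varepsilon\bar\Omega_0+o(\varepsilon)$. From \eqref{eq-V+theta} (ultimately from Lemma~\ref{cor-prop-funnel}(iii)) this error is $O(\theta\varepsilon)$ with $\theta\sim e^{-\ell_0^2/8}$: small, but a \emph{fixed} constant once $\ell_0$ is fixed, not $O(\varepsilon)$. After amplification by $e^{\tau_2-\tau_1}\sim C/\varepsilon$, this becomes a contribution of size $O(\theta)$ to $a_0(\tau_2)$, whereas your ``signal'' $\varepsilon\bar\Omega_0\,e^{\tau_2-\tau_1}$ with $\bar\Omega_0=\pm c\varepsilon$ is only $O(c\varepsilon)$. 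For small $\varepsilon$ the noise swamps the signal and you cannot infer opposite signs at the endpoints; your claim $|\bar\Omega_0^\ast|=O(\varepsilon)$ is therefore unjustified. The paper shoots instead on $\bar\Omega_0\in[-\bar\theta_0,\bar\theta_0]$ with $\bar\theta_0\sim\theta$, and rather than aiming for $a_0(\tau_2)=0$ it uses a funnel condition $|V_+(\tau)|\le 2\theta_0\varepsilon$ together with an exit lemma (Lemma~\ref{lemma-exit}): if every $\bar\Omega_0$ in this interval eventually exits the funnel, the exit-sign map $\mu:\mathcal I\to\{\pm1\}$ is continuous yet takes both values, a contradiction. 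This yields $|a_0(\tau)|=O(\theta\varepsilon)$ throughout, which suffices for \eqref{eq-q-asymp-help}. Your backward-representation trick to upgrade to $|a_0(\tau)|\le C\varepsilon^2$ is correct \emph{given} $a_0(\tau_2)=0$, but you cannot reach that hypothesis with the interval you propose.
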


 The proof of Proposition \ref{prop-q-beh-bound}  follows from a series of Lemmas that follow. Similar computation  as in the proof of Lemma 4.1 in \cite{AV}, using \eqref{eq-Linfty-qq},  yields 
\begin{equation}		\label{eq-Q+}\frac{d}{d\tau} |  V_+(\tau) |  \ge  | V_+(\tau) |- B_1\, \Lambda^2 \varepsilon^2,
\end{equation}
where $B_1$ is a universal constant depending only on $n, m$ and $\rho$.  

If $u(\cdot,\tau)$, $\tau \in [\tau_0, \tau_2]$ and $\tau_2 >\tau_1$,  is the MCF solution as above such that at each time $\tau$ its domain contains  $[-2\rho e^{\tau/4}, 2\rho \, e^{\tau/4}]$, we will say that 
\begin{equation}		\label{eq-funnel-unstable}
u(y,\tau) \in \tilde{\mc F}_{\theta_0} (\tau) \quad  \mbox{if} \quad | V_+(\tau) | \le  2\theta_0 \,   \varepsilon,
\end{equation}
where $\theta_0 := \max\{2\theta, 4B_1 \Lambda^2  \varepsilon\}$, $\theta$ is the same small constant as in \eqref{eq-V+theta}, and $B_1$ and $\Lambda$ are as in \eqref{eq-Q+}.

\begin{lemma}[Exit lemma]
\label{lemma-exit}
Let $u(y,\tau)$, where $\tau\in [\tau_0, \tau_2]$, and $\tau_2 > \tau_1$, is a solution to \eqref{eq:u} starting from a perturbation of the peanut solution $\bu(y,\tau)$ at time $\tau_0$, so that at time $\tau_1$ we have \eqref{eq-u-tau1-omega}, and $\tau_2$ is defined in \eqref{eqn-fixve}.   Assuming \eqref{eq-Linfty-qq}, if $u(\cdot,\bar \tau) \in \partial\tilde{\mc F}_{\theta_0}(\bar \tau)$, and $u(\cdot,\tau)\in \tilde{\mc F}_{\theta_0}(\tau)$ for all $\tau\in [\tau_1,\bar \tau]$ then
\[
\frac{d}{d\tau} |V_+(\tau)|_{\tau = \bar \tau} > 0.
\]
\end{lemma}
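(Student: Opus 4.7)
The plan is to deduce the exit lemma directly from the differential inequality \eqref{eq-Q+} combined with the definition of the threshold $\theta_0$, which was deliberately chosen to be large compared to the quadratic error term.

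First, I would recall that by assumption $u(\cdot,\tau) \in \tilde{\mc F}_{\theta_0}(\tau)$ for $\tau \in [\tau_1, \bar\tau]$, together with the a priori bound~\eqref{eq-Linfty-qq}, validates the hypotheses under which~\eqref{eq-Q+} was derived. In particular the differential inequality
\[
\frac{d}{d\tau}|V_+(\tau)| \;\ge\; |V_+(\tau)| - B_1\, \Lambda^2\,\varepsilon^2
\]
holds at $\tau=\bar\tau$. Since by hypothesis $u(\cdot,\bar\tau) \in \partial \tilde{\mc F}_{\theta_0}(\bar\tau)$, one has the exit condition
\[
|V_+(\bar\tau)| = 2\theta_0\,\varepsilon .
\]

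Next I would plug this into~\eqref{eq-Q+} to obtain
\[
\frac{d}{d\tau}|V_+(\tau)|\bigg|_{\tau=\bar\tau} \;\ge\; 2\theta_0\,\varepsilon - B_1\,\Lambda^2\,\varepsilon^2 .
\]
Now I would invoke the definition $\theta_0 := \max\{2\theta,\; 4B_1 \Lambda^2 \varepsilon\}$, which gives in particular $2\theta_0\varepsilon \ge 8 B_1 \Lambda^2 \varepsilon^2$. Substituting yields
\[
\frac{d}{d\tau}|V_+(\tau)|\bigg|_{\tau=\bar\tau} \;\ge\; 8 B_1\Lambda^2 \varepsilon^2 - B_1\Lambda^2\varepsilon^2 \;=\; 7 B_1 \Lambda^2 \varepsilon^2 \;>\; 0,
\]
which is the desired conclusion.

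There is essentially no hard step: the lemma is a direct algebraic consequence of~\eqref{eq-Q+} once one observes that $\theta_0$ was chosen precisely so that the linear term $|V_+|$ dominates the quadratic error $B_1\Lambda^2\varepsilon^2$ on the boundary of the funnel. The only mild subtlety I would want to double-check is the differentiability of $\tau \mapsto |V_+(\tau)|$ at $\bar\tau$ in case $V_+$ passes through zero, but since $|V_+(\bar\tau)| = 2\theta_0\varepsilon > 0$, the function $|V_+|$ is smooth in a neighborhood of $\bar\tau$ and \eqref{eq-Q+} makes sense classically.
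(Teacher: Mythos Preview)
Your proof is correct and follows essentially the same approach as the paper: both use the differential inequality \eqref{eq-Q+}, evaluate it at the boundary where $|V_+(\bar\tau)| = 2\theta_0\varepsilon$, and invoke the definition of $\theta_0$ to ensure the right-hand side is positive. Your version is slightly more explicit in carrying out the arithmetic and in noting why $|V_+|$ is differentiable at $\bar\tau$, but the argument is the same.
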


\begin{proof}
The statement immediately follows from \eqref{eq-Q+}, and the definition of $\tilde{\mc F}_{\theta_0}(\tau)$ given by \eqref{eq-funnel-unstable}. More precisely, as long as  $u(\cdot,\tau) \in \tilde{\mc F}_{\theta_0}(\tau)$, since we assume \eqref{eq-Linfty-qq}, inequality \eqref{eq-Q+} holds. 
This together with \eqref{eq-funnel-unstable} imply that at the first time $\bar{\tau} > \tau_1$ that it happens $u(\cdot,\bar{\tau}) \in \partial \tilde{\mc F}_{\theta_0}(\bar{\tau})$, we have that $\|V_+(\tau)\| = 2\theta_0 \, \varepsilon$, and 
\begin{equation}		\label{eq-exit}\frac{d}{d\tau}| V_+(\tau)|_{\tau = \bar{\tau}} \geq 2\theta_0 \,  \varepsilon  -  B_1\, \Lambda^2 \varepsilon^2  >0
\end{equation}
\end{proof}


Denote by $\bar{\theta}_0 := \frac{\theta_0}{2\sqrt{2(n-1)}M_1\langle 1, 1\rangle}$. Our goal is to show that we can find at least one such $\bar \Omega_0$ so that our solution stays in $\tilde{\mc F}_{\theta_0}(\tau)$ for all $\tau\in [\tau_1,\tau_2]$. More precisely we have the following result.

\begin{lemma}
\label{lemma-omega0} 
Define the interval  $\mc I := [-\bar{\theta}_0, \bar{\theta}_0]$. There exists an $\bar \Omega_0\in \mc I$, for which the solution $u(y,\tau)$ stays in the funnel $\tilde{\mc F}_{\theta_0}(\tau)$, for all $\tau\in [\tau_1, \tau_2]$.
\end{lemma}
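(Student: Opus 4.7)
The proof is a one-dimensional shooting argument on $\bar\Omega_0 \in \mc I$. Throughout, fix the branch $\bar\Omega_2 = \sqrt{1-\bar\Omega_0^2}$ so that Lemma~\ref{lemma-homotopy} furnishes a (locally continuous) selection $\bar\Omega_0 \mapsto \bom(\bar\Omega_0) \in \mathbb S^1$ for which the solution $u = u_{\epsilon,\bom(\bar\Omega_0)}$ satisfies \eqref{eq-u-tau1-omega} at $\tau_1$. For each $\bar\Omega_0 \in \mc I$ define the exit time
\[
\tau^*(\bar\Omega_0) := \sup\bigl\{\tau \in [\tau_1, \tau_2] \;:\; u(\cdot, s) \in \tilde{\mc F}_{\theta_0}(s) \text{ for all } s \in [\tau_1, \tau]\bigr\}.
\]
We claim $\tau^*(\bar\Omega_0) = \tau_2$ for some $\bar\Omega_0 \in \mc I$, and argue by contradiction, assuming $\tau^*(\bar\Omega_0) < \tau_2$ for every $\bar\Omega_0 \in \mc I$.

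\emph{Step 1 (transversality and continuity).} Under the contradiction hypothesis, $|V_+(\tau^*(\bar\Omega_0))| = 2\theta_0\varepsilon$, while Lemma~\ref{lemma-exit} gives $\tfrac{d}{d\tau}|V_+(\tau)|_{\tau = \tau^*} > 0$. This transversal exit, combined with continuous dependence of the rescaled MCF (and of the selection $\bom(\bar\Omega_0)$) on the parameter $\bar\Omega_0$, implies that $\bar\Omega_0 \mapsto \tau^*(\bar\Omega_0)$ and hence $\Psi(\bar\Omega_0) := V_+(\tau^*(\bar\Omega_0))$ are continuous on $\mc I$.

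\emph{Step 2 (sign of $V_+$ at the endpoints).} From \eqref{eq-V+theta} and the definition $\bar\theta_0 = \theta_0/(2\sqrt{2(n-1)}\,M_1\,\langle 1,1\rangle)$, at $\bar\Omega_0 = \bar\theta_0$ one has $V_+(\tau_1) \ge \theta_0 e^{-\tau_1} - \theta e^{-\tau_1} \ge \tfrac12 \theta_0 e^{-\tau_1} > 0$, using $\theta_0 \ge 2\theta$. The \emph{signed} ODE for the one-dimensional projection $V_+$ onto $\langle \hm_0\rangle$ (eigenvalue $1$) takes the form $\tfrac{d}{d\tau}V_+ = V_+ + R(\tau)$ with $|R(\tau)| \le B_1 \Lambda^2 \varepsilon^2$, derived exactly as for \eqref{eq-Q+}; integrating gives
\[
V_+(\tau) \ge V_+(\tau_1) e^{\tau - \tau_1} - B_1\Lambda^2 \varepsilon^2 e^{\tau - \tau_1}.
\]
Since $V_+(\tau_1) \ge \theta_0 \varepsilon/(4\sqrt{2(n-1)}\,M_1)$ and $\varepsilon$ is small enough that $B_1\Lambda^2 \varepsilon \ll \theta_0/(\sqrt{2(n-1)}\,M_1)$, the right hand side remains strictly positive on $[\tau_1,\tau^*(\bar\theta_0)]$, forcing $V_+(\tau^*(\bar\theta_0)) = +2\theta_0\varepsilon$. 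The symmetric argument at $-\bar\theta_0$ yields $V_+(\tau^*(-\bar\theta_0)) = -2\theta_0 \varepsilon$. Hence $\Psi(\bar\theta_0) = +2\theta_0\varepsilon$ and $\Psi(-\bar\theta_0) = -2\theta_0\varepsilon$.

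\emph{Step 3 (contradiction).} The continuous function $\Psi : \mc I \to \mathbb R$ changes sign between its endpoints, so by the intermediate value theorem there exists $\bar\Omega_0^\star \in \mc I$ with $\Psi(\bar\Omega_0^\star) = 0$. But by the contradiction hypothesis, $|\Psi(\bar\Omega_0)| = 2\theta_0\varepsilon > 0$ for every $\bar\Omega_0 \in \mc I$, a contradiction. Therefore $\tau^*(\bar\Omega_0) = \tau_2$ for some $\bar\Omega_0 \in \mc I$, proving the lemma.

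\emph{Main obstacle.} The delicate step is the continuity in Step~1: beyond continuous dependence of MCF on initial data, one must verify that the degree-theoretic selection $\bar\Omega_0 \mapsto \bom(\bar\Omega_0)$ from Lemma~\ref{lemma-homotopy} can be chosen continuously on $\mc I$ (a local continuous section suffices for the IVT argument), and that the transversality $\tfrac{d}{d\tau}|V_+| > 0$ at $\tau^*$ upgrades this to continuity of the exit time. A secondary point is that the persistence of the sign of $V_+$ in Step~2 requires $\varepsilon$ to be small compared to $\theta_0/(M_1 \Lambda^2)$, which is compatible with the standing smallness assumption on $\varepsilon$ and the choice $\theta_0 = \max\{2\theta, 4B_1\Lambda^2\varepsilon\}$.
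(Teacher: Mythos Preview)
Your proposal is correct and takes essentially the same approach as the paper: a one-dimensional shooting argument, using the transversality from the Exit Lemma to get continuity of the exit time, and the signed ODE $\tfrac{d}{d\tau}V_+ = V_+ + O(\varepsilon^2)$ to pin down the exit sign at the endpoints $\pm\bar\theta_0$. The only cosmetic difference is that the paper packages the contradiction via the sign map $\mu(\bar\Omega_0) := V_+(\tau_{ex})/|V_+(\tau_{ex})| : \mc I \to \{-1,1\}$ (a continuous map from a connected interval onto a disconnected set), whereas you use the intermediate value theorem on $\Psi = V_+(\tau^*)$ directly; these are equivalent formulations of the same connectedness argument. The continuity issue you flag as the ``main obstacle'' (that the degree-theoretic selection $\bar\Omega_0\mapsto\bom(\bar\Omega_0)$ be continuous) is handled in the paper at the same level of detail, by appeal to the transversality and to Lemma~3.2 of \cite{AV}.
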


\begin{proof}
 If there exists an $\bar \Omega_0 \in \mc I$   for which \eqref{eq-Q+-tau1} holds,  and which stays inside the set $\tilde{\mc F}_{\theta_0}(\tau)$ for all $\tau\in [\tau_1,\tau_2]$, we are done. Hence, assume that for every $\bar \Omega_0 \in \mc I$, and a solution for which \eqref{eq-Q+-tau1} holds,  there exists the first exit time $\tau_{ex}(\bar \Omega_0) < \tau_2$ at which the solution hits the boundary of the set $\partial \tilde{\mc F}_{\theta_0}(\tau_{ex}(\bar \Omega_0))$, and therefor it exits the funnel  by \eqref{eq-exit}. Note also that for every $\bar \Omega_0 \in \mc I$  our solution  for which \eqref{eq-Q+-tau1} holds,  belongs to $\tilde{\mc F}_{\theta_0}(\tau_1)$.

 We claim that the exit time $\tau_{ex}(\bar \Omega_0)$ is a continuous function of $\bar \Omega_0$, on an interval $\bar \Omega_0\in \mc I$. To justify this claim we use Lemma \ref{lemma-exit} and argue similarly as in the proof of Lemma 3.2 in \cite{AV}.  
 
We define next 
\[
\mu(\bar \Omega_0) = \frac{\langle V, 1\rangle(\tau_{ex}(\bar \Omega_0))}{|\langle V, 1\rangle(\tau_{ex}(\bar \Omega_0))|},
\]
where we recall that  $V$ is the function  given in \eqref{eqn-qV}, it  satisfies  \eqref{eq-Q+-tau1},
and  whose exit time is $\tau_{ex}(\bar \Omega_0)$. Note that 
\[
\mu: \mc I  \to \{-1,1\}
\]
 is a continuous map and recall the definition $\mc I:=[-\theta_0, \theta_0]  $. 
Next we claim that $\mu(-\bar{\theta}_0) = -1$ and  $\mu(\bar{\theta}_0) = 1$, which would then obviously contradict the continuity of the map $\mu$ on a given interval. This would then conclude the proof of the Lemma.

Indeed, let us show  $\mu(\bar{\theta}_0) = 1$, since the other statement is proved similarly. Similar computation to the one to derive \eqref{eq-Q+}, using again \eqref{eq-Linfty-qq} yields
\[
\frac{d}{d\tau} V_+(\tau) \ge  V_+(\tau) - B_1 \Lambda^2 \, \varepsilon^2.
\]
If $\bar \Omega_0 = \bar{\theta}_0$, then our  definition $\bar{\theta}_0 := \frac{\theta_0}{2\sqrt{2(n-1)}M_1\langle 1, 1\rangle}$,   \eqref{eq-Q+-tau1} and $\|V_+(\tau)\| = 2\theta_0 \, \varepsilon$ 
imply  that $V_+(\tau_1) = \theta_0 \,  e^{-\tau_1} + o(e^{-\tau_1})$, thus integrating the equation for $V_+(\tau)$ from $\tau_1$ to $\bar{\tau} := \tau_{ex}(\bar{\theta}_0)$, and recalling that $\varepsilon =  2 \sqrt{2(n-1)} \,  M_1 e^{-\tau_1}$ yield
\[
V_+(\bar{\tau})\, e^{-\bar{\tau}} \ge e^{-\tau_1} V_+(\tau_1) - B_1 \Lambda^2  \varepsilon^2 e^{-\tau_1}> 0,
\]
for big enough $\tau_1$, since
\[
V_+(\tau_1)  =  \theta_0 \, e^{-\tau_1} + o(e^{-\tau_1})  \ge \frac{\theta_0}{2} e^{-\tau_1} \ge 2B_1 \Lambda^2 \varepsilon^2,
\]
where we have used  the definition of $\varepsilon$ in \eqref{eqn-fixve} and $\theta_0$ in \eqref{eq-funnel-unstable}.
This implies  $\mu(\bar{\theta}_0) = 1$. Similarly,  $\mu(-\bar{\theta}_0) = -1$, using
\[
\frac{d}{d\tau}V_+(\tau) \le V_+(\tau) + B_1 \Lambda^2 \varepsilon^2.
\]

The above argument shows that indeed there exists an $\bar \Omega_0$ in the interval  $\mc I$, for which the solution $u(y,\tau)$ stays inside $\tilde{\mc F}_{\theta_0}(\tau)$, for all $\tau\in [\tau_1, \tau_2]$, hence finishing the proof of the Lemma.
\end{proof}

%

\begin{lemma}
\label{lemma-L2-V}
Let $u(y,\tau)$ be the mean curvature flow solution for which $\bar \Omega_0$ is chosen so that \eqref{eq-Q+-tau1} holds, and so that $u(\cdot,\tau) \in \mc F_{\theta_0}(\tau)$ for all $\tau\in [\tau_1,\tau_2]$ $($the existence of such $\Omega_0$ is guaranteed by Lemma \ref{lemma-omega0}$)$. Assume also that  \eqref{eq-Linfty-qq} holds. Then, for all $\tau\in [\tau_1,\tau_2]$
we have
\[
V(y,\tau) = \varepsilon\, H_2 - \bar{K}_0 e^{-\tau} H_4 +  o(\varepsilon),
\]
in the $L^2$-sense, where $\varepsilon$ is defined in   \eqref{eqn-fixve},  and $\bar K_0 := 2\sqrt{2(n-1)} K_0$,  with  $K_0$
being the constant that appears in the  asymptotics of the peanut solution $\bu$.
\end{lemma}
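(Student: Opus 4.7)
The plan is to decompose $V(\cdot,\tau)$ into its Hermite-mode projections and track each one via the ODE system obtained from $V_\tau = \mathcal L V + \eta^2\bar{\mathcal E}_1(q)+\bar{\mathcal E}_2(\eta,q)$. Write
\[
V(y,\tau) = \sum_{k\ge 0} a_k(\tau)\,H_{2k}(y),\qquad a_k'(\tau) = (1-k)a_k(\tau) + \frac{\langle \eta^2\bar{\mathcal E}_1(q)+\bar{\mathcal E}_2(\eta,q), H_{2k}\rangle}{\|H_{2k}\|^2}.
\]
From $\varepsilon=2\sqrt{2(n-1)}M_1 e^{-\tau_1}$ and \eqref{eq-Qtau1}, the initial data at $\tau_1$ give $a_0(\tau_1)=\varepsilon\bar\Omega_0+o(\varepsilon)$, $a_1(\tau_1)\|H_2\|^{2}/\|H_2\|^{2}=\varepsilon\bar\Omega_2+o(\varepsilon)$, $a_2(\tau_1)=-\bar K_0 e^{-\tau_1}+o(\varepsilon)$ (up to the constant $M_1$ absorbed into the normalization of the neutral eigendirection), and $a_k(\tau_1)=o(\varepsilon)$ for $k\ge 3$.

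The first key step is to bound the forcing terms in $L^2(e^{-y^2/4}dy)$. Using the a priori bound \eqref{eq-Linfty-qq} and the fact that $8(n-1)+4q+q_y^2$ is bounded below in the considered regime, the nonlinear term obeys $|\bar{\mathcal E}_1(q)|\lesssim q_y^2 \lesssim \varepsilon^2 \Lambda^2(1+y^4)^2$, so $\|\eta^2\bar{\mathcal E}_1(q)\|_{\mathcal H}\lesssim \varepsilon^2$. The cutoff error $\bar{\mathcal E}_2(\eta,q)$ is supported in $[L(\tau),2L(\tau)]$; combining the bounds on $\eta_\tau,\eta_y,\eta_{yy}$ from \eqref{eqn-eta} with \eqref{eq-Linfty-qq} and the Gaussian weight over $|y|\geq L(\tau)=\rho e^{\tau/4}$ shows $\|\bar{\mathcal E}_2(\eta,q)\|_{\mathcal H}\lesssim \varepsilon\,L(\tau)^{7/2} e^{-L(\tau)^2/8}$, which is exponentially small in $\tau$ and therefore negligible compared to any $\varepsilon^2$ bound once $\tau_0$ is large. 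In summary, each projection of the forcing satisfies $|\langle\text{forcing},H_{2k}\rangle|/\|H_{2k}\|^2=O_k(\varepsilon^2)$ throughout $[\tau_1,\tau_2]$.

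Next I carry out the mode-by-mode analysis. For $k=0$, Lemma \ref{lemma-omega0} already provides $|a_0(\tau)|\leq 2\theta_0\varepsilon$ on $[\tau_1,\tau_2]$, and $\theta_0=o(1)$ can be made arbitrarily small by choosing $\ell_0$ large (via Lemma \ref{cor-prop-funnel}(iii) and \eqref{eq-V+theta}), so $a_0(\tau)=o(\varepsilon)$. For $k=1$ (the neutral mode) the ODE reads $a_1'=O(\varepsilon^2)$, hence $a_1(\tau)=a_1(\tau_1)+O(\varepsilon^2(\tau-\tau_1))=\varepsilon\bar\Omega_2+o(\varepsilon)$, since $\tau_2-\tau_1\lesssim \log(1/\varepsilon)$ and thus $\varepsilon^2(\tau_2-\tau_1)=o(\varepsilon)$; the constraint $|\bar\Omega_0|\leq\bar\theta_0\ll 1$ forces $\bar\Omega_2=1+o(1)$ (choosing the appropriate sign branch). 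For $k=2$, variation of constants applied to $a_2'=-a_2+O(\varepsilon^2)$ yields $a_2(\tau)=a_2(\tau_1)e^{-(\tau-\tau_1)}+O(\varepsilon^2)$, which gives exactly the claimed $-\bar K_0 e^{-\tau}$ profile up to $o(\varepsilon)$. For the high-mode tail $V^{>}:=\sum_{k\ge 3} a_k H_{2k}$, I test the equation against $V^{>}$ itself: using that $\mathcal L$ restricted to this invariant subspace has spectrum $\le -2$, one gets
\[
\tfrac{d}{d\tau}\|V^{>}\|_{\mathcal H}^{2}\leq -4\|V^{>}\|_{\mathcal H}^{2}+C\varepsilon^{2}\|V^{>}\|_{\mathcal H},
\]
and Gronwall together with $\|V^{>}(\tau_1)\|_{\mathcal H}=o(\varepsilon)$ yields $\|V^{>}(\tau)\|_{\mathcal H}=o(\varepsilon)$ on $[\tau_1,\tau_2]$.

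Assembling the four pieces gives the claimed expansion in the $\mathcal H$-norm. The main obstacle in this scheme is the neutral $H_2$ mode: since its linearized eigenvalue is zero, there is no decay to suppress quadratic forcing, so the argument that $a_1$ stays $\varepsilon(1+o(1))$ requires two ingredients working together --- the quadratic (rather than linear) size of $\bar{\mathcal E}_1(q)$, which reduces the forcing to $O(\varepsilon^2)$, and the fact that $\tau_2-\tau_1=O(\log 1/\varepsilon)$ is only logarithmic, so that the time-integrated drift is $O(\varepsilon^2\log 1/\varepsilon)=o(\varepsilon)$. This is exactly the balance that breaks down if one tries to push $\tau_2$ much beyond $2\ln(\varepsilon^{-1}\sigma_n)$, which is the precise reason for the choice of $\tau_2$ in \eqref{eqn-fixve}.
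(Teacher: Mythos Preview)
Your proposal is correct and follows essentially the same approach as the paper: decompose $V$ into its $H_0$, $H_2$, $H_4$ components and the stable tail, bound the forcing by $O(\varepsilon^2)$ using \eqref{eq-Linfty-qq}, and integrate each resulting ODE over $[\tau_1,\tau_2]$. Your treatment is in fact a bit more explicit than the paper's about the cutoff error and about why $\bar\Omega_2=1+o(1)$, which the paper leaves implicit in its final line.
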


\begin{proof}
Denote by $\beta(\tau) = \langle V, H_4\rangle$. The fact that $u(\cdot,\tau) \in \mc F_{\theta_0}(\tau)$ for all $\tau\in [\tau_1,\tau_2]$ yields  the domain of $q(\cdot, \tau)$ contains the interval $|y| \leq 2\rho e^{\tau}$. Since we also have that \eqref{eq-Linfty-qq} holds, similarly to the proof of Lemma 4.1 in \cite{AV},  we have
\[
-\beta(\tau) - B_1\Lambda^2 \varepsilon^2  \le \frac{d}{d\tau}\, \beta(\tau) \le -\beta(\tau) + B_1 \Lambda^2 \varepsilon^2.
\]
Integrating this from $\tau_1$ to $\tau\in [\tau_1,\tau_2]$, and using that
\[
 \beta(\tau_1) = -\bar{K}_0\, \langle H_4, H_4 \rangle \, e^{-\tau_1}  + o(e^{-\tau_1})
\]
(which follows from \eqref{eq-Qtau1}),  yields
\[
 -  B_1 \Lambda^2 \, \varepsilon^2  \le \beta(\tau) + \bar K_0 \, \langle H_4, H_4\rangle (1 + o(1))\, e^{-\tau} \le  B_1 \Lambda^2 \varepsilon^2,
\]
which implies
\[
\beta(\tau) = - \bar{K}_0\, \langle H_4, H_4\rangle \, e^{-\tau} + o(\varepsilon).
\]

Let us denote by $\alpha(\tau) := \langle V, H_2\rangle$. Then, similarly as in \cite{AV},
\[
\Big|\frac{d}{d\tau}\alpha(\tau)\Big| \le B_1 \Lambda^2\, \varepsilon^2,
\]
implying
\[
\alpha(\tau_1) - B_1  \Lambda^2\, \varepsilon^2 \, ( \tau -\tau_1) \le \alpha(\tau) \le \alpha(\tau_1) + B_1 \Lambda^2 \varepsilon^2 \, (\tau - \tau_1).
\]
My \eqref{eq-Qtau1}, we have $\alpha(\tau_1) = \varepsilon \, \bar \Omega_2\, \langle H_2, H_2\rangle  + o(\varepsilon)$,  $\varepsilon = 2\sqrt{2(n-1)}\, M_1 e^{-\tau_1}$, and thus
\[
\alpha(\tau) = \varepsilon\, \bar \Omega_2 \, \langle H_2, H_2\rangle+ o(\varepsilon),
\]
for all $\tau\in [\tau_1,\tau_2]$, provided $\tau_1$ is sufficiently large. Here we have used that for all $\tau\in [\tau_1,\tau_2]$,  we have $\varepsilon^2e^{\tau} \le \sigma_n$, and hence $\varepsilon^2 \tau = o(\varepsilon)$ in that time interval. Furthermore, since our solution stays in $\tilde{\mc F}_{\theta_0}(\tau)$ for all those times, we have $|V_+(\tau)| \le 2\theta_0 \,\varepsilon$, and $\theta_0$ is as in \eqref{eq-funnel-unstable}.

Finally, let us denote by $V^s$ the projection of $V$ onto the space $\langle H_6, H_8, \dots\rangle$. We claim that we have  $\|V^s(\tau)\| \le o( \varepsilon)$ for $\tau\in [\tau_1,\tau_2]$ as well. Indeed,
\[
\frac{d}{d\tau}\|V^s\| \le -2\, \|V^s\| + B_1 \Lambda^2 \varepsilon^2,
\]
implying
\[
\|V^s\| \le o(1) \, e^{-2\tau + \tau_1} + B_1 \Lambda^2 \varepsilon^2 = o(\varepsilon),
\]
where we have used that $\|V^s(\tau_1)\| = o(e^{-\tau_1})=o(\varepsilon)$.
Finally, we conclude that
\[
V(y,\tau) = \varepsilon\, H_2 - 2\sqrt{2(n-1)} \, \bar K_0 \, e^{-\tau} H_4 + o(\varepsilon),
\]
in the $L^2$-sense, for all $\tau\in [\tau_1,\tau_2]$, as claimed.
\end{proof}

We can now finally give a proof of Proposition \ref{prop-q-beh-bound}.

\begin{proof}[Proof of Proposition \ref{prop-q-beh-bound}]
The proof readily  follows by Lemma \ref{lemma-omega0} and Lemma \ref{lemma-L2-V}.
\end{proof}

The following Lemma follows from  Proposition \ref{prop-q-beh-bound}.

\begin{lemma}  
\label{lemma-asymp-2}
Assuming that the domain $q(y, \tau)$ contains  $|y| \leq  2\rho e^{\tau/4}$ for some $\rho >0$ and  \eqref{eq-Linfty-qq} hold,
for every $\ell \gg 1$ there exists $\tau_0 \gg 1$ so that for all $\tau\in [\tau_1,\tau_2]$, we have
\begin{equation}		\label{eq-asymp-2}
\begin{split}
&q(y,\tau) =  \epsilon \, H_2(y)  - \bar{K}_0\, e^{-\tau} \, H_4(y)  + o(\varepsilon),\\
&q_y(y,\tau) =  \epsilon \, H_2'(y) - \bar{K}_0\, e^{-\tau} \, H_4'(y) + o(\varepsilon)
\end{split}
\end{equation}
on $|y| \leq \ell$, where $\bar K_0= 2\sqrt{2(n-1)} K_0$, and $K_0$  is the  constant that appears in the asymptotics of the peanut solution. 

\end{lemma}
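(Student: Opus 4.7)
The plan is to upgrade the $L^2$-convergence from Proposition~\ref{prop-q-beh-bound} to a pointwise $C^1$-convergence on the compact interval $[-\ell,\ell]$, by combining standard interior parabolic estimates with the a priori $C^2$-bound \eqref{eq-Linfty-qq} and an interpolation.

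First, I would set
\[
   r(y,\tau) := q(y,\tau) - \varepsilon\, H_2(y) + \bar K_0\, e^{-\tau}\, H_4(y),
\]
so that Proposition~\ref{prop-q-beh-bound} reads $\|r(\cdot,\tau)\|_\hilb = o(\varepsilon)$ uniformly for $\tau\in[\tau_1,\tau_2]$. Since $H_2(y)$ and $e^{-\tau}H_4(y)$ are in the kernel of $\partial_\tau - \cL$, subtracting them from \eqref{eq:q} shows that $r$ satisfies
\[
   r_\tau = \cL r - \frac{(q_{yy}+2)q_y^2}{8(n-1)+4q+q_y^2},
\]
and the nonlinear error on the right is pointwise bounded by $C\Lambda^2\varepsilon^2(1+|y|^4)^3$ on $|y|\leq 2\rho e^{\tau/4}$ thanks to \eqref{eq-Linfty-qq}. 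In particular its norm on $[-4\ell,4\ell]$ is $O(\varepsilon^2) = o(\varepsilon)$.

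Next, on a fixed interval $[-4\ell,4\ell]$ the weighted norm and the ordinary $L^2$ norm are equivalent up to the factor $e^{4\ell^2}$, so
\[
   \|r(\cdot,\tau)\|_{L^2([-4\ell,4\ell])} \leq e^{2\ell^2}\|r(\cdot,\tau)\|_\hilb = o(\varepsilon),
\]
with implicit constant depending only on $\ell$. I would then apply the standard interior $L^\infty$ estimate for the linear parabolic equation $r_\tau - r_{yy} + \tfrac y2 r_y - r = F$ on the cylinder $[-4\ell,4\ell]\times[\tau-1,\tau]$ (or a one-sided version near $\tau=\tau_1$, treated separately as in the proof of Lemma~\ref{cor-expansion}): this yields
\[
   \|r(\cdot,\tau)\|_{C^0([-2\ell,2\ell])} \leq C(\ell)\Big(\sup_{\tau'\in[\tau-1,\tau]}\|r(\cdot,\tau')\|_{L^2([-4\ell,4\ell])} + \|F\|_{L^\infty}\Big) = o(\varepsilon),
\]
where the extra drift coefficient $\tfrac y2$ and the zero-order term are bounded on $[-4\ell,4\ell]$, and $F$ denotes the nonlinear remainder, which is $O(\varepsilon^2)$ in $L^\infty$ by the discussion above. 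This proves the first line of \eqref{eq-asymp-2}.

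For the derivative bound, the a priori estimate \eqref{eq-Linfty-qq} yields a uniform $C^2$-bound $\|r_{yy}(\cdot,\tau)\|_{C^0([-2\ell,2\ell])}\leq C\Lambda\varepsilon(1+\ell^4)$. Interpolating between this uniform $C^2$-bound and the $o(\varepsilon)$ $C^0$-bound on $[-2\ell,2\ell]$ (for instance via the elementary inequality $\|r_y\|_\infty \leq C(\|r\|_\infty \|r_{yy}\|_\infty)^{1/2}$) gives $\|r_y(\cdot,\tau)\|_{C^0([-\ell,\ell])}=o(\varepsilon)$, which is the second line of \eqref{eq-asymp-2}. The only mildly delicate point is the initial layer $\tau\in[\tau_1,\tau_1+1]$, but here one can use one-sided interior estimates starting from $\tau=\tau_1$, exactly as in the treatment of Lemma~\ref{cor-expansion}; the initial data $r(\cdot,\tau_1)$ already satisfies the desired $L^2$ estimate by \eqref{eq-Qtau1}.

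The main obstacle is keeping track of all the $\ell$-dependent constants while ensuring that the $o(\varepsilon)$ remainder is uniform in $\tau\in[\tau_1,\tau_2]$ and not merely $o(\varepsilon)$ for each fixed $\tau$. This is the reason for requiring $\tau_0$ (hence $\tau_1$, hence $1/\varepsilon$) to be sufficiently large depending on $\ell$: the $L^2$ remainder in Proposition~\ref{prop-q-beh-bound} is of order $\Lambda^2\varepsilon^2(\tau_2-\tau_1)$, and since $\tau_2-\tau_1 \approx 2\ln(\sigma_n/\varepsilon)$ and $\varepsilon \sim e^{-\tau_1}$, this remainder is indeed $o(\varepsilon)$ provided $\tau_1$ is large enough compared to the factor $e^{4\ell^2}$ produced when passing from the Gaussian-weighted $L^2$ norm to the flat $L^2$ norm on $[-4\ell,4\ell]$.
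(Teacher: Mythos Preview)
Your proposal is correct and follows essentially the same approach as the paper's proof: define $r=q-\varepsilon H_2+\bar K_0 e^{-\tau}H_4$, observe that $r_\tau=\cL r+\cE_1(q)$ with $\cE_1(q)=O(\varepsilon^2)$ on $[-4\ell,4\ell]$ by \eqref{eq-Linfty-qq}, pass from the weighted $L^2$ bound of Proposition~\ref{prop-q-beh-bound} to a flat $L^2$ bound on $[-4\ell,4\ell]$, and then apply interior parabolic $L^\infty$ estimates. The only cosmetic difference is that you obtain the derivative bound via the $C^0$--$C^2$ interpolation inequality, whereas the paper simply invokes ``standard derivative estimates''; both are valid and amount to the same thing.
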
 

\begin{proof}
The proof  similar to the proof of Lemma \ref{cor-expansion}, although here is simple.  Call $f(\cdot, \tau) = q(\cdot, \tau)  - \epsilon \, H_2  + \bar{K}_0\, e^{-\tau} \, H_4$.
Then  by \eqref{eq:q} we have that 
$f_\tau - \cL f = \cE_1(q)$, where $\mc{E}_1(q) = -\frac{(q_{yy}+2)q_y^2}{8(n-1)+4q+q_y^2}$,  and by 
\eqref{eq-Linfty-qq}, the estimate  $\mc{E}_1(q)  = O(\varepsilon^2) \, \ell^8$, holds for all $|y| \leq 4\ell$, $\tau \in [\tau_1, \tau_2]$.
Furthermore,  Proposition \ref{prop-q-beh-bound} shows that $\| f(\cdot, \tau) \|_{\hilb( [0, \rho e^\tau]) } = o(\varepsilon)$. 
Recall that  $\varepsilon :=  2\sqrt{2(n-1)}\, M_1 e^{-\tau_1} $. For any fixed $\ell \gg 1$, we have $\ell \ll \rho \, e^{\tau_1}$, provided $\tau_0 \gg 1$.  Hence, the last estimate implies that 
$\| f(\cdot, \tau) \|_{\hilb( [0, 4\ell]  } = o(\varepsilon)$, for $\tau \in [\tau_1, \tau_2]$. One can then employ standard $L^\infty$-estimates 
as in Lemma \ref{cor-expansion},  to conclude the first bound in \eqref{eq-asymp-2}, holding on $[0, 2\ell]$, for $\tau \in [\tau_1, \tau_2]. $
The second bound follows by standard derivative estimates.
Note  that we can make  $ \varepsilon \, \ell^8 =  o(1)$, by choosing $\tau_0 \gg 1$. 

\end{proof}


\section{Proof of Theorem \ref{thm-main}}
\label{sec-main-thm}

 To conclude the proof of Theorem \ref{thm-main} we first show the following.
 
\begin{theorem}
\label{thm-cylinder}
Let $\bar{M}_0(t)$ be the peanut solution as discussed above, and let $\bar{T}$ be its first singular time. There exists a $t_0$ sufficiently close to $\bar{T}$, so that in every sufficiently small neighborhood of $\bar{M}_0(t_0)$, there exists a perturbation $\bar{M}_{\theta_c}(t_0)$ so that the MCF starting at $\bar{M}_{\theta_c}(t_0)$  develops a nondegenerate neckpinch singularity. Here $\theta_c$ can be chosen arbitrarily small.
\end{theorem}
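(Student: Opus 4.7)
The plan is to combine four ingredients already developed in the paper: the degree-theoretic selection of initial data (Lemma~\ref{lemma-homotopy}), the $L^2$ asymptotics on the parabolic scale (Proposition~\ref{prop-q-beh-bound} and Lemma~\ref{lemma-asymp-2}), the sub/supersolutions of Proposition~\ref{prop-sub-sup}, and an Angenent-torus / inscribed-balls comparison performed in the ambient MCF coordinates.

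\textbf{Step 1 (Initial data).} Fix $\bar\bom_c=(\bar\Omega_0,\bar\Omega_2)\in\mathbb S^1$ with $\bar\Omega_2>0$, postponing the exact choice of $\bar\Omega_0$. For every sufficiently small $\epsilon>0$, Lemma~\ref{lemma-homotopy} produces $\bom=\bom(\epsilon)\in\mathbb S^1$ and an exit time $\tau_1=\tau_1(\epsilon,\bom)$ for which
\[
W^u_{\epsilon,\bom}(\cdot,\tau_1)=M_1\,e^{-\tau_1}\!\left(\bar\Omega_0\,\tfrac{\hm_0}{\|\hm_0\|}+\bar\Omega_2\,\tfrac{\hm_2}{\|\hm_2\|}\right).
\]
Set $\varepsilon:=2\sqrt{2(n-1)}\,M_1\,e^{-\tau_1}$ and $\tau_2:=2\ln(\varepsilon^{-1}\sigma_n)$, with $\sigma_n\gg 1$ to be fixed in Step~4.

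\textbf{Step 2 ($L^2$ description on $[\tau_1,\tau_2]$).} Writing $q=u^2-2(n-1)$, the continuity / intermediate-value argument on the interval $\mathcal I$ of admissible values of $\bar\Omega_0$ (Lemma~\ref{lemma-omega0}) combined with the mode-by-mode ODEs derived in Lemma~\ref{lemma-L2-V} refines $\bar\Omega_0$ so that Proposition~\ref{prop-q-beh-bound} yields
\[
q(y,\tau)=\varepsilon\,\hm_2(y)-\bar K_0\,e^{-\tau}\hm_4(y)+o(\varepsilon)\quad\text{in }\hilb,\qquad\tau\in[\tau_1,\tau_2].
\]
By Lemma~\ref{lemma-asymp-2} this upgrades to pointwise $C^1$ bounds on every fixed interval $|y|\le\ell$.

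\textbf{Step 3 (Barriers in the outer region $|y|\ge\ell_1$).} On $[\tau_0,\tau_1]$ I control $q$ in the outer region by the peanut time-translate technique of Lemma~\ref{lem:alpha}; combined with Claim~\ref{claim-peanut-toti} this gives at time $\tau_1$
\[
\cQ^-_{\varepsilon,K_0+\eta}(y,\tau_1)\le q(y,\tau_1)\le \cQ^+_{\varepsilon,K_0-\eta}(y,\tau_1),\qquad |y|\ge\ell_1.
\]
I then invoke Proposition~\ref{prop-sub-sup} and the maximum principle with boundary on the cylinder $\{|y|\ge\ell_1\}\times[\tau_1,\tau_2]$: the spatial boundary values at $|y|=\ell_1$ are furnished by the pointwise asymptotics of Step~2 (choose $\ell=2\ell_1$), and the conclusion is that the same two-sided barrier inequality persists for every $\tau\in[\tau_1,\tau_2]$.

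\textbf{Step 4 (Shape at $\tau_2$ and final avoidance argument).} At $\tau_2$ we have $\varepsilon e^{\tau_2/2}=\sigma_n$. Remark~\ref{rem-height}, together with the barriers from Step~3, forces $u_{\max}(\cdot,\tau_2)\ge\tfrac12\sigma_n/\sqrt{K_0}-O(1)$, while Step~2 at $y=0$ gives $u(0,\tau_2)^2=2(n-1)-2\varepsilon+o(\varepsilon)$. Un-rescale by setting $t_2:=T-e^{-\tau_2}$. The rescaled picture translates to the fact that $\bar M_{\theta_c}(t_2)$ has neck radius close to $\sqrt{2(n-1)(T-t_2)}$ and two bulges of height $\asymp\sigma_n\sqrt{T-t_2}/\sqrt{K_0}$, so one can place an outer Angenent torus $\mathbb T_a$ threading the neck (disjoint from $\bar M_{\theta_c}(t_2)$) and two inscribed Euclidean balls $B_R$ of radius $R\asymp\sigma_n\sqrt{T-t_2}/\sqrt{K_0}$, one on each side of the neck. $\mathbb T_a$ becomes extinct in time $T_a(n)(T-t_2)$, forcing a singularity of $\bar M_{\theta_c}$ by that time; choosing $\sigma_n$ so large that $R^2/(2n)>T_a(n)(T-t_2)$, the inscribed balls ensure that neither bulge vanishes before the singularity. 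The pinch therefore occurs at a single point on the axis separating two nonvanishing components, and by Steps~2--3 it is modelled on $\mathbb S^{n-1}\times\R$; any pointed blow-up near the pinch converges smoothly to the round cylinder, so the singularity is a nondegenerate neckpinch in the sense of Definition~\ref{def-neckpinch-general}. Letting $\epsilon\downarrow 0$ produces the desired family of arbitrarily small perturbations $\bar M_{\theta_c}$.

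\textbf{Main obstacle.} The technically hardest part is the bootstrap underlying Step~3: the $L^2$ arguments that supply boundary data at $|y|=\ell_1$ require the a priori $L^\infty$ bounds \eqref{eq-Linfty-qq} on $q$ and its derivatives, yet those $L^\infty$ bounds can only be extracted from the very barriers one is trying to validate. Resolving this circularity---opening with coarse bounds, running the $L^2$ analysis to produce the outer barriers, and then closing the loop by using barrier-controlled tip behavior to sharpen the $L^\infty$ bounds---is the content of Proposition~\ref{prop-barriers} referenced in~\S\ref{sec-case-cyl}, and is the most delicate ingredient of the proof.
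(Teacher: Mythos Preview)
Your Steps~1--4 track the paper's argument closely, and you have correctly identified the bootstrap behind Proposition~\ref{prop-barriers} as the technical heart of the matter. The genuine gap is in the final paragraph of Step~4. The torus-plus-balls comparison shows only that a singularity forms while two nonvanishing components survive; it does \emph{not} by itself show that the tangent flow is cylindrical, let alone that every pointed blow-up limit is the round cylinder (which is what Definition~\ref{def-neckpinch-general} requires). Your sentence ``by Steps~2--3 it is modelled on $\mathbb S^{n-1}\times\R$'' is unjustified: Steps~2--3 give asymptotic control only on $[\tau_1,\tau_2]$, and $\tau_2$ is strictly before the singular time, so those steps say nothing about the tangent flow at the singularity itself. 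Likewise, ``occurs at a single point on the axis'' is asserted without argument.

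The paper fills this gap with ingredients you omit. First, the peanut and its small perturbations are mean convex, so by White~\cite{Wh} and Brakke~\cite{Br} any tangent flow is a unit-multiplicity smooth mean-convex shrinker, hence by Colding--Minicozzi~\cite{CM} either $\mathbb S^n$ or $\mathbb S^{n-1}\times\R$; the surviving-balls argument then rules out $\mathbb S^n$. Second, rotational and reflection symmetry together with the Sturmian theorem on the number of critical points of the profile pin the singularity to $x=0$. Third, a direct adaptation of Theorem~4.1 in~\cite{SS} is invoked to upgrade ``cylindrical tangent flow with surviving components on both sides'' to ``every blow-up limit is the round cylinder and the singularity is Type~I'', i.e.\ nondegeneracy. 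Without these steps your argument establishes only that the singularity is not spherical, which is strictly weaker than the conclusion of Theorem~\ref{thm-cylinder}.
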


To prove Theorem \ref{thm-cylinder} we will work below with the rescaled equation. Let $\tau_1$ and $\tau_2$ be the (rescaled)  times defined  in Sections  \ref{sec-sub-super}  and \ref{sec-right-bc}, where we also defined 
$\varepsilon := 2 \sqrt{2(n-1)} \, M_1 e^{-\tau_1} $. Recall that at time $\tau_1$, \eqref{eq-u-tau1-omega} holds,  and that $\tau_2 := 2 \ln \big ( \frac{\sigma_n}{\varepsilon} \big )$
was defined in Section \ref{sec-sub-super} 
(see   \eqref{eqn-tau2}), that is $\tau_2 = 2 \ln \big ( \frac{\sigma_n}{\varepsilon} \big )$, where $\sigma_n$ can be any fixed constant (as large as we wish).  

\smallskip

The main tool in the proof of Theorem \ref{thm-cylinder} will be the use of   super and sub solutions  $\cQ^+_{\varepsilon^+,K^+}(y,\tau)$ and  $\cQ^-_{\varepsilon^-, K^-}(y,\tau)$, that were  constructed in Section 
\ref{sec-sub-super} and are defined  for  $|y| \geq \ell_1$,  where $\ell_1$ is a large but fixed constant. 
The significance of these barriers  is that they allow us to extend the asymptotics for $q(y,\tau)$ 
 shown in \eqref{eq-asymp-2}    from $|y| = \ell_1$ (a fixed number) all the way to the tip.
As a conclusion we will prove that the $L^\infty$-bound  \eqref{eq-Linfty-qq} that we assumed in Section  \ref{sec-right-bc} holds  with a constant $\Lambda$ that depends only on $K_0$
and the dimension.

We will take $\varepsilon^-, \varepsilon^+$ very  close to $\varepsilon$ and $K^-, K^+$  near $\bar K_0:= 2 \sqrt{2(n-1)} K_0$. 
Consequently, these supersolutions and subsolutions   will  be defined on the same interval $[\tau_1, \tau_2]$ (since $\sigma_n$ in \eqref{eqn-tau2} can be any constant). 
We  remind the reader that  by definition  $\cQ^+_{\varepsilon^+,K^+}(y,\tau):= \varepsilon^+ \, H_2(y) - K^+ \, e^{-\tau} \, H_4(y)$
while $\cQ^-_{\varepsilon^-, K^-}(y,\tau)$ is given by \eqref{eqn-barq},  defined differently in the intermediate $\cI_{\ell_1, \ell_2}$ and tip $\cT_{\ell_2}$ regions. 
In the intermediate region $\cQ^-_{\varepsilon^-, K^-}(y,\tau) := \varepsilon^-  y^2 - K^- \, e^{-\tau} y^4 + q_1(y,\tau)$ where $q_1$ is of  lower order, while the  tip region happens at a much smaller scale around the tip. 
Hence, $\cQ^-_{\varepsilon^-, K^-}(y,\tau) \approx \varepsilon^-  y^2 - K^- \, e^{-\tau} y^4$ all the way up  to a very tiny neighborhood of the tip. We invite the reader to have this in mind, as we are doing the comparisons 
below. 
\smallskip 

In the previous section we assumed that the $L^\infty$-estimate \eqref{eq-Linfty-qq} holds on $|y| \leq 2\rho e^{-\gamma \tau}$, where $\Lambda$ is an auxiliary constant.
As a result, we saw in Lemma \ref{lemma-asymp-2} that asymptotics \eqref{eq-asymp-2} hold.  We will now show that the constant $\Lambda$ can be taken to 
 depend only on the initial data. 

From now on we assume that $\ell_0$  is sufficiently large constant so that the results in previous sections hold. In what follows we will take $\ell_1$ large so that $2\ell_0 \le \ell_1 \le 1000 \ell_0 < \rho e^{\tau_0/4}$.

\begin{proposition}
\label{prop-barriers}
Let $\varepsilon := 2\sqrt{2(n-1)} M_1\, e^{-\tau_1}$, and $\tau_1 \gg \tau_0$ be the exit time as before so that   \eqref{eq-u-tau1-omega} holds. There exists a uniform constant $C_0(n, K_0, M_1) > 0$  so that the $L^\infty$-estimate 
\begin{equation}		\label{eqn-linfty-q} 
|q(y,\tau)| +  |q_y(y,\tau)| +  |q_{yy}(y,\tau)| \le C_0 \,  \varepsilon \, (1 + |y|^4),
\end{equation}
holds   for $|y| \le 2 \rho \, e^{{\tau/4}}$  and $\tau\in [\tau_1,\tau_2]$.  
This verifies that  \eqref{eq-asymp-2} holds. In addition to that
we have 
\begin{equation}		\label{eqn-barriers-tau2}
\cQ^-_{(1-\bar{\eta})\varepsilon, (1+ \bar{\eta})\bar{K}_0}(y,\tau) \le q(y,\tau) \le \cQ^+_{(1+\bar{\eta})\varepsilon,(1-\bar{\eta})\bar{K}_0}(y,\tau)
\end{equation}
for all $|y| \ge \ell_1$, $\tau \in [\tau_1, \tau_2]$,  where $\bar \eta$ is  small.  Here $\bar{K}_0 := 2\sqrt{2(n-1)}\, K_0$. 
\end{proposition}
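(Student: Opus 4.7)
The plan is a bootstrap. Starting from the auxiliary hypothesis \eqref{eq-Linfty-qq} with the a'priori constant $\Lambda$, I combine the $L^2$-theory of Section \ref{sec-right-bc} with the barriers $\cQ^\pm$ of Section \ref{sec-sub-super} to derive the sharp estimates \eqref{eqn-linfty-q} and \eqref{eqn-barriers-tau2} with constants independent of $\Lambda$, which then closes the bootstrap. The core of the argument is the maximum principle applied to the equation \eqref{eq:q} for $q$ on the parabolic cylinder $\{|y|\geq \ell_1\}\times [\tau_1,\tau_2]$, using $\cQ^+_{(1+\bar\eta)\varepsilon,(1-\bar\eta)\bar K_0}$ as a supersolution and $\cQ^-_{(1-\bar\eta)\varepsilon,(1+\bar\eta)\bar K_0}$ as a subsolution (both licensed by Proposition \ref{prop-sub-sup}).

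First I verify the parabolic-boundary conditions. On the lateral boundary $y=\pm \ell_1$, Lemma \ref{lemma-asymp-2} gives
\[
q(\pm \ell_1, \tau) = \varepsilon H_2(\ell_1) - \bar K_0 e^{-\tau} H_4(\ell_1) + o(\varepsilon), \quad \tau\in[\tau_1, \tau_2],
\]
and the gap to $\cQ^\pm_{(1\pm\bar\eta)\varepsilon,(1\mp\bar\eta)\bar K_0}(\pm \ell_1, \tau)$ equals $\bar\eta\varepsilon H_2(\ell_1) + \bar\eta \bar K_0 e^{-\tau} H_4(\ell_1)$, which is $\gtrsim \bar\eta \varepsilon \ell_1^2$ and so dominates the $o(\varepsilon)$ error once $\ell_1\gg 1$ and $\tau_1$ is large. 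For the initial slice $\tau=\tau_1$ I split $|y|\ge\ell_1$ into an intermediate zone $\ell_1\le |y| \le \bar\ell$ (any fixed large constant) in which the same pointwise expansion applies, and a far zone $|y|\ge \bar\ell$ extending into the tip. In the far zone I use the translated-peanut sandwich \eqref{eqn-comp-peanuts}, namely $\bu(y,\tau_1-\alpha)\le u(y,\tau_1)\le \bu(y,\tau_1+\alpha)$ for a small $\alpha>0$ independent of $\varepsilon$, combined with Claim \ref{claim-peanut-toti}, which already sandwiches $\bq(\cdot,\tau_1)$ between $\cQ^\pm_{\varepsilon, \bar K_0\mp \eta}$ outside $|y|\ge \ell_2$; passing from $\bu^2$ to $u^2$ via the mean value theorem contributes a $\cO(\alpha y^4 e^{-\tau_1})$ perturbation that is absorbed into the $\bar\eta$-slack by choosing $\bar\eta$ strictly larger than $\eta$ and $\alpha$.

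Granting the boundary comparison, the maximum principle yields \eqref{eqn-barriers-tau2}. Near the tip the comparison must be carried out in the $\xi$-coordinates of subsection \ref{ss-stip}, in which \eqref{eq:q} becomes non-degenerate and the gluing in \eqref{eqn-barq} (Lemma \ref{lem-match}) was designed precisely so that $\cQ^-$ retains the subsolution property through the matching point thanks to the inward derivative inequality \eqref{eqn-mder}. From \eqref{eqn-barriers-tau2} and the explicit form of $\cQ^\pm$, the pointwise bound $|q(y,\tau)|\le C(n,K_0,M_1)\,\varepsilon\,(1+|y|^4)$ follows on $|y|\ge \ell_1$, while on $|y|\le \ell_1$ it is immediate from Lemma \ref{lemma-asymp-2}. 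The derivative bounds on $q_y$ and $q_{yy}$ come from standard parabolic interior estimates applied to \eqref{eq:q}, exactly as in the upgrade performed in Proposition \ref{prop-Linfty-w}. This yields \eqref{eqn-linfty-q} with a constant $C_0=C_0(n,K_0,M_1)$ independent of $\Lambda$, and a continuation argument modeled on the proof of Lemma \ref{cor-prop-funnel} then removes the auxiliary assumption \eqref{eq-Linfty-qq}.

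The main obstacle I anticipate is the initial comparison at $\tau=\tau_1$ through the tip region. The peanut sandwich \eqref{eqn-comp-peanuts} is only pointwise in $u$, so passing to $q=u^2-2(n-1)$ and then matching against the Bowl-soliton scaling used to build $\cQ^-$ near the tip requires care: one must exploit the continuous dependence of the tip construction of Section \ref{sec-sub-super} on $\varepsilon$ (so that $\cQ^\pm_{\varepsilon,K}$ degenerate smoothly to the $\varepsilon=0$ case already covered by Claim \ref{claim-peanut-toti}), and simultaneously ensure that the $\bar\eta$-slack is large enough to absorb both the $\varepsilon H_2$ contribution produced at $\tau_1$ by the unstable modes and the small time shift $\alpha$ used in the peanut comparison. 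Choosing the hierarchy $\alpha \ll \eta \ll \bar\eta \ll 1$ and $\ell_1 \ll \ell_2$ appropriately, which can be done with constants depending only on $n,K_0,M_1$, is the delicate bookkeeping on which the whole scheme rests.
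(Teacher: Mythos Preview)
Your proposal is correct and follows essentially the same route as the paper: the paper packages the initial comparison at $\tau=\tau_1$ into Lemma \ref{lemma-initial-comp} (via the peanut time-translates and Claim \ref{claim-peanut-toti}, exactly as you outline), the lateral boundary comparison at $|y|=\ell_1$ into the proof of Lemma \ref{lemma-Linfty-q} (via Lemma \ref{lemma-asymp-2}), and then runs the same continuation argument you describe to remove the auxiliary constant $\Lambda$. The only cosmetic difference is that the paper parametrizes the $K$-slack additively ($\bar K_0\pm\eta$) rather than multiplicatively ($(1\pm\bar\eta)\bar K_0$), and it does not split the initial slice into an intermediate and far zone as you do---it simply uses \eqref{eqn-q34}--\eqref{eqn-q35} uniformly on $|y|\ge\ell_1$---but these are equivalent.
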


For the proof of this proposition we first need the following few lemmas.

\smallskip

\begin{lemma}
\label{lemma-initial-comp}
There exists $\eta > 0$ small so that
\begin{equation}		\label{eqn-ic}
\cQ^-_{\varepsilon (1-\eta), \bar{K}_0+\eta}(y,\tau_1)  \le q(y,\tau_1) \le  \cQ^+_{\varepsilon(1+\eta), \bar{K}_0-\eta}(y,\tau_1)
\end{equation}
on $|y| \geq \ell_1$.  We can take  $\eta$ so that   $\eta \ge C(M_1,K_0)\,  \ell_0^{-2}$. 
\end{lemma}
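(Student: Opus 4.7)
The plan is to split the region $|y|\geq \ell_1$ into an intermediate part, where part~(ii) of Lemma~\ref{cor-prop-funnel} gives sharp asymptotics for $u(y,\tau_1)$, and a far part that contains the tip, where the peanut time-shift barrier~\eqref{eqn-comp-peanuts} combined with Claim~\ref{claim-peanut-toti} does the work. Recall $\varepsilon = 2\sqrt{2(n-1)}M_1 e^{-\tau_1}$ and $\bar K_0 = 2\sqrt{2(n-1)}K_0$, and that Lemma~\ref{cor-prop-funnel} permits us to choose the error constant (denote it $\eta'$) in part~(ii) as small as we wish by enlarging $\ell_0$.

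First I would handle the intermediate range $\ell_1 \leq |y|\leq 1000\ell_0$. Part~(ii) of Lemma~\ref{cor-prop-funnel} writes $\sqrt{2(n-1)}-u(y,\tau_1) = K_0 y^4 e^{-\tau_1}(1+O(\eta'))$, so multiplying by $u+\sqrt{2(n-1)} = 2\sqrt{2(n-1)} + O(K_0 y^4 e^{-\tau_1})$ and using $|y|\leq 1000\ell_0 \ll e^{\tau_1/4}$ gives
\[
q(y,\tau_1) = -\bar K_0 \, e^{-\tau_1}\, y^4 \bigl(1 + O(\eta') + O(e^{-\tau_1/4})\bigr).
\]
On this range the correction $q_1 = \vartheta Q$ in $\cQ^-$ is lower order by Claim~\ref{claim-QQQ}, so both $\cQ^\pm_{(1\pm\eta)\varepsilon,\bar K_0\mp\eta}(y,\tau_1)$ are, to leading order, $(1\pm\eta)\varepsilon y^2 - (\bar K_0\mp\eta)e^{-\tau_1}y^4$. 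Comparing, each of the two desired inequalities reduces to
\[
(\eta - C\eta')\, e^{-\tau_1}\, y^4 \;\geq\; (1\pm\eta)\,\varepsilon\, y^2,
\]
i.e.\ $|y|^2 \gtrsim C(M_1)/(\eta-C\eta')$. Since $|y|\geq \ell_1 \geq 2\ell_0$, this holds provided $\eta \geq C(M_1,K_0)\ell_0^{-2}$ and $\eta' \ll \eta$, which is permitted by Remark~\ref{rem-ell}.

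Next I would treat the far region $|y|\geq 1000\ell_0$, which contains the tip of both $\bar M$ and of the perturbed surface. The peanut-shift barrier~\eqref{eqn-comp-peanuts} yields $\bar u(y,\tau_1-\alpha) \leq u(y,\tau_1) \leq \bar u(y,\tau_1+\alpha)$, and inspection of the proofs of Lemma~\ref{cor-lll} and Proposition~\ref{prop-Linfty-w} (cf.\ Remark~\ref{rem-ell}) shows that the shift $\alpha$ can be made as small as we wish by choosing $\epsilon$ (the initial perturbation size) small; in particular we can arrange $\bar K_0\alpha \leq \eta/3$. Squaring gives $\bar q(y,\tau_1-\alpha)\leq q(y,\tau_1)\leq \bar q(y,\tau_1+\alpha)$. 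A time shift of $\pm\alpha$ in the peanut asymptotics $\bar q \sim -\bar K_0 e^{-\tau}y^4$ is equivalent, at order $\alpha$, to replacing $\bar K_0$ by $\bar K_0 e^{\mp\alpha} = \bar K_0(1\mp\alpha+O(\alpha^2))$; all other peanut terms (in particular the tip-region behavior) are modified by the same multiplicative factor, and these modifications are absorbed into a shift of $\bar K_0$ of size $\leq \eta/2$. Applying Claim~\ref{claim-peanut-toti} to $\bar q$ with $K = \bar K_0(1\pm\alpha)$ and with $\eta/2$ in place of $\eta$, together with the constraint $\ell_2 > \sqrt{2M_1/\eta}$ (satisfied since $\ell_2\geq 1000\ell_0$ and $\eta\geq C(M_1,K_0)\ell_0^{-2}$), sandwiches $\bar q(y,\tau_1\pm\alpha)$ between $\cQ^-_{\varepsilon(1-\eta),\bar K_0+\eta}(y,\tau_1)$ and $\cQ^+_{\varepsilon(1+\eta),\bar K_0-\eta}(y,\tau_1)$, hence so is $q(y,\tau_1)$.

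The main obstacle I anticipate is the bookkeeping at the interface $|y|\approx 1000\ell_0$ between the two regimes, and especially in the tip region where $\cQ^-$ is not simply the polynomial $q_0$ but is defined through the Bowl-soliton matching of Section~\ref{sec-sub2}: one must check that the $O(\tau\alpha^{-2})$ correction built into $\cQ^-$ is comfortably larger than the $O(\alpha)$ error incurred by the peanut time-shift, so that the inequality $\cQ^-_{\varepsilon(1-\eta),\bar K_0+\eta}(y,\tau_1)\leq \bar q(y,\tau_1-\alpha)$ really follows from Claim~\ref{claim-peanut-toti}. The quantitative bound $\eta \geq C(M_1,K_0)\ell_0^{-2}$ comes from the intermediate-region estimate, which is the binding constraint; the far region merely requires $\eta$ to dominate $\alpha \bar K_0$, and $\alpha$ can be made arbitrarily small independently of $\ell_0$.
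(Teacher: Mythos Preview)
Your approach is essentially the paper's, but with an unnecessary split and one misstatement.  The paper does not separate the region $|y|\geq \ell_1$ into ``intermediate'' and ``far'' parts at all: it observes that the peanut time-shift barrier $\bar u(y,\tau_1-\alpha)\leq u(y,\tau_1)\leq \bar u(y,\tau_1+\alpha)$ already holds on the \emph{entire} range $|y|\geq \ell_1$ (this is~\eqref{eqn-comp-peanuts}, which goes all the way to the tip), notes that a time shift by $\pm\alpha$ in $\bar u$ is the same as replacing $K_0$ by $(1\mp\tilde\eta)K_0$ in the peanut, and then invokes Claim~\ref{claim-peanut-toti} once to trap the resulting $\bar q_{(1\pm\tilde\eta)\bar K_0}$ between $\cQ^\pm$.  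Your direct computation in $\ell_1\leq |y|\leq 1000\ell_0$ via part~(ii) of Lemma~\ref{cor-prop-funnel} is correct but redundant, since the time-shift barrier already covers that range.

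The one genuine inaccuracy is your assertion that ``$\alpha$ can be made arbitrarily small independently of $\ell_0$.''  It cannot: tracing the proof of Lemma~\ref{cor-lll}, the shift $\alpha$ is dictated by the quality of the asymptotic~\eqref{eqn-uuu232} at $|y|=\ell_0/10$, which in turn comes from Lemma~\ref{cor-expansion} and carries an error $|w|\lesssim C(M_1)(\ell_0/10)^2 e^{-\tau}$; dividing by $K_0(\ell_0/10)^4 e^{-\tau}$ gives $\alpha\sim\tilde\eta \gtrsim C(M_1,K_0)\ell_0^{-2}$ regardless of how small $\epsilon$ is.  So the far-region step \emph{also} forces $\eta\geq C(M_1,K_0)\ell_0^{-2}$, for exactly the same reason as your intermediate-region step.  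This does not break your argument---the final constraint on $\eta$ is the one you state---but the sentence claiming that the intermediate region is ``the binding constraint'' while the far region imposes nothing is wrong.
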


\begin{proof} 
Recall that $\ell_1$ satisfies  
$2\ell_0 \le \ell_1 \le 1000 \ell_0 < \rho e^{\tau_0/4}$, where $\ell_0$ is as in Proposition \ref{prop-inout}. 
The proof of Lemma \ref{cor-lll} implies that for any $\tilde \eta >0$ small, if we choose $\alpha$ such that $e^{-\alpha} = 1- \tfrac{\tilde \eta}4$, we have 
\[
\bu(y, \tau_1-\alpha) \leq u(y, \tau_1) \leq  \bu(y, \tau_1 + \alpha), \qquad \mbox{on} \,\, |y| \geq \ell_1.
\]
provided that  $C(M_1, K_0)  \ell^{-2}_0 < \tilde \eta$ and  $\tau_0 \gg 1$. 
The above combined with   the peanut asymptotics  and the definition of $\alpha$ 
yields  (similarly as in  Lemma \ref{cor-lll}) that 
\begin{equation}		\label{eqn-q34}(1- \tfrac{\tilde \eta}2) \,  K_0 \, y^4 \, e^{-\tau} \leq     \sqrt{2(n-1)} -  u(y,\tau) \leq (1+\tfrac{\tilde \eta}2) \,   K_0 \, y^4 \, e^{-\tau} 
\end{equation}
holds for $\ell_1 \leq |y|  \leq \rho \, e^{\tau_1/4}$, provided $\tilde \eta \geq  C(M_1,K_0)\,  \ell_0^{-2}$. 

\smallskip
Recall our notation    $q:= u^2 - 2(n-1)$ and set $\bar K_0 := 2\sqrt{2(n-1)} \, K_0$, $\bar K_0^{\pm} := (1\pm \bar  \eta) \bar K_0$ and
$q_{\bar K_0^+}:= (\bu_{(1 + \tilde \eta) K_0})^2- 2(n-1)$, $q_{\bar K_0^-}:= (\bu_{(1-\tilde \eta) K_0})^2- 2(n-1)$. 
Then, \eqref{eqn-q34} and the peanut  asymptotics  imply that 
\begin{equation}		\label{eqn-q35}  q_{\bar K_0^+}(y,\tau) \leq q(y,\tau) \leq q_{\bar K_0^-} (y,\tau) 
\end{equation}
holds for $\ell_1 \leq |y|  \leq \rho \, e^{\tau_1/4}$,  provided that  $C(M_1, K_0)  \ell^{-2}_0 < \tilde \eta$, $\ell_0 \gg 1$  and  $\tau_0 \gg 1$. 

\smallskip 

On the other hand,  by Claim \ref{claim-peanut-toti},  for 
any $\eta >0$ small, if $\cQ^-_{\varepsilon, \bar K^+_0 +  \eta }, \cQ^+_{\varepsilon,\bar K^-_0 - \eta}$ denote  the sub and super solutions constructed in Proposition \ref{prop-sub-sup},  then we have 
\begin{equation}		\label{eq-qQ}
\bar{q}_{\bar K^+_0}(y,\tau_1) \ge \cQ^-_{\varepsilon, \bar K^+_0  +\eta}(y,\tau_1) \quad \mbox{and} \quad \bar{q}_{\bar K^-_0}(y,\tau_1) \le \cQ^+_{\varepsilon,\bar K^-_0 - \eta}(y,\tau_1)
\end{equation}
for $|y| \ge \ell_1$,  provided $\ell_1 > \sqrt{4M_1 \eta^{-1}}$. 

Finally, combining  \eqref{eqn-q35} and \eqref{eq-qQ} while  taking $\tilde \eta <  c(n, K_0) \eta$, for some constant $c(n, K_0)$ depending only on $K_0$
and  $n$,  we conclude the desired bound 
\begin{equation}
 \cQ^-_{\varepsilon,\bar K_0+\eta}(y,\tau_1) \le q(y,\tau_1) \le \cQ^+_{\varepsilon,\bar K_0-\eta}(y,\tau_1)
\end{equation}
holding for all $|y| \ge \ell_1$,  where $\eta \ge C(M_1,K_0) \, \ell_0^{-2}$ can be taken to be a fixed constant. 
\end{proof}

\smallskip

\begin{lemma}
\label{lemma-Linfty-q}
Assume that \eqref{eq-Linfty-qq} holds on $|y| \leq 2\rho \, e^{\tau/4}$, $\tau \in [\tau_1, \tilde \tau_1]$, for some $\tilde \tau_1 \in (\tau_1, \tau_2)$. 
Then, there exists a uniform constant $C_1$   that depends only on $n$,  $K_0$ and $M_1$ such that 
\begin{equation}		\label{eqn-linfty-q1} 
|q(y,\tau)| +  |q_y(y,\tau)| +  |q_{yy}(y,\tau)| \le C_1 \, \varepsilon \, (1 + |y|^4),
\end{equation}
 holds for all $|y| \le 4 \rho \,  e^{{\tau/4}}$ and all $\tau\in [\tau_1,\tilde \tau_1]$.
\end{lemma}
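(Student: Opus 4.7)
The plan is to bootstrap the assumed a priori bound \eqref{eq-Linfty-qq} (with the auxiliary constant $\Lambda$) into the sharp bound \eqref{eqn-linfty-q1} (with constant $C_1$ depending only on $n, K_0, M_1$) by sandwiching $q$ between the super- and subsolutions $\cQ^\pm_{(1\pm\eta)\varepsilon,(1\mp\eta)\bar K_0}$ of Proposition \ref{prop-sub-sup}. The hypothesis \eqref{eq-Linfty-qq} is exactly what is needed to invoke Proposition \ref{prop-q-beh-bound} together with Lemma \ref{lemma-asymp-2}, so the compact-set pointwise expansion
\[
q(y,\tau) = \varepsilon\, H_2(y) - \bar K_0\, e^{-\tau}\, H_4(y) + o(\varepsilon),\qquad |y|\leq 2\ell_1,\ \tau\in[\tau_1,\tilde\tau_1],
\]
is already available, and its constant does not involve $\Lambda$.

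First I would fix a small $\eta>0$ consistent with Lemma \ref{lemma-initial-comp} (in particular $\eta\geq C(M_1,K_0)\ell_0^{-2}$) so that at $\tau=\tau_1$ we have $\cQ^-_{(1-\eta)\varepsilon,(1+\eta)\bar K_0}(y,\tau_1)\le q(y,\tau_1)\le \cQ^+_{(1+\eta)\varepsilon,(1-\eta)\bar K_0}(y,\tau_1)$ for all $|y|\ge \ell_1$. Next I would verify the same inequalities on the lateral boundary $|y|=\ell_1$ for every $\tau\in[\tau_1,\tilde\tau_1]$: by Lemma \ref{lemma-asymp-2} the error at this fixed value of $y$ is $o_\tau(\varepsilon)$, while the gap between $\cQ^\pm$ and the reference expansion $\varepsilon H_2(y)-\bar K_0 e^{-\tau}H_4(y)$ at $y=\pm\ell_1$ is of order $\eta\varepsilon\ell_1^2$, which dominates the former provided $\tau_0$ is chosen large enough.

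Having the ordering on the parabolic boundary of the region $\{(y,\tau):\ell_1\leq |y|\leq 4\rho e^{\tau/4},\ \tau_1\leq\tau\leq\tilde\tau_1\}$, I would apply the maximum principle with boundary. Since $\cQ^+$ is a supersolution and $\cQ^-$ is a subsolution of \eqref{eq:q} throughout this region (Proposition \ref{prop-sub-sup} guarantees this in both the intermediate and tip regions), and since $4\rho e^{\tau/4}$ stays strictly below the tip $Y_0(\tau)$ of $\cQ^-$ whenever $\rho$ is sufficiently small, the ordering persists on the whole space-time domain. Combining the explicit form $\cQ^+_{\varepsilon,K}=\varepsilon H_2 - Ke^{-\tau}H_4$, the bound \eqref{eqn-bound-QQQQ} for $\cQ^-$ in the intermediate region, and the tip-region description of Lemma \ref{lem-sub-tip}, yields $|q(y,\tau)|\leq C(n,K_0,M_1)\,\varepsilon\,(1+|y|^4)$ on $\ell_1\leq |y|\leq 4\rho e^{\tau/4}$. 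Combining this with Lemma \ref{lemma-asymp-2} on $|y|\leq\ell_1$ gives the desired $L^\infty$ bound on $q$. Finally, the bounds on $q_y$ and $q_{yy}$ follow from standard interior parabolic regularity for the quasilinear equation \eqref{eq:q} (whose nonlinear correction is controlled by $q_y^2$ and $q_y^2 q_{yy}$), arguing exactly as in Lemma~6.2 of \cite{AV}.

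The main obstacle is the lateral boundary matching: one must ensure that the $L^2$-derived pointwise expansion at $y=\pm\ell_1$ from Lemma \ref{lemma-asymp-2} really lies strictly between $\cQ^\pm$ for every $\tau\in[\tau_1,\tilde\tau_1]$. This requires careful bookkeeping between the parameter $\eta$ and the $o(\varepsilon)$ remainder in Lemma \ref{lemma-asymp-2}, and it forces the chain of choices $\ell_0=\ell_0(M_1,K_0)\ll \ell_1\leq 1000\ell_0\ll \rho e^{\tau_0/4}$ that underlies the whole argument; once this is in place the resulting constant $C_1$ is manifestly independent of $\Lambda$, which is precisely what allows the bootstrap to close in Proposition \ref{prop-barriers}.
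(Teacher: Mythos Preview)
Your proposal follows the same strategy as the paper's proof: invoke Lemma~\ref{lemma-asymp-2} on $|y|\le\ell_1$, Lemma~\ref{lemma-initial-comp} for the ordering at $\tau=\tau_1$, check the lateral boundary at $|y|=\ell_1$, run the comparison principle against $\cQ^\pm$, and finish with parabolic regularity as in Lemma~6.2 of \cite{AV}.

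There is one imprecision worth fixing. You apply the maximum principle on the truncated slab $\{\ell_1\le|y|\le 4\rho e^{\tau/4}\}$ and assert you have the ordering on its entire parabolic boundary, but you never establish anything at the outer edge $|y|=4\rho e^{\tau/4}$; the remark that this stays below the tip $Y_0(\tau)$ of $\cQ^-$ does not supply a boundary condition there. The correct setup, which the paper uses, is to run the comparison on the full region $|y|\ge\ell_1$: since $u$ and the barriers $U^\pm_{\varepsilon,K}=\sqrt{2(n-1)+\cQ^\pm}$ all define closed rotationally symmetric hypersurfaces, the only lateral boundary is $|y|=\ell_1$ and the avoidance principle for MCF applies directly. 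Only after the ordering $\cQ^-\le q\le\cQ^+$ is secured on all of $|y|\ge\ell_1$ does one restrict to $|y|\le 6\rho e^{\tau/4}$ (the paper takes $6\rho$ rather than $4\rho$ to leave room for interior derivative estimates) and read off the bound $|q|\le C_1\varepsilon(1+|y|^4)$ from \eqref{eqn-bound-QQQQ}.
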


\begin{proof} We will first prove the $L^\infty$-bound $|q(y,\tau)|  \le C_1 \,  \varepsilon \, (1 + |y|^4)$, on 
$|y| \le 4 \rho \,  e^{{\tau/4}}$, $\tau \in [\tau_1, \tilde \tau_1]$. By Lemma \ref{lemma-asymp-2} the following asymptotics  hold
\begin{equation}		\label{eq-q-100}
q(y,\tau) = \varepsilon \, H_2(y) - \bar{K}_0 e^{-\tau}\, \hm_4(y) + o(\varepsilon),
\end{equation}
in the $C^0$-sense, for $|y| \le 4\ell_1$ and all $\tau\in [\tau_1,\tilde{\tau}_1]$. By employing standard derivative estimates for parabolic equations, one can show that \eqref{eq-q-100} implies 
the bound \eqref{eqn-linfty-q1}  on $|y| \leq 2\ell_1$,  for some $C_1$ depending only on $K_0$ and $n$. 

Let us then concentrate next on $|y| \geq \ell_1$. Fix $\eta= C(M_1,K_0) \ell_0^{-2}$ so that \eqref{eqn-ic} holds, according to Lemma \ref{lemma-initial-comp}. 
 Asymptotics \eqref{eq-q-100}, using that we can make  $|o(\varepsilon)| \le  \frac{\eta}{2}$, by ensuring $\tau_1$ is large enough, yield
\[
\varepsilon (1- \frac{\eta}{2}) \hm_2(\ell_1) - (\bar{K}_0 + \frac{\eta}{2})e^{-\tau}\hm_4(\ell_1) \le q(\ell_1,\tau) \le \varepsilon (1+ \frac{\eta}{2}) \hm_2(\ell_1) - (\bar{K}_0 - \frac{\eta}{2})e^{-\tau}\hm_4(\ell_1)  
\]
and thus, using the definition of  $\cQ^-_{\varepsilon(1-\frac \eta 2),\bar{K}_0+\frac \eta 2}, \cQ^+_{\varepsilon (1+\frac \eta 2),\bar{K}_0-\frac \eta 2}$,   we get 
\begin{equation}		\label{eq-bd-sat}
\cQ^-_{\varepsilon(1-\eta),\bar{K}_0+\eta}(\ell_1,\tau) \le q(\ell_1,\tau) \le \cQ^+_{\varepsilon (1+\eta),\bar{K}_0-\eta}(\ell_1,\tau), \qquad \tau \in [\tau_1,\tilde{\tau}_1].
\end{equation}
Having \eqref{eqn-ic}  and  \eqref{eq-bd-sat},  we can apply the comparison principle with  boundary $|y|=\ell_1$  and conclude that 
\begin{equation}		\label{eq-tau1-sat}
\cQ^-_{\varepsilon (1-\eta),\bar{K}_0+\eta}(y,\tau) \le q(y,\tau) \le \cQ^+_{\varepsilon (1+\eta),\bar{K}_0-\eta}(y,\tau),
\end{equation}
for $|y| \ge \ell_1$, and $\tau\in [\tau_1,\tilde{\tau}_1]$. 

On the other hand, by the construction of our barriers we have 
$\cQ^\pm_{\varepsilon, \bar K_0}(y, \tau)  = \big ( \varepsilon \,  H_2(y)  - \bar K_0 \, e^{-\tau} H_4 (y)  \big ) \, (1+o(1))$ on $\ell_1 \leq |y| \leq 6 \rho \, e^{\gamma \tau}$, provided that the constant $\rho >0$ is chosen 
sufficiently small  so that the region $\ell_1 \leq |y| \leq 6 \rho \, e^{\gamma \tau}$ is away from the tip (for example  we can take $\rho >0$ so that $8 \rho e^{\gamma \tau} \leq Y_0(\tau)$). 
Hence, \eqref{eq-tau1-sat} implies  the $L^\infty$ bound 
\[
|q(y, \tau) | \leq C_1    \, \varepsilon (1+ |y|^4 ), \qquad \mbox{on} \,\,  \ell_1 \leq |y| \leq 6\rho \, e^{\tau/4}
\]
where $C_1$ is a uniform constant depending only on $n, \bar K_0$. 

To pass from the $L^\infty$ bound on $q$ on $\ell_1 \leq |y| \leq 6\rho \, e^{\tau/4}$ to the $L^\infty$ bounds on $q_y$ and $ q_{yy}$ on $\ell_1 \leq |y| \leq  4\rho \, e^{\tau/4}$,
one uses standard derivative estimates  following  the proof of Lemma 6.2 in \cite{AV}. 
\end{proof}

We can now finish the  proof of Proposition \ref{prop-barriers}.

\begin{proof}[Proof of Proposition  \ref{prop-barriers}]

To finish the proof of the Proposition we need to remove the  a'priori assumption  that bound \eqref{eq-Linfty-qq} holds. 
Let  $\ell_1 \geq \ell_0$ be sufficiently large but fixed so that Lemma \ref{lemma-Linfty-q} and all our previous results hold. By part (i) of Lemma \ref{cor-prop-funnel}
(applied to  $4\rho$ instead of $\rho$), since $\varepsilon = 2\sqrt{2(n-1)} M_1 e^{-\tau_1}$, we have that 
\[
|q(y,\tau_1)| + |q_y(y,\tau_1)| + |q_{yy}(y,\tau_1)| \le \bar{C}_0\,  \varepsilon \, (1 + |y|^4),
\]
for $|y| \le 4 \rho e^{{\tau/4}}$, where $\bar{C}_0$ depends on $n$ and   $K_0$. 
With no loss of generality we may assume this constant $\bar C_0 = C_1$, where $C_1$ is 
the constant in   \eqref{eqn-linfty-q1}.  Let $\tilde{\tau}_1 \leq  \tau_2$ be the maximal time so that we have
\[
|q(y,\tau)| +  |q_y(y,\tau)| +  |q_{yy}(y,\tau)| \le 2C_1 \, \varepsilon \, (1 + |y|^4),
\]
for $|y| \le 2  \rho \, e^{\tau/4}$, $\tau \in [\tau_0, \tilde \tau_1]$. If $\tilde \tau_1=\tau_2$ we conclude that \eqref{eqn-linfty-q}  holds  with $C_0 = 2C_1$. 
Otherwise,  using the above bound in place of  \eqref{eq-Linfty-qq},
by Lemma \ref{lemma-Linfty-q} we get that 
\[
|q(y, \tilde \tau_1)| +  |q_y(y, \tilde \tau_1)| +  |q_{yy}(y,\tilde \tau_1)| \le  C_1 \,  (\varepsilon + e^{- \tilde \tau_1}) \, (1 + |y|^4)
\]
on $ |y| \leq 4 \rho \, e^{\tilde \tau_1/4}$ and therefore the estimate can be extended beyond  $\tilde \tau_1$, contradicting its maximality.  
We then conclude that \eqref{eqn-linfty-q} holds on $|y| \leq 2 \rho e^{\tau/4}$, $\tau \in [\tau_1, \tau_2]$, with $C_0 = 2 C_1$. 
 \end{proof}

We can now finish the proof of Theorem \ref{thm-main}.

\begin{proof}[Proof of Theorem \ref{thm-main}] Proposition \ref{prop-sphere} shows that in every small neighborhood of peanut solution we can find a perturbation whose mean curvature flow develops a {\em spherical singularity. }

\smallskip 
Previous results ensure that at the same time in any small neighborhood of the peanut solution we can find a perturbation so that Proposition \ref{prop-barriers} holds. Our goal is to show that in this case the flow develops a \emph{nondegenerate neckpinch singularity.} By Proposition \ref{prop-barriers} and Lemma \ref{lemma-asymp-2}, for all $|y| \le \ell_1$ we have
\[
q(y,\tau_2) = \varepsilon \, H_2(y) - Ke^{-\tau_2}\, H_4(y) + o(\varepsilon),
\]
where $\varepsilon = 2\sqrt{2(n-1)} M_1 \, e^{-\tau_1}$. This implies that {\em at time $\tau_2$}  in a  neighborhood of $ y = 0$ we can put a Shrinking Doughnut whose inner radius is $\sqrt{2(n-1)}$  around our solution. Note that this is possible since $u(0,\tau_2) = \sqrt{2(n-1)} - 2\varepsilon + o(\varepsilon)$, and hence in a neighborhood of $y = 0$ we have that $u(y,\tau_2) < \sqrt{2(n-1)} - \varepsilon$. The doughnut whose inner radius is $\sqrt{2(n-1)}$ becomes singular at time $T_a  = T_a(n) < \infty$. 

\smallskip 
On the other hand, our barrier estimate  \eqref{eqn-barriers-tau2}, shown in Proposition \ref{prop-barriers},  implies that at time $\tau_2$   we have enough room so that we can put spheres 
$S^\pm_R$  of a large radius $R$  inside of our solution, on both sides, for $y > 1$ and $y < -1$. Here we use the fact that  $\cQ^-_{(1-\bar{\eta})\varepsilon, (1+ \bar{\eta})\bar{K}_0}  \approx \varepsilon \, H_2 -  \bar K_0 \, e^{-\tau_2} |y|^4$.  We choose   $R$ sufficiently large   so that the extinction  time $T_R$  of $S^\pm_R$ is much bigger than $T_a$. Since $T_a$ depends only on $n$, we can choose $R$ that also depends only on $n$. The comparison principle then guarantees that  at the singular time of the flow,  we have a  local singularity that  disconnects the manifold into two pieces none of which disappears at the singular time.   This ensures the singularity model cannot  be a round sphere $\mathbb{S}^n$.

Note that the height of our barriers at $\tau_2$ is approximately $ \sigma_n / (2\sqrt {\bar K_0}) $, where $\sigma_n$ can be taken as large
as we wish  and  defines $\tau_2$ through  \eqref{eqn-tau2}. (c.f.  Remark \ref{rem-height} and equation \eqref{eqn-height}). Hence, the shape of our barriers guarantee that by choosing $\sigma_n$ 
sufficiently large   our solution at time $\tau_2$ encloses $S^\pm_R$. Moreover, $\sigma_n$ depends  only on  $n$ and $K_0$, since $R$ depends only on $n$.

\smallskip 

We next  remark that  the peanut solutions considered in this paper are mean convex, and hence for each of them all sufficiently close perturbations are mean convex, which is a property that is preserved by the mean curvature flow. By a result of White (\cite{Wh}) and Brakke (\cite{Br}) we have that any tangent flow  at singularities of a compact, embedded, mean convex mean curvature flow is a unit multiplicity smooth mean convex shrinker with polynomial volume growth. Thus, by \cite{CM} it is either a round cylinder $\mathbb{S}^{n-1}\times\mathbb{R}$ or a round sphere $\mathbb{S}^n$. The latter case is excluded by the fact that singularity disconnects the manifold into two pieces none of which disappears at the singular time.  

Recall also that by Lemma \ref{lem-convexity} we can always choose $\ell_0$ big enough so that all our perturbations of the peanut solution $\bar{u}(y,\tau)$ are convex for $|y| \ge 2\ell_0$. 
By using rotational symmetry, reflection symmetry of our initial data, precise asymptotic description of our solution at time $\tau_2$, and Sturmian theorem for decreasing the number of critical points along the flow it immediately follows that   starting from some time, all the way up to the singular time, the profile function has only one maximum and  the solution is therefore convex, or the only local minimum starting from some time on is at $x = 0$, and the singularity in this case occurs at $x = 0$. In the former case the tangent flow is $\mathbb{S}^n$, which we know can not happen. This implies the latter case actually occurs, and a tangent flow at the origin is the round cylinder.
In other words, we have a neckpinch singularity at the origin, and at the singular time the surface disconnects into two pieces none of which disappears at that time. Direct adaptation of  arguments in the proof of Theorem 4.1 in \cite{SS}  shows that the singularity has to be nondegenerate, in a geometric sense, i.e. that every blow up limit around the origin is the round cylinder, and the singularity is Type I.
\end{proof}


\section{Blow ups of families of mean curvature flows}

Let $\bar{M}_t$ be the peanut solution as discussed above, and let $\bar{T}$ be its first singular time. Consider  its rescaled profile $\bar u(y, \tau_0)$, where $\tau = - \log (\bar T-t)$. 
Assume that  $\tau_0$  large and $\eta_0(y)$ are  as in the previous sections. We recall that $\eta_0(y)$ is the cut off defined in \eqref{eqn-def-eta0} and   $\tau_0 = - \log (\bar T- t_0)$ {\em for some $t_0 $ close to $\bar T$.}
Let ${\bf \epsilon}_k = (\epsilon_k^1, \epsilon_k^2)$ be any sequence converging to $(0,0)$ as $k\to \infty$ and define the profile functions 
\[
u_k(y, \tau_0) := \bu(y ,\tau_0) + \eta_0 \, \big ( \frac{y}{\ell_0} \big ) \, (\epsilon_k^1 + \epsilon_k^2\, H_2).
\]
Theorem \ref{thm-main} guarantees that we can choose $ \epsilon_k = (\epsilon_k^1, \epsilon_k^2) \to (0,0)$, so that  the {\em unrescaled  mean curvature flow  $M_t^k := $},  with profile  
$ U_k(x, t) =   \sqrt{\bar T-t} \; u \Bigl( \frac x{\sqrt{\bar T-t}},\log \frac 1{\bar T-t}\Bigr) $
 develops spherical singularity at its first singular time  $T_k < \infty$.  

Having that the $\lim_{k\to \infty} M^k_{t_0} = \bar M_{t_0}$, and using the lower semi-continuity of the singular time of mean curvature flow in terms of its    initial data, on one hand, and the fact that all perturbations $M^k_{t_0}$  can be placed in the interior of a sphere of uniform radius $R$ that does not depend on $k$, by the comparison principle we see that 
\begin{equation}		\label{eq-sing-time}
\bar{T}/2 \le  T_k \le C_0, \qquad \mbox{for all} \,\,\,\,k \,\,\,\, \mbox{big enough}.
\end{equation}
 Here, $C_0$ is the extinction time of a sphere of radius $R$. The  proof of Proposition \ref{prop-sphere} shows that  for each $k$ big, there exists the first time $t_k^1 < \min\{T_k,\hat{T}\}$ at which the flow $M^k_t$ becomes convex. In this section we prove Theorem \ref{thm-families} by showing the following result.

\begin{theorem}
Appropriately rescaled sequence of any sequence of solutions whose initial data converge to the peanut solution, and all of which develop spherical singularities, converges to the Ancient oval solution constructed in \cite{HH, Wh}.  
\end{theorem}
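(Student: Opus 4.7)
The plan is to reduce the theorem to the characterization of ancient ovals given in \cite{CHH}, which identifies ancient ovals precisely as the limits that arise when a sequence of spherical singularities accumulates to a cylindrical singularity. The setup has already prepared the ingredients: each $M^k_t$ develops a (necessarily unique, by rotational and reflection symmetry of the initial data $u_k(\cdot,\tau_0)$) spherical singularity at some spacetime point $(x_k,T_k)$, while the limit flow $\bar M_t$ develops a cylindrical (degenerate neckpinch) singularity at $(0,\bar T)$. Thus I first want to argue that $(x_k,T_k)\to (0,\bar T)$. Convergence of singular times follows from \eqref{eq-sing-time}, smooth convergence $M^k_t\to \bar M_t$ on $[t_0,\bar T-\eta]$ for every $\eta>0$ given by parabolic regularity, and the lower semicontinuity of the singular time for MCF combined with the upper barrier $C_0$. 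Convergence of the singular \emph{location} is immediate from the reflection symmetry $u_k(-y,\tau_0)=u_k(y,\tau_0)$, which is preserved by MCF and forces any round shrinking point to lie at $x=0$.

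Next, I would choose the parabolic rescaling. For each $k$ fix a scale $\lambda_k\to\infty$ and define the rescaled flow
\[
\widetilde M^k_s=\lambda_k\bigl(M^k_{T_k+\lambda_k^{-2}s}-(0,0)\bigr),\qquad s\in[-\lambda_k^2(T_k-t_0),0).
\]
The natural choice of $\lambda_k$ is the one for which $\widetilde M^k_{-1}$ has a fixed normalization, for instance so that the maximal distance of $\widetilde M^k_{-1}$ from the shrinking cylinder $\mathbb S^{n-1}\times\R$ of radius $\sqrt{2(n-1)}$ equals $1$; equivalently, one can normalize $\lambda_k$ using the value of the profile function $u_k$ at a suitable neck scale. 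The point of this choice, combined with Proposition \ref{prop-barriers} and the asymptotic description $q_k(y,\tau_2)=\varepsilon_k H_2(y)-\bar K_0 e^{-\tau_2}H_4(y)+o(\varepsilon_k)$ obtained in Lemma \ref{lemma-asymp-2}, is that the rescaled initial data $\widetilde M^k_{s_0}$ remain precompact, convex (once past the convexity time $t^1_k$), rotationally and reflection symmetric, with uniform curvature bounds on compact spacetime sets. Standard compactness for smooth MCF (White's regularity and Brakke/Huisken-monotonicity based arguments) then furnish a subsequence converging smoothly on compacta to a limit flow $\{M^\infty_s\}_{s\in(-\infty,0)}$.

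It remains to identify $M^\infty$ with the Huisken--Hashofer / White ancient oval. The limit $M^\infty$ is \emph{ancient} because $\lambda_k(T_k-t_0)\to\infty$ by the semicontinuity of singular times and our rescaling; it is \emph{rotationally and reflection symmetric}, \emph{weakly convex} (each $\widetilde M^k_s$ is convex on $[s^1_k,0)$, and the convexity time satisfies $\lambda_k^2(T_k-t^1_k)\to\infty$), \emph{noncollapsed} (as a limit of convex flows), and \emph{not a family of shrinking spheres} because the comparison with the subsolution $\cQ^-_{(1-\bar\eta)\varepsilon,(1+\bar\eta)\bar K_0}$ from Proposition \ref{prop-barriers} and the doughnut/ball comparison used in the proof of Theorem \ref{thm-cylinder} guarantee that $M^\infty$ contains an arbitrarily long neck-like portion at $s=-1$. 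By the classification of compact, convex, rotationally symmetric, noncollapsed ancient solutions in $\R^{n+1}$ (\cite{HH, Wh}, and by now also in \cite{ADS} style arguments), the only such flow other than the shrinking sphere is the ancient oval; uniqueness up to parabolic rescaling pins down $M^\infty$ as precisely that solution.

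The main obstacle I anticipate is the \emph{choice and uniform control of the rescaling factor} $\lambda_k$, that is, proving that with $\lambda_k$ normalized as above the rescaled flows have uniform curvature and diameter bounds on compact backward time intervals so that the limit is a nontrivial, nonsingular, \emph{ancient} flow rather than a static cylinder or a collapsed limit. This is exactly the role played by the fine inner--outer and tip barriers developed in Sections \ref{sec-sub-super} and \ref{sec-right-bc}: the sub- and supersolutions $\cQ^\pm_{\varepsilon,K}$ give us simultaneously the neck behavior (so the limit is not a sphere) and the tip behavior (via the Bowl-soliton matching in Lemmas \ref{lem-sub-tip}--\ref{lem-match}), which together prevent degeneration of the limit and yield the ancient oval. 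Once this is in place, the identification via \cite{CHH} and the uniqueness statements in \cite{HH,Wh} completes the proof.
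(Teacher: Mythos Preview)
Your proposal has a genuine gap in the compactness and non-sphericity step: you repeatedly invoke Proposition \ref{prop-barriers}, Lemma \ref{lemma-asymp-2}, and the sub/supersolutions $\cQ^\pm_{\varepsilon,K}$ from Sections \ref{sec-sub-super}--\ref{sec-right-bc}, but those constructions are carried out exclusively for the \emph{cylindrical} branch of perturbations, namely for the choice $\bar\Omega_2>0$ so that the $H_2$-coefficient $\varepsilon$ is positive. The solutions in the present theorem are on the \emph{spherical} branch ($\Omega_2<0$ in Section \ref{sec-sphere}); for these the sign of the $H_2$-mode is reversed, the profile bends the opposite way, and none of Proposition \ref{prop-barriers}, Lemma \ref{lemma-asymp-2}, or the doughnut/ball comparison in the proof of Theorem \ref{thm-cylinder} is available. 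Consequently your argument for uniform curvature/diameter control and for ruling out a spherical limit does not go through as written. A second, related issue is the appeal to \cite{CHH}: that result concerns spherical and cylindrical singularities of a \emph{single} flow, whereas here the spherical singularities live on the sequence $M^k_t$ and the cylindrical one on the limit $\bar M_t$; the paper explicitly calls the present theorem an \emph{analogue} of \cite{CHH}, not an application.

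The paper's proof avoids both problems by a different mechanism. Up to the first convexity time $t_k^1$ it controls $u_k$ only with the peanut translates from Lemma \ref{lem:alpha} (no need for the Section \ref{sec-sub-super} barriers), which gives $a_k(t_k^1)/b_k(t_k^1)\to\infty$ and $t_k^1\to\bar T$. The rescaling is then pinned down not by a curvature normalization but by choosing a later time $t_k$ at which the scale-invariant major/minor ratio equals $2$; this single number simultaneously rules out the sphere and provides the normalization. Compactness is obtained by placing the rescaled (convex) initial data in the set $Z_s$ of \cite{ADS2}, whose RMCF orbits are precompact, and the identification of the subsequential limit is by the uniqueness of ancient ovals (\cite{ADS}, \cite{CHHW22}). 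If you want to repair your approach, the cleanest route is to drop the Section \ref{sec-sub-super}--\ref{sec-right-bc} inputs entirely and instead use the ratio $a_k/b_k$ together with the \cite{ADS2} compactness as the paper does.
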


\begin{proof}
Let $M_t^k$ be a sequence of mean curvature flows, as discussed above. Let us define  by $a_k(t) := \max_{x\in M_t^k} |x_{n+1}|$ the major radius, and by $b_k(t) := \max_{x\in M^k_t}\Big(\sum_{i=1}^n x_i^2\Big)^{1/2}$ the minor radius of $M_t^k$.  Let $t_k^1 < \min\{T_k,\bar{T}\}$ be {\em the first time at which solution $M^k_t$ becomes convex}. Then we have the following claim.

\begin{claim}
\label{claim-akbk}
We have that the $\lim_{k\to\infty} t_k^1 = \bar{T}$ and the $\lim_{k\to\infty} \frac{a_k(t_k^1)}{b_k(t_k^1)} = \infty$.
\end{claim}

\begin{proof}
Let $\tau_k^1 := -\ln (\bar{T} - t_k^1)$. Note that by the scaling invariance property of the quotient we have that $\frac{a_k(\tau_k^1)}{b_k(\tau_k^1)}  = \frac{a_k(t_k^1)}{b_k(t_k^1)}$, where the first quotient is for the rescaled flow and the latter quotient is for the unrescaled mean curvature flow. Furthermore, by Lemma \ref{lem:alpha}, by Lemma \ref{lem-convexity}, and  by Lemma \ref{cor-prop-funnel} (which guarantees the assumptions in the statement of Proposition \ref{prop-inout} are satisfied all the way to time $\tau_k^1$) we conclude that for  $|y| \ge 2\ell_0$ (where $\ell_0$ is taken as in Lemma \ref{lem:alpha}) we have  
\begin{equation}		\label{eq-comparison}
\bu(y,\tau_k^1 - \alpha_k) \le u_k(y,\tau_k^1) \le \bu(y,\tau_k^1+\alpha_k)
\end{equation} 
where   Lemma \ref{lem:alpha} guarantees that $\alpha_k$ satisfies  $0 < \alpha_k(\tau) \le \ln  2$. This implies that
\begin{equation}		\label{eq-ak}
c_0\, e^{\tau_k^1/4} \le a_k(\tau_k^1) \le C_0\, e^{\tau_k^1/4},
\end{equation}
for uniform positive constants $c_0, C_0$, since the diameter of the peanut  solution with profile function $\bu(y,\tau)$ at time $\tau$ is of the order $e^{{\tau/4}}$. Estimate \eqref{eq-comparison} and the asymptotics of peanut solution also  imply imply that
\begin{equation}		\label{eq-bk}
\frac{\sqrt{2(n-1}}{2} \le b_k(\tau_k^1) \le 2\sqrt{2(n-1)}.
\end{equation}

\smallskip

We {\em claim } that the $\lim_{k\to\infty} \tau_k^1 = \infty$, thus implying that the $\lim_{k\to\infty} t_k^1 = \bar{T}$. To show this we argue by contradiction. Assume that the sequence $\{\tau_k^1\}$ is uniformly bounded in $k$, implying that along a subsequence we have the $\lim_{k\to\infty} \tau_k^1 = \bar{\tau}$, and thus the $\lim_{k\to\infty} t_k^1 = \bar{t} < \bar{T}$. By the continuous dependence of the mean curvature flow on the initial data, we would then have that the $\lim_{k\to \infty} A^k_{ij}(t_k^1) = \bar{A}_{ij}(\bar{t})$, where $A^k_{ij}$ and $\bar{A}_{ij}$  are the second fundamental forms of $M_t^k$ and $\bar{M}_t$, respectively. Since $A^k_{ij}(t_k^1) > 0$ for all $k$, we would have that $\bar{A}_{ij}(\bar{t}) \ge 0$. This leads to contradiction, because $\bar{M}_t$ is the peanut solution that never becomes convex before it extincts to a point. Hence, we indeed have that the $\lim_{k\to\infty} t_k^1 = \bar{T}$.

Estimates \eqref{eq-ak} and \eqref{eq-bk} imply 
\begin{equation}		\label{eq-ratio-t1}
\lim_{k\to\infty} \frac{a_k(t_k^1)}{b_k(t_k^1)} = \infty,
\end{equation}
concluding the proof of the Claim. 
\end{proof}

Next, we claim that for every solution $M_t^k$, there exists the first time $t_k$ satisfying $T_k > t_k \ge t_k^1$  such that 
\begin{equation}		\label{eq-ratio}
\frac{a_k(t_k)}{b_k(t_k)} = 2.
\end{equation}
Indeed, since every solution $M_t^k$ develops a spherical singularity at time $T_k < \infty$, we have that the $\lim_{t\to T_k} \frac{a_k(t_k)}{b_k(t_k)} = 1$.  On the other hand, by Claim \ref{claim-akbk} we have the $\lim_{k\to\infty}\frac{a_k(t_k^1)}{b_k(t_k^1)} = \infty$.
We conclude now that for every $k$ there exists a $t_k \in (t_k^1, T_k)$ so that \eqref{eq-ratio} holds. 

Denote by $\hat{M}^k:= M^k_{t_k^1}$. By our construction, $\hat{M}^k$ is very close to a round cylinder on compact sets. Recall that $T_k$ is the singular time of $M^k_t$ and that $T_k$ satisfies  \eqref{eq-sing-time}. Choose the scaling factor $r_k$ so that the mean curvature flow $\bar{N}^k_t$, with initial data $N^k := r_k\, \hat{M}^k$ becomes singular at time $T_{\max} = 1$. This implies $N_k  \in Z_s$, where $Z_s$ is the set the authors defined in \cite{ADS2}, i.e.
\[
Z_s = \{ C \,\,\,\, \mbox{is a closed, convex set} \,\,\,|\,\,\, C = -C, \,\,\, T_{\max} = 1\}.
\]
By our construction  we have that the $\lim_{k\to\infty} \mc H(N^k) = \mc{H}(\mathbb{S}^{n-1}\times\mathbb{R})$, where $\mc{H}$ is the Huisken's energy. By the results in \cite{ADS2} we have that orbits $N^k(\tau)$ under the RMCF (rescaled mean curvature flow) of $N^k$, with rescaling corresponding to $T_{\max} = 1$, 
stay in a compact subset of $Z_s$. Choose $\tau_k$ to be so that $N^k(\tau_k)$ has the property that $\frac{a_k(\tau_k)}{b_k(\tau_k)} = 2$. Such a $\tau_k$ exists, since this quotient is scaling invariant, and since we have \eqref{eq-ratio}.

We claim that the $\lim_{k\to\infty} \tau_k = \infty$. Indeed, if it were uniformly bounded, we would have along a subsequence that the $\lim_{k\to\infty} \tau_k = \tau_{\infty} < \infty$. By \eqref{eq-sing-time} the sequence $r_k$ (that we used in the definition of $N^k$ above) is uniformly bounded in $k$ from above and below.  Consider then the sequence of solutions $\bar{N}^k_{\tau} := N^k(\tau+\tau_k)$.  As $k\to\infty$, the  hypersurfaces $\hat{M}^k$ converge to a round cylinder, and hence the same holds for hypersurfaces $N^k$. Call the limiting round cylinder $N_{\infty}$. Furthermore, by the compactness of set $Z_s$ we would have that the sequence of flows $N^k(\tau)$ converges to the limiting flow $N_{\infty}(\tau)$, uniformly on compact sets. That would mean that $N(\tau_{\infty}) = N_{\infty}$,  is still the cylinder,  which would certainly violate \eqref{eq-ratio}, due to scaling invariant property of the ratio. 

Finally, due to results in \cite{ADS2} we have that there exists a subsequence $\bar{N}^k(\tau)$ that converges to an ancient solution that must be an Ancient oval due to its uniqueness (\cite{ADS}, \cite{CHHW22}).

\end{proof}


\appendix
\section{Gaussian weighted Hilbert spaces and the drift Laplacian}
\label{sec:appendix hilbert spaces}
\subsection{The Hilbert space $L^2(\R, e^{-y^2/4}dy)$}
 $\cL$ is the \emph{drift Laplacian}
\begin{equation}		\label{eq:L}
\cL v := v_{yy } - \frac {y } 2\, v_y + v .  
\end{equation}
The domain of $\cL$ is
\[
\mathrm{dom}(\cL):= \{ v \in \hilb \;|\; (1+|y|)v_y\in\hilb, (1+y^2) v_{yy} \in \hilb \}.  
\]
Since we only consider even functions, the spectrum of $\cL$ is given by the sequence of simple eigenvalues
\[
\lambda_k = 1-\frac k2 = -k\gamma, \qquad k=0,2,4,6, \dots
\]
and the corresponding eigenfunctions are Hermite polynomials $\hhm_k$.  We use the following normalizations: $\hhm_k$ is the Hermite polynomial normalized so that its leading coefficient is $1$, i.e.  
\begin{equation}		\label{eq:Hermite polynomials explicit form}
\hm_k(y) =y^k - \frac{k(k-1)}{1!}y^{k-2} + \frac{k(k-1)(k-2)(k-3)}{2!} y^{k-4}-\cdots \; .  
\end{equation}

\section{Constructing an $m$-peanut}		\label{sec:remember the peanut}
Let $K_0>0$ be fixed.  We choose our initial surface by perturbing the superellipsoid
\begin{equation}		\label{eq:superellipsoid defined}
U_{\rm out} (y,\tau_0) \stackrel{\rm def}= \sqrt{2(n-1) - K_0 y^me^{-m\gamma\tau_0}}
\end{equation}
both in the parabolic region $|y|\lesssim \rho e^{\gamma\tau_0}$, and the tip region where $u=O(e^{-\gamma\tau_0})$.   

In the tip region we replace the surface with a rescaled copy of the bowl soliton.  This modification will allow us to verify the monotonicity of the peanut solution in the region $|y|\geq \ell_{\rm int}$, $\tau\geq \tau_0$.  Monotonicity of the peanut solution lets us use this solution as barriers which control the perturbations of the peanut solutions that are the subject of this paper.  

\begin{figure}[h]\centering
 \includegraphics[width=\textwidth]{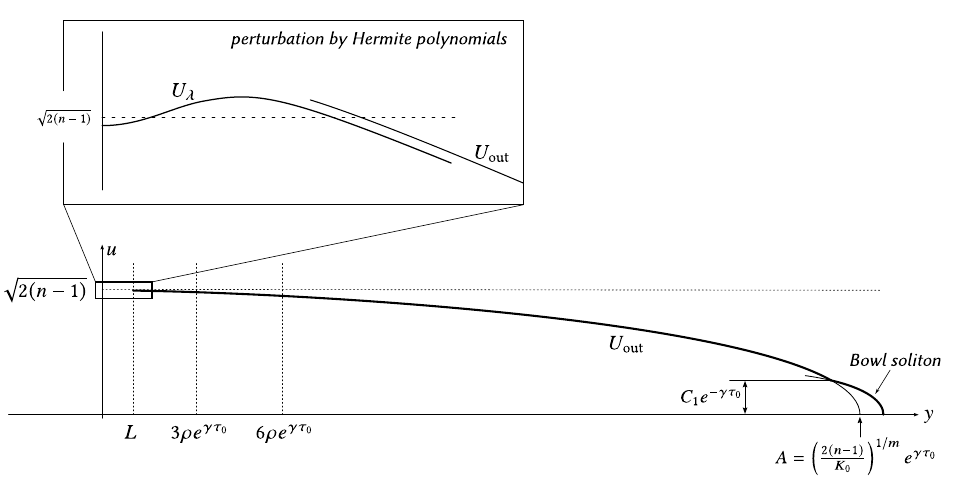}
 \caption{Construction of the initial peanut }
\end{figure}

In the parabolic region we perturb the superellipsoid by pasting in a linear combination of Hermite polynomials.  This perturbation must contain enough parameters ($\lambda_0, \dots, \lambda_{m-2}$) to guarantee that at least one choice of the parameters leads to a peanut solution.  Following \cite{AV} we consider the following $m/2$-parameter family of perturbations of the cylinder:
\begin{equation}		\label{eq:U lambda defined}
U_\lambda(y, \tau) \stackrel{\rm def}= \sqrt{2(n-1)} + \sum_{j=0}^{\frac m2-1} \lambda_{2j} \hhm_{2j}(y)
- \frac{K_0}{2\sqrt{2(n-1)}}e^{-m\gamma\tau_0}\hhm_m(y).  
\end{equation}
with $\lambda_0, \lambda_2, \dots, \lambda_{m-2}\in\R$.  

The initial condition that leads to a peanut solution is given by gluing together the parametrized family $U_\lambda(y, \tau_0)$ and the modified superellipsoid  $\hat U_{\rm out}$.  Thus we set
\begin{equation}		\label{eq:u lambda defined} 
u_\lambda(y, \tau_0) 
	= \zeta\Bigl(\frac{y}{\rho e^{\gamma\tau_0}}\Bigr) \; U_\lambda(y, \tau_0) 
	+ \Bigl\{ 1- \zeta\Bigl(\frac{y}{\rho e^{\gamma\tau_0}}\Bigr)\Bigr\} \hat U_{\rm out}(y, \tau_0).  
\end{equation}
Here $\zeta:\R\to\R$ is a smooth even cut-off function with $\zeta(s)=1$ for $|s|\leq 1$ and $\zeta(s)=0$ for $|s|\geq 2$.

The function $u_\lambda(\cdot, \tau_0)$ defines a  hypersurface, whose evolution by rescaled MCF~\eqref{eq:u} is given by a function $u_\lambda(y, \tau)$ that is defined for
\[
|y|\leq y_{\max,\lambda}(\tau), \qquad \tau_0\leq \tau < \tau_{\max, \lambda}.  
\]
The unstable component at time $\tau$ of the solution is defined to be
\begin{equation}		\label{eq:Psi u def}
\Psi_\lambda^u(y, \tau) := \pi^u \Bigl[ \zeta\Bigl(\frac{y}{\rho e^{\gamma\tau}}\Bigr) \Bigl\{u_\lambda(y, \tau)-\sqrt{2(n-1)}\Bigr\}\Bigr]
\end{equation}
In the exit lemma of \cite{AV} it was shown that if $M_0>0$ is appropriately chosen, and $\tau_0$ is sufficiently large, then for each $\lambda_0, \lambda_2, \dots, \lambda_{m-2}\in \R$ one either has $\tau_{\max, \lambda} =\infty$ and
\begin{equation}		\label{eq:no exit}
\|\Psi_\lambda^u(\cdot, \tau)\|  <  M_0e^{-2m\gamma\tau}\text{ for all }\tau\in[\tau_0, \infty),
\end{equation}
or else there is a first $\tau_1=\tau_1(\lambda)\geq \tau_0$ such that
\begin{equation}		\label{eq:exit condition}
\|\Psi_\lambda^u(\cdot, \tau_1)\|  = M_0e^{-2m\gamma\tau_1}.  
\end{equation}
In the first case, where \eqref{eq:no exit} holds, the unrescaled solution forms a singularity as $t\nearrow T$ whose parabolic blow-up is the cylinder.  

In the second case, i.e.~when there is a first $\tau_1\geq \tau_0$ at which \eqref{eq:exit condition} holds, one has
\begin{equation}
\|\Psi_\lambda^u(\cdot, \tau)\|  <  M_0e^{-2m\gamma\tau}\text{ for all }\tau\in[\tau_0, \tau_1).  
\end{equation}
and
\begin{equation}		\label{eq:exit strict}
\frac{d}{d\tau}\Bigl(e^{2m\gamma\tau}\|\Psi_\lambda^u(\cdot, \tau)\|\Bigr)\bigg|_{\tau=\tau_1} >0.  
\end{equation}
One can verify that the initial condition satisfies
\[
\|\Psi_\lambda^u(\cdot, \tau_0)\|^2
\geq \sum_0^{\frac m2} \lambda_{2j}^2 \|\hhm_{2j}\|^2 - O\bigl(e^{-c \rho^2 e^{2\gamma\tau_0}}\bigr)
\]
for some small constant $c>0$.  It follows from a shooting argument (see \cite{AV}) that one can choose
\begin{equation}		\label{eq:good lambdas}
\bar\lambda_0, \bar\lambda_2, \dots, \bar\lambda_{m-2}\in\R
\end{equation}
with
\begin{equation}		\label{eq:good lambdas bound}
\sqrt{ \bar \lambda_{0}^2 \|\hhm_{0}\|^2
  +  \bar \lambda_{2}^2 \|\hhm_{2}\|^2+\cdots
  + \bar \lambda_{m}^2 \|\hhm_{m}\|^2}
\leq 2M_0 e^{-2m\gamma\tau_0}
\end{equation}
so that the solution starting from $u_{\bar\lambda}(\cdot, \tau_0)$ exists for all $\tau\geq\tau_0$ and satisfies \eqref{eq:no exit}.

The shooting argument in \cite{AV} is robust with respect to small perturbations of the family of initial data \eqref{eq:u lambda defined}.  In particular, one can replace \eqref{eq:U lambda defined} by
\[
\tilde U_\lambda(y, \tau_0) = \sqrt{2(n-1)} -  e^{-m\gamma\tau_0}\hhm_m(y) + \sum_{j=0}^{m-1}c_j\hhm_j(y) + f(y)
\]
where $f$ can be any sufficiently small smooth function with support in $|y|\leq \rho e^{\gamma\tau_0}$.  This implies that there is an infinite dimensional family (parametrized by the function $f$) of solutions that have the behavior \eqref{eq:um2}, \eqref{eq:um2-derivs}, \eqref{eq:u-outer}.  It was suggested in \cite{AV} and recently proved in \cite{AOSUN} that the set of initial data $\bu(y, \tau_0)$ that lead to solutions satisfying \eqref{eq:um2}, \eqref{eq:um2-derivs}, \eqref{eq:u-outer} is a submanifold of codimension $m$.

\section{Proof of monotonicity}
\label{sec:proof of monotonicity}

Since $\bu_\tau$ satisfies a linear parabolic equation, obtained by differentiating the equation \eqref{eq:u} for $\bu$ with respect to $\tau$, we can use the maximum principle.  At first sight it looks like this approach runs into difficulties because the equation for $\bu_\tau$ degenerates at the tip, i.e.  ~at $y=y_{\max}(\tau)$.  However, we can avoid this issue by considering the normal velocity
\[
\mathscr V = H+\frac 12 X\cdot N,
\]
which is related to $\bu_\tau$ by
\[
\mathscr V =\frac{\bu_\tau}{\sqrt{1+\bu_y^2}},
\]
and which satisfies a nondegenerate parabolic equation on the surface, namely
\[
\partial_t^\perp \mathscr V = \Delta\mathscr V - \tfrac 12 \nabla_{X^\top}\mathscr V + \bigl(|A|^2 + \tfrac 12\bigr)\mathscr V.  
\]
We shall verify that $\mathscr V\geq 0$ initially, i.e.~for $y\geq \ell_{\rm int}$, $\tau=\tau_0$, and on the boundary $y=\ell_{\rm int}, \tau\geq \tau_0$.  
The maximum principle then implies that $\mathscr V\geq0$ whenever $y\geq \ell_{\rm int}, \tau\geq \tau_0$.  Since $\mathscr V$ and $\bu_\tau$ have the same sign, this shows that $\bu_\tau\geq 0$ whenever  $y\geq \ell_{\rm int}, \tau\geq \tau_0$.  And, for the same reason, we can verify $\mathscr V\geq 0$ by showing that $\bu_\tau\geq 0$.  

The fact that $\bu_\tau>0$ at $y=\ell_{\rm int}$ for all $\tau\geq \tau_0$  follows directly from the construction of the peanut.  Indeed, the asymptotic expansions of the peanut solution from \cite {AV} imply \eqref{eq:utau positive} for all $y\in [\frac 12 \ell_{\rm int}, 2\ell_{\rm int}]$ and all $\tau\geq \tau_0$.  

To complete the proof we therefore have to verify $\bu_\tau\geq 0$ at $\tau=\tau_0$ for all $y\geq \ell_{\rm int}$.  Because of the way we have defined $\bu(y, \tau_0)$, we have to split the region $y\geq \ell_{\rm int}$ into three parts.  First we consider the near parabolic region where $\ell_{\rm int}\leq y\leq 3\rho e^{\tau_0/4}$, then the gluing region $3\rho e^{\tau_0/4}\leq y\leq 6\rho e^{\tau_0/4}$, and finally we deal with the remaining region in which $y\geq 6\rho e^{\tau_0/4}$.

\subsection{The region $\ell_{\rm int}\leq y\leq 3\rho e^{\gamma\tau_0}$, $\tau=\tau_0$}
\label{sec:near region}

The initial value for the peanut is defined by gluing together the superellipsoid $U_{\rm out}$ and the parametrized perturbation $U_\lambda$ of the formal solution.  See~\eqref{eq:u lambda defined}.  We estimate these two approximate solutions when $y\in[\ell_{\rm int}, 6\rho e^{-\tau_0/4}]$.  In this region the Hermite polynomials $\hhm_j(y)$ are comparable with the monomial $y^j$, i.e.  
\[
cy^j \leq \hhm_j(y) \leq Cy^j,
\]
where this inequality may be differentiated.  Since the coefficients $\lambda_j$ in the definition of $U_\lambda$ are bounded by \eqref{eq:good lambdas bound}, we have $|\lambda_j|\lesssim e^{-2m\gamma\tau_0}$, and hence
\[
\left|\lambda_0\hhm_0(y) + \lambda_1\hhm_2(y) + \cdots + \lambda_{\frac m2 -1}\hhm_{m-2}(y)\right| \lesssim e^{-2m\gamma\tau_0}y^{m-2}.  
\]
By definition $\hhm_m(y) = y^m +O(y^{m-2})$, so
\[
e^{-m\gamma\tau_0} \hhm_m(y) =e^{-m\gamma\tau_0}y^m + O\bigl(e^{-m\gamma\tau_0}y^{m-2}\bigr).
\]
Abbreviating 
\[
K_1 = \frac{K_0}{2\sqrt{2(n-1)}},
\]
we therefore have
\begin{align*}
U_\lambda(y, \tau_0) 
	&= \sqrt{2(n-1)} + \sum_{j=0}^{\frac{m}{2}-1}\lambda_{j}\hhm_{2j}(y) - K_1 e^{-m\gamma\tau_0}\hhm_m(y)  \\
	&= \sqrt{2(n-1)} -  K_1e^{-m\gamma\tau_0}y^m + O\bigl(e^{-2m\gamma\tau_0}y^{m-2} \bigr).  
\end{align*}
For the superellipsoid we have
\begin{align*}
  U_{\rm out}(y, \tau_0)
  	&= \sqrt{2(n-1) - K_0 e^{-m\gamma\tau_0}y^m}\\
 	&= \sqrt{2(n-1)} - K_1 e^{-m\gamma\tau_0}y^m + O\bigl(e^{-2m\gamma\tau_0}y^{2m}\bigr)
\end{align*}
Subtract these to get
\begin{equation}		\label{eq:Uout minus Ulambda}
U_{\rm out}(y, \tau_0) - U_\lambda(y, \tau_0) = O\bigl(e^{-2m\gamma\tau_0}y^{m-2} + e^{-2m\gamma\tau_0}y^{2m}\bigr).  
\end{equation}
The initial profile of the peanut is then given by
\begin{equation}		\label{eq:ulambda defined again}
\bu(y, \tau_0)=
U_\lambda(y, \tau_0)
+ \Bigl\{1-\zeta\Bigl(\frac{y}{6\rho e^{\gamma\tau_0}}\Bigr) \Bigr\} \bigl(U_{\rm out}(y, \tau_0) - U_\lambda(y, \tau_0)\bigr)
\end{equation}
To verify  $\bu_\tau(y, \tau_0) > 0$ we linearize equation~\eqref{eq:u} around the cylinder radius $\sqrt{2(n-1)}$,
\begin{equation}		\label{eq:Gu def}
\bu_\tau= \cG[\bu]
		: = \cL[\bu-\sqrt{2(n-1)}]
			- \frac{(\bu-\sqrt{2(n-1)})^2}{2\bu}
			- \frac{\bu_y^2\bu_{yy}}{(1+\bu_y^2)}.  
\end{equation}
Recall that $\cL$ is the drift Laplacian \eqref{eq:L}.  

One can verify that $\bu_{yy}(y, \tau_0)<0$ for $y\geq \ell_{\rm int}$ (if $\ell_{\rm int}$ is large enough), so we have
\begin{equation}		\label{eq:Gbu lower bound}
                \cG[\bu] > \cL[\bu-\sqrt{2(n-1)}] + O\bigl((\bu-\sqrt{2(n-1)})^2\bigr) .  
\end{equation}
When $\ell_{\rm int}\leq y\leq 3\rho e^{\gamma\tau_0}$ the cut-off function $\zeta$ is $1$, so that $\bu = U_\lambda$.  Keeping in mind that $\cL\hhm_{2j} = (1-j)\hhm_{2j}$, we then get
\[
\cL[\bu] = \sum_{j=0}^{\frac{m}{2}-1}   \lambda_j (1-j)  \hhm_{2j}(y)
-K_1 \bigl(1-\frac{m}{2}\bigr)e^{-m\gamma\tau_0}\hhm_m.
\]
The first term is bounded by
\[
\left|\sum_{j=0}^{\frac{m}{2}-1}\lambda_j \bigl(1-j\bigr) \hhm_{2j}(y)\right|
\lesssim e^{-2m\gamma\tau_0}y^{m-2}.  
\]
We can estimate the last term by using $\hhm_m(y) = y^m +O(y^{m-2})$, which gives us
\[
-K_1 \Bigl(1-\frac{m}{2}\Bigr)e^{-m\gamma\tau_0}\hhm_m(y)
	= \Bigl(\frac{m}{2}-1\Bigr)K_1 e^{-m\gamma\tau_0}y^m + O\bigl(e^{-m\gamma\tau_0}y^{m-2}\bigr)
\]
Therefore, using $y\geq \ell_{\rm int}$ and thus $y^{m-2}\leq \ell_{\rm int}^{-2}y^m$,
\begin{align}		\label{eq:1600}
  \cL[\bu-\sqrt{2(n-1)}]
  	& > \Bigl(\frac{m}{2}-1\Bigr)K_1 e^{-m\gamma\tau_0}y^m + O\bigl(e^{-m\gamma\tau_0}y^{m-2}\bigr)\\
	& = \Bigl(\bigl(\tfrac m2 -1\bigr)K_1 - O(\ell_{\rm int}^{-2})\Bigr) e^{-m\gamma\tau_0}y^m .  \notag
\end{align}
We also have
\[
\bu - \sqrt{2(n-1)} = O\bigl( e^{-m\gamma\tau_0}y^m \bigr).  
\]
Continuing from \eqref{eq:Gbu lower bound}, and using $y\lesssim \rho e^{\gamma\tau_0}$, we therefore find
\begin{align*}
  \cG[\bu]
	&> \Bigl(\bigl(\tfrac m2 -1\bigr)K_1 - O(\ell_{\rm int}^{-2})\Bigr) e^{-m\gamma\tau_0}y^m
        + O\bigl(e^{-2m\gamma\tau_0}y^{2m}\bigr)\\
  	&> \Bigl(\bigl(\tfrac m2 -1\bigr)K_1 - O(\ell_{\rm int}^{-2})- O( \rho^m)\Bigr) e^{-m\gamma\tau_0}y^m.  
\end{align*}
If $\ell_{\rm int}$ is sufficiently large, and $\rho$ sufficiently small, then we have shown that $\cG[\bu]>0$ for $\ell_{\rm int}\leq y\leq 3\rho e^{\gamma\tau_0}$.  

\subsection{The region $3\rho e^{\gamma\tau_0}\leq y\leq 6\rho e^{\gamma\tau_0}$, $\tau=\tau_0$}
\label{sec:gluing region}

In the gluing region we have to take the cutoff functions into account when we estimate $\cG[\bu]$.  Considering \eqref{eq:Gu def} one finds that the two last terms on the right again satisfy
\[
\Bigl(\bu-\sqrt{2(n-1)}\Bigr)^2 = O\bigl(e^{-2m\gamma\tau_0}y^{2m}\bigr)
\text { and }
-\frac{\bu_y^2\bu_{yy}}{1+\bu_y^2} >0.  
\]
so that, in view of \eqref{eq:Gbu lower bound},
\begin{equation}		\label{eq:Gu in between}
\cG[\bu] > \cL[\bu-\sqrt{2(n-1)}] + O\bigl(e^{-2m\gamma\tau_0}y^{2m}\bigr).  
\end{equation}
We estimate the first, linear, term $\cL[\bu-\sqrt{2(n-1)}]$ as in \eqref{eq:1600}, which generates the following extra terms coming from the cutoff function:
\begin{multline*}
\cL\bigl[(1-\zeta)(U_{\rm out}-U_\lambda)\bigr]  \\
	=\cL[1-\zeta](U_{\rm out}-U_\lambda) +(1-\zeta)\bigl(\partial_y^2 - \frac y2 \partial_y\bigr)(U_{\rm out}-U_\lambda) - \zeta_y(U_{\rm out}-U_\lambda)_y\;.  
\end{multline*}
To bound these terms for $3\rho e^{\gamma\tau_0}\leq y \leq 6\rho e^{\gamma\tau_0}$ we use \eqref{eq:Uout minus Ulambda}, i.e.  $U_{\rm out} - U_\lambda = O\bigl(e^{- 2m\gamma\tau_0}y^{m-2} + e^{-2m\gamma\tau_0}y^{2m}\bigr)$.  We also use $\rho e^{\gamma\tau_0}>1$, which implies $e^{-\gamma\tau_0} < \rho$.  This leads to
\begin{align*}
  \cL[1-\zeta] &= O(1) \\
  \zeta_y&= O(e^{-\gamma\tau_0}) = O(\rho) \\
  U_{\rm out}-U_\lambda &= O\bigl(e^{-(m+2)\gamma\tau_0}\rho^{m-2} + \rho^{2m}\bigr) = O(\rho^{2m})\\
  \partial_y(U_{\rm out}-U_\lambda )&= O\bigl(e^{-(m+3)\gamma\tau_0}\rho^{m-3} + e^{-\gamma\tau_0}\rho^{2m-1}\bigr) = O(\rho^{2m})\\
  \bigl(\partial_y^2 - \frac y2 \partial_y\bigr)(U_{\rm out}-U_\lambda) &= O\bigl(e^{-(m+2)\gamma\tau_0}\rho^{m-2} + \rho^{2m}\bigr) = O(\rho^{2m})
\end{align*}
and thus
\[
\cL\bigl[(1-\zeta)(U_{\rm out}-U_\lambda)\bigr] = O(\rho^{2m}).
\]
Hence, using $y=O(\rho e^{\gamma\tau_0})$ and $e^{-m\gamma\tau_0}y^m = O(\rho^m)$,
\begin{align*}
  \cG[\bu]
  > \Bigl(\bigl(\tfrac m2 -1\bigr)K_1 - O(\ell_{\rm int}^{-2})\Bigr) \rho^m    
    + O(\rho^{2m})
  > \Bigl(\bigl(\tfrac m2 -1\bigr)K_1 - O\bigl(\ell_{\rm int}^{-2}+  \rho^m\bigr) \Bigr) \rho^m .  
\end{align*}
We see that if $\rho$ is small enough, $\ell_{\rm int}$ large enough, and $\tau_0$ large enough, then $\cG[\bu]>0$ in the region $3\rho e^{\gamma\tau_0} \leq y\leq 6\rho e^{\gamma\tau_0}$.

\subsection{The region $y\geq 6\rho e^{\gamma\tau_0}$, $\tau=\tau_0$}
\label{sec:far region}
We represent surfaces  as graphs where $u$ is the independent variable.  In this case $y=y(u, \tau)$ represents a solution to RMCF if
\begin{equation}
	y_\tau =\cF[y] := \frac{y_{uu}}{1+y_u^2} + \Bigl(\frac{n-1}{u}-\frac{u}{2}\Bigr)y_u + \frac 12 y
\end{equation}
\begin{prop}		\label{prop-AY-subsol}
The surface given by \eqref{eq:u lambda defined} satisfies $\cF[y]>0$ in the region
\[
	y\geq 6\rho e^{\gamma\tau_0}, \qquad u\geq C_1e^{-\gamma\tau_0}
\]
provided $C_1$  and $\tau_0$ are sufficiently large constants.  
\end{prop}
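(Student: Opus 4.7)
The plan is to verify $\cF[y]>0$ by an explicit computation on each piece of the initial profile. Since $y\geq 6\rho e^{\gamma\tau_0}$ places us in the region where the cutoff in \eqref{eq:u lambda defined} vanishes, one has $\bu(y,\tau_0)=\hat U_{\rm out}(y,\tau_0)$, which in turn is the superellipsoid $U_{\rm out}$ away from the tip and a rescaled bowl soliton inside a shell of $y$-width $\sim \alpha^{-1}\sim e^{-\gamma\tau_0}$ around $y=Y_0=(2(n-1)/K_0)^{1/m}e^{\gamma\tau_0}$. Working in the $y=y(u)$ parametrization with
\[
\cF[y]=\frac{y_{uu}}{1+y_u^2}+\Bigl(\frac{n-1}{u}-\frac{u}{2}\Bigr)y_u+\frac{y}{2},
\]
the strategy in both pieces is to isolate the large positive term $\tfrac12 y\sim e^{\gamma\tau_0}$ and absorb the others into it; the restriction $u\geq C_1 e^{-\gamma\tau_0}$ with $C_1$ large keeps us safely in the asymptotic regimes of the bowl and the superellipsoid where the computation is tractable.

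For the superellipsoid piece $y(u)=e^{\gamma\tau_0}K_0^{-1/m}(2(n-1)-u^2)^{1/m}$, a direct calculation gives $y_u=-\frac{2uy}{mA}$ with $A:=2(n-1)-u^2$, and then the two first-order terms collapse:
\[
\Bigl(\frac{n-1}{u}-\frac{u}{2}\Bigr)y_u=\frac{u^2-2(n-1)}{mA}\,y=-\frac{y}{m},
\]
reducing $\cF[y]$ to $\frac{y_{uu}}{1+y_u^2}+\frac{m-2}{2m}\,y$. A further differentiation yields $|y_{uu}|\lesssim y/A+u^2 y/A^2$ and $y_u^2=\frac{4u^2y^2}{m^2 A^2}$, so in the regime $uy\gtrsim \rho C_1$ (guaranteed by our hypotheses) one has $y_u^2\gg 1$ and $\frac{|y_{uu}|}{1+y_u^2}\leq \frac{|y_{uu}|}{y_u^2}\lesssim_{m,n}\frac{1}{u^2 y}+\frac{1}{y}\leq \frac{C(m,n)}{C_1^2\rho}$. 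This is a constant independent of $\tau_0$, while $\frac{m-2}{2m}y\gtrsim e^{\gamma\tau_0}$, so for $C_1$ and $\tau_0$ both sufficiently large one obtains $\cF[y]\geq\frac{m-2}{4m}y>0$ in this piece.

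Inside the bowl-soliton piece one writes $y=Y_0-\alpha^{-1}\cB(\alpha u)$ with $\alpha=Y_0/m\sim e^{\gamma\tau_0}$ chosen so that the bowl asymptotically matches the superellipsoid, where $\cB$ is the speed-one bowl profile satisfying $\cB''/(1+\cB'^2)+(n-1)\cB'/w-1=0$. Substituting and using this ODE to eliminate $\cB''$, the two $(n-1)/u$ contributions cancel and one is left with
\[
\cF[y]=\frac{(m-2)Y_0}{2m}+\frac{u\,\cB'(\alpha u)}{2}-\frac{\cB(\alpha u)}{2\alpha}.
\]
The asymptotics $\cB(w)=\frac{w^2}{2(n-1)}-\tfrac12\ln w^2+O(w^{-2})$ and $\cB'(w)=\frac{w}{n-1}+O(w^{-1})$, valid since $w=\alpha u\geq C_1$ is large, combine the last two terms into $\frac{\alpha u^2}{4(n-1)}+O\!\left(\tfrac{\log(\alpha u)}{\alpha}\right)\geq 0$, so that $\cF[y]\gtrsim Y_0\gtrsim e^{\gamma\tau_0}>0$. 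The main obstacle I expect is the transition region where $\hat U_{\rm out}$ interpolates between the two pieces: any cutoff functions there generate error terms of the flavour handled in Section~\ref{sec:gluing region}, and controlling them requires that the choice $\alpha=Y_0/m$ makes the bowl's asymptotic paraboloid agree with the local parabolic expansion of the superellipsoid at the matching point to leading order, so the cutoff-generated errors are strictly of lower order than $\tfrac12 y\sim e^{\gamma\tau_0}$ and can be absorbed.
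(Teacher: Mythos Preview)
Your approach is essentially the same as the paper's: both reduce $\cF[y]$ on the superellipsoid to
\[
\cF[y]=\frac{y_{uu}}{1+y_u^2}+\frac{m-2}{2m}\,y
\]
via the identity $\bigl(\tfrac{n-1}{u}-\tfrac{u}{2}\bigr)y_u=-\tfrac{y}{m}$, and then show the curvature term is dominated by the positive linear term. However, your quantitative bound contains an error. You claim
\[
\frac{|y_{uu}|}{y_u^2}\lesssim \frac{1}{u^2 y}+\frac{1}{y}\leq \frac{C(m,n)}{C_1^2\rho},
\]
a constant independent of $\tau_0$. This fails at the small-$u$ end: with $u=C_1e^{-\gamma\tau_0}$ and $y\sim e^{\gamma\tau_0}$ one gets $\tfrac{1}{u^2y}\sim e^{\gamma\tau_0}/C_1^2$, which blows up with $\tau_0$. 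The fix is immediate: this quantity is still $\ll \tfrac{m-2}{2m}y\sim e^{\gamma\tau_0}$ once $C_1$ is large, so your conclusion $\cF[y]>0$ survives. The paper phrases this more cleanly by writing the condition as $A^2>\tfrac{2m}{m-2}\cdot\tfrac{-Y_{uu}}{YY_u^2}$ (in its notation $y=AY(u)$, $A\sim e^{\gamma\tau_0}$), then checking it separately near $u=0$ (yielding exactly the threshold $u\geq C_1e^{-\gamma\tau_0}$) and near $u=\sqrt{2(n-1)}$ (yielding the threshold $y\gtrsim\rho e^{\gamma\tau_0}$).

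Your bowl-soliton analysis and the transition-region discussion are extraneous for \emph{this} proposition: the hypothesis $u\geq C_1e^{-\gamma\tau_0}$ is designed precisely to stay on the pure superellipsoid, and the paper treats the tip cap $u<C_1e^{-\gamma\tau_0}$ and the patching at $u=C_1e^{-\gamma\tau_0}$ in the two subsections that follow. Your bowl computation is correct (and matches what the paper does there), but it belongs to the next step, not to this proposition.
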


\begin{proof}
On the surface given by \eqref{eq:superellipsoid defined} we then have
\[
y= A Y(u),
\]
where
\[
Y(u) := \left(1-\frac{u^2}{2(n-1)}\right)^{1/m}
\qquad
A = \left(\frac{2(n-1)}{K_0}\right)^{1/m}e^{\gamma\tau_0}.
\]
The function $Y(u)$ satisfies
\[
\Bigl(\frac{n-1}{u}-\frac{u}{2}\Bigr)Y'(u) + \frac 1m Y(u) = 0
\]
so that
\[
\cF[A Y] = A \left\{\frac{Y_{uu}}{1+A^2Y_u^2} + \Bigl(\frac 12 - \frac 1m\Bigr)Y\right\}
> A \left\{\frac{Y_{uu}}{A^2Y_u^2} + \Bigl(\frac 12 - \frac 1m\Bigr)Y\right\}
\]
because $Y_{uu}<0$.  
We see that $\cF[A Y] > 0$ holds if
\begin{equation}		\label{eq-FetaY-positive}
\frac{-Y_{uu}}{A^2Y_u^2} < \Bigl(\frac 12 - \frac 1m\Bigr)Y\text{ i.e.  if }
A^2 > \Big(\frac{2m}{m-2}\Big) \frac{-Y_{uu}}{YY_u^2}.  
\end{equation}
For any fixed $u\in (0, \sqrt{2(n-1)})$ this holds if $A$ is large enough.  When $u=o(1)$ we have
\begin{equation}		\label{eq:Y for small u}
Y = 1 - \frac{u^2}{2m(n-1)} + O(u^4),		\quad
Y_u = -\frac{u}{m(n-1)}+O(u^3),			\quad
Y_{uu} = \frac{-1}{m(n-1)} + O(u^2).  
\end{equation}
Thus \eqref{eq-FetaY-positive} holds if $u\geq C/A$ for some constant $C$; in view of the definition of $A$ this means that \eqref{eq-FetaY-positive} holds if $u\geq C_1e^{-\gamma\tau_0}$ for some constant $C_1$.  

At the other end of the $u$ interval, where $u=\sqrt{2(n-1)} -o(1)$ we have
\begin{align*}
Y &= (C+o(1)) \bigl(\sqrt{2(n-1)}-u\bigr)^{1/m}\\
Y_u &= (C+o(1))\bigl(\sqrt{2(n-1)}-u\bigr)^{1/m-1}\\
Y_{uu} &= -(C+o(1))\bigl(\sqrt{2(n-1)}-u\bigr)^{1/m-2}
\end{align*}
for generic constants $C$.  Hence
\[
-\frac{Y_{uu}}{YY_u^2} = (C+o(1)) \bigl(\sqrt{2(n-1)} - u\bigr)^{-2/m} =(C+o(1)) Y^{-2}
\]
and therefore \eqref{eq-FetaY-positive} holds if $\sqrt{2(n-1)} - u > CA^{-m}= Ce^{-m\gamma\tau_0}$.  
If $y(u)\geq \rho e^{\gamma\tau_0}$ then $y(u)= AY(u)$ implies
\[
Y(u)\geq \frac{\rho}{A}e^{\gamma\tau_0} = \Bigl(\frac{K_0}{2(n-1)}\Bigr)^{1/m}\rho =C\rho
 \implies 
-\frac{Y_{uu}}{YY_u^2}\leq  \frac{C}{\rho^2}.  
\]
Therefore \eqref{eq-FetaY-positive} holds in the region $y(u)\geq \ell_{\rm int} $ provided $\ell_{\rm int}$ is large enough.  
\end{proof}

Up to this point we have shown that $\bu_\tau>0$ on almost all of the initial surface, but not on the whole surface.  There is a hole at the tip where we still have to check that $\bu_\tau>0$.  That's what we do in the next two subsections.

\subsection{Monotonicity at $\tau=\tau_0$, $u < C_1 e^{-\gamma\tau_0}$} 

The subsolution \eqref{eq:u lambda defined} has a ``hole'' in the region \(u=\cO(e^{-\gamma\tau_0})\), which we now fill.  Let \(B\) be the standard unit speed translating bowl soliton, i.e.  \(\cB:[0, \infty)\to\R\) is the unique solution of
\begin{equation}		\label{eq:bowl soliton ode}
\frac{\cB''(z)}{1+\cB'(z)^2} + \frac{n-1}{z}\cB'(z) = 1, \qquad \cB(0)=\cB'(0)=0.  
\end{equation}
For large \(z\) it is well known (see e.  g.  ~\cite{AV}) that as $z\to\infty$,
\begin{subequations}	\label{eq-bowl soliton asymptotics}
\begin{align}
\cB(z) &= \frac{z^2}{2(n-1)} - 2 \ln z + C + o(1), \\
\cB'(z) &=\frac{z}{n-1}+O(z^{-1})
\end{align}
\end{subequations}
for some constant \(C\).  It is also known that
\begin{equation}		\label{eq:B-starshaped}
z\cB'(z) <\cB(z)\text{ for all }z>0.  
\end{equation}

For any \(a>0\) we consider
\[
y_{aA}(u) := A - \frac{1}{aA} \cB(aAu),
\]
where, as before, $A=\bigl(2(n-1)/K_0\bigr)^{1/m}e^{\gamma\tau_0}$, and we test if \(y_{aA}(u)+b\) is a subsolution for any $b\in\R$:
\[
\cF[y_{aA}+b] = -\frac{aA\cB''(aAu)}{1+\cB'(aAu)^2}
			-\Bigl(\frac{n-1}{u} - \frac{u}{2}\Bigr)\cB'(aAu)
			+ \frac 12 A -\frac{1}{2aA}\cB(aAu) + \frac 12 b.  
\]
Using \eqref{eq:bowl soliton ode}, \eqref{eq-bowl soliton asymptotics},
\eqref{eq:B-starshaped}, we get
\[
\cF[y_{aA}+b]
	= \Bigl(\frac 12 - a\Bigr)A + \frac{1}{2aA}\bigl(aAu\, \cB'(aAu)-\cB(aAu)\bigr) + \frac 12 b 
	\geq \Bigl(\frac 12 - a\Bigr)A + \frac 12 b.  
\]
\subsection{Patching the two subsolutions}
Consider
\[
y_-(u) =
\begin{cases}
y_{aA}(u)+b	& \bigl(0\leq u \leq C_1 e^{-\gamma\tau_0}\bigr)  
\\[1ex]
AY(u)		& \bigl(C_1e^{-\gamma\tau_0} \leq u\leq \sqrt{2(n-1)-K_0 e^{-m\gamma\tau_0}\ell_{\rm int}^m}\bigr)
\end{cases}
\]
where, as before, $A=(2(n-1)/K_0)^{1/m}e^{\gamma\tau_0}$.  We always choose the
coefficient $b\in\R$ so that $y_-$ is continuous at $u=C_1$, i.e.  ~for given $a, C_1,
\tau_0$, we choose
\[
  b = AY(C_1 e^{-\gamma\tau_0}) - y_{aA}(C_1 e^{-\gamma\tau_0}) 
\]
which, in view of the definitions of $Y(u)$ and $y_{aA}$ can be expanded as
\begin{align*}
  b &= A\left\{\Bigl(1-\frac{C_1^2 e^{-2\gamma\tau_0}}{2(n-1)}\Bigr)^{\frac 1m}
  	-1 + \frac{1}{aA^2}\cB(aA C_1 e^{-\gamma\tau_0})\right\}  \\
    &=A\left\{
	-\frac{C_1^2 e^{-2\gamma\tau_0}}{2m(n-1)} + O(e^{-4\gamma\tau_0})
	+ \frac 1a \frac{K_0}{2(n-1)} e^{-2\gamma\tau_0} \cB\bigl(a({2(n-1)/K_0})^{1/m} C_1\bigr)
    \right\} \\
    &= O(e^{-2\gamma\tau_0}).  
\end{align*}
We now show that if  $a\in \bigl(\frac 1m, \frac 12\bigr)$, and if $C_1$ is large enough,
then $y_-$ is a subsolution, i.e.  ~one has $\cF[y_-]\geq 0$,  even in the viscosity sense
at $u=C_1e^{-\gamma\tau_0}$.  

We have already shown that $\cF[y_-]\geq 0$ when $u\neq C_1e^{-\gamma\tau_0}$, so we only have to check that
\[
\lim_{\epsilon\searrow 0} y_-'\bigl(C_1e^{-\gamma\tau_0}+\epsilon\bigr) \geq \lim_{\epsilon\searrow 0} y_-'\bigl(C_1e^{-\gamma\tau_0}-\epsilon\bigr),
\]
i.e.  
\begin{equation}		\label{eq-y-convex-corner}
A Y'\bigl(C_1e^{-\gamma\tau_0}\bigr) \geq  y_{aA}'\bigl(C_1e^{-\gamma\tau_0}\bigr).  
\end{equation}
Since $y_{aA}'(u)= -\cB'(aAu)$, we have 
\[
y_{aA}'(C_1e^{-\gamma\tau_0}) = -\cB'(aAC_1e^{-\gamma\tau_0})
= -\frac{a}{n-1}\Bigl(\frac{2(n-1)}{K_0}\Bigr)^{\frac{1}{m}} C_1 +  O(C_1^{-1}).  
\]
Using \eqref{eq:Y for small u} we find 
\begin{align*}
  y_-'(C_1e^{-\gamma\tau_0}) = AY'(C_1e^{-\gamma\tau_0}) 
&= -A \frac{C_1e^{-\gamma\tau_0}}{m(n-1)}+O(AC_1^3e^{-3\gamma\tau_0})  \\
&= -\Bigl(\frac{2(n-1)}{K_0}\Bigr)^{\frac 1m} \frac{C_1}{m(n-1)} + O\bigl(C_1^3 e^{-2\gamma\tau_0}\bigr)
\end{align*}
Therefore, at $u=C_1 e^{-\gamma\tau_0}$ 
\[
AY'(u) - y_{aA}'(u)
  = \Bigl(a - \frac 1m\Bigr) \Bigl(\frac{2(n-1)}{K_0}\Bigr)^{\frac 1m} \frac{C_1}{n-1}
    + O\bigl(C_1^{-1} + C_1^3 e^{-2\gamma\tau_0}\bigr).  
\]
We have chosen $a>\frac 1m$, so if $C_1$ is large enough, then the $O(C_1^{-1})$ term is small compared to the first $C_1$ term.  If $\tau_0$ is sufficiently large then we can also ignore the $O(C_1^3 e^{-2\gamma\tau_0})$ term.  Therefore \eqref{eq-y-convex-corner} holds if $C_1$ and $\tau_0$ are large enough.  This completes the proof that $u_\tau(y, \tau) > 0$ for $y>\ell_{\rm int}$, $\tau>\tau_0$.


\end{document}